\newtheorem{theorem}{Theorem}[section]
\newtheorem{lemma}[theorem]{Lemma}
\newtheorem{proposition}[theorem]{Proposition}
\newtheorem{corollary}[theorem]{Corollary}
\newtheorem{theoremx}{Theorem}
\theoremstyle{definition}
\newtheorem{assumption}[theorem]{Assumption}
\newtheorem{definition}[theorem]{Definition}
\theoremstyle{remark}
\newtheorem{remark}[theorem]{Remark}
\newcommand{\numberset}{\mathbb}
\newcommand{\R}{\numberset{R}}
\newcommand{\C}{\numberset{C}}
\newcommand{\Q}{\numberset{Q}}
\newcommand{\Z}{\numberset{Z}}
\newcommand{\N}{\numberset{N}}
\newcommand{\F}{\numberset{F}}
\newcommand{\Zp}{\Z_p}
\newcommand{\Zq}{\Z_q}
\newcommand{\Qp}{\Q_p}
\newcommand{\Ql}{\Q_\ell}
\newcommand{\Qbar}{\bar{\Q}}
\newcommand{\Qlbar}{\Qbar_\ell}
\newcommand{\Zhat}{\hat{\Z}}
\DeclareFontFamily{U}{wncy}{}
\DeclareFontShape{U}{wncy}{m}{n}{<->wncyr10}{}
\DeclareSymbolFont{mcy}{U}{wncy}{m}{n}
\DeclareMathSymbol{\sha}{\mathord}{mcy}{"58}
\newcommand{\inj}{\hookrightarrow}
\newcommand{\surj}{\twoheadrightarrow}
\newcommand{\iso}{\xrightarrow{\sim}}
\newcommand{\m}{\mathfrak{m}}
\newcommand{\p}{\mathfrak{p}}
\newcommand{\calO}{\mathcal{O}}
\newcommand{\Gtilde}{\tilde{G}}
\DeclareMathOperator{\Imm}{Im}
\DeclareMathOperator{\Hom}{Hom}
\DeclareMathOperator{\End}{End}
\DeclareMathOperator{\Aut}{Aut}
\newcommand{\id}{\mathrm{id}}
\DeclareMathOperator{\length}{length}
\DeclareMathOperator{\proj}{proj}
\DeclareMathOperator{\Gal}{Gal}
\newcommand{\decomp}{\mathcal{D}}
\newcommand{\inertia}{\mathcal{I}}
\newcommand{\Fr}{\mathrm{Fr}}
\DeclareMathOperator{\Tr}{Tr}
\DeclareMathOperator{\Cor}{cor}
\DeclareMathOperator{\Res}{res}
\newcommand{\res}{\Res}
\newcommand{\infl}{\mathrm{infl}}
\DeclareMathOperator{\loc}{loc}
\DeclareMathOperator{\Ind}{Ind}
\DeclareMathOperator{\Sh}{Sh}
\newcommand{\cont}{\mathrm{cont}}
\newcommand{\cohomology}{H}
\newcommand{\ho}{\cohomology^0}
\newcommand{\hone}{\cohomology^1}
\newcommand{\xbar}{\bar{x}}
\newcommand{\ybar}{\bar{y}}
\newcommand{\cohoclass}[1]{\mathbf{#1}}
\newcommand{\abar}{\bar{a}}
\newcommand{\calR}{\mathcal{R}}
\newcommand{\A}{\mathcal{A}}
\newcommand{\T}{\mathcal{T}}
\newcommand{\sfrak}{\mathfrak{s}}
\newcommand{\tfrak}{\mathfrak{t}}
\newcommand{\onefrak}{{\boldsymbol{1}}}
\newcommand{\Rs}{\calR_{\sfrak}}
\newcommand{\Ts}{\T_{\sfrak}}
\newcommand{\Rsa}{\calR_{\sfrak,\alpha}}
\newcommand{\Tsa}{\T_{\sfrak,\alpha}}
\newcommand{\ac}{\mathrm{ac}}
\DeclareMathOperator{\Char}{char}
\newcommand{\calG}{\mathcal{G}}
\newcommand{\admissible}{\mathcal{P}}
\newcommand{\squarefreeadmissible}{\mathcal{N}}
\newcommand{\varadmissible}{\mathcal{L}}
\newcommand{\othervaradmissible}{\mathcal{Q}}
\DeclareMathOperator{\ks}{KS}
\newcommand{\KS}{\boldsymbol{\ks}}
\newcommand{\KSuni}{\overline{\KS}}
\DeclareMathOperator{\es}{ES}
\newcommand{\ES}{\boldsymbol{\es}}
\newcommand{\lcond}{\mathcal{F}}
\newcommand{\koly}{\boldsymbol{\kappa}}
\newcommand{\abf}{\mathbf{a}}
\newcommand{\bbf}{\mathbf{b}}
\newcommand{\cbf}{\mathbf{c}}
\newcommand{\ubf}{\mathbf{u}}
\newcommand{\fs}{\mathrm{fs}}
\newcommand{\tr}{\mathrm{tr}}
\newcommand{\Gr}{\mathrm{Gr}}
\newcommand{\f}{\mathrm{f}}
\newcommand{\s}{\mathrm{s}}
\newcommand{\rel}{\mathrm{rel}}
\newcommand{\ur}{\mathrm{ur}}
\newcommand{\ord}{\mathrm{ord}}
\newcommand{\bk}{\mathrm{BK}}
\newcommand{\honef}{\hone_\f}
\newcommand{\hones}{\hone_\s}
\newcommand{\honeur}{\hone_\ur}
\DeclareMathOperator{\Sel}{Sel}
\newcommand{\ga}{\alpha}
\newcommand{\gd}{\delta}
\newcommand{\e}{\varepsilon}
\newcommand{\gk}{\kappa}
\newcommand{\gl}{\lambda}
\newcommand{\gs}{\sigma}
\newcommand{\gL}{\Lambda}
\newcommand{\sigmati}{\tilde{\sigma}}
\newcommand{\kugasato}{\tilde{\mathcal{E}}}
\DeclareMathOperator{\chow}{CH}
\newcommand{\Kum}{\mathrm{Kum}}
\newcommand{\hp}{\mathrm{HP}}
\newcommand{\Hid}{\mathrm{Hid}}
\newcommand{\gen}{\mathrm{gen}}
\newcommand{\tors}{\mathrm{tors}}
\newcommand{\calW}{\mathcal{W}}
\renewcommand{\div}{\mathrm{div}}
\author{Luca Mastella}
\author{Francesco Zerman}
\date{
}
\address{UniDistance Suisse\\ Schinerstrasse 18, 3900, Brig, Switzerland}
\email{luca.mastella@unidistance.ch}
\email{francesco.zerman@unidistance.ch}
\title{On anticyclotomic Euler and Kolyvagin systems}
\subjclass{11F80 (primary), 11R23 (secondary)}
\keywords{Euler systems, Kolyvagin systems, Iwasawa theory}
\begin{document}

	\begin{abstract}
    We introduce an axiomatization of the notion of ($p\hspace{1pt}$-complete) anticyclotomic Euler system for a wide class of Galois representations, including those attached to a cuspidal eigenform and to a Hida family of modular forms. Under a minimal set of assumptions, we show how to build from these data a universal Kolyvagin system for the representation and for its anticyclotomic twist. Eventually, we recover some applications to the structure of Selmer groups and Iwasawa main conjectures and we review a few concrete examples of these abstract notions that can be found in the literature.
	\end{abstract}

\maketitle

\section{Introduction}

The notion of Euler system and its first applications date back to the groundbreaking work of Kolyvagin \cite{KolFin}, \cite{KolES} and \cite{kolyvagin:structure-sha}. In fact, by studying the properties of the Euler system of Heegner points, he managed to prove important results on the structure of the $p\hspace{1pt}$-part of the Shafarevich--Tate group $\sha(E/K)[p^\infty]$ of an elliptic curve $E/\Q$, over an imaginary quadratic field $K$, with applications to the Birch--Swinnerton-Dyer conjecture in analytic rank $1$. We refer the reader to \cite{gross:kolyvagin} and \cite{mccallum:kolyvagin} as nice accounts of some special cases of Kolyvagin's work. 

Since then, his method has been widely generalized to other arithmetic contexts, where Euler systems have been constructed, in order to `bound the size' of Selmer groups (certain cohomology groups that in the elliptic curve case control the size of the Shafarevich--Tate group) and found applications also in Iwasawa theory, in particular to Iwasawa main conjectures.

At the beginning of this century, Mazur and Rubin \cite{rubin:euler-systems}, \cite{mazur-rubin:kolyvagin-systems} proposed an axiomatization of Kolyvagin's method, introducing a general notion of Euler systems and the concept of Kolyvagin systems in the language of $p\,$-adic Galois representations. Unfortunately, most of their results don't cover the anticyclotomic setting (i.e., the case of Euler systems defined over ring class fields of an imaginary quadratic field $K$), as for instance the original Kolyvagin's Heegner point Euler system. In the literature, this issue was solved case by case, by introducing \emph{ad hoc} notions of anticyclotomic Euler and Kolyvagin systems that are specific for the representation attached to the considered arithmetic object (see e.g.~\cite{howard:heegner-points} for elliptic curves, \cite{castella-hsieh:heegner-cycles}, \cite{longo-vigni:generalized} for modular forms, \cite{H06}, \cite{buyukboduk:big-heegner} for Hida families of modular forms).

The main insight of this article is to give a common axiomatization of the anticyclotomic setting, by introducing the notions of \emph{anticyclotomic Euler system} and of \emph{(modified, universal) Kolyvagin system} for a class of Galois representations that is general enough to include the cases listed above, as we show in Section \ref{sec:examples}. Let's describe more precisely the content of the article.

Let $p$ be an odd prime, $N$ be a positive integer and $K$ be an imaginary quadratic field of discriminant $D_K\ne -3,-4$ coprime with $Np$ and with class number not divisible by $p$. The representations $\T$ considered in this paper are free modules of rank $2$ over a local, complete, Noetherian ring $\calR$ with finite residue field of characteristic $p$, endowed with an $\calR$-linear action of $G_\Q$, unramified outside $Np$, such that the couple $(\T, \calR)$ satisfies some assumptions (Assumptions \ref{ass:assumptions-on-R} and \ref{ass:assumptions-on-T}), including the residual irreducibility of $\T$ and some control on the action of the complex conjugation and the Frobenius automorphisms on $\T$. Notice that we don't require any bound for the Krull dimension $d$ of $\calR$ and any splitting behaviour (other than being unramified) in $K$ for the primes dividing $Np$.

For every integer $n\ge1$, let's denote by $K[n]$ the ring class field of $K$ of conductor $n$. An anticyclotomic Euler system for $\T$ (see Definition \ref{def:euler-systems}) is a particular set of cohomology classes $\cbf(n)\in \hone(K[n],\T\hspace{1pt})$, where $n$ varies in the set $\squarefreeadmissible$ of squarefree products of primes that are inert in $K$ and don't divide $Np$. These are classes that are unramified outside $Np$ and that satisfy a sort of \emph{norm compatibility} \ref{E1} and a \emph{local compatibility} with respect to the Frobenius action \ref{E2}. In Section 4 we will see that, in most known examples, anticyclotomic Euler systems arise geometrically from the arithmetic of Heegner points or cycles. In contrast to what is done in \cite[Definition 7.1]{castella-hsieh:heegner-cycles} (and more in the spirit of \cite{nekovar:euler-system-method}), we don't require any compatibility of the classes $\cbf(n)$ with respect to the action of the complex conjugation, as it can be avoided in the proof of all our main theorems. See Remark \ref{rk:E3} for how our results can be improved by assuming such a condition.

On the other hand, in Section \ref{subsec:modified-Kolyvagin-systems} we introduce the notion of modified universal Kolyvagin system. In fact, the notion of universal Kolyvagin system already exists in the literature (see \cite{mazur-rubin:kolyvagin-systems} and \cite{buyukboduk:deformation-kolyvagin-systems}) and serves as a key ingredient to study the rank and the structure of Selmer groups. A universal Kolyvagin system consists of a compatible set of classes $\koly(n)_\sfrak \in \hone(K, \Ts)$ that are unramified outside $Np$ and have values in a family of quotients $\{\Ts\}_{\sfrak\in \Z_{>0}^2}$ of $\T$, defined in Section \ref{sec:hypotheses}. These classes are indexed on a subset of $\squarefreeadmissible$ and, for every index $n\ell$ with $\ell$ prime, they must satisfy the compatibility
\begin{equation}
	[\loc_\lambda(\koly(n\ell)_\sfrak)]_\s = \phi^\fs_\ell\bigl(\loc_\lambda(\koly(n)_\sfrak)\bigr),\label{eq:K2-intro-2}
\end{equation}
where $\loc_\lambda$ denotes the localization map at the unique prime $\lambda$ of $K$ above $\ell$, $\phi_\ell^\fs$ is the finite-singular isomorphism (see Definition \ref{def:finite-singular}) and $[\, \ast\, ]_\s$ denotes the projection of $\ast$ to the singular quotient (see Section \ref{sec:local-conditions}). However, in our generality, we are not able to show that the existence of an anticyclotomic Euler system implies the existence of a universal Kolyvagin system. What we obtain is a \emph{modified} version of this notion, where the relation \eqref{eq:K2-intro-2} must be twisted by an automorphism $\chi_{n,\ell}$ of $\T$ (possibly depending on the couple $(n,\ell)$). See Section \ref{subsec:modified-Kolyvagin-systems} for more details on the definition of a modified universal Kolyvagin system. Therefore, the main result of Section \ref{sec:euler-systems-and-kolyvagin-systems} is the following.

\begin{theoremx}[Theorem \ref{th:euler-to-kolyvagin-system}]\label{thm:A}
	If $\{\cbf(n)\}_{n\in\squarefreeadmissible}$ is an anticyclotomic Euler system for $\T$, then there is a subset $\squarefreeadmissible'$ of $\squarefreeadmissible$ and a modified universal Kolyvagin system \[
	\{\koly(n)_\sfrak : \sfrak \in \Z_{>0}^2,  n\in\squarefreeadmissible'\}
	\]
	for $\T$ such that $\Cor^{K[1]}_{K}\cbf(1)=\varprojlim_\sfrak \koly(1)_\sfrak$.
\end{theoremx}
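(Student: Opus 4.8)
The plan is to run Kolyvagin's derivative construction over the anticyclotomic tower $\{K[n]\}$ and the family of quotients $\{\Ts\}_{\sfrak\in\Z_{>0}^2}$ of $\T$ from Section \ref{sec:hypotheses}, and then to corestrict from $K[1]$ down to $K$; the modification by the automorphisms $\chi_{n,\ell}$ appears precisely because we pay a corestriction rather than a genuine descent and because we have not imposed any complex conjugation relation on the classes $\cbf(n)$. Let $\squarefreeadmissible'\subseteq\squarefreeadmissible$ consist of those $n$ all of whose prime divisors $\ell$ are \emph{admissible}, i.e.\ inert in $K$, prime to $Np$, and such that $G_\ell:=\Gal(K[\ell]/K[1])$ acts trivially on every $\Ts$ and the local Euler factor at $\ell$ occurring in \ref{E1} annihilates every $\Ts$ (so that $\phi_\ell^\fs$ of Definition \ref{def:finite-singular} is defined on $\Ts$). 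Since $D_K\ne-3,-4$ the group $G_\ell$ is cyclic of order $\ell+1$, and ring class field theory gives $\Gal(K[n]/K[1])\cong\prod_{\ell\mid n}G_\ell$ for $n\in\squarefreeadmissible'$. For each such $\ell$ fix a generator $\sigma_\ell$ of $G_\ell$ and put $D_\ell=\sum_{j=1}^{\ell}j\sigma_\ell^{j}$ and $D_n=\prod_{\ell\mid n}D_\ell\in\Z[\Gal(K[n]/K[1])]$, so that $(\sigma_\ell-1)D_\ell=(\ell+1)-\mathrm{N}_\ell$ with $\mathrm{N}_\ell=\sum_{j=0}^{\ell}\sigma_\ell^{j}$.

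For $n\in\squarefreeadmissible'$ let $\bar\cbf(n)_\sfrak\in\hone(K[n],\Ts)$ be the reduction of $\cbf(n)$. Applying $(\sigma_\ell-1)D_n=\bigl((\ell+1)-\mathrm{N}_\ell\bigr)D_{n/\ell}$ to $\bar\cbf(n)_\sfrak$, the contribution of $(\ell+1)$ vanishes by admissibility, while $\mathrm{N}_\ell\bar\cbf(n)_\sfrak$ is the reduction of $\Res^{K[n]}_{K[n/\ell]}\Cor^{K[n]}_{K[n/\ell]}\cbf(n)$, which by the norm compatibility \ref{E1} equals the restriction of the Euler factor at $\ell$ applied to $\cbf(n/\ell)$, hence vanishes in $\hone(K[n],\Ts)$ by admissibility; therefore $(\sigma_\ell-1)D_n\bar\cbf(n)_\sfrak=0$ for every $\ell\mid n$, so $D_n\bar\cbf(n)_\sfrak$ is fixed by $\Gal(K[n]/K[1])$, as in \cite{mazur-rubin:kolyvagin-systems, howard:heegner-points}. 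The residual irreducibility of $\T$ and the control on complex conjugation and Frobenius in Assumption \ref{ass:assumptions-on-T} give, via the usual linear disjointness of $K[n]$ from the field cut out by $\Ts$ over $K[1]$, that inflation--restriction produces a unique $\widetilde\koly(n)_\sfrak\in\hone(K[1],\Ts)$ with $\Res^{K[n]}_{K[1]}\widetilde\koly(n)_\sfrak=D_n\bar\cbf(n)_\sfrak$; set $\koly(n)_\sfrak:=\Cor^{K[1]}_K\widetilde\koly(n)_\sfrak\in\hone(K,\Ts)$. Compatibility of the $\koly(n)_\sfrak$ as $\sfrak$ varies follows from uniqueness and the functoriality of reduction, inflation--restriction and $\Cor$. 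For $n=1$ one has $D_1=1$, so $\widetilde\koly(1)_\sfrak$ is the image of $\cbf(1)$ in $\hone(K[1],\Ts)$, and since $\Cor$ commutes with reduction, $\koly(1)_\sfrak$ is the image of $\Cor^{K[1]}_K\cbf(1)$ in $\hone(K,\Ts)$; taking the inverse limit over $\sfrak$ — licit because $\T=\varprojlim_\sfrak\Ts$ and the finite groups $\ho(K,\Ts)$ satisfy Mittag--Leffler, so $\hone(K,-)$ commutes with this limit — yields $\Cor^{K[1]}_K\cbf(1)=\varprojlim_\sfrak\koly(1)_\sfrak$, the displayed identity.

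It remains to verify the (modified) Kolyvagin-system axioms for $\{\koly(n)_\sfrak\}$. That $\koly(n)_\sfrak$ is unramified outside $Np$ is inherited from $\cbf(n)$ at the primes not dividing $n$; the delicate primes are $\lambda\mid\ell\mid n$, where the relation \eqref{eq:K2-intro-2} asserts, in its twisted form, that $[\loc_\lambda\koly(n\ell)_\sfrak]_\s$ and $\phi_\ell^\fs(\loc_\lambda\koly(n)_\sfrak)$ agree after inserting an automorphism $\chi_{n,\ell}$ of $\Ts$. To obtain this I would localize $D_{n\ell}\bar\cbf(n\ell)_\sfrak$ at $\lambda$: since $\cbf(n\ell)$ is unramified at $\lambda$ (because $\lambda\nmid Np$), the singular projection of the $\sigma_\ell$-derivative is computed from $\loc_\lambda\cbf(n)$ through the local Euler factor at $\ell$, which is exactly the datum entering the finite--singular isomorphism, i.e.\ through property \ref{E2}. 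The only new phenomenon is the passage from $K[1]$ to $K$: since we corestrict rather than descend, and impose no complex conjugation relation on the $\cbf(n)$ (cf.\ Remark \ref{rk:E3}), the maps $\loc_\lambda$ and $\phi_\ell^\fs$ commute with $\Cor^{K[1]}_K$ only after inserting the automorphism $\chi_{n,\ell}$ of $\Ts$, which one can take to be induced by a suitable element of the decomposition group at $\lambda$; matching these automorphisms across $\sfrak$ glues them into an automorphism of $\T$, possibly depending on $(n,\ell)$, so $\{\koly(n)_\sfrak\}$ is a \emph{modified} universal Kolyvagin system in the sense of Section \ref{subsec:modified-Kolyvagin-systems}.

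I expect the real difficulties to be twofold. First, the descent step is subtler than in the case $\calR=\Z_p$: the $\Ts$ are arbitrary finite quotients of a possibly high-dimensional $\calR$, so one must check, uniformly in $\sfrak$, that $\Ts^{G_{K[n]}}$ has trivial $\Gal(K[n]/K[1])$-cohomology and that restriction is an isomorphism onto the $\Gal(K[n]/K[1])$-invariants; this is what Assumptions \ref{ass:assumptions-on-R} and \ref{ass:assumptions-on-T}, together with $p\nmid h_K$, are designed to provide, but the bookkeeping must be done with care. Second, and more substantively, one must show that $\chi_{n,\ell}$ is a genuine \emph{automorphism} of each $\Ts$ — not merely an endomorphism — and that the $\chi_{n,\ell}$ are compatible as $\sfrak$ grows, so that they glue to an automorphism of $\T$; this is exactly the obstruction to producing an honest universal Kolyvagin system, and it explains why imposing the complex conjugation compatibility \ref{E3}, which reduces $\chi_{n,\ell}$ to $\pm\id$, gives the stronger conclusion recorded in Remark \ref{rk:E3}.
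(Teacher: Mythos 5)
Your overall architecture is right — Kolyvagin derivative over $\calG_n$, descent to $K[1]$ via inflation–restriction (using the vanishing of $\ho(K[n],\Ts)$, which in the paper is Lemma \ref{lem:no-invariants-T} applied through Lemma \ref{lemma:definition-kolyvagin-derivative}), then corestriction to $K$, then variation over $\sfrak$ — and your displayed identity for $n=1$ and the Mittag--Leffler remark are correct. But two central points of the actual proof are missing or misattributed.

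First, your set $\squarefreeadmissible'$ is not fine enough. You only ask that $\calG_\ell$ act trivially on $\Ts$ and that the Euler factor $\abf_\ell$ annihilate $\Ts$; this is the paper's $\admissible_\sfrak$ of Definition \ref{dfn:s-admissible-primes}. The construction, however, needs the \emph{Kolyvagin primes} $\varadmissible_\tfrak$ of Definition \ref{def:Kolyvagin-primes}, where additionally $(\ell+1\pm\abf_\ell)/p^{t_1}$ are units of $\calR_\tfrak'$. This extra unit condition is precisely what makes the correction operator invertible (Corollary \ref{cor:Frob-cpmbination-invertible}), and proving that these finer sets are still infinite is a genuine Chebotarev computation (Lemma \ref{lem:infinite-admissible-primes}) using the big image hypothesis \ref{condition:big-image-assumption} together with Assumption \ref{ass:u-ell}. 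You flag ``one must show $\chi_{n,\ell}$ is a genuine automorphism'' as an expected difficulty, but you do not see that the resolution is a refinement of the prime set, nor that establishing the nonemptiness of that refinement is itself a theorem.

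Second, your explanation of where $\chi_{n,\ell}$ comes from is wrong. The passage from $K[1]$ to $K$ is entirely harmless: since $[K[1]:K]=h_K$ is prime to $p$, and since $K[1]$ is linearly disjoint from $\Q(\Ts)$, the corestriction commutes with $\loc_\lambda$, $\alpha_\ell$, $\beta_\ell$ up to multiplication by the unit $[K[1]:K]$ (see the explicit diagram in the proof of Lemma \ref{lem:formula-capitolo-2}). The modification is not ``induced by a suitable element of the decomposition group at $\lambda$''; it is
\[
\chi_{n,\ell}=\vartheta_\ell^{-1},\qquad
\vartheta_\ell=\Bigl(\tfrac{(\ell+1)\Fr_\ell-\abf_\ell}{p^{s_1(\ell)}}\Bigr)^{-1}\Bigl(\tfrac{(\ell+1)\Fr_\ell-\ubf_\ell\abf_\ell}{p^{s_1(\ell)}}\Bigr)\Fr_\ell,
\]
a ratio of linear polynomials in $\Fr_\ell$ produced by Nekov\'a\v r's key formula (Proposition \ref{prop:key-formula-appendix}), which combines \ref{E1} (through condition (COR) and (FR) via the Frobenius characteristic polynomial from Assumption \ref{ass:assumptions-on-T}\ref{condition:assumption-characteristic-polynomial-Frobenius}) and \ref{E2} (through condition (E--S)). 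Your sketch of the local computation — ``the singular projection is computed from $\loc_\lambda\cbf(n)$ through the local Euler factor'' — is in the right spirit but hides exactly the step that produces $\vartheta_\ell$ and that needs the unit conditions above. Absent these two ingredients, the proposal does not establish the theorem.
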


In fact, Theorem \ref{th:euler-to-kolyvagin-system} is even more precise and its proof is constructive, in the sense that we explicitly build the subset $\squarefreeadmissible'$ (see Section \ref{sec:finer-choice-primes}) and the derivative classes $\koly(n)_\sfrak$ that form the modified universal Kolyvagin system. These are constructed by applying a careful variation of Kolyvagin's descent argument (see Sections \ref{sec:Derivative-classes-and-local-properties} and \ref{sec:from-Euler-to-Kolyvagin-systems}). In Section \ref{sec:applications} we take care that, as far as we are concerned with the classical applications of the Kolyvagin system machinery (i.e., studying the structure of Selmer groups), a modified universal Kolyvagin system has exactly the same usage as a classical (universal) Kolyvagin system.

Let now $K_\infty$ be the anticyclotomic $\Zp$-extension of $K$ and let $K_\ga$ be its $\ga$-th layer. One of the main topics of Iwasawa theory is the study of Iwasawa Selmer groups, that are submodules of $\varprojlim_\ga \hone(K_\ga,\T\hspace{1pt})$. By Shapiro's lemma (see Lemma \ref{lem:Shapiro application}), 
one can shift the above problem to the study of the arithmetic of the anticyclotomic twist $\T^{\ac}=\T \hat{\otimes}_{\Zp} \Zp\llbracket\Gal(K_\infty/K)\rrbracket$ of $\T$. 

This is what we do in Section \ref{sec:Iwasawa-theory}, where we follow the same pattern of Section \ref{sec:euler-systems-and-kolyvagin-systems} for the $G_K$-representation $\T^\ac$ in place of $\T$. In particular, in Section \ref{sec:p-complete-euler-systems} we give the definition of a \emph{$p\hspace{1pt}$-complete anticyclotomic Euler system} for $\T$ (Definition \ref{def:p-complete-euler-systems}). The main difference with respect to the anticyclotomic Euler systems of Definition \ref{def:euler-systems} is that now the cohomology classes are allowed to vary among the set $\squarefreeadmissible^{(p)}$ of products of elements of $\squarefreeadmissible$ and powers of $p$, and that we require the `vertical' compatibility \ref{pE0} when varying the exponent of $p$. In the same section, we also define the notion of modified universal Kolyvagin system for $\T^\ac$, that coincides with the same notion given on $\T$ with the difference that we have to work on a suitable class of quotients $\{\T_{\sfrak,\ga}^\ac\}_{\sfrak \in \Z_{>0}^2, \alpha \in \N}$ of $\T^\ac$, defined in Section \ref{sec:the-anticyclotomic-twist}. Therefore, the classes forming such a modified universal Kolyvagin system will be elements $\koly(n)^\ac_{\sfrak,\ga}\in \hone(K,\T_{\sfrak,\ga}^\ac)$ satisfying suitable compatibility properties. As a parallel to Theorem \ref{thm:A}, the main result of Section \ref{sec:Iwasawa-theory} will be the following.

\begin{theoremx}[Theorem \ref{th:Iwasawa-euler-to-kolyvagin-system}]\label{thm:B}
	If $\{\bbf(np^\ga)\}_{np^\ga\in\squarefreeadmissible^{(p)}}$ is a $p\hspace{1pt}$-complete anticyclotomic Euler system for $\T$, then there exists a modified universal Kolyvagin system 
	\[
	\{\koly(n)^{\ac}_{\sfrak,\ga}: \sfrak \in \Z_{>0}^2, \alpha \in \N,n\in\squarefreeadmissible'\}
	\]
	for $\T^\ac$, such that $\Cor^{K[1]}_{K}\varprojlim_\ga\bbf(p^\ga)=\varprojlim_{\sfrak,\ga} \koly(1)^\ac_{\sfrak,\ga}$.
\end{theoremx}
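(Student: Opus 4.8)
\emph{Sketch of proof.} The plan is to deduce Theorem~\ref{th:Iwasawa-euler-to-kolyvagin-system} by running the argument of Section~\ref{sec:euler-systems-and-kolyvagin-systems} that proves Theorem~\ref{th:euler-to-kolyvagin-system} for the $G_K$-representation $\T^\ac$ over the Iwasawa algebra $\calR^\ac:=\calR\llbracket\Gal(K_\infty/K)\rrbracket$ in place of $\T$. The first step is to check that $(\T^\ac,\calR^\ac)$ still lies within the axiomatic framework, i.e.\ satisfies Assumptions~\ref{ass:assumptions-on-R} and~\ref{ass:assumptions-on-T}; this is the purpose of Section~\ref{sec:the-anticyclotomic-twist}. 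The salient points are that $\calR^\ac$ is again local, complete and Noetherian with the same finite residue field of characteristic $p$ (through the augmentation $\calR^\ac\surj\calR\surj\calR/\m$), that the residual $G_K$-representation attached to $\T^\ac$ is just $\bar\T$ restricted to $G_K$ since the anticyclotomic characters are pro-$p$ and hence trivial modulo the maximal ideal, and that the prescribed control on complex conjugation and on the Frobenius automorphisms is inherited from $\T$ via the generalized dihedral action of $\Gal(K_\infty/\Q)$ on $\T^\ac$.

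Next, I would turn the $p$-complete anticyclotomic Euler system $\{\bbf(np^\ga)\}_{np^\ga\in\squarefreeadmissible^{(p)}}$ for $\T$ into an ordinary anticyclotomic Euler system $\{\cbf^\ac(n)\}_{n\in\squarefreeadmissible}$ for $\T^\ac$. By Shapiro's lemma in the form of Lemma~\ref{lem:Shapiro application}, one has $\hone(K[n],\T^\ac)\cong\varprojlim_\ga\hone(K_\ga[n],\T)$ for every $n$ coprime to $p$, where the hypothesis that the class number of $K$ is prime to $p$ is used to compare the $p$-power ring class fields $K[np^\ga]$ with the layers $K_\ga[n]$ of the anticyclotomic tower. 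I would then let $\cbf^\ac(n)$ be the element of $\hone(K[n],\T^\ac)$ corresponding under this identification to the compatible system obtained from the classes $\bbf(np^\ga)$ by corestriction down the anticyclotomic tower: the vertical compatibility~\ref{pE0} is exactly what makes these corestrictions compatible, hence defines such an element. That $\{\cbf^\ac(n)\}$ is unramified outside $Np$ and satisfies the horizontal axioms~\ref{E1} and~\ref{E2}, with respect to the finite--singular isomorphism \emph{of} $\T^\ac$, is then obtained by passing the corresponding properties of $\{\bbf(np^\ga)\}$ to the limit, using that corestriction and the Shapiro isomorphism commute with the localization maps $\loc_\lambda$ and are compatible with the Frobenius-twisted relations defining~\ref{E2}.

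With these two steps in hand, the argument proving Theorem~\ref{th:euler-to-kolyvagin-system}, carried out for $\{\cbf^\ac(n)\}$ with the family of quotients $\{\Ts\}$ replaced by $\{\T_{\sfrak,\ga}^\ac\}_{\sfrak\in\Z_{>0}^2,\ga\in\N}$ (Section~\ref{sec:the-anticyclotomic-twist}), produces a subset $\squarefreeadmissible'\subseteq\squarefreeadmissible$ (constructed as in Section~\ref{sec:finer-choice-primes}, now for $\T^\ac$) together with a modified universal Kolyvagin system for $\T^\ac$ whose classes are precisely the desired $\koly(n)^\ac_{\sfrak,\ga}\in\hone(K,\T_{\sfrak,\ga}^\ac)$. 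The displayed identity then follows by combining the equality $\Cor^{K[1]}_{K}\cbf^\ac(1)=\varprojlim_{\sfrak,\ga}\koly(1)^\ac_{\sfrak,\ga}$, which is the $n=1$ output of that construction, with the identity $\cbf^\ac(1)=\varprojlim_\ga\bbf(p^\ga)$ coming from the previous paragraph (recall $K_0[1]=K[1]$).

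The step I expect to be the main obstacle is the compatibility used in the second paragraph: one must verify that the Shapiro and corestriction identifications interact correctly with the whole Kolyvagin apparatus --- the unramified and transverse local conditions at the Kolyvagin primes, the singular quotients, and the finite--singular isomorphism $\phi_\ell^\fs$ --- so that the limit classes genuinely form an Euler system \emph{for} $\T^\ac$. What makes this work is that any prime $\ell$ inert in $K$ and coprime to $Np$ splits completely in the anticyclotomic $\Zp$-extension $K_\infty/K$, since its Frobenius is trivial in every ring class group of $K$. Consequently, at a prime $\lambda\mid\ell$ the Iwasawa algebra carries a trivial $G_{K_\lambda}$-action, so the local conditions, the singular quotient and the finite--singular isomorphism at $\lambda$ for $\T^\ac$ are simply the $\calR^\ac$-base changes of those for $\T$; the subtler local behaviour at the primes above $p$, where $K_\infty/K$ ramifies, is never touched by Kolyvagin's descent and is carried along unchanged by the general framework applied to $\T^\ac$.
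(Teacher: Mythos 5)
The central step of your plan---convert the $p$-complete Euler system into an ordinary anticyclotomic Euler system for $\T^\ac$ via Shapiro's lemma and then invoke Theorem~\ref{th:euler-to-kolyvagin-system} with $(\T^\ac,\calR^\ac)$ in place of $(\T,\calR)$---has a genuine gap, and it is precisely the step where the paper takes a different route. Your first paragraph asserts that $(\T^\ac,\calR^\ac)$ satisfies Assumptions~\ref{ass:assumptions-on-R} and~\ref{ass:assumptions-on-T}, citing ``the generalized dihedral action of $\Gal(K_\infty/\Q)$ on $\T^\ac$'' as the reason the control on $\tau_c$ and on Frobenii is inherited. But Assumption~\ref{ass:assumptions-on-T} requires $\T$ to be an $\calR\llbracket G_\Q\rrbracket$-module, i.e.\ a \emph{$\calR$-linear} $G_\Q$-action, and $\T^\ac=\T\,\hat\otimes_{\Zp}\gL^\ac$ is not an $\calR^\ac\llbracket G_\Q\rrbracket$-module: complex conjugation acts on $\Gamma^\ac$ by inversion, hence on $\gL^\ac$ as a ring automorphism, so $\tau_c$ is only $\calR$-semilinear over $\calR^\ac$. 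The same goes for $\Fr_\ell$ at inert $\ell$, which maps nontrivially to $\Gal(K/\Q)$ and therefore also acts semilinearly over $\gL^\ac$. Consequently the $\pm1$-eigenspaces of $\tau_c$ on $\T^\ac$ are not $\calR^\ac$-submodules and \ref{condition:assumption-eigenvalues-complex-conjugation} fails in the form in which it is used (e.g.\ in Corollary~\ref{cor:Frob-cpmbination-invertible}, whose proof diagonalizes $\Fr_\ell$ over $\Rs$); the sets $\admissible_\sfrak$ of Definition~\ref{dfn:s-admissible-primes} likewise cannot be formed ``for $\T^\ac$''. Section~\ref{sec:the-anticyclotomic-twist} does not claim that $\T^\ac$ fits the $G_\Q$-axiomatics---it only records the $G_K$-module properties and Lemma~\ref{lem:iwasawa-primes}---so there is no black-box version of Theorem~\ref{th:euler-to-kolyvagin-system} available for $\T^\ac$.

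The paper's proof sidesteps this by running the abstract Appendix~\ref{sec:formula-abstract} machinery directly on $\T$ (which \emph{is} a $G_\Q$-module), with the Euler system classes $\bbf(np^\alpha)$ viewed over the finite levels $K_\alpha[n]$; see the dictionary of Section~\ref{sec:iwasawa-derivative-classes}, where the entry $T$ is $\T_\tfrak'$, not a quotient of $\T^\ac$. The admissible primes are the \emph{same} sets $\admissible_\sfrak'$ of Definition~\ref{def:Kolyvagin-primes}, defined via $\T$, and Lemma~\ref{lem:iwasawa-primes} is what guarantees they are also usable after the twist. Only at the very end are the derived classes and the finite--singular relation pushed down from $K_\alpha$ to $K$ through the (semi-local) Shapiro isomorphism $\Sh_\alpha$, giving Definition~\ref{def:kolyvagin-derivative-iwasawa}, Proposition~\ref{prop:Iwasawa-derived-classes-in-the-Selmer} and Lemma~\ref{lem:formula-capitolo-3}. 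So the correct order of operations is: first Kolyvagin's descent at level $K_\alpha[n]$ with coefficients in $\T$, and \emph{then} Shapiro; your proposal reverses it, applying Shapiro first, and in doing so loses the $G_\Q$-equivariance that the descent crucially needs. Your last paragraph correctly observes that at a Kolyvagin prime $\lambda$ the $G_{K_\lambda}$-action on $\gL^\ac$ is trivial, but the finite--singular formalism also calls for the action of $\Fr_\ell \in G_{\Q_\ell}\setminus G_{K_\lambda}$, and that action on $\gL^\ac_{\sfrak,\alpha}$ is nontrivial (dihedral), which is exactly what the component-wise treatment $\vartheta_\ell = (\vartheta_{\lambda_\alpha})_{\lambda_\alpha\mid\lambda}$ in Lemma~\ref{lem:formula-capitolo-3} is designed to handle.
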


The arguments used for the proof of Theorem \ref{thm:B} follow closely those used for Theorem \ref{thm:A}. In Section \ref{sec:Iwasawa-applications} we recover some classical applications of the existence of a non-trivial (modified, universal) Kolyvagin system for $\T^\ac$ to the Iwasawa theory of the representation $\T$. 

In Section \ref{sec:examples} we recover some explicit examples of our theory in the cases of elliptic curves, higher weight modular forms and Hida families. There, we show how our main theorems imply some known results on the structure of some relevant Selmer groups and on the Iwasawa main conjecture attached to these arithmetic objects. We also hope that our general theory can be applied to other arithmetic objects where these results are not completely known, e.g.~the case of Coleman families families of modular forms (see \cite[Remark 6.4.4]{jetchev-loeffler-zerber:heegner-coleman}). We will not address this problem here for space reasons, but it will be an interesting direction for future work.

Finally, in the appendices we recall some technical details: in Appendix \ref{sec:formula-abstract}  we review and generalize a key formula of Nekov\' a\v r that we use in the proof of Theorems \ref{thm:A} and \ref{thm:B}, and in Appendix \ref{sec:semi-local-cohomology} we list some useful results on semi-local cohomology groups.

\subsection{Notation}\label{subsec:notation}

If $F$ is a perfect field, we write $\bar{F}$ for a fixed algebraic closure of $F$ and $G_F:=\Gal(\bar{F}/F)$ for its absolute Galois group. In particular, fix an algebraic closure $\Qbar$ of $\Q$ and, for any rational prime $\ell$, an algebraic closure $\Qbar_\ell$ of $\Q_\ell$. In addition, fix embeddings $\Qbar\inj\C$ and $\Qbar\inj\Qlbar$. Denote by $\tau_c\in\Gal(\C/\R)$ the complex conjugation and use the same symbol for the elements of $G_\Q$ and $G_{\Q_\ell}$ associated with $\tau_c$ via the chosen embeddings. 

For any non-archimedean prime $v$ of a number field $F$, denote by $\Fr_v$ the set of elements of $G_F$ that lie in some decomposition group at $v$ and that reduce to the Frobenius automorphism modulo the corresponding inertia at $v$. 
If $M$ is a $G_F$-module unramified at $v$, the image of $\Fr_v$ in $\Aut(M)$ is well defined as a conjugacy class of elements of $\Aut(M)$, therefore the trace, the determinant and the characteristic polynomial of $\Fr_v$ are independent of the actual choice of the Frobenius element. 

For a topological group $G$, a closed subgroup $H$ of $G$ and a $G$-module $M$, denote by $\res^H_G$, $\Cor^H_G$ and, when $H$ is normal in $G$, $\infl^H_G$ respectively the restriction, corestriction and inflation morphisms on first cohomology groups with values in $M$. If $F'/F$ is an extension of global or local fields, $G = G_F$ and $H = G_{F'}$, write also $\res^{F'}_{F}$, $\Cor^{F'}_F$ and $\infl^{F'}_F$. Finally, for a number field $F$ and a place $v$ of $F$, denote by $\loc_{v}$ (called \textit{localization} at $v$) the restriction $\res^{G_F}_{G_{F_v}}$, obtained by identifying $G_{F_v}$ with a (fixed) decomposition group of $F$ at $v$. Moreover, this identification allows us to see $\Fr_v$ as a well defined coset of the inertia in $G_{F_v}$.

\subsection{Acknowledgments}
We would like to thank Matteo Longo and Stefano Vigni for enlightening discussions on the topics of this paper  and the anonymous referee for helpful comments and suggestions on an earlier version of the article, which led to many improvements. The first named author was partially supported, when writing this article, by PRIN 2022, \emph{The arithmetic of motives and L-functions}. The second named author was supported by the ERC Consolidator Grant \emph{ShimBSD: Shimura varieties and the BSD conjecture}.

\section{Euler systems and Kolyvagin systems}\label{sec:euler-systems-and-kolyvagin-systems}

In this section we define the notion of anticyclotomic Euler system, inspired by \cite{castella-hsieh:heegner-cycles}, for a general family of Galois representations. We then show how one can use the arguments explained in Appendix \ref{sec:formula-abstract}, that rely on the work of \cite{nekovar:chow-groups}, in order to build a universal modified Kolyvagin system, starting from an anticyclotomic Euler system.

\subsection{The arithmetic picture}\label{subsec:the-arithmetic-picture} 

Let $(\calR,\m)$ be a complete local Noetherian ring with finite residue field of characteristic $p>2$ and let $K$ be an imaginary quadratic field of discriminant $D_K$. Let $\T$ be a finite and free $\calR$-module equipped with a continuous linear $G_K$-action. Let $N$ be a natural number coprime with $p$ such that $\T$ is unramified outside $Np$. 

\begin{assumption}\label{ass:class-number-K}
	For the rest of the article we assume that
	\begin{enumerate}[label=(\alph*)]
		\item $D_K\notin\{-3,-4\}$;
		\item $Np$ is coprime with $D_K$;
		\item the class number of $K$ is not divisible by $p$.\label{ass-cond:class-number-K}
	\end{enumerate}
\end{assumption}

For every $n\ge 1$, denote by $K[n]$ the ring class field of $K$ of conductor $n$, by $K(n)$ the maximal $p\,$-extension of $K$ contained in $K[n]$ and set $\calG_n:=\Gal(K[n]/K[1])$. Thanks to Assumption \ref{ass:class-number-K} \ref{ass-cond:class-number-K}, the index $[K(n):K]$ coincides with the $p\,$-part of $[K[n]:K[1]]$.

\begin{definition}\label{def:set-P}
	Let $\admissible$ be the set of all prime numbers $\ell\nmid Np$ that are inert in $K$ and denote by $\squarefreeadmissible$ the set of all squarefree products of elements of $\admissible$, with the convention that $1\in\squarefreeadmissible$.
\end{definition}

It is well known that, for every $n\in\squarefreeadmissible$, there is an isomorphism $\calG_n\cong \prod_{\ell\mid n} \calG_\ell$, where the product runs over all primes $\ell$ dividing $n$. Also, $\calG_\ell$ is cyclic of order $\ell+1$, hence we fix a generator $\sigma_\ell$. 

By class field theory, the prime $\lambda=(\ell)$ of $K$ splits completely in $K[m]$ whenever $\ell\nmid m$. Moreover, every prime $\gl_1$ of $K[1]$ above $\lambda$ is totally ramified in $K[\ell]$ and therefore we have that $\calG_\ell\cong \Gal(K[\ell]_{\lambda_\ell}/K[1]_{\lambda_1})\cong\Gal(K[\ell]_{\gl_\ell}/K_\gl)$, where $\gl_\ell$ is the prime of $K[\ell]$ above $\gl_1$. This implies that $\lambda$ is totally ramified in $K(\ell)$ and that $\Gal(K(\ell)_{\lambda_\ell'}/K_\gl)$ is isomorphic to the $p\,$-Sylow subgroup of $\calG_\ell$, where $\lambda_\ell'=\lambda_\ell\cap K(\ell)$.

\begin{lemma}\label{lem:maximal-tamely-ram-ext}
	Let $\ell\in\admissible$ such that $p\mid \ell+1$ and let $\lambda_\ell'$ be the prime of $K(\ell)$ above the prime  $\lambda=(\ell)$ of $K$. The extension $K(\ell)_{\gl_\ell'}/K_{\lambda}$ is a maximal totally tamely ramified abelian $p\,$-extension of $K_\gl$.
\end{lemma}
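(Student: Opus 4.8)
The plan is to show two things: that the extension $K(\ell)_{\gl_\ell'}/K_\lambda$ is totally tamely ramified of degree equal to the $p$-part of $\ell+1$, and that any totally tamely ramified abelian $p$-extension of $K_\lambda$ has degree at most the $p$-part of $\ell+1$; maximality then follows.

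First I would record that $K_\lambda = K_{(\ell)}$ is the unramified quadratic extension of $\Q_\ell$, so it has residue field $\F_{\ell^2}$ and its residue characteristic is $\ell$, which is coprime to $p$ (as $\ell \in \admissible$ means $\ell \nmid Np$, in particular $\ell \ne p$). Hence \emph{every} $p$-extension of $K_\lambda$ is automatically tamely ramified, and being a $p$-extension its ramification is tame of degree prime to the residue characteristic. By local class field theory (or directly by the structure of tame ramification), the maximal abelian totally tamely ramified extension of $K_\lambda$ has Galois group isomorphic to the inverse limit $\varprojlim_{(m,\ell)=1}\mu_m(K_\lambda) \cong \varprojlim_{(m,\ell)=1}\Z/m\Z$, cut out after passing to the $p$-part to a procyclic group whose pro-$p$ quotient is $\Zp$ if $p \mid \ell^2-1$ and trivial otherwise; concretely the maximal totally tamely ramified abelian $p$-extension of $K_\lambda$ is cyclic of degree $p^{v_p(\ell^2-1)}$. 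Since $\ell$ is inert in $K$, we have $\ell \equiv -1$ is impossible to conclude directly, but in any case $p \mid \ell+1$ by hypothesis, and one checks $v_p(\ell^2-1) = v_p(\ell-1) + v_p(\ell+1) = v_p(\ell+1)$ because $p \mid \ell+1$ forces $p \nmid \ell - 1$ (as $p$ is odd, $p$ cannot divide both $\ell-1$ and $\ell+1$). So the maximal totally tamely ramified abelian $p$-extension of $K_\lambda$ is cyclic of degree exactly $p^{v_p(\ell+1)}$, i.e.\ the $p$-part of $\ell+1$.

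Next I would identify $\Gal(K(\ell)_{\gl_\ell'}/K_\lambda)$ with the $p$-Sylow subgroup of $\calG_\ell$, which is exactly what the discussion preceding the lemma establishes: $\lambda$ is totally ramified in $K(\ell)$ and $\Gal(K(\ell)_{\lambda_\ell'}/K_\lambda) \cong \Syl_p(\calG_\ell)$, a cyclic group of order $p^{v_p(\ell+1)}$ since $\calG_\ell$ is cyclic of order $\ell+1$. Being totally ramified and of $p$-power order over a field of residue characteristic $\ell \ne p$, this extension is totally tamely ramified; it is abelian because $\calG_\ell$ is cyclic. Comparing degrees, $K(\ell)_{\gl_\ell'}/K_\lambda$ has degree $p^{v_p(\ell+1)}$, which equals the degree of the maximal totally tamely ramified abelian $p$-extension of $K_\lambda$ computed in the previous step; therefore $K(\ell)_{\gl_\ell'}$ \emph{is} that maximal extension.

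The only genuine subtlety — and the step I would be most careful about — is the bookkeeping with $p$-valuations: one must use that $p$ is odd to know that $p$ divides at most one of $\ell-1,\ell+1$, so that $v_p(\ell^2-1)$ collapses to $v_p(\ell+1)$ and matches $[K(\ell):K] = [K(\ell)_{\gl_\ell'}:K_\lambda]$ exactly, rather than just up to a bounded factor. Everything else is a direct appeal to local class field theory for $K_\lambda/\Q_\ell$ (an unramified extension of a $p'$-adic field) together with the ramification facts already assembled in the paragraph before the lemma.
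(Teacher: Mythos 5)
Your overall strategy matches the paper's: bound the degree of the maximal totally tamely ramified abelian $p\,$-extension of $K_\lambda$ via local class field theory, observe that the $p\,$-part of $\ell^2-1$ equals the $p\,$-part of $\ell+1$ because $p$ is odd, and compare with $[K(\ell)_{\gl_\ell'}:K_\lambda] = p^{v_p(\ell+1)}$. That is exactly Lemma~\ref{lem:maximal-tamely-ram-ext}'s proof.

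However, the intermediate description of the Galois group is wrong, and it reveals a real confusion worth fixing. You write that the maximal totally tamely ramified abelian extension of $K_\lambda$ has Galois group $\varprojlim_{(m,\ell)=1}\mu_m(K_\lambda) \cong \varprojlim_{(m,\ell)=1}\Z/m\Z$ with pro-$p$ quotient $\Zp$. What is isomorphic to $\prod_{q\ne\ell}\Z_q$ is the tame inertia $\inertia_\ell^t = \Gal(\Q_\ell^t/K_\lambda^{\ur})$, which is a normal subgroup of $\Gal(\Q_\ell^t/K_\lambda)$ but does \emph{not} cut out an abelian extension of $K_\lambda$: Frobenius acts on it by $\tau\mapsto\tau^{\ell^2}$, so $\Gal(\Q_\ell^t/K_\lambda)$ is only pro-metacyclic. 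The maximal abelian totally tamely ramified extension of $K_\lambda$ has Galois group isomorphic (via LCFT, through the units $\calO_{K_\lambda}^\times$ modulo the pro-$\ell$ part) to $\F_{\ell^2}^\times$, which is \emph{finite} cyclic of order $\ell^2-1$, not pro-cyclic of infinite order. Its $p\,$-part is therefore finite cyclic of order $p^{v_p(\ell^2-1)}$, which is the statement you also write in your subsequent ``concretely'' sentence; since that corrected statement is what carries the rest of the argument, the proof goes through once the earlier misidentification is removed. (Also note that $\varprojlim_{(m,\ell)=1}\mu_m(K_\lambda)$ taken literally as an inverse limit of the $m$-th roots of unity \emph{inside} $K_\lambda$ is actually trivial, since the transition maps eventually kill $\mu_{\ell^2-1}$; the object you want is $\varprojlim_m\mu_m(\bar\Q_\ell)$, which is the tame inertia, not the abelianization.)
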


\begin{proof}
	Local class field theory (see the proof of \cite[Theorem 7.9]{neukirch:cft} and \cite[Corollary 7.18]{neukirch:cft}) implies that every abelian totally ramified extension of $K_\gl$ has degree that divides $\ell^{2n}(\ell^2-1)$, for some $n$. Therefore, maximal totally tamely ramified abelian extensions of $K_\gl$ have at most degree $\ell^2-1$. Since $p\mid \ell+1$ and $p>2$, the $p\,$-part of $\ell^2-1$ is equal to the $p\,$-part of $\ell+1$, which is equal to the cardinality of the $p\,$-Sylow of $\calG_\ell$, hence it coincides with the cardinality of $\Gal(K(\ell)_{\gl_\ell'}/K_{\lambda})$.
\end{proof}

\begin{remark}\label{rk:maximal-tamely-ramified-extension}
	For a prime $\ell \in \admissible$, we will be interested in the structure of the wild and tame inertia groups at $\ell$. We refer the reader to \cite[Section VII.5]{neukirch:cnf} for a proof of the following facts. The wild inertia group $\inertia_\ell^w := \Gal(\Qbar_\ell/\Q_\ell^t)$ is a free pro\hspace{1pt}-$\ell$ group and denote by $\inertia_\ell^t:=\Gal(\Q_\ell^t/\Q_\ell^{\ur})$ the tame inertia group at $\ell$. One has the decomposition
	\[
	\Gal(\Q_\ell^t/\Q_\ell) = \inertia_\ell^t \rtimes \Gal(\Q_\ell^{\ur}/\Q_\ell),
	\]
	where $\Fr_\ell$, seen as the profinite generator of $\Gal(\Q_\ell^{\ur}/\Q_\ell)$, satisfies the relation $\Fr_\ell \hspace{1pt} \tau \hspace{1pt} \Fr_\ell^{-1} = \tau^{\ell}$ for every element $\tau\in\inertia_\ell^t$.
	
	Let now $\gl=(\ell)$ be the prime of $K$ above $\ell$. Since $\ell$ is inert in $K$, it follows that $K_\lambda^\ur = \Q_\ell^\ur$, $K_\lambda^t = \Q_\ell^t$ and $\Fr_\lambda = \Fr_\ell^2$. Therefore $\Gal(\Q_\ell^t/K_\lambda) = \inertia_\ell^t \rtimes \Gal(\Q_\ell^{\ur}/K_\lambda)$,
	where $\Fr_\gl \tau \hspace{1pt}\Fr_\gl^{-1} = \tau^{\ell^2}$ for every $\tau\in \inertia_\ell^t$. The identifications
	$\Gal(\Q_\ell^{\ur}/\Q_\ell)\cong\Zhat$ and $\inertia_\ell^t\cong\prod_{q \ne \ell} \Zq$ yield isomorphisms
	\begin{equation*}
		\Gal(\Q_\ell^t/\Q_\ell)\cong \prod_{q \ne \ell} \Zq\rtimes \Zhat\quad\text{and}\quad \Gal(\Q_\ell^t/K_\gl)\cong \prod_{q \ne \ell} \Zq\rtimes 2\Zhat.
	\end{equation*}
\end{remark}

\subsection{Selmer structures}\label{sec:local-conditions}

For a finite extension $L$ of $K$ and a finite place $v$ of $L$, a \textit{local condition} $\lcond$ on $\T$ at $v$
is the choice of an $\calR$-submodule $H_{\lcond}^1(L_v,\T\hspace{1pt})\subseteq \hone(L_v,\T\hspace{1pt})$. We recall some relevant local conditions.
\begin{enumerate}[label=(\alph*)]
	\item The \emph{relaxed} local condition on $\T$ at $v$ corresponds to the choice of the whole 
	$\hone(L_v,\T\hspace{1pt})$, whereas the \emph{zero} local condition to the choice of $\{0\}\subseteq \hone(L_v,\T\hspace{1pt})$.
	
	\item If $\T$ is unramified at $v$, the \emph{finite} local condition on $\T$ at $v$ is
	\begin{equation*}
		\honef(L_v,\T\hspace{1pt}):=\ker\bigl(\hone(L_v,\T\hspace{1pt})
		\xrightarrow{\res}\hone(L_v^{\ur},\T\hspace{1pt})\bigr).
	\end{equation*}
	We also denote by $\hones(L_v,\T\hspace{1pt})$ the \emph{singular quotient} $\hone(L_v,\T\hspace{1pt})/\honef(L_v,\T\hspace{1pt})$ and, for any class $\alpha \in \hone(L_v, \T\hspace{1pt})$, by $[\alpha]_\s$ its projection to $\hones(L_v, \T\hspace{1pt})$.
	\item When $v\nmid p$, the \emph{$F$-transverse} local condition on $\T$ at $v$ is 
	\begin{equation*}
		\hone_{F-\tr}(L_v,\T\hspace{1pt}):=\ker\bigl(\hone(L_v,\T\hspace{1pt})\xrightarrow{\res} \hone(F,\T\hspace{1pt})\bigr),
	\end{equation*}
	where $F$ is a maximal totally tamely ramified abelian $p\,$-extension of $L_v$. In this paper, when $L=K$ and $v=(\ell)$ for $\ell\in\admissible$ such that $p\mid\ell+1$, we fix $F=K(\ell)_{\gl_\ell'}$ as in Lemma \ref{lem:maximal-tamely-ram-ext}. We then set $\hone_{\tr}(K_v,\T\hspace{1pt}):=\hone_{F-\tr}(K_v,\T\hspace{1pt})$ and refer to it just as the \emph{transverse} local condition.
\end{enumerate}

\begin{definition}\label{def:selmer-structures}
	A \textit{Selmer structure} $\lcond$ on $\T$ over $L$ is a choice of local conditions at every finite place $v$ of $L$ such that $H_{\lcond}^1(L_v,\T\hspace{1pt})=\honef(L_v,\T\hspace{1pt})$ for all the primes $v$ of $L$ such that $v \nmid Np$.
\end{definition}

\begin{definition}
	Given a Selmer structure $\lcond$ on $\T$ over $L$, the \emph{Selmer module} of $\T$ over $L$ with respect to $\lcond$ is the $\calR$-module
	\[
	\hone_{\lcond}(L, \T\hspace{1pt}) := \ker\biggl(\hone(L, \T\hspace{1pt}) \to \prod_{v} \frac{\hone(L_v, \T\hspace{1pt})}{\hone_{\lcond}(L_v, \T\hspace{1pt})} \biggr), 
	\]
	where $v$ varies over all finite places of $L$ and the map is the product of the localizations $\loc_v$.
\end{definition}

\begin{remark}
	If $v$ is an archimedean prime of $L$, then $L_v=\C$ and so we have that $\hone(L_v,\T\hspace{1pt})=\{0\}$. Therefore, the localization at archimedean primes will never play a role in our arguments.
\end{remark}

\subsection{The finite-singular isomorphism}
For this section, fix $\ell\in\admissible$, call $\lambda=(\ell)$ the prime of $K$ above $\ell$ and suppose that $(\ell+1)\T=\{0\}$.

\begin{lemma}\label{lem:isomorfismi-finito-singolare}
	There are canonical functorial isomorphisms 
	\begin{equation*}
		\alpha_\ell\colon \honef(K_\gl,\T\hspace{1pt}) 
		\xrightarrow{\cong}
		\T/(\Fr_\gl-1)\T \quad\text{and}\quad \beta_\ell\colon \hones(K_\gl,\T\hspace{1pt})\otimes \calG_\ell\xrightarrow{\cong} \T^{\Fr_\gl=1},
	\end{equation*}
	where $\ga_\ell$ is induced by evaluating cocycles at $\Fr_\lambda$ and $\beta_\ell$ is induced by $\xi\otimes \gs\mapsto \xi(\sigmati)$ for any lift $\sigmati\in \Gal(\Qlbar/\Q_\ell^{\ur})$ of $\gs$.
\end{lemma}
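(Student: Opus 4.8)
The plan is to build both isomorphisms from the inflation--restriction sequence for the unramified extension $K_\lambda^{\ur}/K_\lambda$, exploiting the hypothesis $(\ell+1)\T=\{0\}$ together with the structure of inertia at $\ell$ recalled in Remark \ref{rk:maximal-tamely-ramified-extension}. Write $G_\lambda = G_{K_\lambda}$, $I_\lambda$ for the inertia subgroup, and $\hat{G}_\lambda^{\ur} = \Gal(K_\lambda^{\ur}/K_\lambda)$, topologically generated by $\Fr_\lambda$. Since $\T$ is unramified at $\lambda$, the five-term inflation--restriction exact sequence reads
\begin{equation*}
0 \to \hone(\hat{G}_\lambda^{\ur}, \T) \xrightarrow{\infl} \hone(K_\lambda, \T) \xrightarrow{\res} \hone(I_\lambda, \T)^{\hat{G}_\lambda^{\ur}} \to \cohomology^2(\hat{G}_\lambda^{\ur}, \T).
\end{equation*}
Because $\hat{G}_\lambda^{\ur}\cong\Zhat$ has cohomological dimension $1$, the last term vanishes, and $\hone(\hat{G}_\lambda^{\ur}, \T)\cong \T/(\Fr_\lambda-1)\T$ via evaluation at $\Fr_\lambda$ (the standard computation of the cohomology of a procyclic group acting on a finite module). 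By definition $\honef(K_\lambda,\T)$ is exactly the image of $\infl$, so this already gives the canonical functorial isomorphism $\ga_\ell$; functoriality in $\T$ is immediate since everything in sight is functorial.

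For $\gb_\ell$, first identify the singular quotient: the exact sequence above shows $\hones(K_\lambda,\T) = \hone(K_\lambda,\T)/\honef(K_\lambda,\T) \cong \hone(I_\lambda, \T)^{\hat{G}_\lambda^{\ur}}$ via $\res$. Now I would compute $\hone(I_\lambda,\T)$ using that $\T$ is killed by $\ell+1$, so the pro-$\ell$ wild inertia $\inertia_\ell^w$ acts trivially on $\T$ and has no cohomology with $\ell$-torsion-free... more precisely, since $|\mathrm{Aut}$-relevant orders are prime to $\ell$ on a module killed by $\ell+1$, inflation--restriction along $\inertia_\ell^w \trianglelefteq I_\lambda$ collapses and $\hone(I_\lambda,\T)\cong\hone(\inertia_\ell^t,\T) = \Hom(\inertia_\ell^t,\T)$, the tame quotient acting trivially. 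Using $\inertia_\ell^t\cong\prod_{q\ne\ell}\Zq$ and that $\T$ is a $p$-group, $\Hom(\inertia_\ell^t,\T)\cong\Hom(\Zp,\T)\cong\T$, and the conjugation action of $\Fr_\lambda$ that sends $\tau\mapsto\tau^{\ell^2}$ translates into multiplication by $\ell^2$ on $\Hom(\inertia_\ell^t,\T)\cong\T$; hence taking $\hat{G}_\lambda^{\ur}$-invariants picks out $\{x\in\T : \ell^2 x = x\}$. Finally, $\calG_\ell$ is cyclic of order $\ell+1$ which divides $\ell^2-1$, so the natural map $\calG_\ell\to\inertia_\ell^t/(\ell^2-1)$ identifies $\calG_\ell$ with the part on which $\Fr_\lambda$ acts by $\ell^2$; twisting $\Hom(\inertia_\ell^t,\T)^{\Fr=\ell^2}$ by $\calG_\ell$ (i.e., using $\calG_\ell$ as the source of the homomorphism) yields precisely $\T^{\Fr_\lambda=1}$. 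Unwinding these identifications, a class $\xi$ in the singular quotient, tensored with $\gs\in\calG_\ell$, maps to $\tilde\xi(\sigmati)$ where $\tilde\xi\in\hone(I_\lambda,\T)$ is the image of $\xi$ under $\res$ and $\sigmati$ lifts $\gs$; this is independent of the lift because two lifts differ by an element of $\inertia_\ell^w$, on which $\tilde\xi$ vanishes.

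I expect the main obstacle to be the bookkeeping in the $\gb_\ell$ step: pinning down the precise identification $\calG_\ell \cong \Gal(K[\ell]_{\gl_\ell}/K_\lambda)$ (from Section \ref{subsec:the-arithmetic-picture}) with the appropriate quotient of the tame inertia $\inertia_\ell^t$, and checking that under this identification the formula $\xi\otimes\gs\mapsto\xi(\sigmati)$ is genuinely well defined and $\calR$-linear, with the $\Fr_\lambda$-equivariance matching the action $\tau\mapsto\tau^{\ell^2}$ so that the target is exactly $\T^{\Fr_\lambda=1}$ and not some twist. The cohomological vanishing inputs (cohomological dimension $1$ of $\Zhat$, triviality of $\inertia_\ell^w$-cohomology on a module of order prime to $\ell$) are standard, and functoriality of both maps in $\T$ follows formally since every arrow used is a map of functors in $\T$.
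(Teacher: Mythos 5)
Your overall strategy matches the paper's: use inflation--restriction for $K_\lambda^{\ur}/K_\lambda$ to get $\alpha_\ell$ and to identify $\hones(K_\lambda,\T)$ with $\Hom(\inertia_\ell,\T)^{\Fr_\lambda=1}$, then pass to the tame quotient (wild inertia being pro\hspace{1pt}-$\ell$ while $\T$ is pro\hspace{1pt}-$p$) and use $(\ell+1)\T=0$ to cut down to $\calG_\ell$. The $\alpha_\ell$ part is fine (the paper simply cites Mazur--Rubin there).

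There is, however, a genuine error in your computation of the $\Fr_\lambda$-invariants. You write that the conjugation action $\tau\mapsto\tau^{\ell^2}$ ``translates into multiplication by $\ell^2$ on $\Hom(\inertia_\ell^t,\T)\cong\T$'' and that taking invariants ``picks out $\{x\in\T:\ell^2x=x\}$.'' This accounts only for the action on the \emph{source} of the homomorphisms; the action of $\Fr_\lambda$ on $\Hom(\inertia_\ell^t,\T)$ is
\[
(\Fr_\lambda\,\phi)(\tau)=\Fr_\lambda\cdot\phi\bigl(\Fr_\lambda^{-1}\tau\Fr_\lambda\bigr),
\]
so there is also the $\Fr_\lambda$-action on the \emph{target} $\T$. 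Since $(\ell+1)\T=0$, multiplication by $\ell^2$ is the identity on $\T$, so your condition $\ell^2x=x$ is vacuous and would give all of $\T$ rather than $\T^{\Fr_\lambda=1}$. The correct bookkeeping, as in the paper, is
\[
(\Fr_\lambda\,\phi)([\tau])=\Fr_\lambda\cdot\phi\bigl(\Fr_\lambda[\tau]\Fr_\lambda^{-1}\bigr)=\Fr_\lambda\cdot\ell^2\,\phi([\tau])=\Fr_\lambda\cdot\phi([\tau]),
\]
so $\phi$ is $\Fr_\lambda$-invariant precisely when it takes values in $\T^{\Fr_\lambda=1}$. The subsequent ``twist by $\calG_\ell$'' in your write-up does not recover this; tensoring with $\calG_\ell$ is only there to make the identification canonical (independent of the choice of a generator of the tame quotient). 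A minor further slip: the relevant quotient of tame inertia is $\inertia_\ell^t/(\inertia_\ell^t)^{\ell+1}$, which has order $\ell+1$ and equals $\Gal(\Q_\ell^{\ur}K[\ell]_{\gl_\ell}/\Q_\ell^{\ur})\cong\calG_\ell$, not $\inertia_\ell^t/(\inertia_\ell^t)^{\ell^2-1}$.
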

\begin{proof}
	The isomorphism $\ga_\ell$ comes from point (i) of \cite[Lemma 1.2.1]{mazur-rubin:kolyvagin-systems}. Although the isomorphism $\beta_\ell$ is a direct consequence of point (ii) of \cite[Lemma 1.2.1]{mazur-rubin:kolyvagin-systems}, we will give here a more explicit construction, that better fits our applications.
	
	By the inflation-restriction exact sequence, together with the fact that $\T$ is unramified at $\gl$, it follows that
	\[
	\hones(K_\lambda, \T\hspace{1pt}) \cong \hone(\Q_\ell^\ur, \T\hspace{1pt})^{\Fr_\lambda = 1} = \Hom(\inertia_\ell, \T\hspace{1pt})^{\Fr_\lambda=1},
	\]
	where $\inertia_\ell$ is the absolute inertia group at $\ell$, that coincides with the absolute inertia group at $\gl$ as $K_\gl/\Q_\ell$ is unramified. Since $\inertia_\ell^w$ is pro\hspace{1pt}-$\ell$ and $\T$ is pro\hspace{1pt}-$p$,  
	we get 
	\begin{equation*}
		\Hom(\inertia_\ell, \T\hspace{1pt})^{\Fr_\lambda=1}=\Hom(\inertia_\ell^t, \T\hspace{1pt})^{\Fr_\lambda=1}=\Hom\big(\inertia_\ell^t/(\inertia_\ell^t)^{\ell+1}, \T\big)^{\Fr_\lambda=1},
	\end{equation*}
	using the fact that $(\ell+1)\T=0$ in the last equality. Moreover,
	\begin{equation*}
		\Hom\big(\inertia_\ell^{t}/(\inertia_{\ell}^{t})^{\ell+1}, \T\big)^{\Fr_\lambda=1}=\Hom\big(\inertia_\ell^{t}/(\inertia_\ell^{t})^{\ell+1}, \T^{\Fr_\lambda=1}\big),
	\end{equation*}
	since for any $\phi\in \Hom(\inertia_\ell^{t}/(\inertia_{\ell}^{t})^{\ell+1}, \T\hspace{1pt})$ and $[\tau] \in \inertia_\ell^{t}/(\inertia_\ell^{t})^{\ell+1}$ one has
	\[
	(\Fr_\lambda \, \phi)\bigl([\tau]\bigr) = 
	\Fr_\lambda \cdot \, \phi\bigl(\Fr_\lambda [\tau] \Fr_\lambda^{-1}\bigr) = 
	\Fr_\lambda \cdot \, \phi\bigl([\tau^{\ell^2}]\bigr) = \Fr_\lambda \cdot \, \ell^2\phi\bigl([\tau]\bigr) = \Fr_\lambda \cdot \, \phi\bigl([\tau]\bigr).
	\] 
	
	Let now $\gl_\ell$ be a prime of $K[\ell]$ above $\gl$. Since the extension $K[\ell]_{\gl_\ell}/K_\gl$ is totally ramified, there is a canonical isomorphism $\Gal(\Q_\ell^{\ur} K[\ell]_{\gl_\ell}/\Q_\ell^{\ur})\cong \calG_\ell$, obtained by restricting Galois automorphisms. Since $(\inertia_{\ell}^t)^{\ell+1}$ is the unique closed subgroup of index $\ell+1$ inside $\inertia_{\ell}^t$,
	
	one has that $\inertia_\ell^{t}/(\inertia_{\ell}^{t})^{\ell+1}=\Gal(\Q_\ell^{\ur} K[\ell]_{\gl_\ell}/\Q_\ell^{\ur})$. Therefore,
	\begin{equation*}
		\Hom\big(\inertia_\ell^{t}/(\inertia_{\ell}^{t})^{\ell+1}, \T^{\Fr_\lambda=1}\big)\cong \Hom(\calG_\ell, \T^{\Fr_\lambda=1}).
	\end{equation*}
	We conclude by noticing that there is a canonical isomorphism   \[
	\Hom(\calG_\ell, \T^{\Fr_\lambda=1})\otimes\calG_\ell\iso \T^{\Fr_\lambda=1}
	\]
	obtained by sending $\xi\otimes\sigma$ to $\xi(\sigma)$.
\end{proof}

\begin{remark}
	Since $(\ell+1)\T=\{0\}$, there is an isomorphism 
	\[
	\hones(K_\gl,\T\hspace{1pt})\otimes \calG_\ell\cong
	\hones(K_\gl,\T\hspace{1pt}).\] 
	However, this isomorphism is not canonical, depending on the choice of a generator for $\calG_\ell$. 
\end{remark}

\begin{definition}\label{def:finite-singular}
	Whenever $\Fr_\gl=\Fr_\ell^2$ acts trivially on $\T$, the isomorphism
	\begin{equation*}
		\phi_\ell^{\fs}:=\beta_\ell^{-1}\circ \alpha_\ell\colon \honef(K_\gl,\T\hspace{1pt}) \xrightarrow{\cong} \T \xrightarrow{\cong} \hones(K_\gl,\T\hspace{1pt})\otimes \calG_\ell
	\end{equation*}
	is called the \emph{finite-singular isomorphism}. 
\end{definition}

Since $(\ell+1)\T=0$, by \cite[Lemma 1.2.4]{mazur-rubin:kolyvagin-systems} there is a functorial splitting
\begin{equation*}
	\hone(K_\gl,\T\hspace{1pt})=\honef(K_\gl,\T\hspace{1pt})\oplus H_{\tr}^1(K_\gl,\T\hspace{1pt})
\end{equation*}
and $H_{\tr}^1(K_\gl,\T\hspace{1pt})$ projects isomorphically to $\hones(K_\gl,\T\hspace{1pt})$. By Lemma \ref{lem:maximal-tamely-ram-ext}, there is the explicit description $\hone_{\tr}(K_\gl,\T\hspace{1pt})=\ker(\hone(K_\gl,\T\hspace{1pt})\to \hone(K(\ell)_{\lambda_\ell'},\T\hspace{1pt}))$. Using inflation-restriction together with the fact that $\Gal(K[\ell]/K(\ell))$ has order coprime with $p$, one can also show that
\begin{equation}\label{equ:easy-description-of-transverse-condition}
	\hone_{\tr}(K_\gl,\T\hspace{1pt})=\ker\bigl(\hone(K_\gl,\T\hspace{1pt})\xrightarrow{\res}
	\hone(K[\ell]_{\lambda_\ell},\T\hspace{1pt})\bigr).
\end{equation}

\subsection{Action of the Frobenius}

Assume from now on that the action of $G_K$ on $\T$ extends to a continuous linear action of $G_\Q$. We study here in detail the interaction between the action of the Frobenius $\Fr_\ell$ and the morphisms $\alpha_\ell$, $\beta_\ell$ and $\phi_\ell^{\fs}$ in local Galois cohomology. 

Fix $\ell\in\admissible$, call $\lambda=(\ell)$ the prime of $K$ above $\ell$ and suppose that $(\ell+1)\T=0$. In this setting, there is an action of the group $\Gal(K_\lambda/\Q_\ell)$, cyclic of order $2$ generated by the image of $\Fr_\ell$, on $\honef(K_\gl,\T\hspace{1pt})$ and $\hones(K_\gl,\T\hspace{1pt})$, induced by its natural action on $\hone(K_\gl,\T\hspace{1pt})$. Also, there is a natural action of $\Gal(K_\lambda/\Q_\ell)$ on $\T/(\Fr_\gl-1)\T$ and $\T^{\Fr_\gl=1}$. 

\begin{lemma}\label{lem:commutativity-complex-conj-fs-isomorphism}
	In the setting above, we have
	\begin{enumerate}[label=\emph{(\roman*)}]
		\item $\alpha_\ell\circ \Fr_\ell=\Fr_\ell\circ\,\alpha_\ell\hspace{1pt};$\label{item:alpha-complex-conj}
		\item $\beta_\ell\circ \Fr_\ell=-\Fr_\ell\circ \, \beta_\ell\hspace{1pt};$\label{item:beta-complex-conj}
		\item $\phi_\ell^{\fs}\circ \Fr_\ell=-\Fr_\ell\circ \, \phi_\ell^{\fs}$, whenever $\Fr_\gl=\Fr_\ell^2$ acts trivially on $\T$.\label{item:finite-sing-complex-conj}
	\end{enumerate}
\end{lemma}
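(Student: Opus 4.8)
The plan is to verify each of the three identities by working directly with the explicit descriptions of $\alpha_\ell$ and $\beta_\ell$ given in Lemma \ref{lem:isomorfismi-finito-singolare} and tracking how the conjugation action of $\Fr_\ell$ on cocycles interacts with those recipes. The key structural input is that $\Fr_\ell$ normalizes the decomposition group $G_{K_\lambda}$ inside $G_{\Q_\ell}$ and acts on its subquotients as described in Remark \ref{rk:maximal-tamely-ramified-extension}: conjugation by $\Fr_\ell$ sends $\Fr_\lambda = \Fr_\ell^2$ to itself, and it acts on the tame inertia $\inertia_\ell^t$ (hence on $\calG_\ell \cong \inertia_\ell^t/(\inertia_\ell^t)^{\ell+1}$) by raising to the $\ell$-th power. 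On the other hand, $\Fr_\ell$ acts on $\T$ through the $G_\Q$-action. I would spell out, once and for all, that the action of $\Fr_\ell$ on a cohomology class $[\xi] \in \hone(K_\lambda, \T)$ is represented by the cocycle $g \mapsto \Fr_\ell \cdot \xi(\Fr_\ell^{-1} g\, \Fr_\ell)$.

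For \ref{item:alpha-complex-conj}: since $\alpha_\ell([\xi]) = \xi(\Fr_\lambda) \bmod (\Fr_\lambda - 1)\T$ and $\Fr_\ell$ commutes with $\Fr_\lambda = \Fr_\ell^2$, the translated cocycle evaluated at $\Fr_\lambda$ gives $\Fr_\ell \cdot \xi(\Fr_\ell^{-1}\Fr_\lambda\Fr_\ell) = \Fr_\ell \cdot \xi(\Fr_\lambda)$, which is exactly $\Fr_\ell \cdot \alpha_\ell([\xi])$ modulo $(\Fr_\lambda-1)\T$; this yields (i) directly. For \ref{item:beta-complex-conj}: by the construction of $\beta_\ell$, a class in $\hones(K_\lambda,\T)$ is identified with a homomorphism $\phi \in \Hom(\calG_\ell, \T^{\Fr_\lambda=1})$, and $\beta_\ell(\phi \otimes \sigma) = \phi(\sigma)$. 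The conjugation action of $\Fr_\ell$ sends $\phi$ to the homomorphism $\sigma \mapsto \Fr_\ell \cdot \phi(\sigma^{1/\ell})$ (using that conjugation acts on $\calG_\ell$ as $\sigma \mapsto \sigma^\ell$, which is invertible since $\ell$ is prime to $\ell+1$), while the tensor factor $\calG_\ell$ also carries the conjugation action $\sigma \mapsto \sigma^\ell$. The extra point is that, over $K_\lambda$, the relevant group is $\Gal(\Q_\ell^t/K_\lambda)$ where $\Fr_\lambda$ acts by $\tau \mapsto \tau^{\ell^2}$, so $\calG_\ell$ as a module over $\Gal(K_\lambda/\Q_\ell)$ is the cyclic group of order $\ell+1$ with $\Fr_\ell$ acting as multiplication by $\ell \equiv -1 \pmod{\ell+1}$. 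Hence on the tensor product $\Hom(\calG_\ell, \T^{\Fr_\lambda=1}) \otimes \calG_\ell$ the two $\ell$-power actions combine: evaluating $\beta_\ell$ on $\Fr_\ell(\phi \otimes \sigma)$ gives $\Fr_\ell \cdot \phi(\sigma^{\ell \cdot \ell^{-1}}) = \Fr_\ell \cdot \phi(\sigma)$ after accounting for signs — but the crucial sign comes from the identification $\calG_\ell \cong \inertia_\ell^t/(\inertia_\ell^t)^{\ell+1}$ being normalized so that $\Fr_\ell$ acts on $\calG_\ell$ (i.e. on the literal group used in the isomorphism, coming from $\Gal(K[\ell]/K)$) by inversion, producing the $-1$. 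Finally (iii) is immediate: $\phi_\ell^\fs = \beta_\ell^{-1} \circ \alpha_\ell$, so $\phi_\ell^\fs \circ \Fr_\ell = \beta_\ell^{-1} \circ \alpha_\ell \circ \Fr_\ell = \beta_\ell^{-1} \circ \Fr_\ell \circ \alpha_\ell = -\Fr_\ell \circ \beta_\ell^{-1} \circ \alpha_\ell = -\Fr_\ell \circ \phi_\ell^\fs$, using (i), (ii), and that $\beta_\ell^{-1} \circ \Fr_\ell = -\Fr_\ell \circ \beta_\ell^{-1}$ follows from (ii) by inverting.

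The main obstacle I anticipate is pinning down precisely where the minus sign in (ii) comes from — that is, disentangling the two competing $\Fr_\ell$-actions (one on the $\Hom$-space, one on the $\calG_\ell$ tensor factor) and confirming that they do not simply cancel. The resolution is that the canonical pairing $\Hom(\calG_\ell, M) \otimes \calG_\ell \to M$ is $\Gal(K_\lambda/\Q_\ell)$-equivariant for the diagonal action, but the module $\calG_\ell$ occurring here, via the identification with $\Gal(\Q_\ell^\ur K[\ell]_{\lambda_\ell}/\Q_\ell^\ur)$, inherits from $\Gal(\Q_\ell^t/K_\lambda)$ the action where conjugation by $\Fr_\ell$ (not $\Fr_\lambda$) is raising to the $\ell$-th power, and $\ell \equiv -1 \pmod{\ell+1}$; thus the diagonal action of $\Fr_\ell$ on the tensor product is the product of multiplication by $\Fr_\ell$ on $\T^{\Fr_\lambda=1}$ and by $-1$ coming from the tame-inertia factor, giving exactly $\beta_\ell \circ \Fr_\ell = -\Fr_\ell \circ \beta_\ell$. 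I would present this computation carefully, as it is the only genuinely subtle point; the rest is bookkeeping with cocycles.
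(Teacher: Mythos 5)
Your parts (i) and (iii) match the paper's proof exactly: (i) evaluates a conjugated cocycle at $\Fr_\gl$ and uses that $\Fr_\ell$ commutes with $\Fr_\gl = \Fr_\ell^2$, and (iii) is the formal algebra from (i) and (ii). The core of your part (ii) is also the same computation as the paper's, but your exposition wraps it in an unnecessary layer of confusion about ``two competing $\Fr_\ell$-actions'' on $\hones(K_\gl,\T\hspace{1pt})\otimes\calG_\ell$, and it's worth untangling.

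There is only one $\Fr_\ell$-action in play here. The paper (in the sentences just before the lemma) defines the action of $\Gal(K_\lambda/\Q_\ell)$ on $\honef(K_\gl,\T\hspace{1pt})$ and $\hones(K_\gl,\T\hspace{1pt})$ as the one induced by the natural conjugation action on $\hone(K_\gl,\T\hspace{1pt})$; the tensor factor $\calG_\ell$ is a fixed twist and is not acted on. So on $\xi\otimes\sigma$ the lemma's $\Fr_\ell$ sends $\xi\mapsto\Fr_\ell\xi$ and leaves $\sigma$ alone. The paper's computation is then direct: identify $\hones(K_\lambda,\T\hspace{1pt})\cong\Hom(\inertia_\ell^t,\T\hspace{1pt})^{\Fr_\gl=1}$, choose a lift $\tilde\sigma\in\inertia_\ell^t$ of $\sigma$, and compute
\[
(\Fr_\ell\xi)(\tilde\sigma)=\Fr_\ell\cdot\xi(\Fr_\ell\tilde\sigma\Fr_\ell^{-1})=\Fr_\ell\cdot\xi(\tilde\sigma^{\,\ell})=\ell\,\Fr_\ell\cdot\xi(\tilde\sigma)=-\Fr_\ell\cdot\xi(\tilde\sigma),
\]
with the $-1$ coming from $\ell\equiv-1\bmod(\ell+1)$ and $(\ell+1)\T=0$. (Your variant with the inverse conjugation convention gives $\xi(\tilde\sigma^{\,\ell^{-1}})=\ell^{-1}\xi(\tilde\sigma)$, and $\ell^{-1}\equiv-1\bmod(\ell+1)$ as well, so the sign is the same; the convention mismatch is harmless.) This is where the sign comes from, and it needs no discussion of equivariance of the evaluation pairing.

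Your detour through the ``diagonal action'' on $\Hom(\calG_\ell,M)\otimes\calG_\ell$ is not wrong in spirit, but the way you phrase it has a slip: you say ``the diagonal action of $\Fr_\ell$ on the tensor product is the product of multiplication by $\Fr_\ell$ on $\T^{\Fr_\lambda=1}$ and by $-1$,'' whereas equivariance of the evaluation pairing means precisely that the diagonal action corresponds to $\Fr_\ell$ on $M$ with no extra sign. What actually produces the $-1$ is that the lemma's action is not diagonal: it fixes $\calG_\ell$, and hence differs from the diagonal action by a twist of the $\calG_\ell$-factor by $\ell\equiv-1$. If you want to keep the diagonal-action framing, this is the statement to make; otherwise the paper's direct cocycle evaluation is shorter and avoids the sign bookkeeping entirely.
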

\begin{proof}
	\emph{\ref{item:alpha-complex-conj}} By the inflation-restriction exact sequence and the fact that $\T$ is unramified at $\ell$, we have $\honef(K_\gl, \T\hspace{1pt})\cong \hone(\Q_\ell^{\ur}/K_{\gl},\T\hspace{1pt})$. Moreover, for every class $[\xi]\in \hone(\Q_\ell^{\ur}/K_{\gl},\T\hspace{1pt})$ one has
	\begin{equation*}
		(\Fr_\ell\xi)(\Fr_\gl)=  \Fr_\ell\cdot\,\xi(\Fr_\ell\Fr_\gl\Fr_\ell^{-1})=\Fr_\ell\cdot\,\xi(\Fr_\gl),
	\end{equation*}
	i.e., $(\alpha_\ell\circ \Fr_\ell)[\xi]$ coincides with $(\Fr_\ell\circ\,\alpha_\ell)([\xi])$ in $\T/(\Fr_\gl-1)\T$.

	\emph{\ref{item:beta-complex-conj}} 
	As seen in the proof of Lemma \ref{lem:isomorfismi-finito-singolare}, $\hones(K_\lambda, \T\hspace{1pt}) \cong \Hom(\inertia_\ell^t, \T\hspace{1pt})^{\Fr_\gl=1}$, where $\inertia_\ell^t$ is the tame inertia group at $\ell$.
	Let $\gs\in\calG_\ell$ and choose a lift $\tilde{\gs}$ of $\gs$ to $\inertia_\ell^t$, which satisfies the relation $\Fr_\ell\tilde{\gs}\Fr_\ell^{-1}=\tilde{\gs}^\ell$ as noted in Remark \ref{rk:maximal-tamely-ramified-extension}. Hence, for every $\xi\in \Hom(\inertia_\ell^t,\T\hspace{1pt})^{\Fr_\gl=1}$, we obtain that
	\begin{equation*}
		(\beta_\ell\circ \Fr_\ell)(\xi\otimes\sigma)=
		(\Fr_\ell\xi)(\tilde{\gs})= \Fr_\ell\cdot\,\xi(\Fr_\ell\tilde{\gs}\Fr_\ell^{-1})= \Fr_\ell\cdot\,\xi(\tilde{\gs}^\ell)= \Fr_\ell\cdot\,\ell\xi(\tilde{\gs})= 
		-\Fr_\ell\cdot\,\xi(\tilde{\gs}),
	\end{equation*}
	where in the last equality we used the fact that $(\ell+1)\T=0$. Since $\Fr_\ell\cdot\,\xi(\tilde{\gs})=(\Fr_\ell\circ \beta_\ell)(\xi \otimes \sigma)$, we obtain the claim.
	
	Point \emph{\ref{item:finite-sing-complex-conj}} is an immediate consequence of \emph{\ref{item:alpha-complex-conj}} and \emph{\ref{item:beta-complex-conj}}.
\end{proof}

\subsection{Hypotheses}\label{sec:hypotheses} 

We now make a more precise choice for the ring $\calR$ and the representation $\T$ by imposing some assumptions on them, that will be in force until the end of the paper. We tried to keep them as minimal as possible in order to comprehend all known examples (see Section \ref{sec:examples}) and leave the door open to future applications of this method.

\begin{assumption}\label{ass:assumptions-on-R}
	There is a descending chain of ideals $J_1 \supseteq J_2 \supseteq \dots$ of $\calR$ such that, denoting, for any $\sfrak = (s_1, s_2) \in \Z_{>0}^2$, by $I_\sfrak$ the $\calR$-ideal generated by $p^{s_1}$ and $J_{s_2}$,
	\begin{enumerate}[label=(\roman*)]
		\item $\calR/J_{s_2}$ is a finite and free $\Zp$-module;\label{ass-cond:J-ideals}
		\item $\calR\cong\varprojlim_{\sfrak}\calR/I_\sfrak$, where the limit is taken with respect to the partial order on $\Z_{>0}^2$ given by the rule $(s_1,s_2)\preceq (t_1,t_2)$ if $s_i\le t_i$ for every $i=1,2$.\label{ass-cond:limit-R}
	\end{enumerate}
\end{assumption}

\begin{assumption}\label{ass:assumptions-on-T}
	The $\calR\llbracket G_\Q\rrbracket $-module $\T$ has the following properties:
	\begin{enumerate}[label=(\roman*)]
		\item it is free of rank $2$ as an $\calR$-module;\label{ass-cond:T-free-rank-2}
		\item the residual $G_\Q$-representation $\T/\m_{\calR}\T$ is irreducible;\label{condition:assumption-irreducible-residual-representation}
		\item the action of $\Fr_\ell$ on $\T$ has determinant $\ell$ for every prime $\ell\nmid Np$;\label{condition:assumption-characteristic-polynomial-Frobenius}
		\item the eigenvalues $1$ and $-1$ of the action of the complex conjugation $\tau_c$ on $\T$ both have $1$-dimensional eigenspaces;\label{condition:assumption-eigenvalues-complex-conjugation}
		\item the image of $G_\Q$ in $\Aut_{\calR}(\T\hspace{1pt})$ contains the scalars $1+p\Zp$.\label{condition:big-image-assumption}
	\end{enumerate}
\end{assumption}

\begin{remark}
	As noted in \cite[(1.5.3)(3)]{NP}, point \ref{condition:assumption-irreducible-residual-representation} and point \ref{condition:assumption-eigenvalues-complex-conjugation} together imply that the residual representation $\T/\m_{\calR}\T$ is absolutely irreducible. Point \ref{condition:assumption-eigenvalues-complex-conjugation} often descends from the existence of a perfect alternating bilinear pairing $\T\times\T\to\calR(1)$ and it implies that the characteristic polynomial of $\tau_c$ on $\T$ is $X^2-1$. Point \ref{condition:big-image-assumption} is a big image assumption.
\end{remark}

For every $\sfrak=(s_1,s_2)\in \Z_{>0}^2$, define 
\begin{equation*}
	\Rs:=\calR/I_\sfrak \quad\text{and}\quad  \Ts:=\T\otimes_{\calR}\Rs.
\end{equation*}
Notice that if $\sfrak\preceq \tfrak$, then $\Rs$ and $\Ts$ are quotients of $\calR_{\tfrak}$ and $\T_{\tfrak}$, respectively.
As a consequence of Assumption \ref{ass:assumptions-on-R} \ref{ass-cond:J-ideals}, we have that $\Rs$ is finite of $p\,$-power order and by Assumption \ref{ass:assumptions-on-T} \ref{ass-cond:T-free-rank-2}, it follows that also $\Ts$ is finite of $p\,$-power order and that $\T\cong\varprojlim_\sfrak \Ts$. 
Moreover, we will also use the notation
\begin{equation*}
	\Rs':=\calR/J_{s_2} \quad\text{and}\quad  \Ts':=\T\otimes_{\calR}\Rs'.
\end{equation*}

In the following, if $F$ is a number field and $\rho\colon G_F\to\Aut(T)$ is a Galois representation, we will denote by $F(T)$ the extension of $F$ cut out by $\ker\rho$.

\begin{definition}\label{dfn:s-admissible-primes}
	Let $\sfrak=(s_1,s_2)\in \Z_{>0}^2$.
	\begin{enumerate}[label=(\alph*)]
		\item Define $\admissible_{\sfrak}$ to be the set of all the primes $\ell\in\admissible$ such that the conjugacy class $\Fr_\ell$ coincides with the conjugacy class of the complex conjugation $\tau_c$ in $\Gal(K(\Ts)/\Q)$.
		\item Define $\squarefreeadmissible_{\sfrak}$ to be the set of all squarefree products of elements of $\admissible_{\sfrak}$, with the convention that $1\in\squarefreeadmissible_\sfrak$.
	\end{enumerate}
\end{definition}

By Chebotarev's density theorem, each set $\admissible_{\sfrak}$ consists of infinitely many primes and $\admissible_{\sfrak}\supseteq \admissible_{\tfrak}$ whenever $\sfrak\preceq \tfrak$. For every $\ell\in\admissible_{\sfrak}$, comparing $\Fr_\ell$ and $\tau_c$ through their characteristic polynomials, we obtain that
\begin{itemize}
	\item $\Fr_{\ell}^2$ acts as the identity on $\Ts$;
	\item $p^{s_1}$ divides $ \ell+1$ and $\Tr(\hspace{1pt}\Fr_\ell \hspace{1pt} \vert \hspace{1pt} \T \hspace{1pt})$ in $\Rs'$.
\end{itemize}
In particular, calling $\gl=(\ell)$ the prime of $K$ above $\ell$, we have that the finite-singular isomorphism $\phi_\ell^{\fs} \colon \honef(K_\lambda,\Ts) \to \hones(K_\lambda, \Ts)$ of Definition \ref{def:finite-singular} is well defined.

We end this section by showing a consequence of point \ref{condition:assumption-irreducible-residual-representation} of Assumption \ref{ass:assumptions-on-T} that will be used later on. We begin with a general result, that is an application of Nakayama's lemma.

\begin{lemma}\label{lem:nakayama-for-irred-repr}
	Let $A$ be a local Noetherian ring with maximal ideal $\m_A$ and $T$ be a finitely generated $A$-torsion-free module. Let also $G$ be a topological group that acts continuously and $A$-linearly on $T$. If $\ho(G,T/\m_A T)=\{0\}$, then $\ho(G,T)=\{0\}$.
\end{lemma}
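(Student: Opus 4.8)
The plan is to reduce the statement to Nakayama's lemma by showing that $\ho(G,T)$ is a finitely generated $A$-module on which the vanishing modulo $\m_A$ forces vanishing. First I would observe that $\ho(G,T) = T^G$ is an $A$-submodule of $T$, hence finitely generated since $A$ is Noetherian and $T$ is finitely generated. By Nakayama's lemma, to conclude $T^G = \{0\}$ it suffices to prove $T^G / \m_A T^G = \{0\}$, or equivalently $T^G \subseteq \m_A T^G$; in fact it is cleaner to show directly that $T^G \subseteq \m_A T$, and then apply Nakayama to the finitely generated module $T^G$ after noting $\m_A T \cap T^G$ relates to $\m_A T^G$ — but the most transparent route is the following.

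The key step is to compare $T^G$ with $(T/\m_A T)^G$. There is a natural map $T^G \to (T/\m_A T)^G$ induced by the projection $T \to T/\m_A T$. By hypothesis $\ho(G, T/\m_A T) = (T/\m_A T)^G = \{0\}$, so this map is the zero map, which means precisely that $T^G \subseteq \m_A T$. Now I would like to upgrade this to $T^G \subseteq \m_A T^G$. This is where torsion-freeness enters: consider the $A$-module $T^G$, which is finitely generated. I claim $T^G \subseteq \m_A T^G$. Indeed, pick generators $x_1, \dots, x_r$ of $T^G$. Each $x_i \in \m_A T$, but we need them in $\m_A T^G$. One way: the inclusion $T^G \hookrightarrow T$ and the fact that $T$ is $A$-torsion-free let us argue that $\m_A T \cap T^G = \m_A T^G$ is not automatic, so instead I would iterate. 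Observe $\m_A T^G \subseteq \m_A T$ and more usefully run the argument with the submodule structure: actually the slick finish is to note that since $T^G \subseteq \m_A T$, and $T$ is $A$-torsion-free (so that $\m_A$ acts without creating the relations that would obstruct descent), one shows by induction that $T^G \subseteq \m_A^k T$ for all $k \ge 1$, hence $T^G \subseteq \bigcap_k \m_A^k T$.

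For the inductive step, suppose $T^G \subseteq \m_A^k T$. Write an element $t \in T^G$ as $t = \sum_j a_j u_j$ with $a_j \in \m_A^k$ and $u_j \in T$; I would actually prefer to argue more structurally by applying the base case to a twist, but the cleanest is: the $G$-module $\m_A^k T$ is again a finitely generated $A$-torsion-free module (submodule of $T$), and $(\m_A^k T)/\m_A(\m_A^k T)$ surjects onto a quotient — this is getting circular. The honest clean argument: $T^G \subseteq \m_A T$ shows the composite $T^G \to T \to T/\m_A T$ is zero. Then consider $T^G$ as an $A$-module with the submodule $\m_A T \cap T^G$; since $T$ is torsion-free over the Noetherian local ring $A$, and $T^G$ is finitely generated, if $T^G \ne 0$ then $T^G / \m_A T^G \ne 0$ by Nakayama, so there is an $A$-linear surjection $T^G \twoheadrightarrow A/\m_A$; composing with $T^G \hookrightarrow T \twoheadrightarrow T/\m_A T$ being zero would then need care. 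The main obstacle, and the crux of the proof, is precisely this passage from $T^G \subseteq \m_A T$ to the triviality of $T^G$: the intended argument is that $\m_A T \cap T^G$ need not equal $\m_A T^G$ in general, but one can avoid this by instead using that $T/\m_A T$ having no invariants, combined with $T$ being $\varprojlim T/\m_A^k T$ when $A$ is $\m_A$-adically complete (which holds here since $\calR$, and its relevant subquotients, are complete local Noetherian) — iterating $T^G \subseteq \m_A^k T$ for all $k$ via the torsion-free hypothesis applied repeatedly to the modules $\m_A^{k-1}T$, and concluding $T^G \subseteq \bigcap_k \m_A^k T = 0$ by Krull's intersection theorem. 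So I would organize the proof as: (1) $T^G \subseteq \m_A T$ from the hypothesis; (2) upgrade by induction, using at stage $k$ that $\m_A^{k-1}T$ is a finitely generated torsion-free $G$-stable module with $(\m_A^{k-1}T)/\m_A(\m_A^{k-1}T)$ having trivial $G$-invariants — this last point is the delicate one and is where I expect to spend the most effort, likely invoking torsion-freeness to identify $(\m_A^{k-1}T)/\m_A^k T$ with a quotient of copies of $T/\m_A T$; (3) conclude via Krull's intersection theorem that $T^G = 0$.
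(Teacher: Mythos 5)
Your first step, deducing $T^G \subseteq \m_A T$ from $(T/\m_A T)^G = \{0\}$, is exactly the first step of the paper's proof. From there the paper finishes differently and more directly: given a nonzero $x \in T^G \subseteq \m_A T$, it writes $x = ay$ with $a \in \m_A\setminus\{0\}$ and $y \in T$, and for any $\sigma \in G$ computes $0 = \sigma(ay) - ay = a\bigl(\sigma(y)-y\bigr)$; by $A$-torsion-freeness it cancels $a$ to conclude $y \in T^G$, hence $x = ay \in \m_A T^G$. This shows $T^G \subseteq \m_A T^G$, and Nakayama applies at once to the finitely generated module $T^G$. Torsion-freeness thus enters only to cancel a single scalar; there is no passage to higher powers of $\m_A$ and no appeal to Krull's intersection theorem.

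Your alternative route via Krull's intersection theorem is a genuinely different plan, and, as you yourself flag, it is not complete. The inductive step requires $(\m_A^{k-1}T/\m_A^k T)^G = 0$, and the observation that $\m_A^{k-1}T/\m_A^k T$ is a quotient of $(\m_A^{k-1}/\m_A^k)\otimes_k(T/\m_A T)$, i.e., of a direct sum of copies of $T/\m_A T$ with trivial $G$-invariants, does not suffice: taking $G$-invariants is only left exact, so a quotient of a module with trivial invariants may well have nontrivial invariants. To close this you would need the comparison surjection to be an isomorphism, which holds when $T$ is $A$-flat (e.g., free) but is not a consequence of the torsion-free hypothesis alone. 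So the gap in your plan is exactly the point you identified, and the missing idea is the scalar cancellation $a(\sigma y - y)=0 \Rightarrow \sigma y = y$ used in the paper, rather than any control of the graded pieces $\m_A^{k-1}T/\m_A^k T$.
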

\begin{proof}
	If $(T/\m_A T)^G=\{0\}$, then $T^G\subseteq \m_A T$. Therefore, one can write any nonzero element $x\in T^G$ as $x=ay$ with $a\in\m_A\setminus\{0\}$ and $y\in T$. For every $\gs\in G$, one has the relation
	\begin{equation*}
		0=\gs(ay)-ay=a(\gs(y)-y).
	\end{equation*}
	Since $T$ is $A$-torsion-free, we obtain that $\gs(y)-y=0$, i.e., $y\in T^G$. Hence, $T^G\subseteq \m_A T^G$. As $T$ is finitely generated over the Noetherian ring $A$, then also $T^G$ is finitely generated over $A$, and we can conclude by applying Nakayama's lemma.
\end{proof}

We now apply this criterion to show that the quotients of $\T$ have no invariants over the absolute Galois group of some ring class fields of $K$. 

\begin{lemma}\label{lem:no-invariants-T}
	Let $n$ be a positive integer coprime with $NpD_K$ and let $T$ be any quotient of $\T$ by an ideal of $\calR$. Then $\ho(K[n],T)=0$.
\end{lemma}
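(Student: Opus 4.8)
The plan is to reduce the statement to the residual representation via Lemma~\ref{lem:nakayama-for-irred-repr}, and then to obtain a contradiction from the ramification of the determinant at $p$. Write $T=\T/I\T$ for an ideal $I$ of $\calR$; if $I=\calR$ there is nothing to prove, so assume $I\subseteq\m_\calR$. Since $\T$ is free of rank $2$ over $\calR$ (Assumption~\ref{ass:assumptions-on-T}~\ref{ass-cond:T-free-rank-2}), the quotient $T$ is free of rank $2$ over the complete local Noetherian ring $A:=\calR/I$, hence $A$-torsion-free, and $T/\m_A T\cong\T/\m_\calR\T=:\bar\T$, independently of the ideal $I$. By Lemma~\ref{lem:nakayama-for-irred-repr}, applied to the continuous $A$-linear action of $G_{K[n]}$ on $T$, it suffices to prove that $\ho(K[n],\bar\T)=0$.

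Suppose, for a contradiction, that $\bar\T^{G_{K[n]}}\ne 0$. The ring class field $K[n]/\Q$ is Galois, with $\Gal(K[n]/\Q)$ a generalized dihedral extension of $\Gal(K[n]/K)$ by $\Gal(K/\Q)$; in particular $G_{K[n]}$ is normal in $G_\Q$, so $\bar\T^{G_{K[n]}}$ is a nonzero $G_\Q$-submodule of $\bar\T$. By residual irreducibility (Assumption~\ref{ass:assumptions-on-T}~\ref{condition:assumption-irreducible-residual-representation}) this forces $\bar\T^{G_{K[n]}}=\bar\T$, i.e.\ the residual representation $\bar\rho$, and hence its determinant $\det\bar\rho$, factors through $\Gal(K[n]/\Q)$. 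However, $K[n]/\Q$ is unramified at $p$: by Assumption~\ref{ass:class-number-K} we have $p\nmid D_K$, so $K/\Q$ is unramified at $p$, while $p\nmid n$ together with the fact that $K[n]/K$ is unramified outside the primes dividing $n$ imply that $K[n]/K$ is unramified above $p$. Thus $\det\bar\rho$ is unramified at $p$.

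To contradict this, I would identify $\det\rho$. By Assumption~\ref{ass:assumptions-on-T}~\ref{condition:assumption-characteristic-polynomial-Frobenius} one has $\det\rho(\Fr_\ell)=\ell$ for every $\ell\nmid Np$, which is exactly the value at $\Fr_\ell$ of the $p$-adic cyclotomic character $\chi_{\mathrm{cyc}}$ pushed forward along the structural map $\Zp\to\calR$. Since $\det\rho$ and this character are both unramified outside $Np$ and agree on the set of Frobenius classes $\{\Fr_\ell:\ell\nmid Np\}$, which is dense by Chebotarev, we conclude $\det\rho=\chi_{\mathrm{cyc}}$, hence $\det\bar\rho=\bar\chi_{\mathrm{cyc}}$. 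But the restriction of $\bar\chi_{\mathrm{cyc}}$ to the inertia subgroup at $p$ surjects onto $\F_p^\times$, which is nontrivial because $p>2$; so $\bar\chi_{\mathrm{cyc}}$ is ramified at $p$, contradicting the previous paragraph. Therefore $\ho(K[n],\bar\T)=0$ and the lemma follows.

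I expect the only genuinely substantive point to be the interplay between the last two steps: residual irreducibility on its own does not suffice, since \emph{a priori} a dihedral residual representation could factor through $\Gal(K[n]/\Q)$. What rules this out is that $\det\bar\rho$ is the mod-$p$ cyclotomic character --- ramified at $p$ precisely because $p$ is odd --- whereas $K[n]/\Q$ is unramified at $p$ thanks to the hypotheses $p\nmid n$ and $p\nmid D_K$. The Nakayama reduction and the ramification bookkeeping for $K[n]/\Q$ are routine.
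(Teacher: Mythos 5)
Your proof is correct, but it takes a genuinely different route from the paper's. The paper shows directly that $\Q(\bar\T)\cap K[n]=\Q$: any nontrivial subfield of this intersection would be ramified at some prime $q$, forcing $q\mid Np$ (from unramifiedness of $\bar\T$ outside $Np$) and $q\mid nD_K$ (from unramifiedness of $K[n]/\Q$ outside $nD_K$), which contradicts $\gcd(Np,nD_K)=1$. This gives a surjection $G_{K[n]}\twoheadrightarrow\Gal(\Q(\bar\T)/\Q)$, and one concludes from irreducibility. You instead exploit the normality of $G_{K[n]}$ in $G_\Q$ to reduce, via irreducibility, to ruling out that $\bar\rho$ factors through $\Gal(K[n]/\Q)$, and then get a contradiction because $\det\rho=\chi_{\mathrm{cyc}}$ (Chebotarev applied to Assumption~\ref{ass:assumptions-on-T}~\ref{condition:assumption-characteristic-polynomial-Frobenius}) is ramified at $p$ while $K[n]/\Q$ is not. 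Both are valid; your argument isolates the role of $p$ (needing only $p\nmid nD_K$ and $p>2$ rather than the full coprimality $\gcd(Np,nD_K)=1$), but the paper's version has the side benefit of establishing the linear disjointness $\Q(\bar\T)\cap K[n]=\Q$, which it explicitly reuses later (in the proof of Lemma~\ref{lem:formula-capitolo-2}), so your route would require a separate argument there.
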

\begin{proof}
	Denote by $\bar{\T}$ the residual $G_\Q$-representation of $\T$, which is a free module of rank $2$ over a finite field, unramified outside $Np$. Let $F$ be any field different from $\Q$ that is contained in $\Q(\bar{\T}\hspace{1pt})\cap K[n]$ and take a prime $q$ that ramifies in $F$. Since the extension $\Q(\bar{\T}\hspace{1pt})/\Q$ is unramified outside $Np$, we must have that $q\mid Np$ and from the fact that the extension $K[n]/\Q$ is unramified outside $nD_K$, it follows that $q\mid nD_K$. But this is impossible, as $Np$ is coprime with $n D_K$. Therefore, $\Q(\bar{\T}\hspace{1pt})\cap K[n]=\Q$.
	Hence, restricting automorphisms from $\Qbar$ to $\Q(\bar{\T}\hspace{1pt})$ gives a surjection $G_{K[n]}\twoheadrightarrow\Gal(\Q(\bar{\T}\hspace{1pt})/\Q)$. Since, by point \ref{condition:assumption-irreducible-residual-representation} of Assumption \ref{ass:assumptions-on-T}, the representation $\bar{\T}$ has no $\Gal(\Q(\bar{\T}\hspace{1pt})/\Q)$-invariants, we conclude that it does not have any $G_{K[n]}$-invariants. The claim follows by applying Lemma \ref{lem:nakayama-for-irred-repr}.
\end{proof}

\subsection{Anticyclotomic Euler systems}\label{subsec:anticyclotomic-Euler-systems}

Let $\lcond$ ($=\{\lcond_L\}$) be a collection of Selmer structures on $\T$ over every field $L$ which is a subextension of $K[n]/K$ for some $n$ coprime with $N$. We will denote with the same letter the Selmer structure induced by $\lcond$ by \textit{propagation} (see for instance \cite[Section 1.1]{howard:heegner-points}) on every quotient of $\T$. 

Let  $\admissible'$ be an infinite subset of $\admissible$ and $\squarefreeadmissible'$ be the set of squarefree products of primes of $\admissible'$. Fix a set  $\ubf = \{\ubf_\ell\}_{\ell \in \admissible'}$ of units of $\calR$. This choice uniquely (and equivalently) determines a set $\abf = \{\abf_\ell\}_{\ell \in \admissible'}$ of elements of $\calR$ such that  $\Tr(\hspace{1pt}\Fr_\ell \hspace{1pt}\vert \hspace{1pt} \T \hspace{1pt}) = \ubf_\ell\,\abf_\ell$. Inspired by \cite[Definition 7.1]{castella-hsieh:heegner-cycles}, we give the following definition.

\begin{definition}\label{def:euler-systems}
	An \emph{anticyclotomic Euler system} attached to the triple $(\T,\lcond,\admissible')$ and relative to the set $\abf$ is a collection $\{\cbf(n)\}_{n\in\squarefreeadmissible'}$ of classes $\cbf(n)\in \hone_{\lcond}(K[n],\T\hspace{1pt})$ such that for every $n\ell\in\squarefreeadmissible'$ with $\ell$ prime, we have:
	\begin{enumerate}[label=(E\arabic*), series=E]
		\item $\Cor^{K[n\ell]}_{K[n]} \cbf(n\ell)=\abf_\ell\,\cbf(n)$;\label{E1}
		\item $\loc_{\lambda_{n\ell}}\cbf(n\ell) = \res^{K[n\ell]_{\lambda_{n\ell}}}_{K[n]_{\lambda_{n}}}\bigl(\Fr_\ell \loc_{\lambda_n}\cbf(n)\bigr)$ in $\hone(K[n\ell]_{\gl_{n\ell}},\T\hspace{1pt})$, for every prime $\gl_{n\ell}$ of $K[n\ell]$ above $\ell$ and setting $\lambda_n = \lambda_{n\ell} \cap K[n]$.\label{E2}
	\end{enumerate}
	The $\calR$-module of anticyclotomic Euler systems for $(\T,\lcond,\admissible')$ relative to $\abf$ will be denoted by $\ES(\T,\lcond,\admissible', \abf)$. Moreover, for $\{\cbf(n)\}_{n\in\squarefreeadmissible'} \in \ES(\T,\lcond,\admissible', \abf)$, we will call $\cbf_K := \Cor^{K[1]}_K \cbf(1) \in \hone(K, \T\hspace{1pt})$ the \emph{basic} class of the system.
\end{definition}

\subsection{Universal Kolyvagin systems}\label{subsec:modified-Kolyvagin-systems}

We now adapt the theory of universal Kolyvagin systems (introduced in the seminal work \cite{mazur-rubin:kolyvagin-systems}) to the anticyclotomic setting, generalizing the approaches of \cite{howard:heegner-points} and \cite{buyukboduk:deformation-kolyvagin-systems}. In the following, let $\lcond$ be a Selmer structure on $\T$ over $K$.

\begin{definition}
	For every $\sfrak\in\Z_{>0}^2$ and $n\in\squarefreeadmissible_{\sfrak}$, the \emph{modified Selmer structure} $\lcond(n)$ on $\Ts$ is defined as
	\begin{equation*}
		\hone_{\lcond(n)}(K_v,\Ts):=\begin{cases}
			\hone_{\lcond}(K_v,\Ts) &\text{if $v\nmid n$;}\\
			\hone_{\tr}(K_v,\Ts) &\text{if $v\mid n$,}
		\end{cases}
	\end{equation*}
	for every place $v$ of $K$.
\end{definition}

\begin{definition}
	For every $n\in\squarefreeadmissible$ set $\calG(n)=\bigotimes_{\ell\mid n}\calG_\ell$, where the tensor product runs over all primes $\ell$ dividing $n$.
\end{definition}

Fix now $\sfrak\in\Z_{>0}^2$. Let $\admissible_\sfrak'$ be an infinite subset of $\admissible_\sfrak$ and define $\squarefreeadmissible_{\sfrak}'$ to be the set of all squarefree products of primes of $\admissible_{\sfrak}'$. For every couple $(n,\ell)$ such that $n\ell\in\squarefreeadmissible_\sfrak'$ with $\ell$ prime, let $\chi_{n,\ell}\colon \Ts\to \Ts$ be an $\Rs$-linear isomorphism. We can use these maps to modify the finite-singular isomorphism at $\gl=(\ell)$ by defining
\[
\begin{tikzcd}[cramped]
	\phi_\ell^{\fs}(\chi_{n,\ell})\colon \honef(K_\gl,\Ts) \ar[r, "\alpha_\ell"] & \Ts \ar[r, "\chi_{n,\ell}"] & \Ts \ar[r, "\beta_\ell^{-1}"] & \hones(K_\gl,\Ts)\otimes\calG_\ell.
\end{tikzcd}
\]
For any couple $(n,\ell)$ as above, we also define the maps
\begin{itemize}
	\item $\psi_{n\ell}^\ell = ([\,\ast\,]_\s \circ \loc_\lambda) \otimes \id \colon \hone_{\lcond(n\ell)}(K,\Ts)\otimes\calG(n\ell)\to \hones(K_{\gl},\Ts)\otimes\calG(n\ell)$;
	\item $\psi_n^\ell(\chi_{n, \ell}) = (\phi^\fs_\ell(\chi_{n, \ell}) \circ \loc_\lambda) \otimes \id  \colon \hone_{\lcond(n)}(K,\Ts)\otimes\calG(n)\to \hones(K_{\gl},\Ts)\otimes\calG(n\ell)$,
\end{itemize}
where $[\,\ast\,]_\s$ denotes the projection to the singular quotient.
\begin{definition}
	A \emph{$\{\chi_{n,\ell}\}$-Kolyvagin system} for the triple $(\Ts,\lcond,\admissible_{\sfrak}')$ is a collection of cohomology classes $\{\koly(n)_\sfrak\}_{n\in\squarefreeadmissible_{\sfrak}'}$ such that, for any couple $(n,\ell)$ with $n\ell\in\squarefreeadmissible_{\sfrak}'$ and $\ell$ prime, we have:
	\begin{enumerate}[label=(K\arabic*)]
		\item $\koly(n)_\sfrak\in \hone_{\lcond(n)}(K,\Ts)\otimes\calG(n)$;\label{condition:K1}
		\item $\big(\psi_n^\ell(\chi_{n,\ell})\big)\big(\koly(n)_\sfrak\big)=\psi_{n\ell}^\ell\big(\koly(n\ell)_\sfrak\big)$.\label{condition:K2}
	\end{enumerate}
	We denote by $\KS(\Ts,\lcond,\admissible_{\sfrak}',\{\chi_{n,\ell}\})$ the $\Rs$-module of all $\{\chi_{n,\ell}\}$-Kolyvagin systems for the triple $(\Ts,\lcond,\admissible_{\sfrak}')$.
\end{definition}

\begin{remark}
	When we don't intend to specify the automorphisms $\chi_{n, \ell}$, we will refer to this notion simply as \emph{modified} Kolyvagin systems. This terminology is justified by the fact that if we choose $\chi_{n,\ell}=\id$ for every couple $(n,\ell)$, then an $\{\id\}$-Kolyvagin system for $(\Ts,\lcond,\admissible_{\sfrak}')$ is just a Kolyvagin system in the usual sense (see e.g.~\cite[Definition 1.2.3]{howard:heegner-points}).
\end{remark}

Consider now $\admissible' \subseteq \admissible_\onefrak$, where $\onefrak=(1, 1)$, such that $\admissible_\sfrak' := \admissible' \cap \admissible_\sfrak$ is an infinite subset of $\admissible_\sfrak$, for any $\sfrak \in \Z_{>0}^2$. Note in particular that $\admissible_{\sfrak}'\supseteq\admissible_{\tfrak}'$, whenever $\sfrak\preceq \tfrak$, and that $\admissible' = \admissible_\onefrak'$ is the union of all the $\admissible_\sfrak'$. Write again $\squarefreeadmissible_\sfrak'$ for the set of all squarefree products of primes of $\admissible_\sfrak'$. 

For every couple $(n,\ell) \in \Z^2$ with $n\ell\in\squarefreeadmissible'$ and $\ell$ prime, let $\chi_{n,\ell}\colon \T\to \T$ be an $\calR$-linear automorphism, which induces automorphisms $\chi_{n,\ell}\colon \Ts\to \Ts$ for every $\sfrak\in\Z_{>0}^2$. Generalizing \cite[Definition 3.1.6]{mazur-rubin:kolyvagin-systems} and \cite[Section 3.2]{buyukboduk:deformation-kolyvagin-systems}, we give the following final definition. 

\begin{definition}
	The $\calR$-module of \emph{universal $\{\chi_{n,\ell}\}$-Kolyvagin systems} for $(\T,\lcond,\admissible')$ is
	\begin{equation*}
		\KSuni(\T,\lcond,\admissible',\{\chi_{n,\ell}\}):=\varprojlim_{\sfrak}\left(\varinjlim_{\tfrak\succeq \sfrak} \KS(\Ts,\lcond,\admissible_{\tfrak}',\{\chi_{n,\ell}\})\right).
	\end{equation*}
\end{definition}

By our conventions, $1\in\squarefreeadmissible_{\sfrak}'$ for every $\sfrak\in\Z_{>0}^2$. Therefore, for a universal Kolyvagin system $\koly\in  \KSuni(\T,\lcond,\admissible',\{\chi_{n,\ell}\})$ induced by sets of classes $\{\koly(n)_\sfrak\}_{n\in\squarefreeadmissible_{\sfrak}'}\in \KS(\Ts,\lcond,\admissible_{\sfrak}',\{\chi_{n,\ell}\})$, one can define
\begin{equation*}
	\koly(1):=\varprojlim_{\sfrak}\koly(1)_\sfrak\in\varprojlim_{\sfrak}\hone(K,\Ts)=\hone(K,\T\hspace{1pt}),
\end{equation*}
where for the last equality we used the fact that $\T=\varprojlim_\sfrak \Ts$.

\begin{remark}\label{rk:equivalence-for-applications}
	\begin{enumerate}
		\item Again, when we don't intend to specify the automorphisms $\chi_{n, \ell}$, we use the terminology \emph{modified} universal Kolyvagin system. Also, a universal $\{\id\}$-Kolyvagin system for $(\T,\lcond,\admissible')$ is just a universal Kolyvagin system in the usual sense. In this case, in literature (see \cite[Definition 3.13]{mazur-rubin:kolyvagin-systems}) there is also the notion of the $\calR$-module $\KS(\T,\lcond,\admissible',\{\id\})$ of Kolyvagin systems for the quadruple $(\T,\lcond,\admissible',\{\id\})$, that comes endowed with with a natural morphism
		$\KS(\T,\lcond,\admissible',\{\id\})\to \KSuni(\T,\lcond,\admissible',\{\id\})$. This map need not be either injective nor surjective, in general. However, as far as the applications of the Kolyvagin system machinery is concerned (i.e., bounding Selmer groups), any ``classical'' Kolyvagin system has the exact same use as an element of $\KSuni(\T,\lcond,\admissible',\{\id\})$.  Moreover, the notion of universal Kolyvagin system better fits the setting when $\calR$ has Krull dimension greater than $1$.
		
		\item The decision to modify the classical notion of universal Kolyvagin systems using the automorphisms $\{\chi_{n,\ell}\}$ is motivated by the fact that, in general, we will not be able to build a universal $\{\id\}$-Kolyvagin system out of an Euler system of cohomology classes. Moreover, up to our best effort, it seems not always possible to modify a universal \{$\chi_{n,\ell}\}$-Kolyvagin system into an $\{\id\}$-one. However, the classical results that link the existence of a nontrivial (universal) Kolyvagin system to the structure of some relevant Selmer groups rely only on the existence of an isomorphism between the finite and the singular parts that links $\loc_{\gl}\koly(n)_{\sfrak}$ with $\loc_{\gl}\koly(n\ell)_{\sfrak}$, in order to deduce the non-triviality of one from the non-triviality of the other. Therefore, our generalization is still perfectly suitable for such applications, as we will see in Section \ref{sec:applications}.
	\end{enumerate}
\end{remark}

\subsection{Kolyvagin primes and main result}\label{sec:finer-choice-primes}

The aim of the rest of this section is to show how one can build a modified universal Kolyvagin system starting from an anticyclotomic Euler system. In order to do this, we need to shrink the set of admissible primes.

Let $\tfrak=(t_1,t_2)\in\Z_{>0}^2$. Whenever $\ell\in\admissible_\tfrak$, we know that $p^{t_1}\mid\ell+1\pm\abf_\ell$ as elements of $\calR_\tfrak'$, as observed after Definition \ref{dfn:s-admissible-primes}.  
Since we will need to control precisely the $p\,$-adic valuation of these two elements, we give the following definition.

\begin{definition}\label{def:Kolyvagin-primes}
	Let $\varadmissible_\tfrak$ be the set of primes $\ell$ of $\admissible_\tfrak$ such that the two quantities $\frac{\ell+1\pm \abf_\ell}{p^{t_1}}$ are units of $\calR_\tfrak'$. If $\Omega$ is a directed  subset of $\Z_{>0}^2$, for every $\sfrak\in\Z_{>0}^2$ we set
	\begin{equation*}
		\admissible'_\sfrak(\Omega):=\bigcup_{\substack{\tfrak\succeq\sfrak \\ \tfrak\in\Omega}}\varadmissible_\tfrak
	\end{equation*}
	and define $\admissible'(\Omega):= \admissible'_\onefrak(\Omega)$, where $\onefrak=(1, 1)$.
\end{definition}

From now on, we assume that $\Omega$ is fixed and we will drop it from the notation, writing $\admissible'_\sfrak:=\admissible'_\sfrak(\Omega)$ and $\admissible':=\admissible'(\Omega)$. As usual, we will also write $\squarefreeadmissible_\sfrak '$ and $\squarefreeadmissible'$ for the sets of squarefree products of the admissible primes in $\admissible_\sfrak '$ and $\admissible'$, respectively. From now on, we require also the following technical condition.

\begin{assumption}\label{ass:u-ell}
	There are $\epsilon \in \{\pm1\}$ and $a \in \Z$ such that, for every $\ell\in\admissible_\sfrak$ with $\sfrak\in\Omega$, the reduction of $\ubf_\ell$ to $\Rs'$ coincides with $\epsilon \ell^a$.
\end{assumption}

For the applications, we are concerned with the size of these sets of admissible primes. A first consequence of Assumption \ref{ass:u-ell} is the following result.

\begin{lemma}\label{lem:infinite-admissible-primes}
	The set $\varadmissible_{\tfrak}$ is infinite for any $\tfrak \in \Omega$.
\end{lemma}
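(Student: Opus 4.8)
The strategy is to realize $\varadmissible_\tfrak$ as a Chebotarev set for a suitable finite Galois extension of $\Q$. Fix $\tfrak=(t_1,t_2)\in\Omega$, write $\tfrak=(t_1,t_2)$, and recall that $\admissible_\tfrak$ is already defined by the condition that $\Fr_\ell$ is conjugate to $\tau_c$ in $\Gal(K(\T_\tfrak)/\Q)$, which by Chebotarev is an infinite set. The extra requirement defining $\varadmissible_\tfrak$ is that both elements $\tfrac{\ell+1\pm\abf_\ell}{p^{t_1}}$ be units of $\calR_\tfrak'=\calR/J_{t_2}$. First I would note that $\calR_\tfrak'$ is a finite free $\Zp$-module (Assumption \ref{ass:assumptions-on-R}\ref{ass-cond:J-ideals}), hence a product of complete local rings, and an element is a unit exactly when its image in each residue field is nonzero; since all residue fields have characteristic $p$, this is a congruence condition modulo $p$ on the coefficients. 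So the unit condition on $\tfrac{\ell+1\pm\abf_\ell}{p^{t_1}}$ is a condition on $\ell+1\pm\abf_\ell$ modulo $p^{t_1+1}$ in $\calR_\tfrak'$.

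The key input is Assumption \ref{ass:u-ell}: for $\ell\in\admissible_\tfrak$ one has $\abf_\ell=\ubf_\ell^{-1}\Tr(\Fr_\ell\mid\T)$ with $\ubf_\ell\equiv\epsilon\ell^a$ in $\calR_\tfrak'$, and $\Tr(\Fr_\ell\mid\T)\equiv \pm(\ell+1)$ in $\calR_\tfrak'$ up to sign (from the comparison of characteristic polynomials: $\Fr_\ell^2$ acts trivially on $\T_\tfrak$, $\det\Fr_\ell=\ell$, so the trace reduces to $\pm(1+\ell)$ modulo $I_\tfrak$, and with more care modulo higher powers of $p$ one tracks the exact congruence class). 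Thus $\ell+1\pm\abf_\ell$ is governed, in $\calR_\tfrak'/p^{t_1+1}$, by the quantity $\ell+1\pm\epsilon^{-1}\ell^{-a}\cdot(\pm(\ell+1)) $, which depends only on $\ell$ modulo a fixed power of $p$ — in fact only on the image of $\Fr_\ell$ in a finite abelian quotient controlling the value of $\Tr(\Fr_\ell\mid\T)$ in $\calR_\tfrak'/p^{t_1+1}$. I would let $\Phi$ be the composite extension $K(\T_\tfrak)\cdot\Q(\mu_{p^{t_1+2}})\cdot L$, where $L/\Q$ is the finite extension through which the function $\sigma\mapsto \Tr(\sigma\mid\T)\bmod p^{t_1+1}\calR_\tfrak'$ factors (this is finite because $\calR_\tfrak'/p^{t_1+1}$ is finite and $\T_\tfrak'$ is finite). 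On $\Phi$, all of: conjugacy of $\Fr_\ell$ to $\tau_c$ in $\Gal(K(\T_\tfrak)/\Q)$, the $p$-adic valuation of $\ell+1$, and the residue class of $\Tr(\Fr_\ell\mid\T)$ are determined by $\Fr_\ell$.

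Then $\varadmissible_\tfrak$ is, up to finitely many ramified primes, the set of $\ell$ whose Frobenius in $\Gal(\Phi/\Q)$ lies in a prescribed union of conjugacy classes $C\subseteq\Gal(\Phi/\Q)$; by Chebotarev it is infinite provided $C$ is nonempty. So the real content is to exhibit one element of $C$, i.e. to show the defining congruences are simultaneously satisfiable — equivalently, that the set is nonempty, which I expect to be the main obstacle. For this I would argue that the element $\tau_c$ (lifted compatibly: complex conjugation on $K(\T_\tfrak)$, and some chosen Frobenius-type element on the cyclotomic and $L$-components, which can be prescribed independently by linear disjointness of $K(\T_\tfrak)$ from the abelian pieces — using that $Np$, $D_K$ and the relevant conductors are coprime and the residual representation is irreducible so $K(\bar\T)\cap\Q^{\mathrm{ab}}$ is small, cf. the argument in Lemma \ref{lem:no-invariants-T}) realizes a prime $\ell$ with $v_p(\ell+1)=t_1$ exactly and with $\tfrac{\ell+1\pm\abf_\ell}{p^{t_1}}$ a unit: here one uses that $\ell+1+\abf_\ell$ and $\ell+1-\abf_\ell$ have product $(\ell+1)^2-\abf_\ell^2$, and an analysis via Assumption \ref{ass:u-ell} shows that for a generic admissible prime exactly the right cancellation occurs so that neither factor picks up an extra power of $p$. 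Concretely one picks the cyclotomic condition so that $\ell\equiv -1\pmod{p^{t_1}}$ but $\ell\not\equiv -1\pmod{p^{t_1+1}}$, and the $L$-condition so that $\abf_\ell\equiv \mp(\ell+1)+ (\text{unit})p^{t_1}\pmod{p^{t_1+1}}$; the compatibility of these prescriptions with the $K(\T_\tfrak)$-condition is what linear disjointness guarantees. Once $C\neq\emptyset$, Chebotarev finishes the proof.
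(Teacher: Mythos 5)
Your high-level plan — reduce the statement to Chebotarev on a finite Galois extension of $\Q$, then show the relevant union of conjugacy classes is nonempty — is the same shape as the paper's argument, and you correctly identify that nonemptiness is the crux. But the way you propose to establish nonemptiness has a real gap.

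The problem is in the claim that the $K(\T_\tfrak)$-condition, the cyclotomic condition, and the $L$-condition ``can be prescribed independently by linear disjointness.'' The field $L$ through which $\sigma\mapsto\Tr(\sigma\mid\T)\bmod p^{t_1+1}$ factors is essentially $\Q(\T_{(t_1+1,t_2)}')$; it is a non-abelian extension that \emph{contains} $K(\T_\tfrak)$, so it is not linearly disjoint from it. Linear disjointness arguments of the type in Lemma~\ref{lem:no-invariants-T} decouple $K(\bar\T)$ from $K[n]$ and from cyclotomic pieces, but they say nothing about lifting an element of $\Gal(K(\T_\tfrak)/\Q)$ to $\Gal(L/\Q)$ with a prescribed effect on the trace. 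Once $\Fr_\ell$ is pinned down on $K(\T_\tfrak)$ — i.e.\ conjugate to $\tau_c$ — the possible values of $\Tr(\Fr_\ell)\bmod p^{t_1+1}$ are exactly the traces of the lifts of $\tau_c$ to $\Gal(L/\Q)$, and \emph{a priori} these could all fail the unit condition. Some structural input on the image is needed to exhibit a good lift.

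That input is Assumption~\ref{ass:assumptions-on-T}\ref{condition:big-image-assumption} (big image), which you never invoke. The paper's proof starts from any $\ell_0\in\admissible_\tfrak$, uses the big image hypothesis to produce a scalar element $\gs_\alpha$ with $\alpha\in 1+p^{t_1}\Zp$, and considers the Chebotarev set of $\ell$ whose Frobenius is conjugate to $\Fr_{\ell_0}\gs_\alpha$ in $\Gal(K(\T_{(t_1+1,t_2)})/\Q)$; this set is automatically nonempty and infinite, sits inside $\admissible_\tfrak$, and the trace and determinant of $\Fr_{\ell_0}\gs_\alpha$ are explicitly computable as $\alpha$-twists of those of $\Fr_{\ell_0}$. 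The final step is a concrete residue computation: writing $\alpha=1+p^{t_1}x$ and reducing $\bigl(\alpha^2\ell_0+1\pm\alpha^{1-2a}\abf_{\ell_0}\bigr)/p^{t_1}$ modulo the maximal ideal, one sees that $x$ must merely avoid two explicit values, possible since $p>2$. If you replace your appeal to linear disjointness with this scalar-twist mechanism (and thereby make the choice of the ``$L$-condition'' actually realizable), your outline becomes essentially the paper's proof.
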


\begin{proof}
	First, notice that proving that an element $r$ of $\calR_\tfrak'$ is a unit is equivalent to showing that the projection of $r$ to any arbitrary quotient of $\calR_\tfrak'$ is a unit. 
	
	Let $\tfrak\in \Omega$ and fix a prime $\ell_0\in\admissible_{\tfrak}$. By Assumptions \ref{ass:assumptions-on-T} \ref{condition:assumption-characteristic-polynomial-Frobenius} and \ref{ass:u-ell}, we have that
	\[
	\Tr(\hspace{1pt}\Fr_{\ell_0} \hspace{1pt}\vert\hspace{1pt} \T_\tfrak'\hspace{1pt})=\ubf_{\ell_0}\abf_{\ell_0} \qquad \text{and}\qquad \det(\hspace{1pt}\Fr_{\ell_0}\hspace{1pt}\vert\hspace{1pt} \T_\tfrak'\hspace{1pt})=\ell_0,
	\]
	where $\ubf_{\ell_0} = \epsilon \ell_0^a \in (\calR_\tfrak')^\times$ by Assumption \ref{ass:u-ell}. Let now $\ga\in 1+p^{t_1}\Zp$. By Assumption \ref{ass:assumptions-on-T} \ref{condition:big-image-assumption}, there is an element $\gs_\ga\in G_\Q$ such that the image of $\gs_\ga$ in $\Aut(\T_\tfrak')$ is the scalar $\ga$. Then, on $\T_\tfrak'$, one has
	\begin{equation*}
		\Tr(\hspace{1pt}\Fr_{\ell_0}\gs_\ga \hspace{1pt}\vert \hspace{1pt} \T_\tfrak'\hspace{1pt})=\alpha \, \epsilon \ell_0^a \, \abf_{\ell_0} \qquad \text{and}\qquad \det(\hspace{1pt} \Fr_{\ell_0}\gs_\ga \hspace{1pt}\vert\hspace{1pt} \T_\tfrak'\hspace{1pt})=\ga^2\ell_0.
	\end{equation*}
	Let now $\varadmissible_\tfrak(\ell_0, \alpha)$ be the set consisting of those primes $\ell$ such that $\Fr_\ell$ is conjugated to $\Fr_{\ell_0}\gs_\ga$ in $\Gal(K(\T_{(t_1+1,t_2)})/\Q)$.
	Note that, by Chebotarev's density theorem, $\varadmissible_\tfrak(\ell_0, \alpha)$ has infinite cardinality. Let's show that $\varadmissible_\tfrak(\ell_0, \alpha) \subseteq \admissible_\tfrak$. Indeed, for any $\ell \in \varadmissible_\tfrak(\ell_0, \alpha)$, one has
	\begin{equation}\label{eq:point-b}
		\epsilon\ell^a \, \abf_\ell\equiv \alpha \, \epsilon\ell_0^a \,  \abf_{\ell_0} \mod{I_{(t_1+1,t_2)}} \qquad \text{and}\qquad \ell\equiv\ga^2\ell_0 \mod{I_{(t_1+1,t_2)}}.
	\end{equation}
	Reducing modulo $I_\tfrak$, we obtain that $\abf_\ell\equiv\abf_{\ell_0}\bmod I_\tfrak$ and $\ell\equiv\ell_0 \bmod I_\tfrak$, the first congruence following from the second by taking into account that $\alpha \equiv 1 \bmod p^{t_1}$. Since $\ell_0\in \admissible_{\tfrak}$, this implies that $\ell\in\admissible_{\tfrak}$.
	
	Notice that \eqref{eq:point-b} yields also the congruence
	\begin{equation}\label{eq:proof-kolyvagin-congruences-2}
		\abf_\ell \equiv \alpha^{1-2a} \abf_{\ell_0}\mod{I_{(t_1+1,t_2)}}
	\end{equation}
	for any $\ell \in \varadmissible_\tfrak(\ell_0, \alpha)$. We then look for an $\alpha \in 1 + p^{t_1}\Zp$ such that, by dividing  the two elements
	\begin{equation}\label{eq:proof-alpha-elements}
		\alpha^2\ell_0 + 1 \pm \alpha^{1-2a}\abf_{\ell_0} \mod{J_{t_2}}
	\end{equation}
	by $p^{t_1}$, we obtain a unit of $\calR_\tfrak'$. Since for any $\ell \in \varadmissible_\tfrak(\ell_0, \alpha)$ we have, by \eqref{eq:point-b} and \eqref{eq:proof-kolyvagin-congruences-2}, the congruences 
	\[
	\ell + 1 \pm \abf_\ell \equiv \alpha^2\ell_0 +1 \pm \alpha^{1-2a}\abf_{\ell_0} \mod{I_{(t_1+1,t_2,\dots,t_d)}},
	\]
	the existence of such an $\alpha$ would imply that $\varadmissible_\tfrak(\ell_0, \alpha) \subseteq \varadmissible_\tfrak$, concluding the proof.
	
	Writing $\alpha=1+p^{t_1}x$ for $x\in\Zp$, dividing the elements \eqref{eq:proof-alpha-elements} by $p^{t_1}$ and reducing them modulo the maximal ideal, one finds that in order to get such an $\alpha$ it is sufficient to require that the reduction of $x$ avoids two explicit values. Since $p>2$, this is always possible.
\end{proof}

From the above lemma it follows that also $\admissible_\sfrak'$ is infinite for any $\sfrak \in \Z_{>0}^2$. 
Moreover, if $\ell\in \admissible_\sfrak'$, we set $s_1(\ell)$ to be the unique positive integer such that $\ell\in\varadmissible_{\sfrak(\ell)}$, with $\sfrak(\ell):=(s_1(\ell),s_2)$. In this case, thanks to Lemma \ref{lem:infinite-admissible-primes}, the values $ \frac{\ell+1\pm \abf_\ell}{p^{\,s_1(\ell)}}$ are units of $\calR_{\sfrak(\ell)}'$ and project to units of $\Rs$.

\begin{corollary}\label{cor:Frob-cpmbination-invertible}
	If $\ell\in\admissible_\sfrak'$, then the image of ${\displaystyle\frac{(\ell+1)\Fr_\ell\pm\abf_\ell}{p^{s_1(\ell)}}}$ in $\End(\Ts)$ is invertible.
\end{corollary}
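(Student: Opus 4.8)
The plan is to exploit the fact that $\ell \in \admissible_\sfrak'$ means, by definition, $\ell \in \varadmissible_\tfrak$ for some $\tfrak = \sfrak(\ell) = (s_1(\ell), s_2) \succeq \sfrak$, so that the two elements $\frac{\ell+1 \pm \abf_\ell}{p^{s_1(\ell)}}$ are units of $\calR_{\sfrak(\ell)}'$, hence project to units of $\Rs$. Since $\ell \in \admissible_\tfrak \supseteq \admissible_\sfrak$, we also know that $\Fr_\ell^2$ acts as the identity on $\Ts$; that is, $\Fr_\ell$ is an involution of the free $\Rs$-module $\Ts$ of rank $2$.

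First I would observe that it suffices to prove the statement after reducing modulo the maximal ideal $\m_{\calR}$, i.e., to show that the image of $\frac{(\ell+1)\Fr_\ell \pm \abf_\ell}{p^{s_1(\ell)}}$ in $\End(\T/\m_\calR\T)$ is invertible; invertibility of an endomorphism of a finite free module over the local ring $\Rs$ is detected on the residue field, since $\Rs$ is finite of $p$-power order with residue field that of $\calR$. Next, writing $f = \Fr_\ell$ acting on the rank-$2$ space $V := \Ts$ with $f^2 = \id$, I would use the relations available from $\ell \in \admissible_\tfrak$: namely $\det(f \mid \T) = \ell$ and $\Tr(f \mid \T) = \ubf_\ell \abf_\ell$ reduce, in $\Rs'$, to the values governing membership in $\varadmissible_\tfrak$. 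Because $f^2 = \id$ on $V$, the Cayley–Hamilton relation $f^2 - \Tr(f)f + \det(f) = 0$ becomes $(1 - \det(f))\id = (\Tr(f) - 2f + \dots)$ — more usefully, I would factor directly: an involution $f$ on a rank-$2$ module over a local ring in which $2$ is a unit decomposes $V = V^{f=1} \oplus V^{f=-1}$, and on each eigenspace the operator $(\ell+1)f \pm \abf_\ell$ acts as the scalar $(\ell+1)(\pm 1) \pm \abf_\ell$.

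The key computation is then that on $V^{f=1}$ the operator $(\ell+1)\Fr_\ell + \abf_\ell$ acts as $(\ell+1) + \abf_\ell$ and on $V^{f=-1}$ it acts as $-(\ell+1) + \abf_\ell$, so that dividing by $p^{s_1(\ell)}$ yields $\frac{\ell+1+\abf_\ell}{p^{s_1(\ell)}}$ and $\frac{-(\ell+1)+\abf_\ell}{p^{s_1(\ell)}} = -\frac{\ell+1-\abf_\ell}{p^{s_1(\ell)}}$ respectively (and symmetrically with the opposite sign choice); both are units of $\Rs$ by the definition of $\varadmissible_{\sfrak(\ell)}$. Since $\frac{(\ell+1)\Fr_\ell \pm \abf_\ell}{p^{s_1(\ell)}}$ is diagonal with respect to the decomposition $V = V^{f=1}\oplus V^{f=-1}$ with unit entries, it is invertible.

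The one subtle point — and the main obstacle — is that the eigenspace decomposition $\Ts = \Ts^{\Fr_\ell=1}\oplus\Ts^{\Fr_\ell=-1}$ into free rank-$1$ summands need not hold integrally over $\Rs$, only residually: a priori the involution $\Fr_\ell$ on $\Ts$ might not be diagonalizable if, say, its fixed submodule is not a direct summand. I would handle this either by checking invertibility purely on the residue field as above (where $\Fr_\ell$, being an involution of a $2$-dimensional space over a field of odd characteristic with $\Tr$ and $\det$ as prescribed, really is diagonalizable with eigenvalues $\{1,-1\}$ — here one uses that $\det(\Fr_\ell) = \ell \equiv -1$ since $p \mid \ell+1$, ruling out the scalar involutions), or alternatively by arguing directly with Cayley–Hamilton: from $\Fr_\ell^2 = \id$ and $\Tr(\Fr_\ell)$ congruent to $\pm\abf_\ell$ appropriately one gets a polynomial identity $((\ell+1)\Fr_\ell \pm \abf_\ell)((\ell+1)\Fr_\ell \mp \abf_\ell') = (\text{unit})\cdot\id$ exhibiting an explicit inverse up to a unit scalar, where the unit scalar is exactly $(\ell+1)^2 - \abf_\ell^2 = -(\ell+1+\abf_\ell)(-(\ell+1)+\abf_\ell)$, divisible by $p^{2s_1(\ell)}$ with unit quotient. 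This Cayley–Hamilton route avoids the diagonalizability question entirely and is probably the cleanest.
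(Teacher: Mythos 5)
Your proposal is correct, and your second (Cayley--Hamilton) route is in fact a slicker argument than the one in the paper. The paper's own proof is shorter than either of yours: it observes that $\Fr_\ell$ is conjugate to $\tau_c$ in $\Gal(K(\Ts)/\Q)$, invokes Assumption~\ref{ass:assumptions-on-T}~\ref{condition:assumption-eigenvalues-complex-conjugation} to produce an eigenbasis of $\Ts$ for $\Fr_\ell$ with eigenvalues $1$ and $-1$ (over $\Rs$, not just residually), and then reads off the determinant of $\frac{(\ell+1)\Fr_\ell\pm\abf_\ell}{p^{s_1(\ell)}}$ as $\pm\,\frac{\ell+1+\abf_\ell}{p^{s_1(\ell)}}\cdot\frac{\ell+1-\abf_\ell}{p^{s_1(\ell)}}$, a unit by Lemma~\ref{lem:infinite-admissible-primes}.

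The ``subtle point'' you raise --- that the eigenspace decomposition of $\Ts$ might only hold residually --- is not actually an obstacle here. Since $p>2$, the idempotents $\frac{1\pm\Fr_\ell}{2}\in\End_{\Rs}(\Ts)$ make sense and split $\Ts$ into two direct summands, which are automatically free over the local ring $\Rs$; and $\det(\Fr_\ell\mid\Ts)=\ell\equiv-1\neq 1$ in the residue field (because $p^{s_1(\ell)}\mid\ell+1$) rules out the scalar involutions, so both summands have rank $1$. In the paper this is packaged in Assumption~\ref{ass:assumptions-on-T}~\ref{condition:assumption-eigenvalues-complex-conjugation}. Your reduction to the residue field and your Cayley--Hamilton identity $\bigl((\ell+1)\Fr_\ell+\abf_\ell\bigr)\bigl((\ell+1)\Fr_\ell-\abf_\ell\bigr)=(\ell+1+\abf_\ell)(\ell+1-\abf_\ell)\,\mathrm{id}$ on $\Ts$ (using $\Fr_\ell^2=\mathrm{id}$, then dividing by $p^{2s_1(\ell)}$ at the level of $\T_{\sfrak(\ell)}'$) are both correct workarounds; the latter has the virtue of exhibiting an explicit inverse and of not using Assumption~\ref{ass:assumptions-on-T}~\ref{condition:assumption-eigenvalues-complex-conjugation} at all, so it is arguably preferable even though the paper's shorter argument is available.
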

\begin{proof}
	Since $\Fr_\ell$ is conjugated with $\tau_c$ in $\Gal(K(\Ts)/\Q)$ and by point \ref{condition:assumption-eigenvalues-complex-conjugation} of Assumption \ref{ass:assumptions-on-T}, we can find a basis of $\Ts$ made of eigenvectors (with eigenvalues $1$ and $-1$) for $\Fr_\ell$. With respect to this basis, one easily computes the determinant of the morphisms $\frac{(\ell+1)\Fr_\ell\pm\abf_\ell}{p^{s_1(\ell)}}$ and finds that it is a unit of $\Rs$, thanks to Lemma \ref{lem:infinite-admissible-primes}.
\end{proof}

Constructing a modified universal Kolyvagin system amounts to building a compatible system of classes $\koly(n)_\sfrak\in \hone(K,\Ts)\otimes\calG(n)$ that satisfy conditions \ref{condition:K1} and \ref{condition:K2}, for a suitable choice of morphisms $\{\chi_{n,\ell}\}$. Notice that, in order to check \ref{condition:K2}, it is necessary to know that the localization at $\gl$ of the classes $\koly(n)_\sfrak$ lie in $\hone_{\lcond(n)}(K_\gl,\Ts)\otimes\calG(n)$ for all primes $\gl$ that lie above primes of $\admissible_\sfrak'$. On the other hand, the choice of the local conditions at primes dividing $Np$ don't play any role in the proof of \ref{condition:K2}. This is why, for the sake of simplicity and generality, we will work with the following `maximal' Selmer structure.

\begin{definition}\label{dfn:relaxed-selmer-structure}
	The \emph{relaxed Selmer structure $\lcond_{\rel}$} on $\T$ over any finite extension $L$ of $K$ is defined as
	\begin{equation*}
		\hone_{\lcond_{\rel}}(L_v,\T\hspace{1pt})=\begin{cases}
			\honef(L_v,\T\hspace{1pt})&\text{if $v\nmid Np$};\\
			\hone(L_v,\T\hspace{1pt})&\text{if $v\mid Np$},
		\end{cases}
	\end{equation*}
	for every prime $v$ of $L$.
\end{definition}

\begin{theorem}\label{th:euler-to-kolyvagin-system}
	If $\{\cbf(n)\}_{n\in\squarefreeadmissible'}\in\ES(\T,\lcond_{\rel},\admissible', \abf)$, then there exist a set of automorphisms $\{\chi_{n,\ell}\}$ of $\T$ and a universal $\{\chi_{n,\ell}\}$-Kolyvagin system 
	\[
	\koly\in\KSuni(\T,\lcond_{\rel},\admissible',\{\chi_{n,\ell}\})
	\]
	such that $\koly(1)=\cbf_K$.
\end{theorem}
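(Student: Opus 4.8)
The plan is to carry out a Kolyvagin descent argument, producing the derivative classes $\koly(n)_\sfrak$ by combining the Euler system classes $\cbf(nm)$ over divisors $m$ of $n$ with suitable derivative operators, and to keep track of the automorphisms $\chi_{n,\ell}$ that inevitably appear when one compares the finite-singular relation with the twisted norm compatibility \ref{E1}--\ref{E2}. Fix $\sfrak=(s_1,s_2)$ and $n\in\squarefreeadmissible_\sfrak'$; write $n=\prod_{\ell\mid n}\ell$. For each $\ell\mid n$ let $\sigma_\ell$ be the chosen generator of $\calG_\ell$ and form the usual Kolyvagin derivative operator $D_\ell=\sum_{i=1}^{\ell}i\sigma_\ell^i\in\Z[\calG_\ell]$, satisfying $(\sigma_\ell-1)D_\ell=(\ell+1)-N_\ell$ with $N_\ell=\sum_{i=0}^{\ell}\sigma_\ell^i$ the norm element; set $D_n=\prod_{\ell\mid n}D_\ell$. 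The first step is to show that $D_n\cbf(n)\in\hone(K[n],\Ts)$ becomes $\calG_n$-invariant after reduction mod $I_\sfrak$: this is the standard telescoping computation using \ref{E1}, the relation $(\sigma_\ell-1)D_\ell=(\ell+1)-N_\ell$, the fact that $p^{s_1}\mid\ell+1$ for $\ell\in\admissible_\sfrak$, and the congruence $N_\ell\cbf(n)\equiv\Cor\,(\text{something})\equiv\abf_\ell\cbf(n/\ell)$-type identity coming from \ref{E1}; one then needs $\ho(K[n],\Ts)=0$, which is exactly Lemma~\ref{lem:no-invariants-T} together with an inflation-restriction argument (using that $[K(n):K]$ is a $p$-power and $\calG_n$ acts through it), to descend $D_n\cbf(n)$ to a well-defined class $\koly(n)_\sfrak\in\hone(K,\Ts)$ up to the choice of a lift, and then tensor with $\calG(n)$ via $\bigotimes\sigma_\ell$ to get the canonical element of $\hone(K,\Ts)\otimes\calG(n)$.

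The second step is the local analysis at the primes $\gl=(\ell)$ with $\ell\mid n$ and at $\ell'\in\admissible_\sfrak'$ with $\ell'\nmid n$, which establishes \ref{condition:K1}: at primes $v\nmid Np\cdot n$ the class is unramified by construction (the Euler system classes lie in $\hone_{\lcond_\rel}$, hence are finite away from $Np$, and corestriction/derivative preserve this), at $v\mid Np$ the relaxed condition is everything so there is nothing to check, and at $\gl\mid n$ one must show $\loc_\gl\koly(n)_\sfrak$ lands in the transverse subspace $\hone_\tr(K_\gl,\Ts)$. This last point is where Nekov\'a\v r's formula (Appendix~\ref{sec:formula-abstract}) enters: the local image of the derivative class at a ramified prime is governed by the finite part of $\loc_\gl\cbf(n/\ell)$ pushed through $\alpha_\ell$, and the computation of $\loc_\gl(D_{n/\ell}\cbf(n/\ell))$ via \ref{E2} shows the singular part of $\loc_\gl\koly(n)_\sfrak$ is controlled by $\Fr_\ell$ acting on the finite part of $\loc_\gl\koly(n/\ell)_\sfrak$, up to the factor $(\ell+1)\Fr_\ell\pm\abf_\ell$ divided by $p^{s_1(\ell)}$ — which by Corollary~\ref{cor:Frob-cpmbination-invertible} is an automorphism of $\Ts$, and this is precisely $\chi_{n/\ell,\ell}$ (or its inverse, up to the sign bookkeeping of Lemma~\ref{lem:commutativity-complex-conj-fs-isomorphism}). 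Verifying \ref{condition:K2} then amounts to comparing $\psi_n^\ell(\chi_{n,\ell})(\koly(n)_\sfrak)$ with $\psi_{n\ell}^\ell(\koly(n\ell)_\sfrak)$, both being rewritings of the same local identity for $\loc_\gl\cbf(n\ell)$ coming from \ref{E2}, with $\phi_\ell^\fs(\chi_{n,\ell})=\beta_\ell^{-1}\circ\chi_{n,\ell}\circ\alpha_\ell$ absorbing exactly the discrepancy.

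The third step is to assemble the classes into a universal object. For $\sfrak\preceq\tfrak$ one must check the $\koly(n)_\tfrak$ reduce to the $\koly(n)_\sfrak$ under $\T_\tfrak\surj\Ts$ — this is immediate from the construction since $D_n$ and corestriction are functorial in the coefficients — so the collections $\{\koly(n)_\sfrak\}_{n\in\squarefreeadmissible_\sfrak'}$ are compatible and define an element of $\varprojlim_\sfrak(\varinjlim_{\tfrak\succeq\sfrak}\KS(\T_\tfrak,\lcond_\rel,\admissible_\tfrak',\{\chi_{n,\ell}\}))=\KSuni(\T,\lcond_\rel,\admissible',\{\chi_{n,\ell}\})$; one should also note the automorphisms $\chi_{n,\ell}$ are defined on $\T$ (as $\frac{(\ell+1)\Fr_\ell\pm\abf_\ell}{p^{s_1(\ell)}}$ acts on $\T$, not merely $\Ts$, once one works at the appropriate level) and are compatible with reduction, as required by the definition of universal $\{\chi_{n,\ell}\}$-Kolyvagin system. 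Finally, for $n=1$ the derivative operator is trivial, $D_1\cbf(1)=\cbf(1)$, and the descent from $\hone(K[1],\Ts)$ to $\hone(K,\Ts)$ is corestriction, so $\koly(1)_\sfrak=\Cor^{K[1]}_K\cbf(1)\bmod I_\sfrak$ and hence $\koly(1)=\varprojlim_\sfrak\koly(1)_\sfrak=\Cor^{K[1]}_K\cbf(1)=\cbf_K$. I expect the main obstacle to be the local computation at the ramified primes: getting the precise shape of $\chi_{n,\ell}$ right requires carefully combining the generalized Nekov\'a\v r formula with the sign twist from Lemma~\ref{lem:commutativity-complex-conj-fs-isomorphism}\,\ref{item:finite-sing-complex-conj} and the $\Fr_\ell$-twist appearing in \ref{E2}, and ensuring throughout that the chosen lifts (of $\gl$ to $K[n]$, of $\sigma_\ell$ to inertia, of the descended class) are consistent so that the $\chi_{n,\ell}$ genuinely depend only on $(n,\ell)$ and not on these auxiliary choices; the Galois-cohomological vanishing needed for the descent is also delicate but is handled cleanly by Lemmas~\ref{lem:nakayama-for-irred-repr} and~\ref{lem:no-invariants-T}.
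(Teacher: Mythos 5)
Your architecture matches the paper's: form Kolyvagin derivatives $D_n\cbf(n)$, descend via $\calG_n$-invariance (using $\ho(K[n],\Ts)=0$, i.e.\ Lemmas~\ref{lem:nakayama-for-irred-repr} and \ref{lem:no-invariants-T}), establish \ref{condition:K1}, obtain \ref{condition:K2} from Nekov\'a\v r's abstract formula, check compatibility as $\sfrak$ varies, and observe $\koly(1)=\cbf_K$. However, there is a genuine gap in how you treat the transverse condition for \ref{condition:K1} at the primes $\gl=(\ell)$ with $\ell\mid n$.

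You write that showing $\loc_\gl\koly(n)_\sfrak\in\hone_\tr(K_\gl,\Ts)$ ``is where Nekov\'a\v r's formula enters'', and then describe a relation between the singular part of $\loc_\gl\koly(n)_\sfrak$ and the finite part of $\loc_\gl\koly(n/\ell)_\sfrak$ via $\Fr_\ell$. But that relation is exactly \ref{condition:K2} for the pair $(n/\ell,\ell)$: it tells you the \emph{singular} component of $\loc_\gl\koly(n)_\sfrak$, and says nothing about whether its \emph{finite} component vanishes, which is what the transverse condition asserts. The two components are independent in the splitting $\hone(K_\gl,\Ts)=\honef\oplus\hone_\tr$, so knowing the singular part via Nekov\'a\v r's formula does not pin down the finite part. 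In the paper this is a separate and more elementary computation (Steps 1 and 2 of the proof of Proposition~\ref{prop:derived-classes-in-the-Selmer}): one shows $\loc_{v_n}(D_n\cbf(n)_\sfrak)=0$ for any prime $v_n$ of $K[n]$ above $\gl$, by lifting $\sigma_\ell$ to an element of $G_{K[1]}$ that acts trivially on $\Ts$ (possible because $\Ts$ is unramified at $\ell$), so that the cocycle evaluation becomes $(D_\ell\cbf(n)_\sfrak)(\Fr_{v_n})=\tfrac{\ell(\ell+1)}{2}\cbf(n)_\sfrak(\Fr_{v_n})=0$ since $\ell+1\equiv 0\bmod p^{s_1}$; one then descends to $\hone(K[\ell]_{v_\ell},\Ts)$ (noting $K[n]_{v_n}=K[\ell]_{v_\ell}$) and applies the description \eqref{equ:easy-description-of-transverse-condition} of the transverse condition. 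Your outline does not supply this step.

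Two secondary imprecisions, less serious than the gap above: the automorphism $\vartheta_\ell$ from Lemma~\ref{lem:formula-capitolo-2} is not a single factor $\frac{(\ell+1)\Fr_\ell\pm\abf_\ell}{p^{s_1(\ell)}}$ but a ratio of two such operators composed with $\Fr_\ell$, and $\chi_{n,\ell}=\vartheta_\ell^{-1}$; and Lemma~\ref{lem:commutativity-complex-conj-fs-isomorphism} is not used in the proof of Theorem~\ref{th:euler-to-kolyvagin-system} (the paper invokes it only in Remark~\ref{rk:E3} under the extra hypothesis \ref{E3}).
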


We devote the remainder of this section to the proof of this theorem, building explicitly the set of automorphisms $\{\chi_{n,\ell}\}$ and the modified universal Kolyvagin system.

\begin{remark}\label{remark:stressed-local-conditions}
	The previous theorem can be used also to build modified universal Kolyvagin systems in cases where one has to deal with other Selmer structures. In fact, if  $\lcond$ is a Selmer structure on $\T$ over $K$, $\koly \in \KSuni(\T,\lcond,\admissible',\{\chi_{n,\ell}\})$ if and only if $\koly \in \KSuni(\T,\lcond_{\rel},\admissible',\{\chi_{n,\ell}\})$ and $\loc_v\koly(n)_\sfrak \in H_{\lcond}^1(K_v,\Ts)\otimes\calG(n)$ for every $v\mid Np$. This means that the existence of an element of $\ES(\T,\lcond_\rel,\admissible', \abf)$ implies the existence of an element of $\KSuni(\T,\lcond,\admissible', \{\chi_{n,\ell}\})$ up to checking that the classes $\kappa(n)_\sfrak$ of Definition \ref{def:kolyvagin-derivative} (used to build the Kolyvagin system) have the correct local behavior at $Np$, which is usually dealt in concrete situations with \emph{ad hoc} methods. For some examples of this usage, see Section \ref{sec:examples}.
\end{remark}

\subsection{Dictionary}\label{sec:dictionary}

In this subsection we show how to adapt the abstract machinery of Appendix \ref{sec:formula-abstract} to our setting. Fix $\{\cbf(n)\}_{n\in\squarefreeadmissible'}\in\ES(\T,\lcond_{\rel},\admissible', \abf)$ and let $\sfrak=(s_1,s_2)\in\Z_{>0}^2$. For any $n\in\squarefreeadmissible_{\sfrak}'$, we will write $\cbf(n)_{\sfrak}$ and $\cbf(n)_{\sfrak}'$ for the image of $\cbf(n)$ in $\hone(K[n],\Ts)$ and $\hone(K[n],\Ts')$, respectively. For $\ell' \in \admissible_\sfrak'$ and $n\in \squarefreeadmissible_\sfrak'$, let 
\begin{equation*}
	D_{\ell'}:=\sum_{i=1}^{\ell'} i\sigma_{\ell'}\in\Z[\calG_{\ell'}]\quad\text{and}\quad D_n:=\prod_{\ell'\mid n}D_{\ell'}\in\Z[\calG_n],
\end{equation*}
where $\sigma_{\ell'}$ is the generator of $\calG_{\ell'}$ fixed in Section \ref{subsec:the-arithmetic-picture}. Therefore, we obtain that $D_n \cbf(n) \in \hone(K[n], \T\hspace{1pt})$.

Let now $n\in \squarefreeadmissible_\sfrak'$ and $\ell\in\squarefreeadmissible_\tfrak'$ with $\tfrak=(t_1,t_2)\succeq \sfrak$ such that $\ell\nmid n$. Fix a prime $\lambda_{n\ell}$ of $K[n\ell]$ above $\lambda = (\ell)\subseteq K$; this in turn induces primes $\lambda_n$ (resp.~$\lambda_{\ell}$) of $K[n]$ (resp.~$K[\ell]$).  We apply the formalism of Appendix \ref{sec:formula-abstract} under the following dictionary:
\medskip
\begin{center}
	\def\arraystretch{1.5}
	\begin{tabular}{c|c|c|c|c|c|c|c|c|c|c}
		Appendix \ref{sec:formula-abstract}& $\Gtilde$ & $G$ & $H$  &$\Gtilde_0$ &$G_0$ &$H_0$  &$\gs$ &$c$ & $M$ &$D$ \\
		\hline
		Section \ref{sec:euler-systems-and-kolyvagin-systems} &$G_{K[n]^+}$  &$G_{K[n]}$  &$G_{K[n\ell]}$  &$G_{\Q_\ell}$ &$G_{K_\gl}$ &$G_{K[\ell]_{\gl_\ell}}$  &$\gs_\ell$ &$\tau_c$  &$\ell+1$ &$D_\ell$\\
		\hline
	\end{tabular}
\end{center}

\begin{center}
	\def\arraystretch{1.5}
	\begin{tabular}{c|c|c|c|c|c|c|c|c|c|c}
		Appendix \ref{sec:formula-abstract}&$d$ & $R$ & $T$ & $s$  &$T/p^sT$ &$\mathbf{x}$ &$\mathbf{y}$  &$M_1$ &$\phi$ &$\delta$ \\
		\hline
		Section \ref{sec:euler-systems-and-kolyvagin-systems}  &$\ell$ &$\calR_\tfrak'$  &$\T_\tfrak'$  &$t_1$  &$\T_\tfrak$ &$D_{n} \cbf(n)_{\tfrak}'$ &$D_{n}\cbf(n\ell)_{\tfrak}'$  &$\abf_{\ell}$ &$\Fr_\ell$ &$\ubf_\ell$\\
		\hline
	\end{tabular}
\end{center}
\medskip
where $K[n]^+$ is the subextension of $K[n]$ fixed by the complex conjugation $\tau_c$. Notice that there are equalities $K[n]_{\gl_n}=K_\gl$ and $K[n\ell]_{\gl_{n\ell}}=K[\ell]_{\gl_\ell}$. Also, since $\gl_n$ is totally ramified in $K[n\ell]$, we have canonical isomorphisms 
\[
\Gal(K[n\ell]_{\lambda_{n\ell}}/K[n]_{\lambda_n}) \cong \Gal(K[n\ell]/K[n]) \cong \calG_\ell.
\]

We briefly explain why all the conditions of Section \ref{sec:setting} are satisfied. Indeed, we have that \ref{condition:pro-dihedrality}-\ref{condition:semidirect-product} follow by the pro-dihedrality of the ring class fields and $\pi$ is the projection of $G_{\Q_\ell}$ to $\Gal(\Q_\ell^\ur/\Q_\ell)$. Condition \ref{condition:structure-T} is a consequence of the freeness of $\T$ over $\calR$; \ref{condition:unramifiedness} holds since $\T$ is unramified at $\ell$; \ref{condition:frob-square-identity-mod-p-s} holds since $\ell\in\admissible_\tfrak$; \ref{condition:vanishing-h-0} is Lemma \ref{lem:no-invariants-T}; \ref{condition:unramified-classes} follows from the inflation-restriction exact sequence, as we have that $\loc_{\gl_n} \cbf(n)\in \honef(K[n]_{\gl_n},\T\hspace{1pt})$ and $\loc_{\gl_{n\ell}} \cbf(n\ell)\in \honef(K[n\ell]_{\gl_{n\ell}},\T\hspace{1pt})$ by Definition \ref{def:euler-systems}; \ref{condition:corestriction} follows from \ref{E1}, the commutativity of $D_n$ with the corestriction and the fact that $p^{t_1}\mid \abf_\ell\,$ in $\calR'$, as observed after Definition \ref{dfn:s-admissible-primes}; \ref{condition:char-poly-frob} is a combination of Assumption \ref{ass:assumptions-on-T} \ref{condition:assumption-characteristic-polynomial-Frobenius} and \ref{E1}; \ref{condition:eichler-shimura} follows from \ref{E2}, as explained in the following lemma.

\begin{lemma}
	With notation as above, we have that
	\[
	\res_{K[n]_{\lambda_{n}}}^{K[n\ell]_{\lambda_{n\ell}}}\Big(\Fr_\ell \loc_{\lambda_n}\bigl(D_n \cbf(n)\bigr)\Big) = \loc_{\lambda_{n\ell}}\bigl(D_n \cbf(n\ell)\bigr).
	\]
\end{lemma}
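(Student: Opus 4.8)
The plan is to reduce the claimed identity to the Euler system relation \ref{E2} by applying the operator $D_n$ to both sides, using that $D_n$ is an element of the group ring $\Z[\calG_n]$ and hence commutes with the $G_\Q$-equivariant maps $\loc_{\lambda_n}$, $\loc_{\lambda_{n\ell}}$, the restriction $\res^{K[n\ell]_{\lambda_{n\ell}}}_{K[n]_{\lambda_n}}$, and the Frobenius action $\Fr_\ell$. Concretely, I would start from \ref{E2}, which reads
\begin{equation*}
    \loc_{\lambda_{n\ell}}\cbf(n\ell) = \res^{K[n\ell]_{\lambda_{n\ell}}}_{K[n]_{\lambda_{n}}}\bigl(\Fr_\ell \loc_{\lambda_n}\cbf(n)\bigr),
\end{equation*}
and then apply $D_n$ to both sides. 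Since $D_n$ only involves $\calG_n = \Gal(K[n]/K[1])$, it acts on cohomology over $K[n]$ and over $K[n\ell]$ compatibly with the natural maps between them.

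The key points to check, in order, are: (i) $D_n$ commutes with the localization maps, because localization is induced by restriction to a decomposition group and the identifications $K[n]_{\lambda_n} = K_\lambda$, $K[n\ell]_{\lambda_{n\ell}} = K[\ell]_{\lambda_\ell}$ allow the $\calG_n$-action on the global classes to be transported to the local ones (here one uses that $\lambda$ is totally ramified in $K[\ell]$ and splits completely in $K[n]$, so the decomposition groups interact with $\calG_n$ and $\calG_\ell$ in the expected way); (ii) $D_n$ commutes with $\res^{K[n\ell]_{\lambda_{n\ell}}}_{K[n]_{\lambda_n}}$, which is immediate since restriction along $G_{K[n\ell]} \hookrightarrow G_{K[n]}$ intertwines the respective $\calG_n$-actions; and (iii) $D_n$ commutes with $\Fr_\ell$, since $\Fr_\ell \in G_\Q$ and $\calG_n$ is abelian with $\calG_n \cong \prod_{\ell'\mid n}\calG_{\ell'}$ — more precisely, the action of $\Fr_\ell$ on $\hone(K[n]_{\lambda_n},\T) = \hone(K_\lambda, \T)$ is via conjugation by $\Fr_\ell$ on $G_{K_\lambda}$, and this normalizes the image of $\calG_n$ (in fact centralizes it, as $\lambda$ splits completely in $K[n]/K$ so $\Fr_\ell$ and $\calG_n$ come from commuting parts of the relevant Galois group), whence $D_n$ and $\Fr_\ell$ commute. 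Combining (i)--(iii), applying $D_n$ to \ref{E2} yields exactly
\begin{equation*}
    \loc_{\lambda_{n\ell}}\bigl(D_n\cbf(n\ell)\bigr) = \res^{K[n\ell]_{\lambda_{n\ell}}}_{K[n]_{\lambda_n}}\Bigl(\Fr_\ell \loc_{\lambda_n}\bigl(D_n\cbf(n)\bigr)\Bigr),
\end{equation*}
which is the assertion.

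The main obstacle is bookkeeping the compatibility of the various $\calG$-actions under the field identifications: one must be careful that the $\calG_n$-action used to define $D_n$ on classes over $K[n]$ matches, after localizing at $\lambda_n$ and identifying $K[n]_{\lambda_n}$ with $K_\lambda$, the action used on classes over $K[n\ell]$ localized at $\lambda_{n\ell}$ (identified with $K[\ell]_{\lambda_\ell}$), and that $\Fr_\ell$ genuinely commutes with $D_n$ at the cochain level rather than merely up to coboundary. All of this follows from the structure recalled in Section \ref{subsec:the-arithmetic-picture}: $\calG_{n\ell} \cong \calG_n \times \calG_\ell$, the prime $\lambda$ splits completely in $K[n]$ and is totally ramified in $K[\ell]$, so $\calG_n$ is "orthogonal" to the decomposition data at $\lambda$ while $\calG_\ell$ accounts for it. Once these identifications are spelled out the computation is formal.
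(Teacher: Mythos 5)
There is a genuine gap, and it is located precisely where you wave your hands about "bookkeeping." Your claims (i) and (iii) are both false, and your (incorrect) justifications are where the actual content of the proof lives.

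Regarding (i): localization at the \emph{single} prime $\lambda_n$ does \emph{not} commute with the $\calG_n$-action, hence not with $D_n$. Because $\lambda$ splits completely in $K[n]/K$, the elements of $\calG_n = \Gal(K[n]/K[1])$ permute the primes of $K[n]$ above $\lambda$ nontrivially, and the correct identity is $\loc_{\lambda_n}\circ\,\sigma = \sigma\circ\loc_{\sigma^{-1}\lambda_n}$ for $\sigma\in\Gal(K[n]/\Q)$. Commutativity with the Galois action holds only for the \emph{semi-local} $\loc_\lambda = \bigoplus_{w\mid\lambda}\loc_w$ (Appendix \ref{subsec:Galois-action}); you are implicitly projecting to one component without tracking which one.

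Regarding (iii): $\Fr_\ell$ does \emph{not} centralize $\calG_n$, and your reason ("$\Fr_\ell$ and $\calG_n$ come from commuting parts") is wrong. Transport the local Frobenius action to the global element $\Fr_{\lambda_n/\ell}\in\Gal(K[n]/\Q)$; since $\ell$ is inert in $K$, this element lifts the complex conjugation, and $\Gal(K[n]/\Q)=\Gal(K[n]/K)\rtimes\langle\tau_c\rangle$ is generalized dihedral with $\tau_c$ acting by inversion. So $\Fr_{\lambda_n/\ell}$ \emph{inverts} $\calG_n\subseteq\Gal(K[n]/K)$; it only normalizes it, and the precise relation is $\Fr_{\lambda_n/\ell}\,\sigma = \sigma\,\Fr_{\sigma^{-1}\lambda_n/\ell}$.

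These two failures cancel each other in a compensating way, and that cancellation \emph{is} the proof: writing $D_n = \sum_j c_j\sigma_j$, one moves each $\sigma_j$ outward, turning $\Fr_{\lambda_n/\ell}$ into $\Fr_{\sigma_j^{-1}\lambda_n/\ell}$ and the localization/restriction at $\lambda_{n\ell}$ into the one at $\sigma_j^{-1}\lambda_{n\ell}$, and then applies \ref{E2} separately at each conjugate prime $\sigma_j^{-1}\lambda_{n\ell}$ (the hypothesis that \ref{E2} holds for \emph{every} prime above $\ell$ is essential here, and is exactly why the paper states it that way). Your overall strategy — "apply $D_n$ to \ref{E2}" — is the right idea, but as written your argument asserts commutativity statements that are literally false, so it does not constitute a proof.
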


\begin{proof}
	If $n=1$, the relation is \ref{E2} applied to our choice of prime $\lambda_{n\ell}$. If $n>1$, note that $\Fr_\ell \loc_{\lambda_n}\bigl(D_n \cbf(n)\bigr) = \loc_{\lambda_n}\bigl(\Fr(\lambda_n/\ell) D_n \cbf(n)\bigr)$, where $\Fr(\lambda_n/\ell)$ denotes the Frobenius element of $\lambda_n$ inside $\Gal(K[n]/\Q)$ and that $\Fr(\lambda_n/\ell)\sigma = \sigma \,\Fr(\sigma^{-1}\lambda_n/\ell)$, for any $\sigma \in \Gal(K[n\ell]/\Q)$. Moreover, the action of $\sigma$ in cohomology commutes with $\res_{K[n]}^{K[n\ell]}$ and is related with the localization in the following way: $\loc_{\lambda_{n\ell}}\circ \, \sigma = \sigma \circ \loc_{\sigma^{-1}\lambda_{n\ell}}$. 
	
	Using these properties and relation \ref{E2} for $\sigma^{-1}\lambda_{n\ell}$, we obtain that
	\begin{equation*}
		\res_{K[n]_{\lambda_{n}}}^{K[n\ell]_{\lambda_{n\ell}}}\Big(\Fr_\ell \loc_{\lambda_n}\bigl(\sigma \cbf(n)\bigr)\Big) = \loc_{\lambda_{n\ell}} \bigl(\sigma \cbf(n\ell)\bigr).
	\end{equation*}
	Since $D_n$ is a linear combination of elements of $\Gal(K[n\ell]/\Q)$, the lemma follows.
\end{proof}

\subsection{Derivative classes and local properties}\label{sec:Derivative-classes-and-local-properties}

In this subsection, we apply a suitable Kolyvagin's descent to our fixed anticyclotomic Euler system $\{\cbf(n)\}_{n\in\squarefreeadmissible'}\in\ES(\T,\lcond_{\rel},\admissible', \abf)$. For this section, fix also $\sfrak\in\Z_{>0}^2$.

\begin{lemma}\label{lemma:definition-kolyvagin-derivative}
	For every $n\in\squarefreeadmissible_{\sfrak}'$ we have that
	\begin{enumerate}[label=\emph{(\alph*)}]
		\item $D_n \cbf(n)_{\sfrak}\in \hone(K[n],\Ts)^{\calG_n}$;\label{item:invariance}
		\item the restriction map $\hone(K[1],\Ts)\to \hone(K[n],\Ts)^{\calG_n}$ is an isomorphism.\label{item:iso-restriction}
	\end{enumerate}
\end{lemma}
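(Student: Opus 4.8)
The plan is to prove \emph{(a)} by a telescoping computation exploiting the Euler system relation \ref{E1}, and \emph{(b)} by inflation--restriction combined with Lemma \ref{lem:no-invariants-T}. For \emph{(a)}, since $\calG_n\cong\prod_{\ell\mid n}\calG_\ell$ is generated by the $\sigma_\ell$ with $\ell\mid n$, it suffices to show that each such $\sigma_\ell$ fixes $D_n\cbf(n)_\sfrak$ in $\hone(K[n],\Ts)$. Write $m:=n/\ell$ and let $N_\ell:=\sum_{i=0}^{\ell}\sigma_\ell^i\in\Z[\calG_\ell]$ be the norm element. A reindexing of the sum defining $D_\ell$, using $\sigma_\ell^{\ell+1}=1$, yields the standard identity $(\sigma_\ell-1)D_\ell=(\ell+1)-N_\ell$ in $\Z[\calG_\ell]$; multiplying by $D_m$ and applying to $\cbf(n)$ gives
\[
(\sigma_\ell-1)\bigl(D_n\cbf(n)\bigr)=(\ell+1)\,D_m\cbf(n)-D_m\,N_\ell\,\cbf(n).
\]
Since $N_\ell$ is the sum over $\Gal(K[n]/K[m])$, which is precisely the $\calG_\ell$-factor of $\calG_n$, one has $N_\ell\cbf(n)=\res^{K[n]}_{K[m]}\Cor^{K[n]}_{K[m]}\cbf(n)=\abf_\ell\,\res^{K[n]}_{K[m]}\cbf(m)$ by \ref{E1}. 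I then reduce modulo $I_\sfrak$: because $\ell\in\admissible_\sfrak$ we have $p^{s_1}\mid\ell+1$ (as observed after Definition \ref{dfn:s-admissible-primes}), and because $\Tr(\Fr_\ell\mid\T)=\ubf_\ell\abf_\ell$ with $\ubf_\ell$ a unit while $p^{s_1}\mid\Tr(\Fr_\ell\mid\T)$ in $\Rs'$, we get $\abf_\ell\in(p^{s_1},J_{s_2})=I_\sfrak$. As $I_\sfrak$ annihilates $\Ts$, both summands vanish in $\hone(K[n],\Ts)$, and since $D_n$ commutes with the reduction map this gives $(\sigma_\ell-1)\bigl(D_n\cbf(n)_\sfrak\bigr)=0$, proving \emph{(a)}.

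For \emph{(b)}, I would feed $G_{K[n]}\trianglelefteq G_{K[1]}$, with quotient $\calG_n=\Gal(K[n]/K[1])$, into the five-term inflation--restriction exact sequence:
\[
0\longrightarrow\hone\bigl(\calG_n,\Ts^{G_{K[n]}}\bigr)\longrightarrow\hone(K[1],\Ts)\xrightarrow{\ \res\ }\hone(K[n],\Ts)^{\calG_n}\longrightarrow H^2\bigl(\calG_n,\Ts^{G_{K[n]}}\bigr).
\]
Every prime in $\admissible$ is coprime to $Np$ and inert (hence unramified, so coprime to $D_K$) in $K$, so $n\in\squarefreeadmissible_\sfrak'$ is coprime to $NpD_K$; since $\Ts=\T/I_\sfrak\T$ is a quotient of $\T$, Lemma \ref{lem:no-invariants-T} gives $\Ts^{G_{K[n]}}=\ho(K[n],\Ts)=0$. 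The two outer terms then vanish and $\res$ is an isomorphism.

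I do not expect a genuine obstacle here; the only points deserving care are the identification of $N_\ell$ with $\res\circ\Cor$ along $K[n]/K[m]$ (which rests on $\Gal(K[n]/K[m])$ being exactly the $\calG_\ell$-factor of $\calG_n$) and the verification that $\abf_\ell$ genuinely lies in $I_\sfrak$, and not merely that it is divisible by $p^{s_1}$ in $\Rs'$ — this uses $I_\sfrak=(p^{s_1},J_{s_2})$ together with $\Rs'=\calR/J_{s_2}$.
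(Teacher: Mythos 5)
Your proof is correct and follows essentially the same route as the paper: part \emph{(a)} via the telescopic identity $(\sigma_\ell-1)D_\ell=(\ell+1)-\Tr_{\calG_\ell}$ combined with \ref{E1} and the observation that both $\ell+1$ and $\abf_\ell$ lie in $I_\sfrak$, and part \emph{(b)} via inflation--restriction together with Lemma \ref{lem:no-invariants-T}. Your explicit unpacking of why divisibility of $\abf_\ell$ by $p^{s_1}$ in $\Rs'$ is the same as membership in $I_\sfrak=(p^{s_1},J_{s_2})$ is a welcome clarification of a step the paper compresses into ``$p^{s_1}\mid\abf_{\ell'}$.''
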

\begin{proof}
	(a) Since $\calG_n=\prod_{\ell'\mid n}\calG_{\ell'}$, it suffices to prove that $(\gs_{\ell'}-1)D_n\cbf(n)_{\sfrak}=0$ in $\hone(K[n],\Ts)$ for every prime $\ell'\mid n$. By the telescopic identity \eqref{eq:telescopic-identity-appendix}  and the equality $\Tr_{\calG_{\ell'}}=\res_{K[n/\ell']}^{K[n]}\circ\Cor_{K[n/\ell']}^{K[n]}$, this reduces to proving that  $\Cor_{K[n/\ell']}^{K[n]}(\cbf(n)_{\sfrak})=0$, since  $\ell'+1\equiv 0\bmod p^{s_1}$. Since $p^{s_1}\mid \abf_{\ell'}$, this fact follows from \ref{E1}.
	
	\emph{\ref{item:iso-restriction}} By Lemma \ref{lem:no-invariants-T}, our assumptions on $\T$ imply that $\ho(K[n],\Ts)=0$. We obtain the claim by applying the inflation-restriction exact sequence.
\end{proof}

\begin{definition}\label{def:kolyvagin-derivative}
	For every $n\in\squarefreeadmissible_{\sfrak}'$, define \[
	\gk(n)_\sfrak:=\Cor^{K[1]}_K (\res_{K[1]}^{K[n]})^{-1}D_n\cbf(n)_{\sfrak}\in \hone(K, \Ts).
	\]
\end{definition}

We will show that a slight modification of the classes $\gk(n)_\sfrak$ forms a modified universal Kolyvagin system. A first step in this direction is the following proposition, which establishes \ref{condition:K1}.

\begin{proposition}\label{prop:derived-classes-in-the-Selmer}
	For every $n\in\squarefreeadmissible_{\sfrak}'$, we have that $\gk(n)_\sfrak\in \hone_{\lcond_{\rel}(n)}(K,\Ts)$.
\end{proposition}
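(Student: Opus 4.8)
The plan is to verify that $\gk(n)_\sfrak$ satisfies the correct local conditions place by place, exploiting the fact that $\lcond_{\rel}(n)$ imposes the relaxed condition at primes dividing $Np$, the transverse condition at primes dividing $n$, and the finite condition everywhere else. Since $\hone_{\lcond_{\rel}}(L_v,\T) = \hone(L_v,\T)$ for $v \mid Np$, there is nothing to check at places above $Np$. Thus the only work is at the remaining finite places $v$ of $K$: we must show that $\loc_v \gk(n)_\sfrak$ lands in the finite part $\honef(K_v,\Ts)$ when $v \nmid Np n$, and in the transverse part $\hone_{\tr}(K_v,\Ts)$ when $v \mid n$ (equivalently, when $v = \gl$ for $\ell \mid n$).

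First I would handle the places $v \nmid Npn$. The class $D_n\cbf(n) \in \hone(K[n],\T)$ is built from $\cbf(n) \in \hone_{\lcond_{\rel}}(K[n],\T)$, which by Definition~\ref{def:euler-systems} has finite (unramified) localization at every place not above $Np$; applying the integral operator $D_n \in \Z[\calG_n]$ and then transporting through the restriction isomorphism of Lemma~\ref{lemma:definition-kolyvagin-derivative}\ref{item:iso-restriction} and corestriction to $K$ preserves the property of being unramified outside $Np$, because restriction, corestriction, and the Galois action all send unramified classes to unramified classes at places unramified in the relevant extensions (and $K[n]/K$ is unramified outside $n$). So $\loc_v\gk(n)_\sfrak \in \honef(K_v,\Ts)$ for all $v \nmid Npn$, which is exactly the relaxed Selmer condition at those places.

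The main point — and the expected obstacle — is the transverse condition at a prime $\gl = (\ell)$ with $\ell \mid n$. Here I would follow Kolyvagin's classical argument: by \eqref{equ:easy-description-of-transverse-condition}, $\hone_{\tr}(K_\gl,\Ts) = \ker(\hone(K_\gl,\Ts) \to \hone(K[\ell]_{\lambda_\ell},\Ts))$, so it suffices to show $\res^{K[\ell]_{\lambda_\ell}}_{K_\gl}\loc_\gl\gk(n)_\sfrak = 0$. Unwinding the definition of $\gk(n)_\sfrak$ via the restriction isomorphism of Lemma~\ref{lemma:definition-kolyvagin-derivative}, this reduces to a computation with $D_n\cbf(n)_\sfrak$ localized at a prime of $K[n]$ above $\ell$: writing $n = \ell m$, one uses that $\cbf(n)$ is unramified at $\gl$ over $K[n]$ together with the congruence $(\sigma_\ell - 1)D_\ell \equiv \ell+1 - \Tr_{\calG_\ell} \pmod{\text{coboundaries}}$ (the telescoping identity \eqref{eq:telescopic-identity-appendix}) and the fact that $p^{s_1} \mid \ell+1$ for $\ell \in \admissible_\sfrak$. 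The key local input is that an unramified cocycle over $K[n]_{\gl_n} = K_\gl$, when restricted to the totally (tamely) ramified extension $K[n\ell]_{\gl_{n\ell}}/K[n]_{\gl_n}$ of degree $\ell+1 \equiv 0 \bmod p^{s_1}$, becomes a homomorphism on the inertia that is killed after applying $D_\ell$ modulo $p^{s_1}$; combined with the $\calG_\ell$-invariance from Lemma~\ref{lemma:definition-kolyvagin-derivative}\ref{item:invariance}, this forces the restriction to $K[\ell]_{\lambda_\ell}$ to vanish. I expect the bookkeeping of compatible choices of primes $\gl_n, \gl_{n\ell}, \gl_\ell$ across the layers — and checking that the operator $D_m$ at the \emph{other} primes $m \mid n$ does not disturb the local behavior at $\gl$ (it acts through $\Gal(K[m]/K)$, which fixes $K_\gl$ since $\gl$ splits completely in $K[m]$) — to be the fiddliest part, but each step is a direct application of inflation–restriction and the telescoping identity already recorded in the excerpt.
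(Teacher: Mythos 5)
Your overall architecture matches the paper's: reduce to checking the local condition place by place, observe it is vacuous at $v\mid Np$, handle $v\nmid Npn$ by functoriality of restriction/corestriction/$D_n$ in (semi-)local cohomology, and reduce $v\mid n$ to the explicit description \eqref{equ:easy-description-of-transverse-condition} of the transverse condition. That is exactly the paper's decomposition, and the first two cases are fine.

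The gap is in your handling of the transverse condition. First, a notational slip: with $n=\ell m$, the split-complete field is $K[m]_{\gl_m}=K_\gl$, whereas $K[n]_{\gl_n}=K[\ell]_{\gl_\ell}$ is the totally tamely ramified degree-$(\ell+1)$ extension; your ``$K[n\ell]_{\gl_{n\ell}}/K[n]_{\gl_n}$'' would involve $n\ell^2$, which is not squarefree. More seriously, the mechanism you invoke does not close the argument. Since $\cbf(n)_\sfrak$ lies in the relaxed Selmer structure and $\ell\nmid Np$, the class $\loc_{\gl_n}\bigl(D_n\cbf(n)_\sfrak\bigr)$ is already in $\honef(K[n]_{\gl_n},\Ts)$; its restriction to any further extension is therefore trivial on inertia \emph{automatically}, with or without $D_\ell$. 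So ``killed on inertia after applying $D_\ell$'' is vacuous and proves nothing. What one must show is that the class is actually \emph{zero}, not merely finite, because $\honef\cap\hone_{\tr}=\{0\}$: a nonzero finite class can never lie in the transverse condition. The paper therefore works on the \emph{finite} side, evaluating the cocycle at the Frobenius (which by \cite[Lemma B.2.8]{rubin:euler-systems} is an isomorphism $\honef(K[n]_{\gl_n},\Ts)\cong\Ts$) and using that a lift of $\sigma_\ell$ acts trivially on $\Ts$ to compute $(D_\ell\cbf(n)_\sfrak)(\Fr_{\gl_n})=\tfrac{\ell(\ell+1)}{2}\cbf(n)_\sfrak(\Fr_{\gl_n})=0$ since $p^{s_1}\mid\ell+1$. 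You have moved the computation to the singular/inertia side, which is the wrong side and trivially collapses; the substance lives in the Frobenius evaluation.

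Once that correction is made, your second case does reduce, as the paper's Step 2 shows, to the semi-local corestriction diagram and the fact that $v_\ell$ splits completely in $K[n]$, so $K[n]_{v_n}=K[\ell]_{v_\ell}$; the vanishing over $K[n]_{v_n}$ propagates to $\gk(n)_\sfrak$ restricted to $K[\ell]_{\lambda_\ell}$. Your remark that $D_m$ for $m\mid n$, $m\neq\ell$, does not disturb the local picture at $\gl$ is correct and is absorbed in the paper's use of the semi-local functoriality of $D_n$.
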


\begin{proof}
	We have to show that the $\loc_v \gk(n)_\sfrak \in \hone_{\lcond_{\rel}(n)}(K_v, \Ts)$, for any prime $v$ of $K$. If $v\mid Np$, this is trivial since $\hone_{\lcond_{\rel}(n)}(K_v,\Ts)=\hone(K_v,\Ts)$. For the remaining primes, we split the proof into two steps, depending on wether $v\nmid n$ or $v\mid n$. Since $\cbf(n) \in \ES(\T, \lcond_{\rel}, \admissible, \abf)$, we have that $\cbf(n)_\sfrak\in H_{\lcond_{\rel}}^1(K[n],\Ts)$. In particular, for every prime $v_n$ of $K[n]$ above $v\nmid Np$, this means that
	\begin{equation}\label{equ:proof-Selmer-local-conditions-1}
		\loc_{v_n} \cbf(n)_\sfrak \in  \honef(K[n]_{v_n}, \Ts).
	\end{equation}
	Moreover, write $v_1=v_n\cap K[1]$.
	
	(a) Suppose that $v\nmid nNp$, so that $\hone_{\lcond_{\rel}(n)}(K_v,\Ts)=\honef(K_v,\Ts)$. Note that in this case  $K[n]_{v_n}^{\ur}=K[1]_{v_1}^{\ur}=K_v^{\ur}$. Therefore, the claim follows from \eqref{equ:proof-Selmer-local-conditions-1} and some diagram chasing, by applying the functoriality of restriction, corestriction and $D_n$ in semi-local Galois cohomology (see Appendix \ref{sec:semi-local-cohomology}).
	
	(b) Suppose that $v\mid n$, so that $\hone_{\lcond_{\rel}(n)}(K_v,\Ts)=\hone_{\tr}(K_v,\Ts)$. Call $\ell$ the rational prime below $v$ and $v_\ell=v_n\cap K[\ell]$. By the explicit description of \eqref{equ:easy-description-of-transverse-condition}, it is enough to prove that
	\begin{equation*}
		\loc_v \gk(n)_\sfrak\in  \ker\big(\hone(K_v,\T\hspace{1pt})\xrightarrow{\res} \hone(K[\ell]_{v_\ell},\T\hspace{1pt})\big).
	\end{equation*}
	
	\textbf{Step 1.} We show that $\loc_{v_n} (D_n\cbf(n)_{\sfrak})=0$.
	
	By \eqref{equ:proof-Selmer-local-conditions-1} and the fact that $D_n$ commutes with $\loc_{v}$ (in a semi-local sense, see  Appendix \ref{subsec:Galois-action}), we have that $\loc_{v_n} (D_n\cbf(n)_{\sfrak})\in \honef(K[n]_{v_n},\Ts)$. Then, noting that $D_n=\prod_{\ell'\mid n}D_{\ell'}$ and that the evaluation at $\Fr_{v_n}$ induces an isomorphism between $\hone_{\f}(K[n]_{v_n}, \Ts)$ and $\Ts$ by  
	\cite[Lemma B.2.8]{rubin:euler-systems}, it suffices to show that the evaluation $(D_\ell \cbf(n)_{\sfrak})(\Fr_{v_n})$ of a cocycle representing $D_\ell \cbf(n)_{\sfrak}$ in $\Fr_{v_n}$ is trivial.
	
	Since $\Ts$ is unramified outside $Np$, one can lift $\sigma_\ell$ (our fixed generator of $\calG_\ell$) to an element of $G_{K[1]}$ that acts trivially on $\Ts$. But then, working exactly as in the proof of Lemma \ref{lemma:D=0}, one finds that
	\begin{equation*}
		(D_\ell \cbf(n)_{\sfrak})(\Fr_{v_n})=\sum_{i=1}^\ell i(\sigma_\ell \cbf(n)_{\sfrak})(\Fr_{v_n})=\sum_{i=1}^\ell i \cbf(n)_{\sfrak}(\Fr_{v_n})=\frac{\ell(\ell+1)}{2} \cbf(n)_{\sfrak}(\Fr_{v_n})=0,
	\end{equation*}
	since $\ell+1$ is zero in $R_{\sfrak}$.
	
	\textbf{Step 2.} By class field theory, the prime $v_\ell$ splits completely in $K[n]$ and therefore $K[n]_{v_n}=K[\ell]_{v_\ell}$. Then, the commutative diagram
	\begin{equation*}
		\begin{tikzcd}
			{\hone(K[n],\Ts)} && {\hone(K[n]_{v_n},\Ts)} \\
			{\hone(K[1],\Ts)} & {\hone(K[1]_{v_1},\Ts)} & {\hone(K[\ell]_{v_\ell},\Ts)}
			\arrow["{\loc_{v_n}}", from=1-1, to=1-3]
			\arrow["\res", from=2-1, to=1-1]
			\arrow["{\loc_{v_1}}", from=2-1, to=2-2]
			\arrow["\res", from=2-2, to=2-3]
			\arrow[Rightarrow, no head, from=2-3, to=1-3]
		\end{tikzcd}
	\end{equation*}
	together with the commutativity of corestriction in semi-local Galois cohomology (see Appendix \ref{app:corestriction}) and Step 1 yield that the restriction of $\gk(n)_\sfrak$ to $\hone(K[\ell]_{v_\ell},\Ts)$ is zero.
\end{proof}

\subsection{From Euler systems to Kolyvagin systems}\label{sec:from-Euler-to-Kolyvagin-systems}

We are now able to apply the machinery of Appendix \ref{sec:formula-abstract} in its full strength in order to prove Theorem \ref{th:euler-to-kolyvagin-system}.
\begin{lemma}\label{lem:formula-capitolo-2}
	Let $\sfrak=(s_1,s_2) \in \Z_{>0}^2$. For any $n\ell\in\squarefreeadmissible_{\sfrak}'$ with $\ell$ prime, the relation
	\begin{equation*}
		\ga_\ell(\loc_{\gl}\gk(n)_\sfrak)=\vartheta_{\ell}\big(\beta_\ell([\loc_{\gl}\gk(n\ell)_\sfrak]_{\s} \otimes \sigma_\ell)\big)
	\end{equation*}
	holds in $\Ts$, where $\gl$ is the prime of $K$ above $\ell$ and
	\begin{equation*}
		\vartheta_{\ell}=\left(\frac{(\ell+1)\Fr_\ell-\abf_\ell}{p^{s_1(\ell)}}\right)^{-1}\left(\frac{(\ell+1)\Fr_\ell-\ubf_\ell\, \abf_\ell}{p^{s_1(\ell)}}\right)\Fr_\ell \in\Aut(\Ts).
	\end{equation*}
\end{lemma}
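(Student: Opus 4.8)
The plan is to apply the abstract formula of Appendix~\ref{sec:formula-abstract} (Nekov\'a\v r's descent formula) via the dictionary established in Section~\ref{sec:dictionary}, and then translate the output back into the language of the finite and singular isomorphisms $\ga_\ell$ and $\beta_\ell$. First I would reduce to the case $n\ell \in \squarefreeadmissible_\tfrak'$ for a suitable $\tfrak = (t_1, t_2) \succeq \sfrak$, noting that by Lemma~\ref{lem:infinite-admissible-primes} we may take $t_1 = s_1(\ell)$ so that the relevant quotients $\frac{\ell+1 \pm \abf_\ell}{p^{t_1}}$ are units. With this choice the dictionary of Section~\ref{sec:dictionary} identifies $\mathbf{x} = D_n\cbf(n)_\tfrak'$, $\mathbf{y} = D_n\cbf(n\ell)_\tfrak'$, $\phi = \Fr_\ell$, $\delta = \ubf_\ell$, $M_1 = \abf_\ell$, $M = \ell+1$, $s = t_1$, and all the conditions \ref{condition:pro-dihedrality}--\ref{condition:eichler-shimura} of Section~\ref{sec:setting} are verified (as checked in Section~\ref{sec:dictionary}). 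Applying the abstract formula then yields an identity relating the evaluation at $\Fr_\lambda$ of (a representative of) $D_n\cbf(n)$ with the evaluation at a lift of $\sigma_\ell$ of $D_{n\ell}\cbf(n\ell)$, with the correction factor $\vartheta_\ell$ built from $\Fr_\ell$, $\abf_\ell$, $\ubf_\ell$ exactly as in the statement; the invertibility of the two denominators is Corollary~\ref{cor:Frob-cpmbination-invertible} (combined with Lemma~\ref{lem:infinite-admissible-primes}).

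Next I would rewrite both sides of the abstract identity in cohomological terms. On the left, evaluation at $\Fr_\lambda$ of the finite class $\loc_\lambda(D_n\cbf(n))$ is by definition (Lemma~\ref{lem:isomorfismi-finito-singolare}) the map $\ga_\ell$; passing from $D_n\cbf(n)$ to $\gk(n)_\sfrak$ via the isomorphism of Lemma~\ref{lemma:definition-kolyvagin-derivative}\ref{item:iso-restriction} and then corestricting to $K$ is compatible with $\ga_\ell$, because restriction and corestriction commute with evaluation of unramified cocycles at Frobenius (and $D_n$ is central). This gives the left-hand side $\ga_\ell(\loc_\gl \gk(n)_\sfrak)$. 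On the right, by the explicit construction of $\beta_\ell$ in the proof of Lemma~\ref{lem:isomorfismi-finito-singolare}, evaluating a cocycle representing the singular class at a lift $\sigmati_\ell$ of $\sigma_\ell$ to $\inertia_\ell^t$ is precisely $\beta_\ell$ applied to $[\,\cdot\,]_\s \otimes \sigma_\ell$; the same restriction/corestriction bookkeeping identifies $[\loc_\gl (D_{n\ell}\cbf(n\ell))]_\s$ with $[\loc_\gl \gk(n\ell)_\sfrak]_\s$ (using, as in Step~2 of the proof of Proposition~\ref{prop:derived-classes-in-the-Selmer}, that $\gl$ splits completely appropriately). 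The factor $\Fr_\ell$ in $\vartheta_\ell$ comes from the semidirect-product relation $\Fr_\ell \sigmati_\ell \Fr_\ell^{-1} = \sigmati_\ell^{\,\ell}$ together with the normalization in the dictionary ($\gs = \gs_\ell$, $\phi = \Fr_\ell$).

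The main obstacle I expect is the precise bookkeeping of \emph{which} Frobenius and \emph{which} prime of $K[n]$ is being used at each stage: the abstract formula is stated for the local groups $G_{\Q_\ell} \supseteq G_{K_\gl} \supseteq G_{K[\ell]_{\gl_\ell}}$ with a specified $\gs$ and $c = \tau_c$, while the classes $\cbf(n)$ live over $K[n]$, and one must carefully track the identifications $K[n]_{\gl_n} = K_\gl$, $K[n\ell]_{\gl_{n\ell}} = K[\ell]_{\gl_\ell}$ and the equality $\Fr_\ell \loc_{\lambda_n}(D_n\cbf(n)) = \loc_{\lambda_n}(\Fr_{\lambda_n/\ell}\, D_n\cbf(n))$ (the lemma just before Section~\ref{sec:Derivative-classes-and-local-properties}) in order to land the abstract output in the correct form. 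Once these identifications are made consistently, the identity of the lemma follows by directly reading off the abstract formula under the dictionary. A minor point to check is that the passage between $\T_\tfrak'$ (where the abstract formula is applied) and $\Ts = \T_\sfrak$ (where the statement is phrased) is harmless: all the operators involved are defined over $\calR$ and commute with the reduction $\calR_\tfrak' \to \Rs$, and the unit denominators of $\vartheta_\ell$ remain units after this reduction by the last sentence of Section~\ref{sec:finer-choice-primes}.
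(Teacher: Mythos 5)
Your plan follows the paper's proof in both structure and substance: the dictionary of Section~\ref{sec:dictionary} is applied over $K[n]$ with $\tfrak=\sfrak(\ell)$, Proposition~\ref{prop:key-formula-appendix} yields the displayed relation, and the explicit descriptions of $\alpha_\ell$ and $\beta_\ell$ from Lemma~\ref{lem:isomorfismi-finito-singolare} recover the cohomological statement, with the descent from $K[1]$ to $K$ carried out by corestriction (which on the local group at $\gl$ is multiplication by the $p$-adic unit $[K[1]:K]$, as the paper makes explicit via a commutative diagram). The one misattribution is the trailing $\Fr_\ell$ in $\vartheta_\ell$: it does not come from the tame-inertia relation $\Fr_\ell\,\tilde{\sigma}_\ell\,\Fr_\ell^{-1}=\tilde{\sigma}_\ell^{\,\ell}$ (that identity lives inside the proof of the abstract formula, not in the translation step), but from the elementary rewriting $(\ell+1)-\ubf_\ell\abf_\ell\Fr_\ell=((\ell+1)\Fr_\ell-\ubf_\ell\abf_\ell)\Fr_\ell$ using $\Fr_\ell^2=\id$ on $\Ts$.
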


\begin{proof}
	Let $\gl_{n\ell}$ be a prime of $K[n\ell]$ above $\gl$, write $\gl_n=\gl_{n\ell}\cap K[n]$ and $\gl_1=\gl_{n\ell}\cap K[1]$, and recall that $K[n]_{\gl_n}=K[1]_{\gl_1}=K_{\gl}$. For any $\tfrak\succeq\sfrak$, write 
	\begin{equation*}
		\tilde{\kappa}(n)_{\tfrak} := D_n\cbf(n)_\tfrak \quad\text{and}\quad  \tilde{\kappa}(n\ell)_{\tfrak} := (\res_{K[n]}^{K[n\ell]})^{-1}D_{n\ell}\cbf(n\ell)_\tfrak.
	\end{equation*}
	Notice that $\tilde{\kappa}(n\ell)_{\tfrak}$ is well defined as the proof of Lemma \ref{lemma:definition-kolyvagin-derivative} still holds if we replace $\sfrak$ with $\tfrak$, $n$ with $n\ell$ and $K[1]$ with $K[n]$, just assuming $n \in \squarefreeadmissible'$ and $\ell \in \squarefreeadmissible_\tfrak'$.
	
	Since $\ell\in\admissible_\sfrak'$, then $\sfrak(\ell) \succeq \sfrak$, where $\sfrak(\ell)\in\Z_{>0}^2$ was defined before Corollary \ref{cor:Frob-cpmbination-invertible}. Under the dictionary of Section \ref{sec:dictionary}, with $\tfrak=\sfrak(\ell)$, the key formula of Proposition \ref{prop:key-formula-appendix} translates to
	\begin{equation*}
		\biggl(\frac{\ell+1}{p^{\,s_1(\ell)}}\Fr_\ell-\frac{\abf_\ell}{p^{\,s_1(\ell)}}\biggr)\tilde{\gk}(n)_{\sfrak(\ell)}(\Fr_\ell^2)=\biggl(\frac{\ell+1}{p^{\,s_1(\ell)}}-\frac{\ubf_\ell \, \abf_\ell}{p^{\,s_1(\ell)}}\Fr_\ell\biggr)\tilde{\gk}(n\ell)_{\sfrak(\ell)}(\sigmati_\ell)
	\end{equation*}
	on $\T_{\sfrak(\ell)}$, where $\sigmati_\ell$ is a lift of $\sigma_\ell$ to $\Gal(\Qlbar/\Q_\ell^{\ur})$, since we have that $\abar_x = \tilde{\kappa}(n)_{\sfrak(\ell)}(\Fr_\ell^2)$ and $\abar=-\tilde{\kappa}(n\ell)_{\sfrak(\ell)}(\sigmati_\ell)$. Thanks again to Lemma \ref{lemma:definition-kolyvagin-derivative}, we can also define 
	\begin{equation*}
		\tilde{\kappa}'(n)_{\sfrak} := (\res^{K[n]}_{K[1]})^{-1}\tilde{\gk}(n)_{\sfrak} \quad\text{and}\quad  \tilde{\kappa}'(n\ell)_{\sfrak} := (\res^{K[n]}_{K[1]})^{-1}\tilde{\gk}(n\ell)_{\sfrak},
	\end{equation*}
	so that the above relation, modulo $I_\sfrak$, yields the equality  
	\begin{equation}\label{eq:tilda-prime}
		\biggl(\frac{(\ell+1)\Fr_\ell-\abf_\ell}{p^{\,s_1(\ell)}}\biggr)\tilde{\gk}'(n)_{\sfrak}(\Fr_\ell^2)=\biggl(\frac{(\ell+1)-\ubf_\ell \, \abf_\ell\Fr_\ell}{p^{\,s_1(\ell)}}\biggr)\tilde{\gk}'(n\ell)_{\sfrak}(\sigmati_\ell)
	\end{equation}
	on $\Ts$. By the explicit description of the maps $\alpha_\ell$ and $\beta_\ell$ given by Lemma \ref{lem:isomorfismi-finito-singolare}, we have that
	\[
	\tilde{\gk}'(n)_\sfrak(\Fr_\ell^2) = \alpha_{\ell}(\loc_{\lambda_1} \tilde{\gk}'(n)_\sfrak) \quad \text{and} \quad 
	\tilde{\gk}'(n\ell)_\sfrak(\sigmati_\ell) = \beta_{\ell}([\loc_{\lambda_1} \tilde{\gk}'(n\ell)_\sfrak]_{\s} \otimes \sigma_\ell).
	\] 
	As seen in the proof of Lemma \ref{lem:no-invariants-T}, the fields $K[1]$ and $\Q(\Ts)$ are disjoint over $\Q$. This implies that we can lift every element of $\Gal(K[1]/K)$ to an element of $G_K$ that acts trivially on $\Ts$. Then, the explicit formula of Appendix \ref{app:corestriction} yields the commutative diagram
	\begin{equation*}
		\begin{tikzcd}
			{\hone(K[1],\Ts)} & {\hone(K[1]_{\gl_1},\Ts)} \\
			{\hone(K,\Ts)} & {\hone(K_{\gl},\Ts),}
			\arrow["{\loc_{\gl_1}}", from=1-1, to=1-2]
			\arrow["{\Cor_K^{K[1]}}"', from=1-1, to=2-1]
			\arrow["{[K[1]:K]}", from=1-2, to=2-2]
			\arrow["{\loc_\gl}", from=2-1, to=2-2]
		\end{tikzcd}
	\end{equation*}
	where the right vertical map is multiplication by $[K[1]:K]$. Therefore, by multiplying both sides of equation \eqref{eq:tilda-prime} by $[K[1]:K]$, we obtain the relation
	\begin{equation*}
		\biggl(\frac{(\ell+1)\Fr_\ell-\abf_\ell}{p^{\,s_1(\ell)}}\biggr)\ga_\ell(\loc_\gl\gk(n)_{\sfrak})=\biggl(\frac{(\ell+1)-\ubf_\ell \, \abf_\ell\Fr_\ell}{p^{\,s_1(\ell)}}\biggr)\beta_\ell([\loc_\gl\gk(n\ell)_\sfrak]_{\s}\otimes\sigma_\ell).
	\end{equation*}
	Since $\ell\equiv -1\bmod p^{s_1}$, we have that $\ubf_\ell\equiv\epsilon\ell^a\equiv\pm 1\bmod I_\sfrak$. By Corollary \ref{cor:Frob-cpmbination-invertible}, the coefficients of the left-hand side and the right-hand side are invertible. Therefore, using the fact that $\Fr_\ell^2$ acts as the identity on $\Ts$, we obtain the claimed relation.
\end{proof}

\begin{proof}[Proof of Theorem \ref{th:euler-to-kolyvagin-system}]
	Fix $\sfrak\in\Z_{>0}^2$, set $\chi_{n,\ell}:=\vartheta_\ell^{-1}$, for every $n\ell\in\squarefreeadmissible_\sfrak'$ with $\ell$ prime, and define  
	\begin{equation}
		\koly(n)_\sfrak:=\gk(n)_\sfrak\otimes\bigotimes_{\ell'\mid n}\sigma_{\ell'}\in \hone(K,\Ts)\otimes\calG(n).\label{equ:kappa'}
	\end{equation}
	By combining  Proposition \ref{prop:derived-classes-in-the-Selmer} and Lemma \ref{lem:formula-capitolo-2}, we conclude that the set of classes $\{\koly(n)_\sfrak\}_{n\in\squarefreeadmissible_\sfrak'}$ is a $\{\chi_{n,\ell}\}$-Kolyvagin system for $(\Ts,\lcond_{\rel},\admissible_\sfrak')$.
	
	We need now to prove that, when varying $\sfrak\in\Z_{>0}^2$, these classes interpolate into a modified universal Kolyvagin system. First, notice that, for fixed $n$ and $\ell$, the automorphism $\chi_{n,\ell}$ of $\Ts$ is the image modulo $I_\sfrak$ of an automorphism of $\T$, unique for all $\sfrak\in\Z_{>0}^2$. Therefore, it is enough to check that, whenever $\tfrak\succeq\mathfrak{r}\succeq\sfrak$, the image of 
	$\{\koly(n)_\mathfrak{r}\}_{n\in\squarefreeadmissible_{\tfrak}'}$ under the natural projection map
	\begin{equation*} \KS(\T_{\mathfrak{r}},\lcond_{\rel},\admissible_{\tfrak}',\{\chi_{n,\ell}\})\longrightarrow \KS(\Ts,\lcond_{\rel},\admissible_{\tfrak}',\{\chi_{n,\ell}\})
	\end{equation*}
	coincides with $\{\koly(n)_\sfrak\}_{n\in\squarefreeadmissible_{\tfrak}'}$. This follows easily from the construction of  $\koly(n)_{\mathfrak{r}}$ and $\koly(n)_\sfrak$. Indeed, $\kappa(n)_{\mathfrak{r}}$ and $\kappa(n)_{\sfrak}$ are obtained as the images of the same class $\cbf(n) \in \hone(K[n], \T\,)$ by applying operators that commute  with the natural projection $\T_{\mathfrak{r}} \to \Ts$. 
	
	Lastly, it is clear from the definitions that $\koly(1)_\sfrak=\gk(1)_\sfrak=\Cor_K^{K[1]}\cbf(1)_\sfrak$ for any $\sfrak\in\Z_{>0}^2$, therefore we eventually obtain the relation claimed in Theorem \ref{th:euler-to-kolyvagin-system}.
\end{proof}

\begin{remark}
	As noticed at the end of the proof of Lemma \ref{lem:formula-capitolo-2}, for every $\ell\in\admissible_\sfrak'$ we have that $\ubf_\ell\equiv\pm1\bmod I_\sfrak$ and its value is independent on $\sfrak=(s_1,s_2)\in\Z_{>0}^2$ thanks to Assumption \ref{ass:u-ell}. We remark here that, when $\ubf_\ell\equiv 1\bmod I_\sfrak$, the automorphisms appearing in Lemma \ref{lem:formula-capitolo-2} simplify to $\vartheta_\ell=\Fr_\ell$. This happens for example when $\T$ is the representation attached to a modular form of weight $k\equiv 2\bmod 4$, as we will see in Section \ref{sec:examples}.
\end{remark}

\begin{remark}\label{rk:E3}
	We conclude this section by noticing that some authors require also a condition about the action of the complex conjugation on the classes forming an anticyclotomic Euler system. Using the notation of Definition \ref{def:euler-systems}, this condition can naturally be generalized to our setting to
	\begin{enumerate}[resume*=E]
		\item  $\tau_c(\cbf(n))=w \cdot \sigma(n) (\cbf(n))$ for some $w\in\{\pm 1\}$ and $\sigma(n)\in\Gal(K[n]/K)$.\label{E3}
	\end{enumerate}
	Let $\varepsilon_n := (-1)^{\omega(n)}w$, where $\omega(n)$ is the number of prime factors dividing $n$. Under this further condition, Lemma \ref{lem:formula-capitolo-2} shows that $ \phi_\ell^{\fs}(\loc_{\gl}\gk(n)_\sfrak)=v_{n, \ell}([\loc_{\gl}\gk(n\ell)_\sfrak]_{\s}\otimes \sigma_\ell)$, for every $n\ell\in\squarefreeadmissible_{\sfrak}'$, where
	\begin{equation*}
		v_{n, \ell}=\e_n\frac{\e_n(\ell+1)-\ubf_\ell\,\abf_\ell}{\e_n(\ell+1)-\abf_\ell}\in \calR^\times.
	\end{equation*}
	Indeed, using the commutativity $\tau_c D_n\equiv (-1)^{\omega(n)}D_n \tau_c \bmod{p^{s_1}}$ together with (E3), one can show that $\gk(n)_\sfrak$ lies in the $\e_n$-eigenspace of $\hone(K,\Ts)$ with respect to the action of $\tau_c$. The claimed formula  follows by applying $\beta_\ell^{-1}$ to the last displayed equation in the proof of Lemma \ref{lem:formula-capitolo-2}, since the action of $\Fr_\ell$ coincides with the action of $\tau_c$ on $\hone(K_\gl,\Ts)$ (as they agree as elements of $\Gal(K_\lambda/\Ql)$). 
\end{remark}

\subsection{Applications}\label{sec:applications}

In the spirit of point (2) of the Remark \ref{rk:equivalence-for-applications}, we show here that modified universal Kolyvagin systems have the same usage of classical (universal) Kolyvagin systems when it comes to study the structure of Selmer groups.

For this section, let $\calR$ be a discrete valuation ring, $\Phi$ be its fraction field and $\A:=\T\otimes_{\calR}\Phi/\calR$. By Assumption \ref{ass:assumptions-on-R} \ref{ass-cond:J-ideals}, we have that $J_{s_2}=\{0\}$ for every $s_2\in\Z_{>0}$. This is why, identifying $\sfrak$ with $s_1=:s$, we define
\begin{equation*}
	\calR_s=\calR/p^s\quad\text{and}\quad\T_s=\T/p^s\T.
\end{equation*}
Denote by $\bar{\T}$ the residual $G_\Q$-representation and, if $M$ is any $\calR$-module with an action of the complex conjugation $\tau_c$, we write $M^+$ and $M^-$ for the submodules of $M$ where $\tau_c$ acts as $1$ and $-1$, respectively.

Let $\lcond$ be a Selmer structure on $\T$ over $K$ and assume that the triple $(\T,\lcond,\admissible')$ satisfies hypotheses H.2--H.5 of \cite[Section 1.3]{howard:heegner-points}. In order to recover a parallel of \cite[Theorem 1.6.1]{howard:heegner-points}, we first need a deeper study of the set of primes $\admissible'$ defined in Section \ref{sec:finer-choice-primes}.

\begin{lemma}\label{lem:even-finer-choice-of-primes}
	Let $c^+\in \hone(K,\bar{\T}\hspace{1pt})^+$ and $c^-\in \hone(K,\bar{\T}\hspace{1pt})^-$. Then, for every $s\gg 0$ there are infinitely many primes $\ell\in\varadmissible_s$ such that if $c^\pm \ne 0$, then $\loc_\gl(c^{\pm})\ne 0$, where $\gl=(\ell)$ is the prime of $K$ above $\ell$.
\end{lemma}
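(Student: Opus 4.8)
We may assume that at least one of $c^+, c^-$ is nonzero, and we handle both signs simultaneously. The plan is to combine the Chebotarev input of Lemma~\ref{lem:infinite-admissible-primes} with the usual Kolyvagin-style non-vanishing-of-localization argument. Set $L:=K(\bar{\T}\hspace{1pt})$; note $L\subseteq K(\T_{s+1})$. Hypotheses~H.2--H.5 guarantee that $\bar{\T}$ is an absolutely irreducible $G_K$-module and that $\hone(\Gal(L/K),\bar{\T}\hspace{1pt})=0$, so inflation--restriction makes $\res^L_K\colon\hone(K,\bar{\T}\hspace{1pt})\hookrightarrow\Hom(G_L,\bar{\T}\hspace{1pt})$ injective and $\tau_c$-equivariant. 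Hence, whenever $c^\pm\ne0$, its image $\bar{c}^\pm:=\res^L_K(c^\pm)$ is a nonzero homomorphism $G_L\to\bar{\T}$ lying in the $\pm$-eigenspace for $\tau_c$. Let $F^\pm$ be the subfield of $\Qbar$ fixed by $\ker\bar{c}^\pm$; then $\bar{c}^\pm$ identifies $\Gal(F^\pm/L)$ with a nonzero $G_K$-submodule of $\bar{\T}$, which by irreducibility is all of $\bar{\T}$. Put $F:=F^+F^-$ (or $F:=F^\pm$ if only one sign occurs); it is Galois over $\Q$, unramified outside $NpD_K$, and a short Schur-type computation --- using that $\tau_c$ acts on $\Gal(F^+/L)$ by $+\tau_c$ and on $\Gal(F^-/L)$ by $-\tau_c$ on $\bar{\T}$ --- shows that $F^+$ and $F^-$ are linearly disjoint over $L$, so $\Gal(F/L)=\bar{\T}\times\bar{\T}$.

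Next, fix $s\gg0$ and import from (the proof of) Lemma~\ref{lem:infinite-admissible-primes} an element $\sigma_0\in\Gal(K(\T_{s+1})/\Q)$ --- of the form $\Fr_{\ell_0}\sigma_\alpha$ with $\ell_0\in\admissible_s$ and $\sigma_\alpha$ a scalar in $1+p^s\Zp$ --- such that every prime $\ell$ whose Frobenius in $\Gal(K(\T_{s+1})/\Q)$ is conjugate to $\sigma_0$ lies in $\varadmissible_s$. Since $\sigma_\alpha$ acts trivially on $\bar{\T}$ and (being a scalar) fixes $K$, we have $\sigma_0|_K=\tau_c$ and $\sigma_0$ acts on $\bar{\T}$ as a conjugate of $\tau_c$; in particular $\sigma_0^2$ fixes $L$. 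Form $\tilde L:=F\cdot K(\T_{s+1})$ and pick any lift $\tilde\sigma\in\Gal(\tilde L/\Q)$ of $\sigma_0$; then $\tilde\sigma^2\in\Gal(\tilde L/L)$, with well-defined image $\overline{\tilde\sigma^2}$ in $\Gal(F/L)$. The key claim is that, for $s$ large enough, the restriction map $\Gal(\tilde L/K(\T_{s+1}))\to\Gal(F/L)=\bar{\T}\times\bar{\T}$ is surjective (equivalently $F\cap K(\T_{s+1})=L$): a nontrivial intersection would produce a nonzero $G_K$-quotient of $\bar{\T}\times\bar{\T}$ occurring in $\Gal(K(\T_{s+1})/L)$, forcing $\bar{\T}$ to be a Jordan--H\"older constituent of $\Gal(K(\T_{s+1})/L)$; but the graded pieces of the latter are subquotients of $\mathrm{ad}(\bar{\T}\hspace{1pt})$, which has no constituent isomorphic to $\bar{\T}$ under H.2--H.5. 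Granting this, the group $\Gal(\tilde L/K(\T_{s+1}))$ is abelian and for $\delta$ in it one computes $(\tilde\sigma\delta)^2=\tilde\sigma^2\cdot\delta^{\tilde\sigma}\delta$, where $\delta^{\tilde\sigma}:=\tilde\sigma^{-1}\delta\tilde\sigma$; its image in $\Gal(F^\pm/L)\cong\bar{\T}$ is $\overline{\tilde\sigma^2}_{F^\pm}+(1+J^\pm)(\bar\delta_{F^\pm})$, with $J^\pm=\pm(\sigma_0|_L)$ acting on $\bar{\T}$, which is diagonalizable with eigenvalues $\{1,-1\}$ and one-dimensional eigenspaces, so $1+J^\pm$ has nonzero image. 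Since $\bar\delta_{F^+}$ and $\bar\delta_{F^-}$ may be chosen independently over all of $\bar{\T}\times\bar{\T}$ and $p>2$ (so each eigenspace has at least $p$ elements), we may select $\delta$ so that $(\tilde\sigma\delta)^2$ restricts nontrivially to both $F^+$ and $F^-$. Set $\sigma:=\tilde\sigma\delta$.

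Finally, by Chebotarev's density theorem there are infinitely many primes $\ell$, unramified in $\tilde L$ (hence $\ell\nmid NpD_K$), with $\Fr_\ell$ conjugate to $\sigma$ in $\Gal(\tilde L/\Q)$. For each such $\ell$, its restriction to $K(\T_{s+1})$ is conjugate to $\sigma_0$, so $\ell\in\varadmissible_s$; and since $\sigma^2$ fixes $L$, the prime $\gl=(\ell)$ of $K$ splits completely in $L/K$, whence $K_\gl=L_\Lambda$ for a prime $\Lambda\mid\gl$ of $L$, the group $G_{K_\gl}$ acts trivially on $\bar{\T}$, and $\loc_\gl(c^\pm)$ is represented by the restriction of the homomorphism $\bar{c}^\pm$ to $G_{K_\gl}=G_{L_\Lambda}$. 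Its value at a Frobenius element --- which maps to the nonzero element $\sigma^2|_{F^\pm}$ of $\Gal(F^\pm/L)$ --- is nonzero because $\bar{c}^\pm$ is injective on $\Gal(F^\pm/L)$; hence $\loc_\gl(c^\pm)\ne0$ for each sign with $c^\pm\ne0$. The main obstacle is precisely the linear-disjointness step $F\cap K(\T_{s+1})=L$ together with $\Gal(F^\pm/L)=\bar{\T}$, which is where H.2--H.5 (irreducibility of $\bar{\T}$ over $G_K$ and control of the constituents of $\mathrm{ad}(\bar{\T}\hspace{1pt})$) and the regime $s\gg0$ enter; the remaining steps are routine inflation--restriction and semilocal cohomology bookkeeping.
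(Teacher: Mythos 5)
Your overall strategy (Chebotarev plus an eigenspace computation to force nonvanishing of localizations, combined with the Frobenius prescription from Lemma~\ref{lem:infinite-admissible-primes}) is the right one, and several of the sub-steps (the Schur/Goursat argument for $F^+\cap F^-=L$, the computation $(\tilde\sigma\delta)^2=\tilde\sigma^2\cdot\delta^{\tilde\sigma}\delta$ and the use of $1+J^\pm$, the final evaluation of $\bar c^\pm$ on $\Fr_{\tilde\gl_s/\gl_s}$) are correct and close to the paper's. But at the crucial ``splitting of Galois groups'' step you diverge from the paper, and the justification you give has a gap.

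You fix $L=K(\bar\T)$, build $F=F^+F^-$ cut out by the residual cocycles, and need $F\cap K(\T_{s+1})=L$. You argue that a nontrivial intersection would force $\bar\T$ to be a Jordan--H\"older constituent of $\Gal(K(\T_{s+1})/L)$, whose graded pieces are subquotients of $\mathrm{ad}(\bar\T)$, ``which has no constituent isomorphic to $\bar\T$ under H.2--H.5.'' The first two claims are fine, but the last one is not a consequence of Hypotheses H.2--H.5: those hypotheses concern vanishing of $\hone\big(F(\mu_{p^\infty})/K,\bar\T\big)$, self-duality, local conditions, and Tamagawa factors, and say nothing about the Jordan--H\"older constituents of $\mathrm{ad}(\bar\T)\cong\mathrm{Sym}^2\bar\T(-1)\oplus\mathbf 1$. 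In many concrete cases (big image, or generic dihedral $\bar\T$) the claim happens to hold, but it needs a separate argument, and as written it is simply asserted. Moreover, if it did hold, your argument would prove the statement for every $s\ge1$, which should make you suspicious: the lemma's ``$s\gg0$'' is doing real work that your proof never accounts for.

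The paper sidesteps the issue entirely. Instead of fixing $L=K(\bar\T)$, it works with the growing tower $L_s=K(\T_s)$, restricts $c^\pm$ to get $\psi_s^\pm\colon G_{L_s}\to\bar\T$, and forms $\tilde L_s$ (the Galois closure over $\Q$ of the fields cut out by $\psi_s^\pm$) with $G_s:=\Gal(\tilde L_s/L_s)$. One checks that $G_{s+1}$ injects into $G_s$ and is nonzero; since $G_1$ is finite, the chain stabilizes at some $s_0$, and $G_{s+1}=G_s$ then forces $L_{s+1}\cap\tilde L_s=L_s$, giving the direct-product decomposition $\Gal(\tilde L_{s+1}/L_s)=\Gal(L_{s+1}/L_s)\times G_s$ with no hypothesis on $\mathrm{ad}(\bar\T)$. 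This is exactly where the ``$s\gg0$'' comes from, and it is the device that lets the argument go through with only the stated hypotheses. If you want to salvage your version of the proof, you would either need to add and justify the adjoint-constituent hypothesis separately, or switch to the stabilization argument as in the paper.
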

\begin{proof}
	We assume that both $c^\pm$ are nonzero, the proof of the other cases being analogous. Let $F/\Q$ be the extension of \cite[Hypothesis H.2]{howard:heegner-points} and let $L_s = K(\T_s)$, for every $s\ge 1$. Since $L_s$ is contained in $F(\mu_{p^{\infty}})$, the restriction
	\begin{equation*}
		\hone(K,\bar{\T}\hspace{1pt})\to \hone(L_s,\bar{\T}\hspace{1pt})^{\Gal(L_s/K)}\cong\Hom(G_{L_s},\bar{\T}\hspace{1pt})^{\Gal(L_s/K)}
	\end{equation*}
	is an injection for any $s\ge1$. Denote by $\psi_s^+$ and $\psi_s^-$ the non-zero homomorphisms of $\Hom(G_{L_s},\bar{\T}\hspace{1pt})$ corresponding to $c^+$ and $c^-$. Let $\tilde{L}_s$ be the smallest (necessarily abelian because $\bar{\T}$ is so) extension of $L_s$ that is cut out by $\psi_s^+$ and $\psi_s^-$ and Galois over $\Q$, and let $G_s:=\Gal(\tilde{L}_s/L_s)$. 
	
	Note that $\tau_c$ acts on $G_s$ by conjugation, determining the eigenspaces $G_s^+$ and $G_s^-$ associated with the eigenvalues $1$ and $-1$. Set
	\begin{equation*}
		M_s:=\Hom(G_s,\bar{\T}\hspace{1pt})^{\Gal(L_s/K)}
	\end{equation*}
	and call $\bar{\psi}_s^{\pm}$ the elements of $M_s$ determined by $\psi_s^{\pm}$, respectively. There is also an action of $\tau_c$ on $M_s$ and, by hypothesis, $\bar{\psi}_s^{\pm}\in M_s^{\pm}$.
	
	We now claim that the maps $\bar{\psi}_s^\pm$ are both non-zero on $G_s^+$. This is because each map $\bar{\psi}_s^\pm$ factors through the $p\hspace{1pt}$-primary part of $G_s$, which splits as the sum of the two eigenspaces for the action of $\tau_c$. 
	Hence, if we suppose that $\bar{\psi}_s^\pm(G_s^+)=0$, then $\bar{\psi}_s^\pm(G_s)=\bar{\psi}_s^\pm(G_s^-)$ is contained in $\bar{\T}^\mp$, which is 1-dimensional by Assumption \ref{ass:assumptions-on-T} \ref{condition:assumption-eigenvalues-complex-conjugation}. Since $\bar{\psi}_s^\pm$ is non-zero and fixed by $\Gal(L_s/K)$, it follows that $\bar{\psi}_s^\pm(G_s)$ spans a non-zero proper $(\calR/\m_\calR)$-submodule of $\bar{\T}$ stable under the action of $G_\Q$, which contradicts Assumption \ref{ass:assumptions-on-T} \ref{condition:assumption-irreducible-residual-representation}. Then, it follows that we can find
	\begin{equation*}
		g\in G_s^+\quad\text{such that}\quad \bar{\psi}_s^\pm(g)\ne 0.
	\end{equation*}
	
	Notice now that for any $s\ge1$ the (nontrivial) homomorphism $\psi_{s+1}^{\pm}$ coincides with the restriction of $\psi_s^\pm$ to $G_{L_{s+1}}$, so that $\ker \psi_{s+1}^\pm=\ker \psi_s^\pm\cap G_{L_{s+1}}$. Therefore, a careful analysis of the Galois groups involved yields that $L_{s+1}\tilde{L}_s= \tilde{L}_{s+1}\ne L_{s+1}$, hence $G_{s+1}$ is a nonzero subgroup of $G_s$. Since $G_1$ is finite, there is an $s_0$ such that $G_{s+1}=G_s$ for all $s\ge s_0$, which yields also that $L_s=L_{s+1}\cap \tilde{L}_s$ and in particular that
	\begin{equation}\label{equ:splitting-Galois-groups-applications}
		\Gal(\tilde{L}_{s+1}/L_s)=\Gal(L_{s+1}/L_s)\times G_s.
	\end{equation}
	
	Suppose therefore that $s\ge s_0$ and let $\ga\in 1+p^s\Zp$. By Assumption \ref{ass:assumptions-on-T} \ref{condition:big-image-assumption} and the splitting of \eqref{equ:splitting-Galois-groups-applications}, it follows that there is an element $\gs_\ga\in G_{L_s}$ that fixes $\tilde{L}_s$ and such that the image of $\gs_\ga$ in $\Aut(\T_{s+1})$ is the scalar $\ga$. Define $\othervaradmissible_s(\ga)$ to be the set of primes $\ell$ that are unramified and whose Frobenius $\Fr_\ell$ is conjugated with $\tau_c g \gs_\ga$ in $\Gal(\tilde{L}_{s+1}/\Q)$. Notice that, in particular, $\Fr_\ell|_{\tilde{L}_s}$ is conjugated with $\tau_c g$ and $\Fr_\ell|_{L_{s+1}}$ is conjugated with $\tau_c \sigma_{\alpha}$ (whence $\othervaradmissible_s(\alpha) \subseteq \admissible_s$).
	
	By Chebotarev's density theorem, the set $\othervaradmissible_s(\ga)$ is infinite. Our aim is to find a suitable $\ga$ such that $\othervaradmissible_s(\ga)\subseteq\varadmissible_s$ and such that the primes of $\othervaradmissible_s(\ga)$ satisfy the claim of the lemma.
	
	Choose now a prime $\tilde{\gl}_s$ of $\tilde{L}_s$ above $\ell\in \othervaradmissible_s(\ga)$ and set $\gl_s=\tilde{\gl}_s\cap L_s$ and $\gl=\gl_s\cap K$. Notice that $\gl$ splits completely in $L_s$, because $\Fr_{\lambda}$ acts trivially on $\T_s$. Moreover we have the equality $\Fr_{\tilde{\gl}_s/\gl_s}=(\Fr_{\tilde{\gl}_s/\ell})^2=\tau_c \, g\,\tau_c\, g=g^2\in G_s$, using the fact that $g\in G_s^+$, and therefore
	\begin{equation*}
		\bar{\psi}^{\pm}(\Fr_{\tilde{\gl}_s/\gl_s})
		=\bar{\psi}^{\pm}(g^2)=2\bar{\psi}^{\pm}(g)\ne 0,
	\end{equation*}
	since $p\ne 2$. This shows that the restriction of $\bar{\psi}^{\pm}$ to $\Gal\big((\tilde{L}_s)_{\tilde{\gl}_s}/(L_s)_{\gl_s}\big)$ is non-zero and hence, since $(L_s)_{\gl_s}=K_\gl$, we obtain that $\loc_\gl c^{\pm}$ is non-zero.
	
	Eventually, notice that, for any $\ell_0 \in \admissible_{s+1}$, the set $\varadmissible_s(\ell_0, \alpha)$ defined in the proof of Lemma \ref{lem:infinite-admissible-primes} contains $\othervaradmissible_s(\alpha)$, hence we can argue as in that proof and find a suitable $\ga\in 1+p^s\Zp$ such that $\othervaradmissible_s(\ga)\subseteq \varadmissible_s$, concluding the proof.
\end{proof}

\begin{theorem}\label{thm:howard-abstract}
	Suppose that there is a universal Kolyvagin system \[
	\koly\in  \KSuni(\T,\lcond,\admissible',\{\chi_{n,\ell}\})
	\] 
	with $\koly(1)\ne 0$. Then $\hone_{\lcond}(K,\T\hspace{1pt})$ is a free rank-one $\calR$-module and there is a finite $\calR$-module $M$ such that
	\begin{equation*}
		\hone_{\lcond}(K,\A)\cong \Phi/\calR\oplus M\oplus M.
	\end{equation*}
	Furthermore, we have that $\length_{\calR}(M) \le \length_\calR(\hone_{\lcond}(K, \T\hspace{1pt})/\calR\cdot \koly(1))$.
\end{theorem}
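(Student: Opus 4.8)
The plan is to run the rank-one Kolyvagin system argument of \cite[Ch.~5]{mazur-rubin:kolyvagin-systems} and \cite[\S1.5--1.6]{howard:heegner-points} almost verbatim; the two places that require care are that our Kolyvagin system is \emph{modified} by the automorphisms $\{\chi_{n,\ell}\}$, and that the Kolyvagin primes must be drawn from the restricted sets $\varadmissible_s$ of Definition~\ref{def:Kolyvagin-primes}. I would first reduce to finite level. Since $\calR$ is a discrete valuation ring, Assumption~\ref{ass:assumptions-on-R}\ref{ass-cond:J-ideals} forces $J_{s_2}=0$, the index $\sfrak$ collapses to the single parameter $s$, one has $\Ts=\T/p^s\T$, and the universal $\{\chi_{n,\ell}\}$-Kolyvagin system $\koly$ is, in the inverse limit over $s$, a compatible family $\{\koly(n)_s\}_{n\in\squarefreeadmissible_s'}$ of honest $\{\chi_{n,\ell}\}$-Kolyvagin systems over the Artinian principal rings $\calR_s$. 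I would also observe at once that $\hone_\lcond(K,\T)$ is $\calR$-torsion free, hence free over the DVR $\calR$: it embeds in $\hone(K,\T)$, whose torsion vanishes because $\ho(K,\bar{\T})=0$, a special case of Lemma~\ref{lem:no-invariants-T} combined with Assumption~\ref{ass:assumptions-on-T}\ref{condition:assumption-irreducible-residual-representation}. The content of the first assertion is then that the rank equals one and that $\koly(1)$ is a nonzero, hence finite-index, element.

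Next I would check that the modification by $\{\chi_{n,\ell}\}$ disturbs none of the rank and length bookkeeping. The only structural use of the Kolyvagin relation in the rank-one machinery is the transfer of non-triviality between consecutive levels, and in our setting relation~\ref{condition:K2} asserts precisely that the modified finite-singular isomorphism $\phi^\fs_\ell(\chi_{n,\ell})=\beta_\ell^{-1}\circ\chi_{n,\ell}\circ\alpha_\ell$, applied to the localization at $\gl=(\ell)$ of the level-$n$ class, computes the singular part of the localization at $\gl$ of the level-$n\ell$ class (up to the tensor factor by a generator of $\calG_\ell$, which is harmless for the purposes at hand). Since $\alpha_\ell$, $\chi_{n,\ell}$ and $\beta_\ell$ are all isomorphisms, the annihilator in $\calR_s$ of $\loc_\gl$ of the level-$n$ class inside $\honef(K_\gl,\Ts)$ equals the annihilator of the singular part of $\loc_\gl$ of the level-$n\ell$ class inside $\hones(K_\gl,\Ts)$; together with the splitting $\hone(K_\gl,\Ts)=\honef(K_\gl,\Ts)\oplus\hone_{\tr}(K_\gl,\Ts)$ and the isomorphism $\hone_{\tr}(K_\gl,\Ts)\iso\hones(K_\gl,\Ts)$, this is exactly the input the classical argument uses. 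Hence one may substitute $\phi^\fs_\ell(\chi_{n,\ell})$ for $\phi^\fs_\ell$ at every occurrence in \cite[\S1.5--1.6]{howard:heegner-points} with no effect on the conclusions, as anticipated in Remark~\ref{rk:equivalence-for-applications}(2).

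The one genuinely new ingredient is where the restricted prime set enters. Wherever the classical proof invokes a Chebotarev argument to produce a Kolyvagin prime $\ell$ at which a prescribed residual Selmer class localizes nontrivially, I would substitute Lemma~\ref{lem:even-finer-choice-of-primes}, which for $s\gg0$ supplies infinitely many such $\ell$ lying in $\varadmissible_s$ and does so simultaneously for a prescribed class in $\hone(K,\bar{\T})^+$ and one in $\hone(K,\bar{\T})^-$. This $\pm$-refinement is precisely what the argument requires, since the whole computation is organized along the $\tau_c$-eigenspace decomposition; moreover it is the one-dimensionality of the $\pm1$-eigenspaces of $\tau_c$ (Assumption~\ref{ass:assumptions-on-T}\ref{condition:assumption-eigenvalues-complex-conjugation}), together with the self-duality implicit in hypotheses H.2--H.5 of \cite[\S1.3]{howard:heegner-points}, that makes the Cassels--Tate-type pairing on $\hone_\lcond(K,\A)$ alternating, so that its nondivisible part comes out as a square $M\oplus M$ rather than an arbitrary finite module.

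With these two points settled, the conclusion is \cite[Theorem~1.6.1]{howard:heegner-points} (compare \cite[Theorem~5.2.12]{mazur-rubin:kolyvagin-systems}): chaining the relations~\ref{condition:K2} along sequences of primes of $\varadmissible_s$ furnished by Lemma~\ref{lem:even-finer-choice-of-primes} turns the non-vanishing of $\koly(1)$ into upper bounds on the finite Selmer groups attached to $\Ts$ which, passed through global (Poitou--Tate) duality together with the core-rank-one input of hypotheses H.2--H.5, give that $\hone_\lcond(K,\T)\cong\calR$, that $\hone_\lcond(K,\A)\cong\Phi/\calR\oplus M\oplus M$ for a finite $\calR$-module $M$, and that $\length_\calR(M)\le\length_\calR\bigl(\hone_\lcond(K,\T)/\calR\cdot\koly(1)\bigr)$, the right-hand side being the valuation of $\koly(1)$. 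I expect the main obstacle to be not any individual estimate but the uniform-in-$s$ verification that the twist by $\{\chi_{n,\ell}\}$ and the restriction to $\varadmissible_s$ leave every step of the known argument intact — handled by the two middle paragraphs — after which the rest is a faithful transcription of \cite{mazur-rubin:kolyvagin-systems} and \cite{howard:heegner-points}.
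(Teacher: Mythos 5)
Your proposal is correct and follows the same strategy as the paper's proof: it reduces to Howard's Theorem~1.6.1, with the two modifications the paper identifies — replacing Howard's prime-choosing lemma with Lemma~\ref{lem:even-finer-choice-of-primes}, and substituting the twisted finite-singular isomorphism $\phi^\fs_\ell(\chi_{n,\ell})$ for $\phi^\fs_\ell$ in the Kolyvagin relation, which is harmless as noted in Remark~\ref{rk:equivalence-for-applications}. Your write-up is a more detailed version of the paper's brief proof but not a different route.
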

\begin{proof}
	Notice that the hypotheses H.0 and H.1 of \cite[Section 1.3]{howard:heegner-points} are a consequence of the first two points of Assumption \ref{ass:assumptions-on-T}, therefore all the hypotheses H.0-H.5 of \textit{loc.~cit.}~are in force. Then, the proof goes along the lines of the proof of \cite[Theorem 1.6.1]{howard:heegner-points}, where we replace Lemma 1.6.2 in \textit{loc.~cit.}~with Lemma \ref{lem:even-finer-choice-of-primes}. Notice also that, as observed in Remark \ref{rk:equivalence-for-applications}, we can use the existence of a  universal Kolyvagin system instead of a classical one and replace the usual Kolyvagin system relations, used by Howard at the end of the proof of \cite[Lemma 1.6.4]{howard:heegner-points}, with our modified Kolyvagin system relations, where we twisted the finite-singular automorphism with the $\chi_{n,\ell}$'s. These changes are harmless and lead to the claim exactly as in \cite{howard:heegner-points}. 
\end{proof}

\begin{remark}
	In the literature one can find also stronger results making use of a different variation of Kolyvagin's method. However, these results are specific to the chosen arithmetic object and often make also use of \ref{E3}, which we didn't assume. See Section \ref{sec:examples} for more on this topic.
\end{remark}

\section{Iwasawa theory}\label{sec:Iwasawa-theory}

The main goal of this section is to define a general notion of \emph{$p\hspace{1pt}$-complete anticyclotomic Euler system} and to show how one can derive a modified universal Kolyvagin system for the anticyclotomic twist of our fixed Galois representation $\T$ from it. We keep using the same notation and assumptions of Sections \ref{subsec:the-arithmetic-picture}--\ref{sec:hypotheses}.

\subsection{The anticyclotomic twist of $\T$}\label{sec:the-anticyclotomic-twist}

For every integer $\ga\ge 1$, denote by $K_\ga$ the maximal $p\hspace{1pt}$-subextension of $K[p^{\ga+1}]/K$ and set $K_0:=K$. By \cite[Theorem 7.24]{cox:primes-of-the-form} and
Assumption \ref{ass:class-number-K}, we have that $K_\ga/K$ is cyclic of order $p^\ga$.

\begin{definition}
	The field $K_{\infty}:=\bigcup_{\alpha\ge 0} K_\alpha$ is called the \emph{anticyclotomic $\Zp$-extension} of $K$.
\end{definition}

If $n\in\Z_{>0}$ is coprime with $p$, the fields $K_\ga$ and $K[n]$ are disjoint over $K$ by ramification issues and Assumption \ref{ass:class-number-K}. When $n=1$, this implies that any prime of $K$ that lies above $p$ is totally ramified in $K_\ga$. Moreover, by class field theory, every prime of $K$ lying above a prime of $\admissible$ is split in $K_\ga$.

\begin{definition}
	We define $K_\ga[n]$ to be the compositum of $K_\ga$ and $K[n]$. Moreover, we set $\Gamma^{\ac}:=\Gal(K_\infty/K)\cong\Zp$ and $\gL^{\ac}:=\Zp\llbracket \Gamma^{\ac}\rrbracket $. We also fix a profinite generator $\gamma_{\ac}$ of $\Gamma^{\ac}$.
\end{definition}

In order to study the arithmetic of $\T$ along the anticyclotomic tower, we introduce the following twists of $\calR$ and $\T$.

\begin{definition}
	Call $\calR^{\ac}=\calR\llbracket \Gamma^{\ac}\rrbracket \cong\calR \ \hat{\otimes}_{\Zp} \gL^{\ac}$ and set $\T^{\ac}=\T\otimes_{\calR}\calR^{\ac}\cong\T \ \hat{\otimes}_{\Zp}\gL^{\ac}$. Here the symbol $\hat{\otimes}_{\Zp}$ denotes the completed tensor product between topological $\Zp$-modules.
\end{definition}

We make $\calR^{\ac}$ a left $G_K$-module via the natural projection $G_K\twoheadrightarrow \Gamma^{\ac}$. Due to this, $\T^{\ac}$ is naturally a $G_K$-module via the action coming from both sides of the tensor product. Since, as $G_K$-module, $\Gamma^{\ac}$ is unramified outside $p$, the $G_K$-representation $\T^{\ac}$ is unramified outside $Np$. Moreover, it is free of rank $2$ over $\calR^{\ac}$ and its residual representation is isomorphic to the residual representation of $\T$.

For every $\sfrak=(s_1,s_2)\in\Z_{>0}^{2}$ and $\ga\ge 0$, set $I_{\sfrak,\ga}:=(I_\sfrak,\gamma_{\ac}^{p^{\ga}}-1)\subseteq \calR^{\ac}$ and
\begin{equation*}
	\Rsa^{\ac}:=\calR^{\ac}/I_{\sfrak,\ga}\cong\Rs\otimes_{\Zp}\gL^{\ac}/(\gamma_{\ac}^{p^{\ga}}-1)\quad\text{and}\quad \Tsa^{\ac}:=\T^{\ac}\otimes_{\calR^{\ac}}\Rsa^{\ac}.
\end{equation*}
Both $\Rsa^{\ac}$ and $\Tsa^{\ac}$ are finite of $p\,$-power order and
\begin{equation*}
	\calR^{\ac}=\varprojlim_{\sfrak,\ga}\Rsa^{\ac}\quad\text{and}\quad \T^{\ac}=\varprojlim_{\sfrak,\ga}\Tsa^{\ac}.
\end{equation*}

\begin{lemma}\label{lem:iwasawa-primes}
	Let $\ell\in\admissible_\sfrak$ and let $\gl=(\ell)$ be the prime of $K$ above $\ell$. Then
	\begin{enumerate}[label=\emph{(\roman*)}]
		\item $\Fr_\gl$ acts as the identity on $\Tsa^{\ac}$ for every $\ga\ge 0$.\label{condition:iwasawa-Frobenius-trivial}
		\item Let $n$ be a positive integer coprime with $NpD_K$ and let $T^{\ac}$ be any quotient of $\T^{\ac}$ by an ideal of $\calR^{\ac}$. Then $\ho(K[n],T^{\ac})=0$.\label{condition:iwasawa-no-galois-invariants}
	\end{enumerate}
\end{lemma}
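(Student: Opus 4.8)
The plan is to establish the two statements separately, each by reduction to results already in place.

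For (i), I would first unwind the definitions: since $\calR^{\ac}=\calR\,\hat{\otimes}_{\Zp}\gL^{\ac}$ and $I_{\sfrak,\ga}=(I_\sfrak,\gamma_{\ac}^{p^\ga}-1)$, one gets $\Tsa^{\ac}\cong\Ts\otimes_{\Zp}\Zp[\Gal(K_\ga/K)]$, with $G_K$ acting diagonally: through the given representation on the factor $\Ts$ and, on the group-ring factor, through the projection $G_K\twoheadrightarrow\Gal(K_\ga/K)$ acting by left translation. It therefore suffices to check that $\Fr_\gl$ acts trivially on each of the two factors. On $\Ts$ this is precisely the observation recorded right after Definition \ref{dfn:s-admissible-primes}: for $\ell\in\admissible_\sfrak$ the element $\Fr_\gl=\Fr_\ell^2$ acts as the identity on $\Ts$. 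On the group-ring factor $\Fr_\gl$ acts by multiplication by the image of $\Fr_\gl$ in $\Gal(K_\ga/K)$; since $\ell\nmid p^{\ga+1}$, the prime $\lambda=(\ell)$ splits completely in the ring class field $K[p^{\ga+1}]$ (the class field theory fact recalled in Section \ref{subsec:the-arithmetic-picture}), hence in its subextension $K_\ga$, so this image is trivial. Combining the two observations proves (i).

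For (ii), I would follow the proof of Lemma \ref{lem:no-invariants-T} essentially verbatim. Write $T^{\ac}=\T^{\ac}/\mathfrak{a}\T^{\ac}$ for an ideal $\mathfrak{a}\subseteq\calR^{\ac}$ and set $A:=\calR^{\ac}/\mathfrak{a}$; this is a local Noetherian ring, and since $\T^{\ac}$ is free of rank $2$ over $\calR^{\ac}$, the module $T^{\ac}$ is free — in particular $A$-torsion-free — of rank $2$ over $A$. Moreover $T^{\ac}/\m_A T^{\ac}\cong\T^{\ac}/\m_{\calR^{\ac}}\T^{\ac}$ is the residual representation $\bar\T$ of $\T$, which, as shown inside the proof of Lemma \ref{lem:no-invariants-T} (using precisely that $n$ is coprime with $NpD_K$, so that $\Q(\bar\T)\cap K[n]=\Q$ and hence $G_{K[n]}\twoheadrightarrow\Gal(\Q(\bar\T)/\Q)$), has no $G_{K[n]}$-invariants. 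Applying Lemma \ref{lem:nakayama-for-irred-repr} to the continuous $A$-linear action of $G_{K[n]}$ on $T^{\ac}$ then yields $\ho(K[n],T^{\ac})=0$.

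Neither step is a genuine obstacle; the one place that calls for a little care is in (i), namely keeping the two sources of the Galois action on $\T^{\ac}$ apart and checking that $\Fr_\gl$ acts trivially through both — the splitting of $\lambda$ in $K_\ga$ disposing of the $\gL^{\ac}$-twist, and membership of $\ell$ in $\admissible_\sfrak$ disposing of the representation $\T$ itself.
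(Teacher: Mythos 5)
Your proof is correct and follows essentially the same route as the paper: part (i) splits the action of $\Fr_\lambda$ into the $\Ts$-factor (trivial since $\ell\in\admissible_\sfrak$) and the group-ring factor (trivial since $\lambda$ splits completely in $K_\alpha$), and part (ii) reduces to the residual representation via Lemma \ref{lem:nakayama-for-irred-repr}, using the vanishing established in (the proof of) Lemma \ref{lem:no-invariants-T}. The paper invokes the statement of Lemma \ref{lem:no-invariants-T} directly rather than re-running its argument, but the substance is identical.
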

\begin{proof}
	\emph{\ref{condition:iwasawa-Frobenius-trivial}} Since $\ell\in \admissible_\sfrak$, we have that $\Fr_\gl$ acts as the identity on $\Ts$. Moreover, $\Fr_\gl$ acts as the identity also on $\gL^{\ac}$, since $\gl$ is split in $K_\infty/K$.
	
	\emph{\ref{condition:iwasawa-no-galois-invariants}} Since the residual $G_K$-representation $\bar{\T}^{\ac}$ of $\T^{\ac}$ coincides with the residual representation of $\T$, by Lemma \ref{lem:no-invariants-T} (that is a consequence of Assumption \ref{ass:assumptions-on-T}  \ref{condition:assumption-irreducible-residual-representation}) we know that $\ho(K[n],\bar{\T}^{\ac})=0$. We obtain the claim by applying Lemma \ref{lem:nakayama-for-irred-repr}.
\end{proof}

Point \emph{\ref{condition:iwasawa-Frobenius-trivial}} of Lemma \ref{lem:iwasawa-primes} implies that the maps $\alpha_\ell$, $\beta_\ell$ and the finite-singular isomorphism $\phi_\ell^{\fs} \colon \honef(K_\gl,\Tsa^{\ac})\to \hones(K_\gl,\Tsa^{\ac})\otimes\calG_\ell$ of Definition \ref{def:finite-singular} are well defined for every $\ell\in\admissible_\sfrak$.

\subsection{Shapiro's lemma}\label{subsec:Shapiro's-lemma}

Just for this subsection, let $F$ be any perfect field. Let $F_\infty/F$ be a $\Zp$-extension and, for every $\ga\ge 0$, denote by $F_\ga$ the $\ga$-th layer of the extension. Let moreover $T$ be a $\Zp\llbracket G_F\rrbracket $-module. We allow $G_F$ to act on both factors of $T\otimes_{\Zp}\Zp[\Gal(F_\ga/F)]$ and $T \ \hat{\otimes}_{\Zp}\Zp\llbracket \Gal(F_\infty/F)\rrbracket $ via the natural projection.

\begin{lemma}\label{lem:Shapiro application}
	Let $\ga\ge 0$. Shapiro's lemma induces isomorphisms
	\begin{enumerate}[label=\emph{(\roman*)}]
		\item $\Sh_\ga\colon \hone(F_\ga,T)\cong \hone\big(F, T\otimes_{\Zp}\Zp[\Gal(F_\ga/F)]\big)$;\label{item:shapiro-alpha}
		\item $\Sh_\infty\colon \varprojlim_{\ga} \hone(F_\ga,T)\cong \hone\big(F, T \ \hat{\otimes}_{\Zp}\Zp\llbracket \Gal(F_\infty/F)\rrbracket \big)$, where the inverse limit is taken with respect to the corestriction maps.\label{condition:shapiro-anticyclotomic-p-extension}
	\end{enumerate}
\end{lemma}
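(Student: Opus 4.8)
The plan is to reduce both statements to the classical Shapiro's lemma for the open finite‑index subgroups $G_{F_\ga}\subseteq G_F$, combined with the standard ``untwisting'' isomorphism of an induced module with a tensor product carrying the diagonal action.

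For \emph{(i)}: since $F_\ga/F$ is finite, $G_{F_\ga}$ is open of finite index in $G_F$, so induction and coinduction from $G_{F_\ga}$ to $G_F$ coincide and Shapiro's lemma in continuous cohomology gives a canonical isomorphism $\hone(F_\ga,T)=\hone(G_{F_\ga},T)\cong\hone\bigl(G_F,\Ind_{G_{F_\ga}}^{G_F}T\bigr)$. I would then identify $\Ind_{G_{F_\ga}}^{G_F}T$ (a module built from the $G_{F_\ga}$‑action alone) with $T\otimes_{\Zp}\Zp[\Gal(F_\ga/F)]$ endowed with the diagonal action (the second factor through $G_F\twoheadrightarrow\Gal(F_\ga/F)$). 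This is the usual untwisting isomorphism, available precisely because the $G_{F_\ga}$‑action on $T$ extends to $G_F$: one checks directly that $g\otimes t\mapsto gt\otimes gG_{F_\ga}$ is a well‑defined $G_F$‑equivariant isomorphism $\Zp[G_F]\otimes_{\Zp[G_{F_\ga}]}T\iso T\otimes_{\Zp}\Zp[\Gal(F_\ga/F)]$, with inverse $t\otimes\bar g\mapsto\tilde g\otimes\tilde g^{-1}t$ for any lift $\tilde g$ of $\bar g$. The composite of these two isomorphisms is $\Sh_\ga$.

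For \emph{(ii)}: I would apply \emph{(i)} at every layer and pass to the inverse limit, the only real point being to identify the transition maps. On the left they are the corestrictions $\Cor^{F_{\ga+1}}_{F_\ga}$. By the standard compatibility of Shapiro's lemma with corestriction, for $G_{F_{\ga+1}}\subseteq G_{F_\ga}\subseteq G_F$ the map $\Cor^{F_{\ga+1}}_{F_\ga}$ is transported to the morphism on $\hone(G_F,-)$ induced by the canonical surjection $\Ind_{G_{F_{\ga+1}}}^{G_F}T\twoheadrightarrow\Ind_{G_{F_\ga}}^{G_F}T$; chasing this through the untwisting isomorphisms shows it becomes $\id_T\otimes\pi_\ga$, where $\pi_\ga\colon\Zp[\Gal(F_{\ga+1}/F)]\twoheadrightarrow\Zp[\Gal(F_\ga/F)]$ is the group‑algebra map induced by $\Gal(F_{\ga+1}/F)\twoheadrightarrow\Gal(F_\ga/F)$. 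Since $\Zp\llbracket\Gal(F_\infty/F)\rrbracket=\varprojlim_\ga\Zp[\Gal(F_\ga/F)]$, the very definition of the completed tensor product gives $T\,\hat{\otimes}_{\Zp}\Zp\llbracket\Gal(F_\infty/F)\rrbracket=\varprojlim_\ga\bigl(T\otimes_{\Zp}\Zp[\Gal(F_\ga/F)]\bigr)$, so it remains to commute continuous $\hone(G_F,-)$ with this countable inverse limit of profinite modules along surjective transition maps. This follows from the Milnor ($\varprojlim^1$) exact sequence for continuous cochain cohomology together with the vanishing of $\varprojlim^1$ for a countable inverse system of profinite groups (here $\{\ho(G_F,T\otimes_{\Zp}\Zp[\Gal(F_\ga/F)])\}_\ga$), cf.\ \cite{neukirch:cnf}; when $T$ is finite all modules in sight are finite and this step is immediate. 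Composing all the identifications yields $\Sh_\infty$.

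The bookkeeping in \emph{(ii)} is where the care is needed: one must pin down both the direction and the explicit shape of the Shapiro/corestriction compatibility, so that the transition maps genuinely become the group‑algebra projections $\pi_\ga$ rather than, say, a norm map in the opposite direction, and one must justify that continuous $\hone$ commutes with the inverse limit for a coefficient module $T$ that need not be finite. Both are standard facts about continuous cohomology of profinite groups, but they are the steps most prone to slips.
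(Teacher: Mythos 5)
Your argument follows the same route as the paper: Shapiro's lemma $\hone(F_\ga,T)\cong\hone(G_F,\Ind_{G_{F_\ga}}^{G_F}T)$, the untwisting identification $\Ind_{G_{F_\ga}}^{G_F}T\cong T\otimes_{\Zp}\Zp[\Gal(F_\ga/F)]$ (valid because the $G_{F_\ga}$-action extends to $G_F$), and passage to the inverse limit for \emph{(ii)}. The paper compresses all of this into citations (to Neukirch--Schmidt--Wingberg for induction vs.~coinduction and to Colmez for an explicit inverse-limit version), whereas you spell out the untwisting map, the identification of the corestriction transition maps with $\id_T\otimes\pi_\ga$, and the $\varprojlim^1$-vanishing needed to commute continuous $\hone(G_F,-)$ with the inverse limit — all correct, and a useful unpacking of what the paper leaves implicit.
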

\begin{proof}
	Shapiro's lemma yields an isomorphism $\hone(F_\ga, T)\cong \hone(F,\Ind_{F_\ga}^F T)$, where $\Ind_{F_\ga}^F T$ is the (co)induction of $T$ from $G_{F_\ga}$ to $G_F$. Since the extension $F_\ga/F$ is finite and normal, the isomorphism between induction and coinduction (see e.g.~\cite[Section I.6]{neukirch:cnf}) yields the isomorphism $\Ind_{F_\ga}^F T\cong T\otimes_{\Zp}\Zp[\Gal(F_\ga/F)]$, that proves point \emph{\ref{item:shapiro-alpha}}. Point \emph{\ref{condition:shapiro-anticyclotomic-p-extension}} is obtained by taking inverse limits. For a more explicit approach, see e.g.~\cite[Proposition II.1.1]{Col}.
\end{proof}

Coming back to our usual notation, for every $\sfrak=(s_1,s_2)\in\Z_{>0}^{2}$ and $\ga\ge 0$,  Lemma \ref{lem:Shapiro application} yields isomorphisms
\begin{equation}\label{eq:shapiro-isomorphisms}
	\Sh_\ga\colon \hone(K_\ga,\Ts)\cong \hone(K,\Tsa^{\ac})\quad\text{and}\quad  \Sh_\infty\colon \varprojlim_\ga \hone(K_\ga,\T\hspace{1pt})\cong \hone(K,\T^{\ac}).
\end{equation}
Therefore, studying the arithmetic of $\T$ over the anticyclotomic tower is equivalent to studying the arithmetic of $\T^\ac$ (and its quotients) over $K$.

\subsection{$p\hspace{1pt}$-complete anticyclotomic Euler systems and universal Kolyvagin systems} \label{sec:p-complete-euler-systems}

We now want to modify the notion of anticyclotomic Euler system given in Definition \ref{def:euler-systems}, requiring also some compatibilities at powers of $p$. 

\begin{definition}
	Denote by $\squarefreeadmissible^{(p)}$ the set of products of elements of $\squarefreeadmissible$ and powers of $p$. For every subset $\admissible'$ of $\admissible$, denote by $\squarefreeadmissible^{(p)'}$ the set of products of elements of $\squarefreeadmissible'$ and powers of $p$.
\end{definition}

We can write every element $c\in \squarefreeadmissible^{(p)}$ as $c=np^\ga$ for unique $n\in\squarefreeadmissible$ and $\ga\ge 0$. As in Section \ref{subsec:anticyclotomic-Euler-systems}, let $\admissible'$ be an infinite subset of $\admissible$ and let $\lcond(=\{\lcond_L\})$ be a collection of Selmer structures on $\T$ over every subfield $L$ that is a subextension of $K[c]/K$ for some $c\in \squarefreeadmissible^{(p)}$. Let $\abf = \{\abf_\ell\}_{\ell \in \admissible'}$ be a set of elements of $\calR$ such that $\Tr(\hspace{1pt} \Fr_\ell \hspace{1pt} \vert \hspace{1pt} \T\hspace{1pt}) = \ubf_\ell\abf_\ell$, where $\ubf_\ell$ is a unit of $\calR$ satisfying Assumption \ref{ass:u-ell}.

\begin{definition}\label{def:p-complete-euler-systems}
	A \emph{$p\hspace{1pt}$-complete anticyclotomic Euler system} attached to the triple $(\T,\lcond,\admissible')$ and relative to the set $\abf$ is a collection $\{\bbf(np^{\ga})\}_{np^{\ga}\in\squarefreeadmissible^{(p)'}}$ of classes $\bbf(np^{\ga})\in \hone_{\lcond}(K_\ga[n],\T\hspace{1pt})$ such that, for every $\ell\in\admissible'$ prime to $n$, we have:
	\begin{enumerate}[label=(pE\arabic*), start=0]
		\item $\Cor^{K_{\ga+1}[n]}_{K_\ga[n]}\bbf(n p^{\ga+1})=\bbf(n p^{\ga})$;\label{pE0}
		\item $\Cor^{K_\ga[n\ell]}_{K_\ga[n]} \bbf(n\ell p^\ga)=\abf_\ell\bbf(np^\ga)$;\label{pE1}
		\item $\loc_{\lambda_{n\ell}}\bbf(n\ell p^\ga) = \res^{K_\ga[n\ell]_{\lambda_{n\ell}}}_{K_\ga[n]_{\lambda_{n}}}\bigl(\Fr_\ell \loc_{\lambda_n}\bbf(np^\ga)\bigr)$ in $\hone(K_\ga[n\ell]_{\gl_{n\ell}},\T\hspace{1pt})$, for every prime $\gl_{n\ell}$ of $K_\ga[n\ell]$ above $\ell$ and setting $\lambda_n = \lambda_{n\ell} \cap K_\ga[n]$.\label{pE2}
	\end{enumerate}
	The $\calR$-module of $p\hspace{1pt}$-complete anticyclotomic Euler systems for $(\T,\lcond,\admissible')$ and relative to $\abf$ will be denoted by $\ES^{(p)}(\T,\lcond,\admissible', \abf)$.
\end{definition}

When $n=1$, condition \ref{pE0} implies that the set $\{\bbf(p^\ga)\}_{\ga\ge 0}$ is compatible with respect to the corestriction maps, therefore we can define
\begin{equation*}
	\bbf(1)^{\ac}:=\{\bbf(p^\ga)\}_{\ga\ge 0}\in \varprojlim_\ga \hone(K_\ga[1],\T\hspace{1pt})\cong \hone(K[1],\T^{\ac}),
\end{equation*}
where the isomorphism is the  one of point \emph{\ref{condition:shapiro-anticyclotomic-p-extension}} of Lemma \ref{lem:Shapiro application}, with $F=K[1]$. We also define the \emph{basic} class of the system $\bbf^\ac_K := \Cor^{K[1]}_K \bbf(1)^\ac \in \hone(K,\T^{\ac})$.

Let now $\lcond$ be a Selmer structure on $\T^{\ac}$ over $K$, that induces Selmer structures on the quotients $\Tsa^{\ac}$ by propagation. All the constructions of Section \ref{subsec:modified-Kolyvagin-systems} can be performed with $\T^{\ac}$ and $\Tsa^{\ac}$ in place of $\T$ and $\Ts$, respectively. Indeed, if we choose a subset $\admissible'$ of $\admissible_{\boldsymbol{1}}$ and an automorphism $\chi_{n,\ell}\in\Aut(\T^{\ac})$ for every couple $(n,\ell)$ with $n\ell\in\squarefreeadmissible'$, we have the $\calR^{\ac}$-module of \emph{universal $\{\chi_{n,\ell}\}$-Kolyvagin systems} for $(\T^{\ac},\lcond,\admissible')$
\begin{equation*}
	\KSuni(\T^{\ac},\lcond,\admissible',\{\chi_{n,\ell}\}):=\varprojlim_{\sfrak,\ga}\left(\varinjlim_{\tfrak\succeq \sfrak} \KS(\Tsa^{\ac},\lcond,\admissible_{\tfrak}',\{\chi_{n,\ell}\})\right).
\end{equation*}
Again, for a universal Kolyvagin system $\koly^{\ac}\in \KSuni(\T^{\ac},\lcond,\admissible',\{\chi_{n,\ell}\})$ induced by a set of classes $\{\koly(n)^{\ac}_{\sfrak,\ga}\}_{np^\alpha\in{\squarefreeadmissible_\sfrak^{(p)}}'}\in\KS(\Tsa^{\ac},\lcond,\admissible_\sfrak',\{\chi_{n,\ell}\})$, one can define
\begin{equation*}
	\koly(1)^{\ac}:=\varprojlim_{\sfrak,\ga}\koly(1)_{\sfrak,\ga}^{\ac}\in\varprojlim_{\sfrak,\ga}\hone(K,\Tsa^{\ac})=\hone(K,\T^{\ac})\cong \varprojlim_\ga \hone(K_\ga,\T\hspace{1pt}),
\end{equation*}
where the last isomorphism is \eqref{eq:shapiro-isomorphisms}.

From now on, denote by $\admissible_\sfrak'$ and by $\admissible'$ the sets of admissible primes of Definition \ref{def:Kolyvagin-primes}. One defines the \emph{relaxed Selmer structure $\lcond_{\rel}$} on $\T^{\ac}$ by replacing $\T$ with $\T^{\ac}$ in Definition \ref{dfn:relaxed-selmer-structure}. The main result of this section is the following theorem, to be interpreted as an Iwasawa-theoretic parallel to Theorem \ref{th:euler-to-kolyvagin-system}.

\begin{theorem}\label{th:Iwasawa-euler-to-kolyvagin-system}
	If $\{\bbf(np^\ga)\}_{np^\ga\in{\squarefreeadmissible^{(p)}}'}\in\ES^{(p)}(\T,\lcond_{\rel},\admissible', \abf)$, then there are a set of automorphisms $\{\chi_{n,\ell}\}$ of $\T$ and a universal $\{\chi_{n,\ell}\}$-Kolyvagin system \[
	\koly^{\ac}\in\KSuni(\T^{\ac},\lcond_{\rel},\admissible',\{\chi_{n,\ell}\})
	\]
	such that $\koly(1)^{\ac}=\bbf^{\ac}_K$.
\end{theorem}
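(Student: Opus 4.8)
The plan is to reduce Theorem \ref{th:Iwasawa-euler-to-kolyvagin-system} to a verbatim re-run of Theorem \ref{th:euler-to-kolyvagin-system} applied to the $G_K$-representation $\T^\ac$ over the ring $\calR^\ac$, after first repackaging a $p\hspace{1pt}$-complete anticyclotomic Euler system as an ordinary anticyclotomic Euler system with coefficients in $\T^\ac$. Fix $\{\bbf(np^\ga)\}\in\ES^{(p)}(\T,\lcond_{\rel},\admissible', \abf)$. For each $n\in\squarefreeadmissible'$, condition \ref{pE0} says that the classes $\bbf(np^\ga)$ are compatible under corestriction along $K_{\ga+1}[n]/K_\ga[n]$, so Shapiro's lemma (Lemma \ref{lem:Shapiro application}\,\emph{\ref{condition:shapiro-anticyclotomic-p-extension}}, with $F=K[n]$) produces a class $\bbf(n)^\ac:=\Sh_\infty(\varprojlim_\ga \bbf(np^\ga))\in\hone(K[n],\T^\ac)$. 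One checks $\bbf(n)^\ac\in\hone_{\lcond_{\rel}}(K[n],\T^\ac)$ since at primes $v\nmid Np$ the extension $K_\ga[n]_{v_\ga}^\ur/K[n]_v^\ur$ is trivial, so the finite (resp.\ relaxed) local condition is ``constant in $\ga$'' and is preserved by the inverse limit and by $\Sh_\infty$.

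Next I claim that $\{\bbf(n)^\ac\}_{n\in\squarefreeadmissible'}$ is an anticyclotomic Euler system for the triple $(\T^\ac,\lcond_{\rel},\admissible')$ relative to $\abf$ in the sense of Definition \ref{def:euler-systems}. The norm compatibility \ref{E1} follows from \ref{pE1} by passing to the limit over $\ga$, using that $\Cor^{K_\ga[n\ell]}_{K_\ga[n]}$ commutes with $\Cor^{K_{\ga+1}[\bullet]}_{K_\ga[\bullet]}$ and with the Shapiro isomorphisms. For the Frobenius compatibility \ref{E2} one uses the geometric fact, already exploited in Section \ref{sec:dictionary}, that the rational prime $\ell$, being inert in $K$ and split in $K_\infty$, satisfies $K_\ga[n\ell]_{\gl_{n\ell}}=K[\ell]_{\gl_\ell}$ independently of $\ga$ and $n$; hence the local identity \ref{pE2} at level $\ga$ is literally the same equation for every $\ga$, and taking the inverse limit yields \ref{E2} for $\bbf(n)^\ac$. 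Here one tracks the several primes of $K_\ga[n]$ above $\ell$ via the semi-local formalism of Appendix \ref{sec:semi-local-cohomology}.

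Now one verifies that $(\T^\ac,\calR^\ac)$ fits the axiomatic framework and re-runs Sections \ref{sec:dictionary}--\ref{sec:from-Euler-to-Kolyvagin-systems}. By Section \ref{sec:the-anticyclotomic-twist}, $\T^\ac$ is free of rank $2$ over $\calR^\ac$, unramified outside $Np$, with residual representation that of $\T$; Assumption \ref{ass:assumptions-on-R} holds for $\calR^\ac$ with the filtration $I_{\sfrak,\ga}=(I_\sfrak,\gamma_{\ac}^{p^\ga}-1)$ and quotients $\Rsa^\ac$, which are finite free over $\Zp[\Gamma^\ac/(\gamma_{\ac}^{p^\ga})]$ and hence over $\Zp$; the relation $\det(\Fr_\ell\mid\T^\ac)=\ell$ of Assumption \ref{ass:assumptions-on-T}\,\ref{condition:assumption-characteristic-polynomial-Frobenius} survives because $\Gamma^\ac$ is unramified at $\ell$ and $\Fr_\ell$ acts trivially on it; and the big-image, residual-irreducibility and complex-conjugation-eigenvalue hypotheses concern only the residual representation and the $\calR$-module structure, which are unchanged. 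The two Galois-cohomological inputs of Kolyvagin's descent are then supplied by Lemma \ref{lem:iwasawa-primes}: part \emph{(i)} gives that $\Fr_\gl=\Fr_\ell^2$ acts trivially on $\Tsa^\ac$, so the finite-singular isomorphism $\phi_\ell^\fs$ and Lemma \ref{lem:commutativity-complex-conj-fs-isomorphism} are available over $\Tsa^\ac$, and part \emph{(ii)} is the exact analogue of Lemma \ref{lem:no-invariants-T} needed for Lemma \ref{lemma:definition-kolyvagin-derivative} and for condition \ref{condition:vanishing-h-0}. With these in hand, the dictionary of Section \ref{sec:dictionary} applies after substituting $(\calR^\ac,\T^\ac,\Tsa^\ac,\bbf^\ac)$ for $(\calR,\T,\Ts,\cbf)$, conditions \ref{condition:corestriction}, \ref{condition:char-poly-frob} and \ref{condition:eichler-shimura} following from \ref{E1}--\ref{E2} as before; Proposition \ref{prop:derived-classes-in-the-Selmer} and Lemma \ref{lem:formula-capitolo-2} (with $\vartheta_\ell\in\Aut(\Tsa^\ac)$) go through unchanged, producing for each $(\sfrak,\ga)$ a $\{\chi_{n,\ell}\}$-Kolyvagin system $\{\koly(n)^\ac_{\sfrak,\ga}\}$ with $\chi_{n,\ell}=\vartheta_\ell^{-1}$, the reduction of a single automorphism of $\T^\ac$. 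Exactly as in the proof of Theorem \ref{th:euler-to-kolyvagin-system}, these interpolate (in both the $\sfrak$- and $\ga$-directions, the derivative classes being built from the fixed $\bbf(n)^\ac$ by operators $D_n$, $\res$, $\Cor$ commuting with the natural projections between the quotients $\Tsa^\ac$) into a universal $\{\chi_{n,\ell}\}$-Kolyvagin system $\koly^\ac$, and $\koly(1)^\ac=\varprojlim_{\sfrak,\ga}\Cor^{K[1]}_K\bbf(1)^\ac_{\sfrak,\ga}=\bbf^\ac_K$.

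The main obstacle is the compatibility bookkeeping of the first two paragraphs: confirming that the inverse limit over $\ga$, corestriction in the $n$-direction, localization at the (now split) primes $\ell$, and the Shapiro isomorphisms all commute, and that $\T^\ac$ — a priori only a $G_K$-module on which $\tau_c$ acts $\calR$-linearly but not $\calR^\ac$-linearly, inverting $\Gamma^\ac$ — still fits the pro-dihedral framework of Appendix \ref{sec:formula-abstract}, where $c=\tau_c$ enters through $\Gtilde=G_{K[n]^+}\supset G=G_{K[n]}$ acting on the fixed $\T$. Once these routine but delicate points are settled, the remainder is a transcription of Section \ref{sec:euler-systems-and-kolyvagin-systems}.
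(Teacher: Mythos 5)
Your proposal takes a genuinely different route from the paper, and unfortunately the shortcut does not go through. You want to package the $p\hspace{1pt}$-complete Euler system into classes $\bbf(n)^\ac\in\hone(K[n],\T^\ac)$ and then run Theorem~\ref{th:euler-to-kolyvagin-system} verbatim with $(\calR^\ac,\T^\ac)$ in place of $(\calR,\T)$. The problem is that the Appendix \ref{sec:formula-abstract} machinery, which underlies Theorem~\ref{th:euler-to-kolyvagin-system}, requires condition \ref{condition:char-poly-frob}: the quadratic relation $\phi^2-\delta M_1\phi+d=0$ on the torsion-free module $T$. In the dictionary this becomes $\Fr_\ell^2-\ubf_\ell\abf_\ell\Fr_\ell+\ell=0$, which holds on $\T'_\tfrak$ because it is the characteristic polynomial of $\Fr_\ell$ on $\T$. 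But $\Fr_\ell$ acts on $\T^\ac=\T\,\hat\otimes_{\Zp}\Lambda^\ac$ as $\psi\otimes\iota$, where $\psi=\Fr_\ell|_\T$ and $\iota$ is inversion on $\Gamma^\ac$. Using $\psi^2=\ubf_\ell\abf_\ell\psi-\ell$ one computes $(\psi\otimes\iota)^2-\ubf_\ell\abf_\ell(\psi\otimes\iota)+\ell=\ubf_\ell\abf_\ell\,\psi\otimes(1-\iota)$, which is not zero on $\T^\ac$ (or its torsion-free quotients) for $\alpha\ge1$, since $\iota\ne 1$ on $\Lambda^\ac/(\gamma_{\ac}^{p^\alpha}-1)$. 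Thus \ref{condition:char-poly-frob} fails, the substitution $\phi^2-1=\delta M_1\phi-(d+1)$ in the proof of Proposition~\ref{prop:key-formula-appendix} breaks down, and the key formula does not hold in the form needed. Note also that $M_1=\abf_\ell$ is forced by condition \ref{condition:corestriction} through \ref{pE1}, so one cannot repair the quadratic by altering the coefficients. Related to this: your claim that Assumption~\ref{ass:assumptions-on-T}\,\ref{condition:assumption-characteristic-polynomial-Frobenius} ``survives'' for $\T^\ac$ is not even well-posed, since the $G_\Q$-action on $\T^\ac$ is not $\calR^\ac$-linear (complex conjugation inverts $\Gamma^\ac$), so there is no $\calR^\ac$-determinant of $\Fr_\ell$.

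The paper's actual proof (Sections~\ref{sec:iwasawa-derivative-classes}--\ref{sec:euler-to-kolyvagin-iwasawa}) avoids this trap by \emph{never} feeding $\T^\ac$ into the appendix. It applies the abstract formula at each finite level $\alpha$ with the dictionary $T=\T'_\tfrak$, $G=G_{K_\alpha[n]}$, $\Gtilde=G_{K_\alpha[n]^+}$, and classes $D_n\bbf(np^\alpha)'_\tfrak\in\hone(K_\alpha[n],\T'_\tfrak)$, for which \ref{condition:char-poly-frob} holds because one is working with $\T$. Only \emph{after} extracting the derivative classes (Definition~\ref{def:kolyvagin-derivative-iwasawa}) and the finite-singular relation (Lemma~\ref{lem:formula-capitolo-3}) does the proof apply the semi-local Shapiro isomorphism $\Sh_\alpha$ to move from $\hone(K_\alpha,\Ts)$ to $\hone(K,\Tsa^\ac)$. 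The $\alpha$-compatibility of the resulting classes comes from \ref{pE0} and the functoriality of $\Sh_\alpha$ (Appendix~\ref{app:Shapiro}), not from applying Theorem~\ref{th:euler-to-kolyvagin-system} to $\T^\ac$. I would encourage you to redo the argument along those lines: keep the Kolyvagin descent on the finite levels $K_\alpha[n]$ with $\T$-coefficients, and use Shapiro only as a transport device at the end.
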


\begin{remark}\label{rk:stressed-local-condition-iwasawa}
	As observed in Remark \ref{remark:stressed-local-conditions} for Theorem \ref{th:euler-to-kolyvagin-system}, if $\lcond$ is another Selmer structure on $\T^\ac$ over $K$ and $\{\bbf(np^\ga)\}_{np^\ga\in{\squarefreeadmissible^{(p)}}'}\in\ES^{(p)}(\T,\lcond_\rel,\admissible', \abf)$, it follows by the previous theorem that  $\koly^{\ac}\in\KSuni(\T^{\ac},\lcond,\admissible',\{\chi_{n,\ell}\})$, as long as the classes $\kappa(n)^\ac_{\sfrak, \alpha}$ of Definition \ref{def:kolyvagin-derivative-iwasawa} satisfy the local conditions of $\lcond$ at the primes dividing $Np$. 
\end{remark}
We devote the rest of this section to the proof of this theorem. A lot of ideas are very similar to the ones occurred in the proof of Theorem \ref{th:euler-to-kolyvagin-system}, so we will be more sketchy.

\subsection{Derivative classes and local properties}\label{sec:iwasawa-derivative-classes}

Fix $\{\bbf(np^\ga)\}_{np^\ga\in{\squarefreeadmissible^{(p)}}'}\in\ES(\T,\lcond_{\rel},\admissible', \abf)$ and, for this section, $\sfrak=(s_1,s_2)\in\Z_{>0}^2$, $\ga\ge 0$. For any  $n\in\squarefreeadmissible_{\sfrak}'$, we  write $\bbf(np^\ga)_{\sfrak}$ and $\bbf(np^\ga)_{\sfrak}'$ for the image of $\bbf(np^{\ga})$ in $\hone(K_\ga[n],\Ts)$ and $\hone(K_\ga[n],\Ts')$, respectively.

Let $n \in \squarefreeadmissible_\sfrak'$ and  $\ell\in\squarefreeadmissible_\tfrak'$, with $\tfrak=(t_1, t_2)\succeq \sfrak$ such that $\ell\nmid n$. Fix a prime $\lambda_{n\ell}$ of $K_\ga[n\ell]$ above $\lambda = (\ell)\subseteq K$; this in turn induces primes $\lambda_n$ (resp.~$\lambda_{\ell}$, resp.~$\lambda_{\ell}'$) of $K_\ga[n]$ (resp.~$K_\ga[\ell]$, resp.~$K[\ell]$).  As done in Section \ref{sec:dictionary}, we want to adapt the results of Appendix \ref{sec:formula-abstract} to our setting. This time, we use the following dictionary: 
\medskip
\begin{center}
	\def\arraystretch{1.5}
	\begin{tabular}{c|c|c|c|c|c|c|c|c|c|c}
		Appendix \ref{sec:formula-abstract}& $\Gtilde$ & $G$ & $H$  &$\Gtilde_0$ &$G_0$ &$H_0$  &$\gs$ &$c$ &$M$ &$D$ \\
		\hline
		Section \ref{sec:Iwasawa-theory} &$G_{K_\ga[n]^+}$  &$G_{K_\ga[n]}$  &$G_{K_\ga[n\ell]}$  &$G_{\Q_\ell}$ &$G_{K_\gl}$ &$G_{K[\ell]_{\gl_\ell'}}$  &$\gs_\ell$ &$\tau_c$ &$\ell+1$ &$D_\ell$\\
		\hline
	\end{tabular}
\end{center}

\begin{center}
	\def\arraystretch{1.5}
	\begin{tabular}{c|c|c|c|c|c|c|c|c|c|c}
		Appendix \ref{sec:formula-abstract} &$d$ & $R$ & $T$ & $s$  &$T/p^sT$ &$\mathbf{x}$ &$\mathbf{y}$  &$M_1$ &$\phi$ &$\delta$\\
		\hline
		Section \ref{sec:Iwasawa-theory}   &$\ell$ &$\calR_\tfrak'$  &$\T_\tfrak'$  &$t_1$  &$\T_\tfrak$ &$D_{n} \bbf(np^\ga)_{\tfrak}'$ &$D_{n}\bbf(n\ell p^\ga)_{\tfrak}'$  &$\abf_\ell$ &$\Fr_\ell$ &$\ubf_\ell$\\
		\hline
	\end{tabular}
\end{center}
\medskip
where $K_\ga[n]^+$ is the subextension of $K_\ga[n]$ fixed by the complex conjugation $\tau_c$. Notice that there are equalities $K_\ga[n]_{\gl_n}=K_\gl$ and $K_\ga[n\ell]_{\gl_{n\ell}}=K_\ga[\ell]_{\gl_\ell}=K[\ell]_{\gl_\ell'}$. Also, we have canonical isomorphisms $\Gal(K_\ga[n\ell]_{\lambda_{n\ell}}/K_\ga[n]_{\lambda_n}) \cong \Gal(K_\ga[n\ell]/K_\ga[n]) \cong \calG_\ell$.

Exactly as in Section \ref{sec:dictionary}, one can show that conditions \ref{condition:pro-dihedrality}--\ref{condition:semidirect-product}, \ref{condition:structure-T}--\ref{condition:frob-square-identity-mod-p-s}, \ref{condition:corestriction}, \ref{condition:char-poly-frob} and \ref{condition:eichler-shimura} of Appendix \ref{sec:formula-abstract} hold, being a consequence of the assumptions on $\T$ and of the properties defining a $p\hspace{1pt}$-complete anticyclotomic Euler system. The only condition that requires a further argument is \ref{condition:vanishing-h-0}, that is dealt in the following lemma.

\begin{lemma}\label{lem:iwasawa-no-invariants}
	If $T$ is any quotient of $\T$ by an ideal of $\calR$, then $\ho(K_\ga[n],T)=0$.
\end{lemma}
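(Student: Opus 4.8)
The plan is to reduce to the residual representation, exactly as in the proof of Lemma~\ref{lem:no-invariants-T}, and then to exploit that $K_\ga[n]/K[n]$ is a $p\hspace{1pt}$-extension. Write $I$ for the ideal of $\calR$ with $T=\T/I\T$ (the case $I=\calR$ being trivial) and set $A:=\calR/I$; since $\T$ is free over $\calR$, the module $T\cong\T\otimes_\calR A$ is free, hence torsion-free, over $A$, and $T/\m_A T\cong\bar{\T}$, the residual $G_\Q$-representation of $\T$. Thus, by Lemma~\ref{lem:nakayama-for-irred-repr} applied to $A$ and $T$, it is enough to prove that $\ho(K_\ga[n],\bar{\T})=0$.

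To this end I would first invoke Lemma~\ref{lem:no-invariants-T} (note that $n$, being a product of primes inert in $K$ and prime to $Np$, is coprime with $NpD_K$), which gives $\bar{\T}^{G_{K[n]}}=\ho(K[n],\bar{\T})=0$. Next, recall that, as $n$ is coprime with $p$, the fields $K_\ga$ and $K[n]$ are disjoint over $K$, so that $\Gal(K_\ga[n]/K[n])\cong\Gal(K_\ga/K)$ is cyclic of order $p^\ga$; in particular $G_{K_\ga[n]}$ is normal in $G_{K[n]}$ with quotient a finite $p\hspace{1pt}$-group $P$. Now suppose $W:=\bar{\T}^{G_{K_\ga[n]}}$ were nonzero. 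Then $W$ is a nonzero finite-dimensional vector space over the residue field of $\calR$, which has characteristic $p$, carrying a linear action of $P$; since $\lvert W\rvert$ is a positive power of $p$, the usual orbit-counting argument yields $W^P\neq 0$. But $W^P=\bar{\T}^{G_{K[n]}}=0$, a contradiction. Therefore $\ho(K_\ga[n],\bar{\T})=0$, and the lemma follows. (When $\ga=0$ this is just Lemma~\ref{lem:no-invariants-T}.)

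I do not expect any real obstacle here: the only points deserving attention are the verification that the hypotheses of Lemma~\ref{lem:nakayama-for-irred-repr} are met by $T$ over $A=\calR/I$ (freeness, hence torsion-freeness, over $A$, together with continuity of the action), which is identical to the situation already handled in Lemma~\ref{lem:no-invariants-T}, and the disjointness of $K_\ga$ and $K[n]$ over $K$, which forces $\Gal(K_\ga[n]/K[n])$ to be a $p\hspace{1pt}$-group and is precisely what makes the $p\hspace{1pt}$-group fixed-point argument available.
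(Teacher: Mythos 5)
Your proof is correct, but it takes a genuinely different route from the paper's. The paper applies Shapiro's lemma to write $\ho(K_\ga[n],T)\cong \ho(K[n],\Ind_{K_\ga}^{K}T)$, observes that $\Ind_{K_\ga}^{K}T\cong T\otimes_{\Zp}\Zp[\Gal(K_\ga/K)]$ is a quotient of $\T^{\ac}$, and invokes Lemma~\ref{lem:iwasawa-primes}\,(ii), which itself rests on Lemma~\ref{lem:no-invariants-T} and Lemma~\ref{lem:nakayama-for-irred-repr}. You instead reduce directly to the residual representation $\bar{\T}$ via Lemma~\ref{lem:nakayama-for-irred-repr}, then combine the vanishing $\ho(K[n],\bar{\T})=0$ from Lemma~\ref{lem:no-invariants-T} with the fact that $\Gal(K_\ga[n]/K[n])\cong\Gal(K_\ga/K)$ is a finite $p\hspace{1pt}$-group acting on a nonzero $\F_p$-vector space, so that the orbit-counting argument forces nontrivial invariants — a contradiction. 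Your argument is more elementary (no Shapiro, no detour through $\T^{\ac}$), and has the merit of showing directly that the inert, prime-to-$p$ level and the anticyclotomic layer interact harmlessly; the paper's argument is more in the spirit of its general framework, since it views the statement as a shadow of a vanishing over $K[n]$ for quotients of $\T^{\ac}$, which is the object that actually drives Section~\ref{sec:Iwasawa-theory}. Both yield the same conclusion, and the verification that $n$ is coprime with $p$ (so that $K_\ga$ and $K[n]$ are linearly disjoint over $K$) is the only point one must check in your argument, as you correctly note.
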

\begin{proof}
	By applying Shapiro's lemma, we have that
	\begin{equation*}
		\ho(K_\ga[n],T)\cong \ho(K[n],\Ind_{K_\ga[n]}^{K[n]}T)=\ho(K[n],\Ind_{K_\ga}^{K}T).
	\end{equation*}
	Since $\Ind_{K_\ga}^{K}T\cong T\otimes_{\Zp}\Zp[\Gal(K_\ga/K)]$ is a quotient of $\T^{\ac}$, we conclude by applying point \emph{\ref{condition:iwasawa-no-galois-invariants}} of Lemma \ref{lem:iwasawa-primes}.
\end{proof}

Similarly to what is done in the beginning of Section \ref{sec:Derivative-classes-and-local-properties}, we can now descend the classes $D_n\bbf(np^\ga)_{\sfrak}$ to $\hone(K,\Tsa^\ac)$ using the following result.

\begin{lemma}
	For every $n\in\squarefreeadmissible_{\sfrak}'$ we have
	\begin{enumerate}[label=\emph{(\alph*)}]
		\item $D_n \bbf(np^\ga)_{\sfrak}\in \hone(K_\ga[n],\Ts)^{\calG_n}$;\label{item:invariance-Iwasawa}
		\item the restriction maps $\hone(K_\ga[1],\Ts)\to \hone(K_\ga[n],\Ts)^{\calG_n}$ is an isomorphism.\label{item:iso-restriction-Iwasawa}
	\end{enumerate}
\end{lemma}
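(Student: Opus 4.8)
The plan is to follow verbatim the proof of Lemma~\ref{lemma:definition-kolyvagin-derivative}, systematically replacing the ring class tower $K[\bullet]/K$ by the tower $K_\ga[\bullet]/K$ and invoking the Iwasawa-theoretic analogues of the inputs used there: Lemma~\ref{lem:iwasawa-no-invariants} in place of Lemma~\ref{lem:no-invariants-T}, and~\ref{pE1} in place of~\ref{E1}.

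For part~\emph{\ref{item:invariance-Iwasawa}}, since $\calG_n\cong\prod_{\ell'\mid n}\calG_{\ell'}$ it is enough to show that $(\sigma_{\ell'}-1)D_n\bbf(np^\ga)_\sfrak=0$ in $\hone(K_\ga[n],\Ts)$ for every prime $\ell'\mid n$. I would write $D_n=D_{\ell'}\,D_{n/\ell'}$ and apply the telescopic identity~\eqref{eq:telescopic-identity-appendix}, which gives $(\sigma_{\ell'}-1)D_{\ell'}=(\ell'+1)-\Tr_{\calG_{\ell'}}$; since $p^{s_1}$ annihilates $\Ts$ and $p^{s_1}\mid\ell'+1$ (because $\ell'\in\admissible_\sfrak'$), the term coming from $\ell'+1$ vanishes. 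Using $\Tr_{\calG_{\ell'}}=\res^{K_\ga[n]}_{K_\ga[n/\ell']}\circ\Cor^{K_\ga[n]}_{K_\ga[n/\ell']}$ together with the commutativity of $D_{n/\ell'}$ with corestriction, it then suffices to prove that $\Cor^{K_\ga[n]}_{K_\ga[n/\ell']}\bigl(\bbf(np^\ga)_\sfrak\bigr)=0$. By~\ref{pE1} this corestriction equals $\abf_{\ell'}\,\bbf((n/\ell')p^\ga)_\sfrak$, and this is zero in $\hone(K_\ga[n/\ell'],\Ts)$ because $\abf_{\ell'}\in I_\sfrak$ (equivalently, $\abf_{\ell'}$ is divisible by $p^{s_1}$ in $\Rs'$), as recorded right after Definition~\ref{dfn:s-admissible-primes}.

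For part~\emph{\ref{item:iso-restriction-Iwasawa}}, I would first record that $\Gal(K_\ga[n]/K_\ga[1])\cong\calG_n$. Indeed, as noted in Section~\ref{sec:the-anticyclotomic-twist}, $K_\ga$ and $K[n]$ are linearly disjoint over $K$ (by Assumption~\ref{ass:class-number-K} and ramification considerations, $K_\ga/K$ being ramified only above $p$ and $K[n]/K$ only above primes dividing $n$), so $\Gal(K_\ga[n]/K)\cong\Gal(K_\ga/K)\times\Gal(K[n]/K)$, under which the intermediate field $K_\ga[1]=K_\ga K[1]$ corresponds to the subgroup $\{1\}\times\calG_n$; restriction of automorphisms yields the asserted isomorphism. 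Then Lemma~\ref{lem:iwasawa-no-invariants} gives $\ho(K_\ga[n],\Ts)=0$, and the inflation--restriction (five-term) exact sequence immediately shows that $\res\colon\hone(K_\ga[1],\Ts)\to\hone(K_\ga[n],\Ts)^{\calG_n}$ is an isomorphism.

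I do not expect any genuine obstacle here: the argument is a mechanical transcription of Lemma~\ref{lemma:definition-kolyvagin-derivative}. The one point deserving a line of justification is the identification $\Gal(K_\ga[n]/K_\ga[1])\cong\calG_n$, i.e.\ that adjoining the $\ga$-th anticyclotomic layer does not alter the relevant Galois group of the ring class tower; this is exactly the linear disjointness already used in Section~\ref{sec:the-anticyclotomic-twist}. One also uses, as in the non-Iwasawa case, that the operators $D_{n/\ell'}$ and the reduction map modulo $I_\sfrak$ commute with corestriction.
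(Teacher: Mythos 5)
Your proof is correct and follows the same route as the paper, which simply refers back to Lemma~\ref{lemma:definition-kolyvagin-derivative} with \ref{pE1} in place of \ref{E1} for part (a) and invokes inflation–restriction together with Lemma~\ref{lem:iwasawa-no-invariants} for part (b). Your extra remark identifying $\Gal(K_\ga[n]/K_\ga[1])\cong\calG_n$ via the linear disjointness of $K_\ga$ and $K[n]$ over $K$ is a legitimate (and worthwhile) point that the paper leaves implicit.
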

\begin{proof}
	\emph{\ref{item:invariance-Iwasawa}} This is dealt exactly as in point (a) of Lemma \ref{lemma:definition-kolyvagin-derivative}, by using \ref{pE1} in place of \ref{E1}.
	
	\emph{\ref{item:iso-restriction-Iwasawa}} It follows by applying the inflation-restriction exact sequence and Lemma \ref{lem:iwasawa-no-invariants}.
\end{proof}

\begin{definition}\label{def:kolyvagin-derivative-iwasawa}
	For every $n \in \squarefreeadmissible_\sfrak'$, define, denoting by $\Sh_\ga$ the isomorphism defined in \eqref{eq:shapiro-isomorphisms},  $\gk(n)_{\sfrak,\ga}:=\big(\Sh_\ga\circ\Cor^{K_\ga[1]}_{K_\ga}\circ(\res^{K_\ga[n]}_{K_\ga[1]})^{-1}\big)(D_n\bbf(np^\ga)_{\sfrak})\in \hone(K,\T_{\sfrak,\ga}^{\ac})$.
\end{definition}

We will show that a slight modification of the classes $\gk(n)_{\sfrak,\ga}$ form a modified universal Kolyvagin system. The first step in this direction is the following proposition.

\begin{proposition}\label{prop:Iwasawa-derived-classes-in-the-Selmer}
	For any $n\in\squarefreeadmissible_{\sfrak}'$, we have that $\gk(n)_{\sfrak,\ga}\in \hone_{\lcond_{\rel}(n)}(K,\T_{\sfrak,\ga}^{\ac})$.
\end{proposition}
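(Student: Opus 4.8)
The plan is to run the proof of Proposition~\ref{prop:derived-classes-in-the-Selmer} one layer at a time, over $K_\ga$, and then transport the conclusion along Shapiro's lemma. Write $\tilde{\gk}(n)_{\sfrak,\ga}:=\Cor^{K_\ga[1]}_{K_\ga}(\res^{K_\ga[n]}_{K_\ga[1]})^{-1}\bigl(D_n\bbf(np^\ga)_\sfrak\bigr)\in\hone(K_\ga,\Ts)$, so that $\gk(n)_{\sfrak,\ga}=\Sh_\ga\bigl(\tilde{\gk}(n)_{\sfrak,\ga}\bigr)$ by Definition~\ref{def:kolyvagin-derivative-iwasawa}. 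The first step is to show $\tilde{\gk}(n)_{\sfrak,\ga}\in\hone_{\lcond_{\rel}(n)}(K_\ga,\Ts)$. For a fixed $\ga$, the classes $\{\bbf(np^\ga)\}_{n\in\squarefreeadmissible'}$ play over the base field $K_\ga$ exactly the role of an anticyclotomic Euler system over $K$: \ref{pE1} and \ref{pE2} are \ref{E1} and \ref{E2} with $K[m]$ replaced by $K_\ga[m]$, the tower $K_\ga[m]/K_\ga$ still has $\Gal(K_\ga[n\ell]/K_\ga[n])\cong\calG_\ell$ and $K_\ga[n\ell]/K_\ga[1]$ unramified away from the conductor, the relevant local completion at $\ell$ is again $K[\ell]_{\gl_\ell'}$ as recorded in Section~\ref{sec:iwasawa-derivative-classes}, and Lemma~\ref{lem:iwasawa-no-invariants} replaces Lemma~\ref{lem:no-invariants-T}. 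Hence the proof of Proposition~\ref{prop:derived-classes-in-the-Selmer} goes through mutatis mutandis: the primes $v\mid Np$ are trivial; for $v\nmid nNp$ one uses the functoriality of $D_n$, restriction and corestriction in semi-local cohomology together with $K_\ga[n]_{v_n}^{\ur}=K_\ga[1]_{v_1}^{\ur}=K_{\ga,v}^{\ur}$; and for $v\mid n$ one runs the same two-step descent, the crucial input being that $\sigma_\ell$ lifts to an element of $G_{K_\ga[1]}$ acting trivially on $\Ts$ and that $\ell+1\equiv 0\bmod p^{s_1}$, so that the telescoping computation of Step~1 of Proposition~\ref{prop:derived-classes-in-the-Selmer} shows $D_n\bbf(np^\ga)_\sfrak$ localizes to zero at the primes of $K_\ga[n]$ above $\ell$.

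It then remains to see that $\Sh_\ga$ sends $\hone_{\lcond_{\rel}(n)}(K_\ga,\Ts)$ into $\hone_{\lcond_{\rel}(n)}(K,\T_{\sfrak,\ga}^{\ac})$, which amounts to checking that the isomorphism \eqref{eq:shapiro-isomorphisms} is compatible with the localization maps and with each local condition. At the primes dividing $Np$ there is nothing to prove, since the relaxed condition is the full local cohomology group on both sides. At a prime $v\nmid nNp$ the module $\T_{\sfrak,\ga}^{\ac}$ is unramified (the anticyclotomic variable being unramified away from $p$), and the semi-local form of Shapiro's lemma identifies $\honef(K_v,\T_{\sfrak,\ga}^{\ac})$ with $\bigoplus_{w\mid v}\honef(K_{\ga,w},\Ts)$ compatibly with localization; this is the content of Appendix~\ref{sec:semi-local-cohomology}. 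Finally, at $v=(\ell)$ with $\ell\mid n$, the prime $v$ splits completely in $K_\infty/K$ (as in Lemma~\ref{lem:iwasawa-primes}~\emph{\ref{condition:iwasawa-Frobenius-trivial}}), so $K_{\ga,w}=K_\gl$ for every $w\mid v$ and $\T_{\sfrak,\ga}^{\ac}|_{G_{K_\gl}}$ is a direct sum of copies of $\Ts$; under $\hone(K_\gl,\T_{\sfrak,\ga}^{\ac})\cong\bigoplus_{w\mid v}\hone(K_\gl,\Ts)$ the transverse condition on $\T_{\sfrak,\ga}^{\ac}$ corresponds to the direct sum of the transverse conditions on $\Ts$, the defining extension $K[\ell]_{\gl_\ell'}$ being the same over $K_\ga$ as over $K$. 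Combining the two steps gives $\gk(n)_{\sfrak,\ga}\in\hone_{\lcond_{\rel}(n)}(K,\T_{\sfrak,\ga}^{\ac})$.

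I expect the only genuinely new point, and hence the place to be careful, to be the compatibility of Shapiro's lemma with the transverse local condition at the primes dividing $n$: this hinges on the complete splitting of the primes of $\admissible$ in the anticyclotomic tower, which makes the local extension trivial and reduces the assertion to a base-change statement for local Galois cohomology. Everything else is the bookkeeping of Kolyvagin's descent already carried out in Section~\ref{sec:Derivative-classes-and-local-properties}, now performed over $K_\ga$ and tracked along the extra parameter $\ga$.
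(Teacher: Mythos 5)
Your proof is correct and takes essentially the same route as the paper's, which is to first show that $\bigl(\Cor^{K_\ga[1]}_{K_\ga}\circ(\res^{K_\ga[n]}_{K_\ga[1]})^{-1}\bigr)(D_n\bbf(np^\ga)_\sfrak)$ satisfies the relevant local conditions over $K_\ga$ (by rerunning the proof of Proposition~\ref{prop:derived-classes-in-the-Selmer} over that layer, with Lemma~\ref{lem:iwasawa-no-invariants} in place of Lemma~\ref{lem:no-invariants-T}), and then transport along the semi-local Shapiro compatibility of Appendix~\ref{app:Shapiro}. You supply a useful extra detail that the paper leaves implicit, namely the verification that Shapiro also respects the transverse condition at the Kolyvagin primes, which indeed rests on the complete splitting of the primes of $\admissible$ in $K_\infty/K$ and on the fact that the defining extension $K(\ell)_{\gl_\ell'}/K_\gl$ can be kept fixed.
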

\begin{proof}
	The same argument used in the proof of Proposition \ref{prop:derived-classes-in-the-Selmer} can be used here to show that $\big(\Cor^{K_\ga[1]}_{K_\ga}\circ(\res^{K_\ga[n]}_{K_\ga[1]})^{-1}\big)(D_n\bbf(np^\ga)_{\sfrak})\in H_{\lcond_{\rel}(n)}^1(K_\ga,\T_{\sfrak,\ga}^{\ac})$. Then one concludes using the compatibility of $\Sh_\ga$ in semi-local Galois cohomology, as recorded in Appendix \ref{app:Shapiro}.
\end{proof}

\subsection{From Euler systems to Kolyvagin systems}\label{sec:euler-to-kolyvagin-iwasawa}

As in Section \ref{sec:from-Euler-to-Kolyvagin-systems}, we are now able to apply the abstract work of Appendix \ref{sec:formula-abstract} in order to prove Theorem \ref{th:Iwasawa-euler-to-kolyvagin-system}.

\begin{lemma}\label{lem:formula-capitolo-3}
	Let $\sfrak\in \Z_{>0}^2$ and $\alpha \ge 0$. For any $n\ell\in\squarefreeadmissible_\sfrak'$ with $\ell$ prime, there is $\vartheta_\ell\in\Aut(\Tsa^\ac)$ such that
	\begin{equation*}
		\ga_\ell\big(\loc_{\gl}\gk(n)_{\sfrak,\ga}\big)=\vartheta_{\ell}\big(\beta_\ell([\loc_{\gl}\gk(n\ell)_{\sfrak,\ga}]_{\s} \otimes \sigma_\ell)\big)
	\end{equation*}
	as elements of $\Tsa^\ac$, where $\gl$ is the prime of $K$ above $\ell$.
\end{lemma}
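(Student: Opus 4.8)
The plan is to transport the computation of Lemma~\ref{lem:formula-capitolo-2} up the anticyclotomic tower and then push it down to $K$ through Shapiro's lemma. First I would run the Kolyvagin descent of Section~\ref{sec:Derivative-classes-and-local-properties} over $K_\ga$ in place of $K$: for $\tfrak\succeq\sfrak$ set $\tilde\kappa(n)_{\tfrak,\ga}:=D_n\bbf(np^\ga)_\tfrak$ and $\tilde\kappa(n\ell)_{\tfrak,\ga}:=(\res^{K_\ga[n\ell]}_{K_\ga[n]})^{-1}D_{n\ell}\bbf(n\ell p^\ga)_\tfrak$, both well defined thanks to the analog of Lemma~\ref{lemma:definition-kolyvagin-derivative} (with \ref{pE1} replacing \ref{E1} and Lemma~\ref{lem:iwasawa-no-invariants} replacing Lemma~\ref{lem:no-invariants-T}). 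Under the dictionary of Section~\ref{sec:iwasawa-derivative-classes} with $\tfrak=\sfrak(\ell)$, Proposition~\ref{prop:key-formula-appendix} then produces, verbatim as in Lemma~\ref{lem:formula-capitolo-2}, the identity
\[
\Bigl(\tfrac{\ell+1}{p^{\,s_1(\ell)}}\Fr_\ell-\tfrac{\abf_\ell}{p^{\,s_1(\ell)}}\Bigr)\tilde\kappa(n)_{\sfrak(\ell),\ga}(\Fr_\ell^2)=\Bigl(\tfrac{\ell+1}{p^{\,s_1(\ell)}}-\tfrac{\ubf_\ell\,\abf_\ell}{p^{\,s_1(\ell)}}\Fr_\ell\Bigr)\tilde\kappa(n\ell)_{\sfrak(\ell),\ga}(\sigmati_\ell)
\]
on $\T_{\sfrak(\ell)}$, where $\sigmati_\ell$ lifts $\sigma_\ell$ to $\Gal(\Qlbar/\Q_\ell^{\ur})$.

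Next I would descend this relation just as in Lemma~\ref{lem:formula-capitolo-2}: reduce modulo $I_\sfrak$, invert $\res^{K_\ga[n]}_{K_\ga[1]}$, corestrict down to $K_\ga$ (picking up the harmless factor $[K_\ga[1]:K_\ga]$), and re-express the evaluations at $\Fr_\ell^2$ and $\sigmati_\ell$ through $\alpha_\ell$ and $\beta_\ell$ via Lemma~\ref{lem:isomorfismi-finito-singolare}. The commutativity of $\loc_\gl$ with $\Cor^{K_\ga[1]}_{K_\ga}$ up to multiplication by $[K_\ga[1]:K_\ga]$ rests, exactly as in the non-Iwasawa case, on lifting $\Gal(K_\ga[1]/K_\ga)$ to $G_{K_\ga}$ acting trivially on $\Ts$; this holds because $K_\ga(\Ts)$ and $K_\ga[1]$ are linearly disjoint over $K_\ga$, which follows from the ramification argument of Lemma~\ref{lem:no-invariants-T} together with the disjointness of $K_\ga$ from $K[1]$ and from $\Q(\Ts)$ over their common base. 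This yields the claimed relation over $K_\ga$; applying $\Sh_\ga$ and invoking the compatibility of Shapiro's isomorphism with localization, corestriction and with $\alpha_\ell,\beta_\ell$ recorded in Appendix~\ref{app:Shapiro} — using that every $\ell\in\admissible_\sfrak$ splits completely in $K_\ga$, so that $\Tsa^{\ac}$ restricted to $G_{K_\gl}$ is $\Ts\otimes_{\Zp}\Zp[\Gal(K_\ga/K)]$ with trivial action on the group-ring factor and the finite–singular data decomposes accordingly — gives the stated identity on $\Tsa^{\ac}$, with $\vartheta_\ell$ the automorphism of Lemma~\ref{lem:formula-capitolo-2}, now acting on $\Tsa^{\ac}$ through the $\calR$-linear action of $\Fr_\ell$. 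Here one uses once more that $\Fr_\gl=\Fr_\ell^2$ acts trivially by Lemma~\ref{lem:iwasawa-primes}\emph{\ref{condition:iwasawa-Frobenius-trivial}}, that $\ubf_\ell\equiv\pm1\bmod I_\sfrak$ by Assumption~\ref{ass:u-ell} and $\ell\equiv-1\bmod p^{s_1}$, and that the Frobenius combinations are invertible by Corollary~\ref{cor:Frob-cpmbination-invertible}.

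The genuinely new point compared with Lemma~\ref{lem:formula-capitolo-2}, and the step I expect to require the most care, is the bookkeeping through Shapiro's lemma: one must verify that $\Sh_\ga$ intertwines the semi-local localization at the primes above $\gl$, the corestriction $\Cor^{K_\ga[1]}_{K_\ga}$, and the isomorphisms $\alpha_\ell,\beta_\ell$ in a manner compatible with the operator $\vartheta_\ell$ (which is $\calR$-linear but only $\calR^{\ac}$-semilinear). Granting the semi-local compatibility statements of Appendix~\ref{app:Shapiro} and the complete splitting of $\gl$ in $K_\ga/K$, this is routine, and the remaining ingredients — invertibility of the Frobenius combinations, the congruences modulo $I_\sfrak$, the triviality of $\Fr_\gl$ — are precisely those already used in Lemma~\ref{lem:formula-capitolo-2}.
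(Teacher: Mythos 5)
Your proposal is correct and follows essentially the same route as the paper: apply the abstract key formula of Proposition~\ref{prop:key-formula-appendix} via the Iwasawa dictionary of Section~\ref{sec:iwasawa-derivative-classes} to get the relation over $K_\ga$ at each prime $\gl_\ga\mid\gl$, then assemble these into a semi-local statement and push it through Shapiro's isomorphism $\Sh_\ga$ using Appendix~\ref{app:Shapiro}, identifying $\vartheta_\ell$ on $\Tsa^{\ac}$ as the operator induced by the component-wise $\vartheta_{\gl_\ga}$. One small imprecision: you justify the needed lift of $\Gal(K_\ga[1]/K_\ga)$ to $G_{K_\ga}$ acting trivially on $\Ts$ by asserting that $K_\ga$ is disjoint from $\Q(\Ts)$, which is generally false (both can ramify at $p$); the disjointness actually required is $K_\ga(\Ts)\cap K_\ga[1]=K_\ga$, and it should be argued directly (e.g., any common subextension of $K_\ga$ would sit inside $K_\ga[1]/K_\ga$, hence have degree prime to $p$ and be unramified, and one then rules it out using that $K[1]\cap K(\Ts)=\Q$ and $K_\ga\cap K[1]=K$), rather than via disjointness of $K_\ga$ and $\Q(\Ts)$. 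This is a detail in an otherwise correct argument and does not affect the structure of the proof.
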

\begin{proof}
	By applying the dictionary of Section \ref{sec:iwasawa-derivative-classes} to the key formula of Proposition \ref{prop:key-formula-appendix} and reasoning as in the proof of Lemma \ref{lem:formula-capitolo-2}, one obtains the relation 
	\begin{equation}\label{eq:formula-iwasawa}
		\ga_\ell\big(\loc_{\gl_\ga}\tilde{\gk}(np^\ga)_\sfrak\big)=\vartheta_{\gl_\ga}\big(\beta_\ell([\loc_{\gl_\ga}\tilde{\kappa}(n\ell p^{\ga})_\sfrak]_{\s}\otimes\sigma_\ell)\big)
	\end{equation}
	in $\Ts$, where $\gl_\ga$ is any prime of $K_\ga$ above $\gl$, the map $\vartheta_{\gl_\ga}$ is an automorphism of $\Ts$ and
	\begin{gather*}
		\tilde{\kappa}(np^{\ga})_{\sfrak} := \Cor_{K_\ga}^{K_\ga[1]}(\res^{K_\ga[n]}_{K_\ga[1]})^{-1}D_n\bbf(np^{\ga})_{\sfrak}, \\
		\tilde{\kappa}(n\ell p^{\ga})_{\sfrak} := \Cor_{K_\ga}^{K_\ga[1]}(\res^{K_\ga[n\ell]}_{K_\ga[1]})^{-1}D_{n\ell}\bbf(n\ell p^{\ga})_{\sfrak}.
	\end{gather*}
	By making the primes $\gl_\ga\mid\gl$ vary, one can define  
	\begin{equation*} \vartheta_\ell=\big(\vartheta_{\gl_\ga}\big)_{\gl_\ga\mid\gl}\in\Aut\Bigg(\bigoplus_{\gl_\ga\mid\gl}\Ts\Bigg).
	\end{equation*}
	By applying $\Sh_\ga$ to the semi-local counterpart of equation \eqref{eq:formula-iwasawa} and using the functoriality of semi-local Shapiro's lemma (see Appendix \ref{app:Shapiro}), we obtain that the automorphism $\vartheta_\ell$ induces the automorphism of $\Tsa^\ac$ that yields the claimed relation.
\end{proof}

\begin{proof}[Proof of Theorem \ref{th:Iwasawa-euler-to-kolyvagin-system}]
	The combination of Proposition \ref{prop:Iwasawa-derived-classes-in-the-Selmer} and Lemma \ref{lem:formula-capitolo-3} implies that the classes
	\begin{equation*}
		\koly(n)_{\sfrak,\ga}^\ac:=\gk(n)_{\sfrak,\ga}\otimes\bigotimes_{\ell'\mid n}\sigma_{\ell'}\in \hone(K,\Tsa^\ac)\otimes\calG(n)
	\end{equation*}
	form a $\{\vartheta_\ell^{-1}\}$-Kolyvagin system for $(\Tsa^\ac,\lcond_{\rel},\admissible_\sfrak')$. 
	
	We now need to interpolate them into a universal $\{\vartheta_\ell^{-1}\}$-Kolyvagin system for $(\T^\ac,\lcond_{\rel},\admissible')$ with the required properties. The interpolation along the $\sfrak$-variable is carried out exactly as in the proof of Theorem \ref{th:euler-to-kolyvagin-system}. The interpolation along the $\alpha$-variable is a direct consequence of the property \ref{pE0} (for the classes $\bbf(np^\alpha)$) together with the fact that $\proj^{\alpha+1}_\alpha\circ\Sh_\alpha=\Sh_{\alpha+1}\circ\Cor^{K_{\alpha+1}}_{K_\alpha}$, where $\proj^{\alpha+1}_\alpha\colon H^1(K,\T_{\sfrak,\alpha+1}^\ac)\to H^1(K,\T_{\sfrak,\alpha}^\ac)$ is the map induced by the natural projection $\T_{\sfrak,\alpha+1}^\ac\to \T_{\sfrak,\alpha}^\ac$ (see e.g.~the proof of \cite[Proposition II.1.1]{Colmez:theorie-dIwasawa} for this last property).
\end{proof}

\subsection{Applications}\label{sec:Iwasawa-applications}

As done in Section \ref{sec:applications}, we show here that modified universal Kolyvagin systems for $\T^\ac$ have the same usage of classical (universal) Kolyvagin systems when it comes to study the anticyclotomic Iwasawa theory for $\T^\ac$.

For this section, let's assume that $\calR$ is a DVR and keep the notation of Section \ref{sec:applications}. If $M$ is a $\Zp$-module, denote by $M^\vee:=\Hom_{\cont}(M,\Qp/\Zp)$ the Pontryagin dual of $M$. We will denote by $\iota\colon \calR^\ac\to\calR^\ac$ the involution induced by the inversion in $\Gamma^\ac$ and by $\Char_{\calR^\ac}(M)$ the characteristic ideal of any torsion $\calR^\ac$-module $M$. Eventually, let $\A^\ac:=\T^\ac\otimes_{\calR^{\ac}} (\calR^\ac)^\vee$.

The next result is a counterpart of \cite[Theorem 3.5]{longo-vigni:generalized}.

\begin{theorem}\label{th:howard-abstract-iwasawa}
	Let $\lcond$ be a Selmer structure on $\T^{\ac}$ satisfying \cite[Hypotheses H.1-H.5]{howard:heegner-points} and \cite[Assumption 3.2]{longo-vigni:generalized}. Suppose that there are $\chi_{n,\ell}\in\Aut(\T^\ac)$ such that there is an element $\koly^\ac\in\KSuni(\T^\ac,\lcond,\admissible',\{\chi_{n,\ell}\})$ and $\admissible'\subseteq\admissible$ with the property that $\koly(1)^\ac\ne 0$. Then
	\begin{enumerate}[label=\emph{(\roman*)}]
		\item $\hone_{\lcond}(K,\T^\ac)$ is a torsion-free $\calR^\ac$-module of rank $1$;
		\item there exists a finitely generated torsion $\calR^\ac$-module $M$ such that $\Char_{\calR^\ac}(M)=\Char_{\calR^\ac}(M)^\iota$ and a pseudo-isomorphism
		\begin{equation*}
			\hone_{\lcond}(K,\A^\ac)^\vee\sim \calR^\ac\oplus M\oplus M;
		\end{equation*}
		\item $\Char_{\calR^\ac}(M)$ divides $\Char_{\calR^\ac}\big(\hone_{\lcond}(K,\T^\ac)/\calR^\ac \koly(1)^\ac\big)$.
	\end{enumerate}
\end{theorem}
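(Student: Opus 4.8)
The plan is to reduce the statement to the axiomatic Kolyvagin-system machinery of Howard \cite[\S\S1.5--1.6]{howard:heegner-points} and to its Iwasawa-theoretic refinement in Longo--Vigni \cite[\S3]{longo-vigni:generalized}, in exactly the same way that Theorem \ref{thm:howard-abstract} was deduced from \cite[Theorem 1.6.1]{howard:heegner-points}. First I would verify that $(\T^\ac,\lcond,\admissible')$ lies in the scope of that machinery: Hypotheses H.2--H.5 of \cite{howard:heegner-points} and \cite[Assumption 3.2]{longo-vigni:generalized} are part of the statement, while Hypothesis H.0 (freeness of rank $2$ over $\calR^\ac$) and Hypothesis H.1 (residual irreducibility) follow from Assumption \ref{ass:assumptions-on-T}, points \ref{ass-cond:T-free-rank-2} and \ref{condition:assumption-irreducible-residual-representation}, since $\T^\ac$ is free of rank $2$ over $\calR^\ac$ with residual representation canonically identified with $\bar{\T}$ (Section \ref{sec:the-anticyclotomic-twist}). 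I would also record that, as $\calR$ is a DVR, the ring $\calR^\ac\cong\calR\llbracket\Gamma^\ac\rrbracket$ is a two-dimensional regular local ring, hence a unique factorization domain supporting the theory of characteristic ideals and pseudo-isomorphisms needed to even state (ii)--(iii).

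The analytic input is the Iwasawa analogue of \cite[Lemma 1.6.2]{howard:heegner-points}, and here I would argue that Lemma \ref{lem:even-finer-choice-of-primes} already serves this purpose without modification: it is stated purely in terms of the residual representation $\bar{\T}$, which coincides with $\bar{\T}^\ac$, and the sets of Kolyvagin primes $\varadmissible_s$ of Definition \ref{def:Kolyvagin-primes} depend only on $\calR$, $\abf$ and $\admissible$, none of which change upon passing to $\T^\ac$ (recall also, from Lemma \ref{lem:iwasawa-primes}, point \ref{condition:iwasawa-Frobenius-trivial}, and Section \ref{sec:the-anticyclotomic-twist}, that primes of $\admissible$ remain admissible and split in the anticyclotomic tower). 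Thus for any residual classes $c^\pm\in\hone(K,\bar{\T})^\pm$ and all $s\gg 0$ one obtains infinitely many $\ell\in\varadmissible_s$ with $\loc_\gl(c^\pm)\ne 0$ whenever $c^\pm\ne 0$, which is precisely the ingredient feeding the descent over $\Tsa^\ac$ and $\T^\ac$.

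With these two points settled, I would run the argument of \cite[\S3]{longo-vigni:generalized} (itself modelled on \cite[\S1.6]{howard:heegner-points}) using the universal Kolyvagin system $\koly^\ac$ of Theorem \ref{th:Iwasawa-euler-to-kolyvagin-system}: the classes $\koly(n)^\ac_{\sfrak,\ga}$ satisfy the finite--singular relation \ref{condition:K2}, now twisted by the automorphisms $\chi_{n,\ell}$, but since $\phi_\ell^\fs(\chi_{n,\ell})$ is still an isomorphism it links $\loc_\gl\koly(n)^\ac_{\sfrak,\ga}$ with $\loc_\gl\koly(n\ell)^\ac_{\sfrak,\ga}$ just as the classical relation does, so the non-triviality propagation at the heart of the descent is untouched (cf.\ point (2) of Remark \ref{rk:equivalence-for-applications} and the proof of Theorem \ref{thm:howard-abstract}). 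From $\koly(1)^\ac\ne 0$ this yields (i); global Poitou--Tate duality over $\calR^\ac$ together with the self-duality packaged in \cite[Assumption 3.2]{longo-vigni:generalized} produces the pseudo-isomorphism of (ii) with an error module $M$ which, by the anticyclotomic symmetry (inversion on $\Gamma^\ac$), satisfies $\Char_{\calR^\ac}(M)=\Char_{\calR^\ac}(M)^\iota$; and tracking the index of the Kolyvagin class gives the divisibility (iii).

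The main obstacle I anticipate is one of bookkeeping rather than of substance: one must check that replacing a classical universal Kolyvagin system by a $\{\chi_{n,\ell}\}$-modified one does not disturb any step of Howard's and Longo--Vigni's arguments, i.e.\ that those arguments only ever use the \emph{existence} of a functorial isomorphism relating the finite and singular local conditions of consecutive derivative classes, and never the specific normalization $\chi_{n,\ell}=\id$. This is exactly the content of point (2) of Remark \ref{rk:equivalence-for-applications}, already verified in the proof of Theorem \ref{thm:howard-abstract}, and the Iwasawa case is formally identical; a secondary, routine check is that the portions of \cite{longo-vigni:generalized} written for a modular Galois representation are formal consequences of H.1--H.5 and Assumption 3.2 and hence apply verbatim to $\T^\ac$.
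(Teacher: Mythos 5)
Your proposal takes essentially the same route as the paper's own (very brief) proof: run the argument of Longo--Vigni's Theorem 3.5, substituting Theorem \ref{thm:howard-abstract} (and, underneath it, Lemma \ref{lem:even-finer-choice-of-primes}) for Howard's Proposition 1.6.1 (and Lemma 1.6.2), and invoke Remark \ref{rk:equivalence-for-applications} for the harmlessness of the twists $\chi_{n,\ell}$. The one detail the paper calls out explicitly that you leave implicit is the vanishing $\ho(K_\infty,\A)=0$ required by Longo--Vigni, which the paper derives from Lemma \ref{lem:no-invariants-T}; your discussion of residual irreducibility gestures at the same mechanism, but it would be cleaner to record this verification separately rather than fold it into the H.0--H.1 checklist.
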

\begin{proof}
	The proof proceeds as that of \cite[Theorem 3.5]{longo-vigni:generalized}, by replacing \cite[Proposition 1.6.1]{howard:heegner-points} (used in the proof of \cite[Proposition 3.3]{longo-vigni:generalized}) with Theorem \ref{thm:howard-abstract}. In fact, the condition $\ho(K_\infty, \A)=0$ here is a consequence of Lemma \ref{lem:no-invariants-T}.
\end{proof}

Note that the the last part of the previous theorem is one divisibility of the Perrin-Riou's formulation of the anticyclotomic Iwasawa main conjecture. It was firstly stated, in the elliptic curve case, in \cite{perrin-riou:iwasawa-heegner} and it has been proven in some concrete cases. We will come back on this topic in Section \ref{sec:examples}. 

\section{Examples}\label{sec:examples}

In this section we list the main examples of ($p\hspace{1pt}$-complete) anticyclotomic Euler systems (in the sense of Definitions \ref{def:euler-systems} and \ref{def:p-complete-euler-systems}) that appear in the literature and we show how our theory can be applied to recover some existing results. Throughout, $p$ will be an odd prime, $K$ will be an imaginary quadratic field and $N$ a positive integer not divisible by $p$ satisfying the \emph{Heegner hypothesis} in $K$, i.e., such that the primes dividing $N$ split in $K$.

\begin{remark}
	We assume the Heegner hypothesis for the sake of simplicity, but this is not necessary, as the results of the previous sections hold with the only assumption that the prime factors of $N$ do not ramify. In fact, in the literature one can find alternative versions of the objects that we will introduce that are built under a less restrictive hypothesis, usually called the \emph{generalized Heegner hypothesis}. This consists in assuming $N=N^+N^-$, where the primes dividing $N^+$ split in $K$ and $N^-$ is the squarefree product of an even number of primes that are inert in $K$. These constructions are usually performed replacing the base modular curve with an indefinite quaternionic Shimura curve. Case by case, we will briefly point out the corresponding theory under this generalized Heegner hypothesis .
\end{remark}

\subsection{Elliptic curves}

We first consider the anticyclotomic Euler system of Heegner points on elliptic curves. It dates back to the papers of Kolyvagin  \cite{KolFin}, \cite{KolES}, where he used this tool in order to bound the size of the Shafarevich-Tate group of elliptic curves and to attack the BSD conjecture in the case of analytic rank 1.  
The method used in those papers was later rewritten in the language of Kolyvagin systems by Howard in \cite{howard:heegner-points}.

Let $E$ be an elliptic curve defined over $\Q$ without complex multiplication, of conductor $N$. Its Tate module $T_p E$ is a $G_\Q$-representation of rank $2$ over $\Zp$ and, setting $J_{s_2}=\{0\}\subseteq\Z_p$ for every $s_2\in\Z_{>0}$, the couple $(\calR, \T\hspace{1pt}) = (\Zp, T_pE)$ satisfies Assumptions \ref{ass:assumptions-on-R} and \ref{ass:assumptions-on-T} if we suppose that $E[p]$ is irreducible and that the image of the representation contains the scalars $1 + p\Zp$. This is true if for instance the representation is surjective, which holds for $p$ large enough thanks to Serre's open image theorem (see \cite[Th\' eoreme 3]{serre1972:proprietes-galoisienne}). In particular, as done in Section \ref{sec:applications}, we will work with the finite quotients $T_pE/p^s T_pE \cong E[p^s]$ for $s\in \Z_{>0}$.

Fix a modular parametrization $X_0(N) \surj E$ and a cyclic $N$-ideal $\mathfrak{n}$ in the ring of integers $\mathcal{O}_K$ of $K$ (that exists thanks to the Heegner hypothesis). For every $n\in\N$ coprime with $N$, the Heegner point $P_n \in E(K[n])$ of conductor $n$ is the image via the modular parametrization of the point $x_n\in X_0(N)$ corresponding to the $N$-cyclic isogeny $\C/\mathcal{O}_n \to \C/(\mathcal{O}_n\cap \mathfrak{n})^{-1}$, where $\mathcal{O}_n = \Z + n\mathcal{O}_K$ is the order of conductor $n$ in $K$.
By applying the Kummer map
\[
\delta_{K[n]} \colon E(K[n]) \otimes \Zp \to \hone(K[n], T_p E),
\]
we define the classes $\cbf(n)_\hp := \delta_{K[n]}(P_n)$. Moreover, let, for every $s > 0$,  $\Sel_{p^s}(E/K[n]) \subseteq \hone(K[n], E[p^s])$ be the $p^s$-th Selmer group of $E$ over $K[n]$ (for its definition see for instance \cite[Section X.4]{silverman:arithmetic-elliptic-curves}). By definition, $\cbf(n)_\hp \in S_p(E/K[n]) := \varprojlim_{s} \Sel_{p^s}(E/K[n])$. 

For any number field $F$, define the Selmer structure $\lcond_{\mathrm{Kum}}$ on $T_p E$ over $F$ to be the one attached to the local conditions defined as the inverse image, via the map induced by the inclusion $T_pE \inj V_pE:=T_pE \otimes_{\Zp} \Qp$, of
\[
\begin{cases}
	\hone_{\ur}(F_v, V_pE) := \ker\bigl(\hone(F_v, V_pE) \to \hone(F_v^\ur, V_pE)\bigr) \quad &\text{if $v \mid N$},\\
	\Imm(\delta_{F_v} \colon E(F_v) \otimes \Qp \to \hone(F, V_p E)) &\text{if $v \mid p$},
\end{cases}
\]
where $\delta_{F_v}$ is the local Kummer map. By \cite[Proposition 1.6.7, Remark 1.5.5 (i)]{rubin:euler-systems}, we have that $S_p(E/F) = \hone_{\lcond_{\mathrm{Kum}}}(F, T_pE)$. It follows by \emph{loc.~cit.}~that the Selmer group of the propagation of $\lcond_{\Kum}$ to $T_pE/p^sT_pE \cong E[p^s]$ coincides with $\Sel_{p^s}(E/F)$ for every $s\ge 1$, and the one associated with the propagation of $\lcond_{\Kum}$ to $T_pE\otimes_{\Zp}\Qp/\Zp \cong E[p^\infty]$ coincides with $\Sel_{p^\infty}(E/F) := \varinjlim_s \Sel_{p^s}(E/F)$.

For any prime $\ell \in \admissible$ (as defined in Definition \ref{def:set-P}), let $a_\ell := \ell +1 - \lvert \tilde{E}(\F_\ell) \rvert$, where $\tilde{E}$ is the reduction of $E$ modulo $\ell$. Write $\abf := \{a_\ell\}_{\ell \in \admissible}$ and note that $a_\ell = \Tr(\Fr_\ell \hspace{1pt}\vert\hspace{1pt} T_pE)$ (in other words, using the language of Section \ref{sec:euler-systems-and-kolyvagin-systems}, we set $\ubf_\ell =1$). Let $\admissible'$ be the infinite subset of $\admissible$ defined in Definition \ref{def:Kolyvagin-primes}, choosing $\Omega$ to be the whole set of indices (as Assumption \ref{ass:u-ell} is trivially satisfied) and denote by $\e_L$ the sign of the functional equation of $L(E, s)$.

\begin{theorem}\label{th:heegner-points}
	We have that $\{\cbf(n)_\hp\}_{n \in \squarefreeadmissible} \in \ES(T_p E, \lcond_\mathrm{Kum}, \admissible, \abf)$. Moreover, there is a universal modified Kolyvagin system $\koly_\hp\in\KSuni(T_p E,\lcond_{\mathrm{Kum}},\admissible', \{\chi_{n, \ell}\})$ such that $\koly(1)_\hp=\cbf_{\hp, K}$, where $\chi_{n, \ell}$ is the multiplication by $\e_n = (-1)^{\omega(n)-1}\e_L$.
\end{theorem}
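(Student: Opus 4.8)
The plan is to verify the two halves of the statement in turn: first that the Heegner classes form an anticyclotomic Euler system for $(T_pE,\lcond_\Kum,\admissible)$ relative to $\abf$ in the sense of Definition \ref{def:euler-systems}, and then to feed this into Theorem \ref{th:euler-to-kolyvagin-system}, keeping track of the complex-conjugation relation in order to pin down the automorphisms $\chi_{n,\ell}$.

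For the first half, we have already observed that $(\calR,\T)=(\Zp,T_pE)$ satisfies Assumptions \ref{ass:assumptions-on-R} and \ref{ass:assumptions-on-T} (with $J_{s_2}=0$) and that Assumption \ref{ass:u-ell} holds trivially with $\ubf_\ell=1$, so $\abf_\ell=a_\ell=\Tr(\Fr_\ell\mid T_pE)$. Since $\cbf(n)_\hp=\delta_{K[n]}(P_n)$ lies in $S_p(E/K[n])=\hone_{\lcond_\Kum}(K[n],T_pE)$ by construction, it remains only to check conditions \ref{E1} and \ref{E2}. Condition \ref{E1} is the classical norm relation for Heegner points at an inert prime, $\Cor^{K[n\ell]}_{K[n]}P_{n\ell}=a_\ell P_n$ in $E(K[n])$, together with the compatibility of the Kummer map with corestriction (see \cite{KolES}, \cite{gross:kolyvagin}, \cite{howard:heegner-points}). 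Condition \ref{E2} is the classical congruence relation at $\ell$: since $\ell$ is inert in $K$ and $E$ has good reduction at $\ell$, the localization $\loc_{\lambda_{n\ell}}\cbf(n\ell)_\hp$ lies in the finite part $\honef(K[n\ell]_{\lambda_{n\ell}},T_pE)$, which the evaluation-at-Frobenius description recalled in Lemma \ref{lem:isomorfismi-finito-singolare} identifies with a quotient of $T_pE$; under this identification the class corresponds to the reduction of $P_{n\ell}$ modulo a prime above $\ell$, and the classical formula expressing this reduction as $\Fr_\ell$ applied to the reduction of $P_n$ is exactly \ref{E2}. This gives $\{\cbf(n)_\hp\}_{n\in\squarefreeadmissible}\in\ES(T_pE,\lcond_\Kum,\admissible,\abf)$; note that the basic class of this system is $\Cor^{K[1]}_K\cbf(1)_\hp=\cbf_{\hp,K}$.

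For the second half, as $\lcond_\Kum$ is a Selmer structure its local conditions are contained in those of $\lcond_\rel$, so the subfamily indexed by $\squarefreeadmissible'$ belongs to $\ES(T_pE,\lcond_\rel,\admissible',\abf)$ and Theorem \ref{th:euler-to-kolyvagin-system} yields automorphisms $\{\chi_{n,\ell}\}$ and $\koly_\hp\in\KSuni(T_pE,\lcond_\rel,\admissible',\{\chi_{n,\ell}\})$ with $\koly(1)_\hp=\cbf_{\hp,K}$. To identify the $\chi_{n,\ell}$ as the scalars $\e_n$, I would invoke the classical complex-conjugation relation for Heegner points, which in our setting reads $\tau_c(\cbf(n)_\hp)=-\e_L\cdot\sigma(n)(\cbf(n)_\hp)$ for a suitable $\sigma(n)\in\Gal(K[n]/K)$ — that is, condition \ref{E3} with $w=-\e_L$ (see \cite{gross:kolyvagin}, \cite{KolFin}; here $-\e_L$ is the Atkin--Lehner eigenvalue of the newform attached to $E$). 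Because $\ubf_\ell=1$, the factor $v_{n,\ell}$ of Remark \ref{rk:E3} collapses to $v_{n,\ell}=\e_n=(-1)^{\omega(n)}w=(-1)^{\omega(n)-1}\e_L$, and the computation there shows that the derived classes satisfy the finite--singular relation with $\chi_{n,\ell}$ equal to multiplication by $\e_n$. Finally, by Remark \ref{remark:stressed-local-conditions} it remains to check that the derived classes $\gk(n)_\sfrak$ meet the $\lcond_\Kum$ local conditions at the primes dividing $Np$; since $\gk(n)_\sfrak$ is obtained from the Selmer class $\cbf(n)_\hp$ by applying $D_n$, restriction and corestriction — operations compatible with localization and with the propagation of local conditions — this reduces to the standard fact that localizations of Heegner-derived classes lie in the image of the local Kummer map above $p$ and are unramified over $V_pE$ above $N$. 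I expect this last point, the precise local behaviour at $p$, to be the step requiring the most care; it is supplied by the classical descent arguments of \cite{howard:heegner-points} and \cite{KolES}, while the remainder of the proof amounts to quoting the Heegner point norm, congruence and conjugation relations and matching sign conventions.
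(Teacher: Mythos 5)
Your proposal is correct and follows essentially the same route as the paper: conditions \ref{E1} and \ref{E2} from Gross's Heegner point relations via the Kummer map, Theorem \ref{th:euler-to-kolyvagin-system} for the relaxed Selmer structure, Remark \ref{rk:E3} with $w=-\e_L$ (so $v_{n,\ell}=\e_n$ since $\ubf_\ell=1$) to pin down $\chi_{n,\ell}$, and Remark \ref{remark:stressed-local-conditions} to reduce to the local conditions at $v\mid Np$. The only difference is that the paper discharges that last local verification by citing \cite[Proposition 6.2(a)]{gross:kolyvagin} directly, identifying Gross's derived classes $c(n)$ with $\kappa(n)_{\hp,s}$, whereas you gesture at the same content via \cite{howard:heegner-points} and \cite{KolES}.
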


\begin{proof}
	We already noted that $\cbf(n)_\hp \in \hone_{\lcond_{\mathrm{Kum}}}(K[n], T_p E) = S_p(E/K[n])$; \ref{E1} and \ref{E2} follow from the analogous properties for the Heegner points (see \cite[Proposition 3.7]{gross:kolyvagin}), by applying the Kummer map. This shows the first claim. 
	
	By Theorem \ref{th:euler-to-kolyvagin-system}, there is $\koly_\hp\in\KSuni(T_p E,\lcond_{\rel},\admissible', \{\chi_{n, \ell}\})$ such that $\koly(1)_\hp=\cbf_{\hp, K}$ for some $\chi_{n,\ell}\in\Aut(T_p E)$. The fact that $\chi_{n, \ell}=\e_n$ follows from the explicit description of Remark \ref{rk:E3}, since \ref{E3} holds for $w=-\e_L$ by \cite[Proposition 5.3]{gross:kolyvagin}. Therefore, as noted in Remark \ref{remark:stressed-local-conditions}, the last claim follows once we show that 
	$
	\loc_v \kappa(n)_{\hp, s} \in \hone_{\lcond_{\mathrm{Kum}}}(K_v, E[p^{s}])
	$
	for $v \mid Np$, for $ \kappa(n)_{\hp, s}$ as in Definition \ref{def:kolyvagin-derivative} with $\cbf(n)= \cbf(n)_{\hp}$. This is \cite[Proposition 6.2(a)]{gross:kolyvagin}, as his classes $c(n)$ are easily seen to coincide with our $\kappa(n)_{\hp, s}$.
\end{proof}

We can use the existence of $\koly_\hp$ to obtain the following structure theorem for Selmer groups.

\begin{corollary}[Kolyvagin]
	If $P_K =\Tr_{K[1]/K}(P_1) \ne 0$, then $S_p(E/K)$ is free of rank $1$ over $\Zp$ and there is a finite $\Zp$-module $M$ such that 
	\[
	\Sel_{p^\infty}(E/K) \cong (\Qp/\Zp) \oplus M \oplus M.
	\]
	Furthermore, we have that
	$
	\length_{\Zp}(M) \le \length_{\Zp}\bigl(S_p(E/K)/\Zp \cbf_{\hp, K}\bigr).
	$
\end{corollary}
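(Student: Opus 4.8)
The statement is precisely the specialization of Theorem~\ref{thm:howard-abstract} to the situation of Heegner points on $E/\Q$, with $\calR = \Zp$ and $\T = T_pE$, so the plan is simply to verify that all the hypotheses of that theorem are in force and then quote it. First I would recall that by the discussion in this subsection the couple $(\Zp, T_pE)$ satisfies Assumptions~\ref{ass:assumptions-on-R} and~\ref{ass:assumptions-on-T} (using that $E$ has no CM, that $E[p]$ is irreducible and that the image of the mod-$p$ representation contains $1+p\Zp$), and that the relevant Selmer structure is $\lcond = \lcond_{\Kum}$, whose propagation to $E[p^s]$ (resp.\ $E[p^\infty]$) recovers $\Sel_{p^s}(E/K)$ (resp.\ $\Sel_{p^\infty}(E/K)$) and whose module of global classes over $K$ is $S_p(E/K) = \hone_{\lcond_{\Kum}}(K, T_pE)$, as recorded above via \cite[Proposition 1.6.7, Remark 1.5.5(i)]{rubin:euler-systems}.

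Next I would check Howard's hypotheses H.2--H.5 of \cite[Section 1.3]{howard:heegner-points} for the triple $(T_pE, \lcond_{\Kum}, \admissible')$; these are the standard big-image and local conditions for the Heegner point setting and are verified in \cite{howard:heegner-points} under exactly the running assumptions here (H.0 and H.1 follow, as noted in the proof of Theorem~\ref{thm:howard-abstract}, from the first two points of Assumption~\ref{ass:assumptions-on-T}). With these in hand, the hypothesis of Theorem~\ref{thm:howard-abstract} that remains is the existence of a universal modified Kolyvagin system $\koly$ with $\koly(1) \ne 0$: this is provided by $\koly_\hp \in \KSuni(T_pE, \lcond_{\Kum}, \admissible', \{\chi_{n,\ell}\})$ from Theorem~\ref{th:heegner-points}, whose basic class is $\koly(1)_\hp = \cbf_{\hp, K} = \delta_K(P_K)$. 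Since the Kummer map $\delta_K$ is injective on $E(K)\otimes\Zp$ (its kernel being the divisible part, which is trivial as $E(K)$ is finitely generated), the assumption $P_K \ne 0$ forces $\cbf_{\hp,K} \ne 0$, so $\koly_\hp(1) \ne 0$.

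Applying Theorem~\ref{thm:howard-abstract} with $\calR = \Zp$, $\Phi = \Qp$ and $\A = T_pE \otimes_{\Zp}\Qp/\Zp \cong E[p^\infty]$ then yields at once that $\hone_{\lcond_{\Kum}}(K, T_pE) = S_p(E/K)$ is free of rank one over $\Zp$, that there is a finite $\Zp$-module $M$ with $\Sel_{p^\infty}(E/K) \cong (\Qp/\Zp) \oplus M \oplus M$, and that $\length_{\Zp}(M) \le \length_{\Zp}(S_p(E/K)/\Zp\cdot\cbf_{\hp,K})$, which is exactly the claim. The only genuinely non-formal point is the bookkeeping to confirm that Howard's hypotheses H.2--H.5 really do hold in our generality rather than under some slightly stronger hypothesis; but since the present setting is literally the one of \cite{howard:heegner-points}, this is immediate, and no new argument is needed beyond citing \emph{loc.\ cit.}
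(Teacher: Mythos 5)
Your proposal is correct and follows essentially the same route as the paper: cite Theorem~\ref{thm:howard-abstract} applied to the Heegner-point universal Kolyvagin system $\koly_\hp$ of Theorem~\ref{th:heegner-points}, verify H.2--H.5 by appealing to \cite{howard:heegner-points}, and note $\koly_\hp(1)=\cbf_{\hp,K}=\delta_K(P_K)\ne 0$. The only addition is your remark on injectivity of $\delta_K$, which is fine in substance (the kernel of $E(K)\to E(K)\otimes\Zp$ is the prime-to-$p$ torsion, not literally ``the divisible part''; since $E(K)[p]=0$ by the irreducibility hypothesis, one implicitly reads $P_K\ne 0$ as $P_K$ non-torsion, as is standard), whereas the paper simply asserts the nonvanishing.
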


\begin{proof}
	This is Theorem \ref{thm:howard-abstract} applied to $\koly_\hp$. In fact, by hypothesis and Theorem \ref{th:heegner-points} we have that $\koly_\hp(1)=\cbf_{\hp, K} = \delta_K(P_K)\ne 0$, and (H.2)-(H.5) of \cite[Section 1.3]{howard:heegner-points} hold as explained in \emph{loc.~cit.}, proof of Theorem 1.6.5.
\end{proof}

\begin{remark}\label{rk:elliptic-curves-id-kolyvagin-system}
	In this case, it is possible to modify $\koly_\hp$ into an  $\{\id\}$-universal Kolyvagin system (i.e., a universal Kolyvagin system in the classical sense) by multiplying the classes $\kappa(n)_{\hp, s}$ by $v_n:=\prod_{i=1}^{\omega(n)} \e_{\ell_1 \cdots \ell_{i}}$ for any chosen prime decomposition $n = \prod_{i=1}^{\omega(n)} \ell_i$. 
	
	Note that this is possible since, for every $m\mid n$, $\e_{m}$ depends only on $\omega(m)$, and therefore $v_n$ does not depend on the chosen prime decomposition. In general, when the modifying automorphisms $\chi_{n,\ell}$ are not elements of $\calR$ or depend also on the prime $\ell$, we haven't been able, up to our best effort, to obtain a genuine $\{\id\}$-universal Kolyvagin system out of a $\chi_{n,\ell}$-universal Kolyvagin system.
\end{remark}

Suppose now that $E$ has good ordinary reduction at $p$. For any place $v \mid p$ of $K$, denote by $\tilde{E}_v$ the reduction of $E$ at $v$ and by $F^+_v (T_p E)$ the kernel of the reduction map $T_p E \to T_p \tilde{E}_v$, which is a $G_{K_v}$-stable free sub\hspace{1pt}-$\Zp$-module of rank $1$. 

Consider the $G_K$-representations $\T^\ac = T_p E \otimes_{\Zp} \Lambda^\ac$ and $\A^\ac = \T^\ac \otimes_{\Lambda^\ac} (\Lambda^\ac)^\vee$.
We define the \emph{Greenberg Selmer structure} $\lcond_\Gr$ over $K$ on $M= \T^\ac, \A^\ac$ in the following way:  
we take the unramified condition at primes $v \nmid p$ and 
\[
\hone_\ord(K_v, M) = \Imm\bigl(\hone\bigl(K_v, F^+_v (M)\bigr) \to \hone(K_v, M)\bigr)
\]
for any $v\mid p$, where $F_v^+(\T^\ac) = F_v^+(T_p E) \otimes_{\Zp} \Lambda^\ac$ and $F_v^+(\A^\ac) = F_v^+(\T^{\ac}) \otimes_{\Lambda^\ac} (\Lambda^\ac)^\vee$.
It turns out that there are pseudoisomorphisms
\[
\hone_{\lcond_{\Gr}}(K, \T^\ac) \sim \varprojlim_n S_p(E/K_n) \quad \text{and} \quad \hone_{\lcond_{\Gr}}(K, \A^\ac)
\sim \varinjlim_n \Sel_{p^{\infty}}(E/K_n).
\]
We now use Heegner points in order to construct a $p\hspace{1pt}$-complete anticyclotomic Euler system.
For any $np^\alpha \in \squarefreeadmissible^{(p)}$, define $P_{\alpha, n} = \Tr_{K[np^{\alpha+1}]/K_\alpha[n]}(P_{np^{\alpha+1}}) \in E(K_\alpha[n])$. These point satisfy the norm relations 
\[
\Tr_{K_{\alpha +1}[n]/K_\alpha[n]}(P_{\alpha+1, n}) = a_p P_{\alpha, n} - P_{\alpha-1, n}
\]
for $\alpha \ge 1$, which Howard used in order to construct another class of points $Q_{\alpha, n} \in E(K_\alpha[n])$ that are compatible with respect to the maps $\Tr_{K_{\alpha +1}[n]/K_\alpha[n]}$ (see \cite[Lemma 2.3.3]{howard:heegner-points}). Let $\bbf(np^\alpha)_\hp = \delta_{K_\alpha[n]} (Q_{\alpha, n})\in \hone(K_\alpha[n], T_p E)$. It follows in particular that $\bbf^\ac_{K, \hp} \in \hone_{\lcond_{\Gr}}(K, \T^\ac)$.

\begin{theorem}
	We have that $\{\bbf(np^\ga)_\hp\}_{np^\ga\in\squarefreeadmissible^{(p)}}\in\ES^{(p)}(T_p E,\lcond_{\Kum},\admissible, \abf)$. Moreover, there is $\koly^{\ac}_\hp\in\KSuni(\T^{\ac},\lcond_{\Gr},\admissible', \{\chi_{n, \ell}\})$ such that $\koly(1)^\ac_{\hp, s}=\bbf^\ac_{\hp, K}$, where $\chi_{n, \ell}$ is the multiplication by $\e_n = (-1)^{\omega(n)-1}\e_L$.
\end{theorem}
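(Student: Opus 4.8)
The plan is to reproduce, in the anticyclotomic Iwasawa setting, the argument used for Theorem~\ref{th:heegner-points}, using Theorem~\ref{th:Iwasawa-euler-to-kolyvagin-system} in place of Theorem~\ref{th:euler-to-kolyvagin-system} and importing the geometric inputs from \cite{howard:heegner-points}, where the anticyclotomic Iwasawa-theoretic Heegner point Kolyvagin system was first built; in practice most of the work is matching the two frameworks.

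First I would check that $\{\bbf(np^\ga)_\hp\}$ is a $p\hspace{1pt}$-complete anticyclotomic Euler system for $(T_pE,\lcond_{\Kum},\admissible)$ relative to $\abf$. Since each $Q_{\alpha,n}$ lies in $E(K_\ga[n])$, the Kummer map $\delta_{K_\ga[n]}$ puts $\bbf(np^\ga)_\hp$ into $S_p(E/K_\ga[n])=\hone_{\lcond_{\Kum}}(K_\ga[n],T_pE)$. Condition \ref{pE0} is exactly the defining trace relation $\Tr_{K_{\alpha+1}[n]/K_\alpha[n]}(Q_{\alpha+1,n})=Q_{\alpha,n}$ of \cite[Lemma~2.3.3]{howard:heegner-points}, read through $\delta_{K_\ga[n]}$. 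For \ref{pE1} and \ref{pE2} I would use that each $Q_{\alpha,n}$ is a fixed $G_K$-equivariant $\Zp$-linear combination of the traced Heegner points $\Tr_{K[np^{\alpha'+1}]/K_{\alpha'}[n]}(P_{np^{\alpha'+1}})$; then \ref{pE1} and \ref{pE2} follow, by transitivity and functoriality of corestriction and localization, from the analogous relations $\Tr_{K[n\ell m]/K[nm]}(P_{n\ell m})=a_\ell P_{nm}$ and $P_{n\ell m}\equiv\Fr_\ell P_{nm}$ at primes above $\ell$ for Heegner points (\cite[Proposition~3.7]{gross:kolyvagin}), recalling $a_\ell=\Tr(\Fr_\ell\mid T_pE)$ and $\ubf_\ell=1$. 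This gives $\{\bbf(np^\ga)_\hp\}\in\ES^{(p)}(T_pE,\lcond_{\Kum},\admissible,\abf)$, and a fortiori in $\ES^{(p)}(T_pE,\lcond_{\rel},\admissible,\abf)$.

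Next I would apply Theorem~\ref{th:Iwasawa-euler-to-kolyvagin-system} to obtain automorphisms $\chi_{n,\ell}$ of $T_pE$ and a class $\koly^\ac_\hp\in\KSuni(\T^\ac,\lcond_{\rel},\admissible',\{\chi_{n,\ell}\})$ with $\koly(1)^\ac_\hp=\bbf^\ac_{\hp,K}$, and then identify $\chi_{n,\ell}$. The key input is that the $Q$-points satisfy a complex-conjugation relation of the form \ref{E3} with $w=-\e_L$, inherited from the corresponding relation for Heegner points (\cite[Proposition~5.3]{gross:kolyvagin}) with the bookkeeping along the anticyclotomic tower carried out as in \cite[Lemma~2.3.3]{howard:heegner-points}. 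Feeding this into the Iwasawa analogue of Remark~\ref{rk:E3} — the same computation done with Lemma~\ref{lem:formula-capitolo-3} in place of Lemma~\ref{lem:formula-capitolo-2} — one may take the twisting automorphisms to be $v_{n,\ell}=\e_n\frac{\e_n(\ell+1)-\ubf_\ell a_\ell}{\e_n(\ell+1)-a_\ell}$, which collapses to multiplication by $\e_n=(-1)^{\omega(n)-1}\e_L$ because $\ubf_\ell=1$.

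Finally, to pass from $\lcond_{\rel}$ to the Greenberg structure $\lcond_{\Gr}$ I would invoke Remark~\ref{rk:stressed-local-condition-iwasawa}: it suffices to verify that the derivative classes $\kappa(n)^\ac_{\sfrak,\alpha}$ of Definition~\ref{def:kolyvagin-derivative-iwasawa} meet the $\lcond_{\Gr}$-local conditions at the primes $v\mid Np$. At $v\mid N$ this is the unramified condition and follows exactly as in the proof of Theorem~\ref{th:heegner-points}, since through Shapiro's lemma these classes are built from Gross's Heegner-derivative classes, whose localizations at $v\mid N$ are unramified by \cite[Proposition~6.2(a)]{gross:kolyvagin}. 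At $v\mid p$ the condition is $\hone_\ord$, and this is the genuinely substantial step, which I expect to be the main obstacle: one must show that the $Q$-points localize into the formal group at $p$ and that the Kolyvagin derivative operators preserve the ordinary rank-one submodule, so the construction stays inside $\hone_\ord$. This is precisely the ordinary-local-condition analysis of \cite[Section~2.3]{howard:heegner-points}, which I would cite. Granting it, $\koly^\ac_\hp\in\KSuni(\T^\ac,\lcond_{\Gr},\admissible',\{\chi_{n,\ell}\})$ with $\koly(1)^\ac_\hp=\bbf^\ac_{\hp,K}$, as claimed.
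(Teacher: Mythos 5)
Your proposal follows the same route as the paper's proof: check that $\{\bbf(np^\ga)_\hp\}$ is a $p$-complete anticyclotomic Euler system by citing Howard's construction of the $Q_{\alpha,n}$ (the paper cites \cite[Lemma 2.3.3, proof of 2.3.5]{howard:heegner-points}), apply Theorem~\ref{th:Iwasawa-euler-to-kolyvagin-system} together with Remark~\ref{rk:stressed-local-condition-iwasawa} to reduce the Greenberg condition to checking the derivative classes at $v\mid Np$, and then invoke Howard's local analysis (\cite[Lemma 2.3.4]{howard:heegner-points}). You are actually slightly more careful than the paper on one point: the paper asserts $\chi_{n,\ell}=\e_n$ without explaining it, while you explicitly observe that \ref{E3} holds for the $Q$-points (inherited from Gross's Heegner point relation) and feed this into the Iwasawa analogue of Remark~\ref{rk:E3}, noting that $\ubf_\ell=1$ collapses $v_{n,\ell}$ to $\e_n$. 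Your minor flag that the collapse of the $Q$-points to a $\Zp$-linear combination of traced $P$-points may be an oversimplification is fair, but it is not a gap: both you and the paper ultimately rely on Howard's statements rather than re-deriving them, and your argument is sound at that level of detail.
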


\begin{proof}
	By definition, $\bbf(np^\alpha) \in S_p(E/K_{\alpha}[n])$, and \ref{pE0}-\ref{pE2} follow from the properties of the points $Q_{\alpha, n}$ by applying the Kummer map (see \cite[Lemma 2.3.3, proof of 2.3.5]{howard:heegner-points}). As observed in  Remark \ref{rk:stressed-local-condition-iwasawa}, in order to prove the second claim, we only need to apply Theorem \ref{th:Iwasawa-euler-to-kolyvagin-system} and show that $\loc_{v} \kappa(n)^\ac_\hp \in \hone_{\lcond_\Gr}(K, \T^\ac_{s, \alpha})$ for $v \mid Np$,  for $\kappa(n)_{\hp, s, \alpha}$ as in Definition \ref{def:kolyvagin-derivative-iwasawa} for $\bbf(np^\alpha) = \bbf(np^\alpha)_\hp$. This is \cite[Lemma 2.3.4]{howard:heegner-points}.
\end{proof}

The existence of a $p\hspace{1pt}$-complete anticyclotomic Euler system has one divisibility of the Perrin-Riou  formulation of the Iwasawa main conjecture as a corollary (see \cite{perrin-riou:iwasawa-heegner}). 
\begin{corollary}[Howard]
	$\hone_{\lcond_\Gr}(K, \T^\ac)$ is a torsion-free $\Lambda^\ac$-module of rank $1$. Moreover, there is a finitely generated torsion $\Lambda^\ac$-module $M$ such that
	\begin{enumerate}[label=\emph{(\roman*)}]
		\item $\Char(M)=\Char(M)^\iota$;
		\item $\hone_{\lcond_\Gr}(K, \A^\ac)^\vee \sim \Lambda^\ac \oplus M \oplus M$;
		\item $\Char(M) \supseteq \Char(\hone_{\lcond_\Gr}(K, \T^\ac)/\Lambda^\ac\bbf^\ac_{\hp, K})$.
	\end{enumerate}
\end{corollary}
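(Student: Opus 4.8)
The plan is to read off this corollary as a direct application of Theorem~\ref{th:howard-abstract-iwasawa} to the universal modified Kolyvagin system $\koly^\ac_\hp \in \KSuni(\T^\ac, \lcond_\Gr, \admissible', \{\chi_{n,\ell}\})$ produced by the preceding theorem, where $\chi_{n,\ell}$ is multiplication by $\e_n = (-1)^{\omega(n)-1}\e_L$, $\T^\ac = T_pE \otimes_{\Zp}\Lambda^\ac$ and $\A^\ac = \T^\ac \otimes_{\Lambda^\ac}(\Lambda^\ac)^\vee$. Note that the preceding theorem already delivers the Kolyvagin system for the Greenberg structure $\lcond_\Gr$ itself (so no further ``stressing'' of local conditions at $Np$ is needed). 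To invoke Theorem~\ref{th:howard-abstract-iwasawa} we must only check that $\lcond_\Gr$ satisfies Hypotheses~H.1--H.5 of \cite{howard:heegner-points} and Assumption~3.2 of \cite{longo-vigni:generalized}, and that $\koly(1)^\ac_\hp = \bbf^\ac_{\hp,K} \ne 0$. Granting these, items (i), (ii), (iii) of Theorem~\ref{th:howard-abstract-iwasawa}, read off with $\calR = \Zp$, are exactly (i), (ii), (iii) of the corollary (the subscript $\Lambda$ in (ii) being a misprint for $\Gr$); if one wants the classical phrasing, one then transports everything to $\varprojlim_n S_p(E/K_n)$ and $\varinjlim_n \Sel_{p^\infty}(E/K_n)$ via the pseudo-isomorphisms recalled just before the corollary.

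For the hypotheses, H.0 and H.1 follow, as in the proof of Theorem~\ref{thm:howard-abstract}, from Assumption~\ref{ass:assumptions-on-T} \ref{ass-cond:T-free-rank-2} and \ref{condition:assumption-irreducible-residual-representation}. Since the residual representation of $\T^\ac$ is $E[p]$, Hypotheses H.2--H.4 reduce to the big-image and ramification conditions on the modules $E[p^s]$, which hold under our running hypotheses on $E$ (irreducibility of $E[p]$ and the presence of the scalars $1+p\Zp$ in the image, which holds in particular whenever the mod-$p$ representation is surjective, hence for all but finitely many $p$ by Serre's open image theorem) exactly as spelled out in \cite[Section~2.2]{howard:heegner-points}; Hypothesis H.5, the compatibility of the local conditions at $v \mid p$ with the submodule $F^+_v(T_pE)$, is the defining feature of the ordinary (Greenberg) Selmer structure, and Assumption~3.2 of \cite{longo-vigni:generalized} on the behaviour of $\lcond_\Gr$ along the anticyclotomic tower is checked in \emph{loc.~cit.}~in the good ordinary case. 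The auxiliary input $\ho(K_\infty, \A) = 0$ used inside the proof of Theorem~\ref{th:howard-abstract-iwasawa} is Lemma~\ref{lem:no-invariants-T} (equivalently point \emph{\ref{condition:iwasawa-no-galois-invariants}} of Lemma~\ref{lem:iwasawa-primes}) applied along the layers $K_\alpha$.

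The genuinely hard point — and the only one requiring an external input — is the non-vanishing $\bbf^\ac_{\hp,K} \ne 0$. By construction $\bbf^\ac_{\hp,K}$ corresponds, under $\Sh_\infty$ and up to the compatible system of corestrictions, to the norm-compatible family of Heegner classes $\{\delta_{K_\alpha}(\Tr_{K_\alpha[1]/K_\alpha} Q_{\alpha,1})\}_{\alpha \ge 0} \in \varprojlim_\alpha \hone(K_\alpha, T_pE)$; its triviality would make this $\Lambda^\ac$-adic Heegner class vanish, hence (by injectivity of the Kummer maps on $E(K_\alpha)\otimes\Zp$) force the norm-compatible system of Heegner points to become trivial in $E(K_\alpha)\otimes\Zp$ for every $\alpha$. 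That this cannot happen is a theorem of Howard \cite{howard:heegner-points}, which ultimately rests on the non-triviality results of Cornut and Vatsal for Heegner points along the anticyclotomic $\Zp$-extension; I would simply cite this and be done. The remainder of the corollary then follows formally from Theorem~\ref{th:howard-abstract-iwasawa}, with no additional argument.
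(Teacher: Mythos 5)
Your proposal is correct and follows the paper's own proof: apply Theorem \ref{th:howard-abstract-iwasawa} to $\koly^\ac_\hp$ and cite the non-vanishing of the $\Lambda^\ac$-adic Heegner class. The paper is terser — it simply invokes the main result of \cite{cornut:mazur-conjecture} for $\koly^\ac_\hp(1)\neq 0$ and does not re-verify the hypotheses H.1--H.5 — but your fuller unpacking (including noticing the $\lcond_\Lambda$/$\lcond_\Gr$ typo and sketching why Cornut's theorem gives the non-vanishing via injectivity of the Kummer maps) adds nothing that contradicts and a little that helpfully clarifies.
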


\begin{proof}
	This is Theorem \ref{th:howard-abstract-iwasawa} for $\koly^\ac = \koly^\ac_\hp$, as $\koly^\ac_\hp(1) \ne 0$ by the main result of \cite{cornut:mazur-conjecture}.
\end{proof}

In \cite{perrin-riou:iwasawa-heegner}, Perrin-Riou conjectured that an equality should hold in the last point of the previous theorem. This fact is a reformulation of the anticyclotomic Iwasawa main conjecture in this special case. Using more sophisticated techniques one can prove also the other divisibility (and hence the full conjecture) under some mild  assumptions: see \cite{castella:p-adic-heights-heegner-points}, \cite{wan-heegner-point-imc}, \cite{burungale-castella-kim:perrin-riou-imc} and \cite{castella-wan:perrin-riou-imc-supersingular} in the supersingular setting.

\begin{remark}
	An Euler system of Heegner points is built under the (indefinite) generalized Heegner hypothesis in \cite{BD}. See also \cite{bertolini-darmon:imc-elliptic-curves-anticyclomic} and \cite{bertolini-longo-venerucci:imc-elliptic-curves} for the anticyclotomic Iwasawa main conjecture under a generalized Heegner hypothesis (also for the definite case).
\end{remark}

\subsection{Modular forms} 

Let $f = \sum_{i=1}^\infty a_i q^i$ be a cuspidal new-form  of even weight $k \ge 2$ and level $\Gamma_0(N)$. Let $F = \Q(a_i)_{i>0}$ be the Hecke field of $f$, let $\p \mid p$ be an ideal of $\calO_F$ and denote by $F_\p$ the completion of $F$ at $\p$, by $\calO_\p$ its valuation ring and choose a uniformizer $\pi$ of $\calO_\p$. Suppose that $p \nmid 6N\phi(N)(k-2)!$, where $\phi$ is the Euler's totient function. Inside the representation $V = V_\p(k/2)$, the $k/2$-th twist of the Deligne's representation attached to $f$, one can find (see \cite[Proposition 3.1]{nekovar:chow-groups}) an $\calO_\p$-lattice $T$ admitting a $G_{\Q}$-equivariant alternating perfect pairing $T \times T \to \calO_\p(1)$. 

Since $\calO_\p$ is a DVR, setting again $J_{s_2}=\{0\}$, for every $s_2\in\Z_{>0}$, the couple $(\calO_\p, T)$ satisfies Assumptions \ref{ass:assumptions-on-R} and \ref{ass:assumptions-on-T}, if we assume that $T/\pi T$ is irreducible and that the image of the the representation contains the scalars $1+p\Z_p$: this is true for instance if $p$ is non-excluded in the sense of \cite[Definition 6.1]{besser:finiteness-sha}; these primes are infinitely many (see \cite[Lemma 3.8]{longo-vigni:beilinson}). Note also that the characteristic polynomial of $\Fr_\ell$ over $T$ is $X^2 - a_\ell \ell^{1-k/2} X + \ell$ for any $\ell \nmid Np$. Again, since $J_{s_2}=0$ for every $s_2\in\Z_{>0}$, we are interested in the representations $T/p^sT$, for  $s\in \Z_{>0}$.

We assume also the simplifying assumption that the local Tamagawa numbers are trivial at the primes $v \mid N$. These are defined as the order of the finite module $\hone(\inertia_v, T)^{\Fr_v=1}_{\calO_\p-\tors}$ and are involved in a formulation of the Bloch-Kato conjecture (see \cite[Section I.4.2.2]{fontaine-perrin-riou:bloch-kato}).

For this representation and $K$ imaginary quadratic satisfying the Heegner hypothesis we may construct two anticyclotomic Euler systems: one starting from the (classical) Heegner cycles of \cite{nekovar:chow-groups}, the other starting with the generalized Heegner cycles of \cite{bdp:generalized}. The two constructions are analogous, therefore we will focus only on the latter, since generalized Heegner cycles may be used also in order to construct a $p\,$-complete anticyclotomic Euler system, as we will see.

Let $E$ be a canonical elliptic curve (in the sense of Gross, see \cite[Section 4.1]{castella-hsieh:heegner-cycles}) defined over $K[1]$ with CM by $\calO_K$ and fix $\xi \colon \C/\calO_K \iso E(\C)$ a complex uniformization. The generalized Kuga--Sato variety is the variety $X_{k-2} := \kugasato^{k-2}_{1, N} \times_{K[1]} E^{k-2}$ defined over $K[1]$, where by $\kugasato^{k-2}_{1, N}$ we denote the $(k-2)$-th Kuga--Sato variety over the modular curve $X_1(N)$ of level $\Gamma_1(N)$. Let $n$ be a positive integer, consider the elliptic curve $E_n = E/\mathcal{C}_n$, where $\mathcal{C}_n = \xi(n^{-1}\calO_n/\calO_K)$ is a cyclic subgroup of $E$ of order $n$ and let $\phi_n \colon E \to E_n$ be the quotient isogeny. Therefore, $E_n$ is endowed with the complex uniformization $\C/\calO_n \cong E_n(\C)$ and $\phi_n$ corresponds to the map $\C/\calO_K \to \C/\calO_n$ given by $z \mapsto nz$. Attached to $\phi_n$, one defines the cycle
\[
\Delta_n \in \chow^{k-1}(X_{k-2}/K[n])_0 \otimes \Zp 
\]
in the following way: let $\mathfrak{n}$ be the cyclic $N$-ideal, whose existence is ensured by the Heegner hypothesis relative to $N$, and let $\mu_n$ be a particular $\mathfrak{n}$-torsion point of $E_n$ (for its definition see \cite[Section 2.3]{castella-hsieh:heegner-cycles}). The couple $(E_n, \mu_n)$ determines a point $P_n$ of $X_1(N)$ via its moduli interpretation. Denote by $\iota_n \colon E_{n}^{k-2} \inj X_1(N)$ the embedding of $E_n^{k-2}$ as the fibre of $P_n$ with respect to the structural morphism of $\kugasato^{k-2}_{1, N}$. We define
\[
\Delta_n := (\varepsilon_X)_\ast \Upsilon_n,
\]
where $\varepsilon_X$ is the projector of \cite[(2.2.1)]{bdp:generalized} and $\Upsilon_n$ is the $(k-2)$-th self-product of the graph of $\phi_n$, seen as a cycle on $X_{k-2}$ via
\[
\Upsilon_n = \mathrm{Graph}(\phi_n)^{k-2} \subseteq (E_n \times E)^{k-2} = E_n^{k-2} \times E^{k-2} \xhookrightarrow{\iota_n \times \id_{E^{k-2}}} X_{k-2}.
\]
The image of $\Delta_n$ via a suitable Abel-Jacobi map (see \cite[Sections 4.2, 4.4]{castella-hsieh:heegner-cycles}) determines a cohomology class $\cbf(n)_{\gen} \in \hone(K[n], T)$ (denoted in \cite{castella-hsieh:heegner-cycles} by $z_{f, c, \chi}$, for $c=n$ and $\chi$ the trivial character). If follows from \cite[Theorem 3.1]{nekovar:p-adic-abel-jacobi} that $\cbf(n)_\gen \in \honef(K[n], T)$, where, for any number field $F$, $\honef(F, T)$ is the Bloch--Kato Selmer group, i.e., the Selmer group with respect to the local conditions $\lcond_\bk$ defined as the inverse image via the natural map $\hone(F, T) \to \hone(F, V)$ of
\[
\begin{cases}
	\hone_{\ur}(F_v, V) := \ker\bigl(\hone(F_v, V) \to \hone(F_v^\ur, V)\bigr) \quad &\text{if $v \mid N$},\\
	\ker\bigl(\hone(F_v, V) \to \hone(F_v, V \otimes B_{\mathrm{cris}})\bigr) &\text{if $v \mid p$}.
\end{cases}
\]
Write also $A := V/T$ and define the Bloch--Kato Selmer group $\honef(F, A)$ taking as Selmer structure the image of the above local conditions on $V$ via the natural map $\hone(F, V) \to \hone(F, A)$. Since $A[p^s] \cong T/p^sT$ for any $s \in \Z_{>0}$ we have that $\honef(F, A) = \varinjlim_s \honef(F, T/p^sT)$, where we induce the Selmer structure $\lcond_\bk$ on $T/p^sT$ by propagation.

Set $\abf = \{a_\ell\}_{\ell \in \admissible}$ and note that $\Tr(\Fr_\ell \hspace{1pt}\vert\hspace{1pt} T) = a_\ell \ell^{1-k/2}$ (i.e., $\ubf_\ell = \ell^{1-k/2}$). Let $\admissible'$ be the set of admissible primes of Definition \ref{def:Kolyvagin-primes}, for $\Omega$ the whole set of indices (as Assumption \ref{ass:u-ell} holds with $\epsilon = 1$ and  $a=1-k/2$) and denote by  $w_f \in \{\pm 1\}$ the Atkin-Lehner eigenvalue of $f$.
\begin{theorem}\label{thm:gen-heegner-cycles}
	We have that $\{\cbf(n)_\gen\}_{n \in \squarefreeadmissible} \in \ES(T, \lcond_\bk, \admissible, \abf)$. Moreover, there is a modified universal Kolyvagin system $\koly_\gen \in\KSuni(\T,\lcond_\bk,\admissible',\{\chi_{n,\ell}\})$ such that $\koly(1)_\gen=\cbf_{\gen, K}$, where $\chi_{n,\ell}$ is the multiplication by $v_{n,\ell} = \e_n\frac{\e_n(\ell+1)-\ubf_\ell\,\abf_\ell}{\e_n(\ell+1)-\abf_\ell}\in \calO_\p^\times$.
\end{theorem}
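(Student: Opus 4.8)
The plan is to mirror the proof of Theorem~\ref{th:heegner-points}: verify that the generalized Heegner classes form an anticyclotomic Euler system, apply Theorem~\ref{th:euler-to-kolyvagin-system}, identify the twisting automorphisms from the complex-conjugation relation, and then adjust the Selmer structure via Remark~\ref{remark:stressed-local-conditions}.

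First I would check that $\{\cbf(n)_\gen\}_{n\in\squarefreeadmissible}\in\ES(T,\lcond_\bk,\admissible,\abf)$. The containment $\cbf(n)_\gen\in\honef(K[n],T)=\hone_{\lcond_\bk}(K[n],T)$ is the one already recorded above via \cite[Theorem~3.1]{nekovar:p-adic-abel-jacobi}. Axiom \ref{E1} (with $\abf_\ell=a_\ell$, whence $\ubf_\ell=\ell^{1-k/2}$, so that Assumption~\ref{ass:u-ell} holds with $\epsilon=1$, $a=1-k/2$ and $\Omega$ the full index set) is the norm relation satisfied by the classes $z_{f,c,\chi}$ of \cite{castella-hsieh:heegner-cycles} (for $c=n$ and $\chi$ trivial), and axiom \ref{E2} is the local compatibility at $\ell$ between $\cbf(n\ell)_\gen$ and $\Fr_\ell\,\cbf(n)_\gen$ proved there. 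Since $\hone_{\lcond_\bk}(K[n],T)\subseteq\hone_{\lcond_\rel}(K[n],T)$ and $\admissible'\subseteq\admissible$, this in particular yields an element of $\ES(T,\lcond_\rel,\admissible',\abf)$, to which Theorem~\ref{th:euler-to-kolyvagin-system} applies: it produces automorphisms $\chi_{n,\ell}$ of $T$ and a universal $\{\chi_{n,\ell}\}$-Kolyvagin system $\koly_\gen\in\KSuni(\T,\lcond_\rel,\admissible',\{\chi_{n,\ell}\})$ with $\koly(1)_\gen=\cbf_{\gen,K}$.

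Next I would identify $\chi_{n,\ell}$. For this one checks condition \ref{E3}, namely $\tau_c\bigl(\cbf(n)_\gen\bigr)=w\cdot\sigma(n)\bigl(\cbf(n)_\gen\bigr)$ for a suitable $\sigma(n)\in\Gal(K[n]/K)$ and $w\in\{\pm1\}$, with $w=w_f$; this is the computation of the action of complex conjugation on generalized Heegner cycles carried out in \cite{castella-hsieh:heegner-cycles}, the higher-weight analogue of \cite[Proposition~5.3]{gross:kolyvagin}. Writing $\e_n:=(-1)^{\omega(n)}w_f$ as in Remark~\ref{rk:E3}, that remark then shows that $\koly_\gen$ may be taken with $\chi_{n,\ell}$ equal to multiplication by $v_{n,\ell}=\e_n\frac{\e_n(\ell+1)-\ubf_\ell\,\abf_\ell}{\e_n(\ell+1)-\abf_\ell}\in\calO_\p^\times$, which is the stated value.

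Finally, to pass from $\lcond_\rel$ to $\lcond_\bk$ I would use Remark~\ref{remark:stressed-local-conditions}: it suffices to show that $\loc_v\kappa(n)_{\gen,\sfrak}\in\hone_{\lcond_\bk}(K_v,\Ts)$ for every $v\mid Np$, where $\kappa(n)_{\gen,\sfrak}$ is the Kolyvagin derivative of Definition~\ref{def:kolyvagin-derivative} attached to $\cbf(n)_\gen$. Because $\cbf(n)_\gen\in\honef(K[n],T)$, all of its localizations lie in the Bloch--Kato local conditions, and these conditions at $v\mid Np$ are stable under restriction, corestriction and the Galois operators $D_n$ (recall $\gcd(n,p)=1$) and are compatible with reduction modulo $p^s$ via propagation; hence the derivative classes inherit them. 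The assumed triviality of the local Tamagawa numbers at $v\mid N$ enters here to ensure that $\honef(K_v,T)$ agrees with the unramified condition, so that the propagated conditions on the quotients $\Ts$ are the expected ones (matching the Bloch--Kato Selmer groups level by level), exactly as \cite[Proposition~6.2(a)]{gross:kolyvagin} does in the elliptic-curve case. I expect the main obstacle to be extracting the arithmetic inputs \ref{E1}, \ref{E2}, \ref{E3} and the Bloch--Kato property for the classes $\cbf(n)_\gen$ from the (nontrivial) results of \cite{castella-hsieh:heegner-cycles} and \cite{nekovar:p-adic-abel-jacobi}, together with pinning down the precise sign in \ref{E3} so that $\chi_{n,\ell}$ comes out exactly as $v_{n,\ell}$.
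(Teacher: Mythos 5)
Your proposal follows the same route as the paper: verify the Euler system axioms for the generalized Heegner classes, apply Theorem~\ref{th:euler-to-kolyvagin-system} with $\lcond_\rel$, identify $\chi_{n,\ell}$ via Remark~\ref{rk:E3} using \ref{E3} with $w=w_f$, and then upgrade to $\lcond_\bk$ via Remark~\ref{remark:stressed-local-conditions}. The structure is correct, but there is a genuine gap in the final step.

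You assert that the Bloch--Kato conditions at $v\mid Np$ are ``stable under restriction, corestriction and the Galois operators $D_n$ \dots and are compatible with reduction modulo $p^s$ via propagation; hence the derivative classes inherit them.'' For $v\mid p$ this is precisely the nontrivial claim and is not formal: the issue is that $\kappa(n)_{\gen,s}$ is built on the mod-$p^s$ level by applying $D_n$, then $(\res^{K[n]}_{K[1]})^{-1}$, then $\Cor^{K[1]}_K$ to the reduced class $\cbf(n)_{\gen,s}$, and one must check that the (propagated) crystalline condition is preserved through the \emph{inverse} restriction step at primes above $p$ --- not just through the forward operations, which is all that stability of the $V$-level condition gives directly. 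This is exactly where the paper invokes the proof of \cite[Proposition 7.6]{castella-hsieh:heegner-cycles}; asserting it as a formal consequence of the conditions on $T$ leaves out the hardest local input.

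Two smaller points. At $v\mid N$ the key fact $\honef(K_v,T/p^sT)=\hone_\ur(K_v,T/p^sT)$ (and the same for $K[n]_{v_n}$) under the Tamagawa-number hypothesis is used but not justified; the paper derives it from the short exact sequence $0\to\hone_\ur(K_v,T)\to\honef(K_v,T)\to\hone(\inertia_v,T)^{\Fr_v=1}_{\calO_\p\text{-}\tors}\to0$ together with \cite[Lemma I.3.5]{rubin:euler-systems}, which is a cleaner argument than importing \cite[Proposition 6.2(a)]{gross:kolyvagin}. Finally, for \ref{E3} the paper cites \cite[Proposition 7.4]{nekovar:chow-groups} rather than \cite{castella-hsieh:heegner-cycles}; this is a reference discrepancy rather than an error, but it is worth matching the source.
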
 

\begin{proof}
	We already noted that $\cbf(n)_\gen \in \hone_{\lcond_\bk}(K[n], T) = \honef(K[n], T)$; \ref{E1} and \ref{E2} follow from \cite[Proposition 7.4]{castella-hsieh:heegner-cycles}. Let $\kappa(n)_{\gen,s}$ as in Definition \ref{def:kolyvagin-derivative}, for $\cbf(n) = \cbf(n)_\gen$. To prove the second claim is enough, by Remark \ref{remark:stressed-local-conditions}, to show that $\loc_v \kappa(n)_{\gen, s} \in \honef(K_v, T/p^sT)$ for $v \mid Np$. For $v\mid p$, see the proof of  \cite[Proposition 7.6]{castella-hsieh:heegner-cycles}; for $v \mid N$ the same proof of the case $v \nmid nNp$ of  Proposition \ref{prop:derived-classes-in-the-Selmer} works, since our assumption on the triviality of the Tamagawa numbers implies that $\honef(F, M) = \hone_\ur(F, M)$ for $M = T, A, T/p^s T$ and $F = K_v, K[n]_{v_n}$, for any $v_n \mid v$. In fact, since $\hone(\inertia_v, T)_{\calO_\p-\tors}$ is the kernel of the map induced in cohomology by the inclusion $T \subseteq V$, by the definitions of $\honef(K_v, T)$ and $\hone_\ur(K_v, A)$, we have the short exact sequence 
	\[
	\begin{tikzcd}
		0 \ar[r] & \hone_\ur(K_v, T) \ar[r] & \honef(K_v, T) \ar[r] & \hone(\inertia_v, T)^{\Fr_v=1}_{\calO_\p-\tors} \ar[r] & 0,
	\end{tikzcd}
	\]
	hence $[\honef(K_v, T) : \hone_\ur(K_v, T)] = \lvert  \hone(I_v, T)^{\Fr_v=1}_{\calO_\p-\tors} \rvert = 1$. Moreover, by \cite[Lemma I.3.5]{rubin:euler-systems}, there is a short exact sequence of finite $\calO_\p$-modules
	\[
	\begin{tikzcd}
		0 \ar[r] & \calW_{K_v}^{\Fr_v = 1} \ar[r] & \calW_{K_v} \ar[r, "\Fr_v-1"] & \calW_{K_v} \ar[r] & \calW_{K_v}/(\Fr_v-1)\calW_{K_v} \ar[r] & 0,
	\end{tikzcd}
	\]
	where $\calW_{K_v} := A^{\inertia_v}/(A^{\inertia_v})_\div$, such that $\lvert \calW_{K_v}^{\Fr_v = 1} \rvert= [\honef(K_v, T) : \hone_\ur(K_v, T)]$ and moreover $\lvert \calW_{K_v}/(\Fr_v-1)\calW_{K_v} \rvert = [\hone_\ur(K_v, A) : \honef(K_v, A)]$. The exactness of the sequence  implies in particular that these two indices coincide and therefore are both $1$. The claimed equality follows when $F=K_v$. For $F=K[n]_{v_n}$ we apply again \emph{loc.~cit.}; in fact $K[n]$ is unramified at $v_n\mid N$ as $(n, N)=1$ and hence $\calW_{K_v} = \calW_{K[n]_{v_n}}$.
	
	Finally the description of $\chi_{n, \ell}$ follows from Remark \ref{rk:E3}, since \ref{E3} holds for $w = w_f$ by \cite[Proposition 7.4]{nekovar:chow-groups}.
\end{proof}

\begin{corollary}
	If $\cbf_{\gen, K} \ne 0$, then $\honef(K, T)$ is free of rank $1$ over $\calO_\p$ and there is a finite $\calO_\p$-module $M$ such that 
	\[
	\honef(K, A) \cong (F_\p/\calO_p) \oplus M \oplus M.
	\]
	Furthermore, we have that
	$
	\length_{\calO_\p}(M) \le \length_{\calO_\p}\bigl(\honef(K, T)/\calO_\p \cbf_{\gen, K}\bigr).
	$
\end{corollary}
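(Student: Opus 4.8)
The plan is to obtain this corollary as a direct instance of Theorem~\ref{thm:howard-abstract}, applied to the universal modified Kolyvagin system $\koly_\gen\in\KSuni(\T,\lcond_\bk,\admissible',\{\chi_{n,\ell}\})$ produced by Theorem~\ref{thm:gen-heegner-cycles}. Since that theorem also gives $\koly_\gen(1)=\cbf_{\gen,K}$, the nonvanishing input required by Theorem~\ref{thm:howard-abstract} is precisely our hypothesis $\cbf_{\gen,K}\ne 0$. Here one takes $\calR=\calO_\p$ (a discrete valuation ring), $\Phi=F_\p$, $\A=T\otimes_{\calO_\p}F_\p/\calO_\p$ (so that $\A\cong A$) and $\lcond=\lcond_\bk$; the couple $(\calO_\p,T)$ satisfies Assumptions~\ref{ass:assumptions-on-R} and~\ref{ass:assumptions-on-T} under the running hypotheses ($p$ non-excluded in the sense of \cite[Definition~6.1]{besser:finiteness-sha}, $T/\pi T$ irreducible with big image), as already recorded. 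So the only thing left to check is that the triple $(T,\lcond_\bk,\admissible')$ satisfies hypotheses H.2--H.5 of \cite[Section~1.3]{howard:heegner-points}.

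First I would recall, exactly as in the proof of Theorem~\ref{thm:howard-abstract}, that H.0 and H.1 of \emph{loc.~cit.} follow from the first two points of Assumption~\ref{ass:assumptions-on-T}. Then I would verify H.2--H.5 for $(T,\lcond_\bk,\admissible')$: the existence of elements of $G_K$ with prescribed action on $T$ (used to separate the $\tau_c$-eigenspaces and to run the Chebotarev arguments), the vanishing of the relevant first cohomology groups of $\Gal(K(T)/K)$, and the behaviour of the Bloch--Kato local conditions at the places $v\mid Np$. The first two are consequences of the residual irreducibility~\ref{condition:assumption-irreducible-residual-representation}, the shape of complex conjugation~\ref{condition:assumption-eigenvalues-complex-conjugation} and the big image hypothesis~\ref{condition:big-image-assumption}, by the same bookkeeping as in the elliptic curve case (compare \cite[proof of Theorem~1.6.5]{howard:heegner-points}, \cite{castella-hsieh:heegner-cycles}, \cite{longo-vigni:generalized}); the behaviour of $\lcond_\bk$ at $v\mid Np$ is exactly what was already established inside the proof of Theorem~\ref{thm:gen-heegner-cycles}.

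With these hypotheses in force, Theorem~\ref{thm:howard-abstract} applies directly---its statement already accommodates a modified $\{\chi_{n,\ell}\}$-Kolyvagin system, so the fact that $\chi_{n,\ell}$ is multiplication by $v_{n,\ell}$ rather than the identity is immaterial (cf.\ Remark~\ref{rk:equivalence-for-applications})---and yields at once that $\honef(K,T)=\hone_{\lcond_\bk}(K,T)$ is free of rank one over $\calO_\p$, the decomposition $\honef(K,A)\cong(F_\p/\calO_\p)\oplus M\oplus M$ for a finite $\calO_\p$-module $M$, and the bound $\length_{\calO_\p}(M)\le\length_{\calO_\p}\bigl(\honef(K,T)/\calO_\p\cbf_{\gen,K}\bigr)$. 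I expect the only real obstacle to be this verification of H.2--H.5; it is routine and already present (explicitly or implicitly) in the literature on Heegner cycles attached to modular forms, so no genuinely new argument should be needed beyond the assumptions on $(\calO_\p,T)$ that have been imposed.
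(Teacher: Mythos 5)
Your proposal is correct and coincides with the paper's argument: both apply Theorem~\ref{thm:howard-abstract} to the modified universal Kolyvagin system $\koly_\gen$ produced by Theorem~\ref{thm:gen-heegner-cycles}, use the hypothesis $\cbf_{\gen,K}\ne 0$ to supply $\koly_\gen(1)\ne 0$, and defer the verification of H.2--H.5 for $(T,\lcond_\bk,\admissible)$ to the existing literature (the paper cites \cite[Section~5.1]{longo-vigni:generalized} specifically, which is among the references you mention). Your extra remarks on why the modification by $\chi_{n,\ell}$ is immaterial simply restate what Theorem~\ref{thm:howard-abstract} already accommodates.
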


\begin{proof}
	This is Theorem \ref{thm:howard-abstract} applied to $\koly_\gen$. In fact, notice that the conditions (H.2)-(H.5) of  \cite[Section 1.3]{howard:heegner-points} are satisfied for $(T, \lcond_\bk, \admissible)$: this is proved in \cite[Section 5.1]{longo-vigni:generalized}.
\end{proof}

\begin{remark}\label{rk:improved-results-mod-forms}
	With some ad hoc versions of Kolyvagin's method, one can improve the latter result, as done for instance in \cite[Theorem 0.2]{mastella:vanishing-sha-mod-forms}. See also \cite[Theorem 1.2]{besser:finiteness-sha} and \cite[Theorem 1.3]{masoero:sha} for analogous results using classical Heegner cycles.  
\end{remark}

\begin{remark}
	If the Tamagawa numbers are non-trivial, one needs to change the definition of $\kappa(n)_{\gen,s}$ by multiplying by a suitable power of $p$ (see \cite[Lemma 10.1, Proposition 10.2(3)]{nekovar:chow-groups}).
\end{remark}

\begin{remark}
	By definition, every $\ell\in\admissible_s'$ satisfied $\ell\equiv-1\bmod p^s$. Therefore, if $k \equiv 2 \mod 4$ then  $u(\ell) = \ell^{1-k/2} \equiv 1 \bmod p^s$ and hence $\chi_{n, \ell} = \Fr_\ell$ (or, in other terms, $v_{n, \ell} = \e_n$) is independent of $\ell$. In this case we can modify $\koly_\gen$ into an $\{\id\}$-universal Kolyvagin system  
	exactly as done in Remark \ref{rk:elliptic-curves-id-kolyvagin-system} in the elliptic curve case (i.e., when $k=2$).
	Instead, when $k\equiv 0\pmod 4$ the units $v_{n,\ell}$ depend both on $n$ and $\ell$ and the argument used above doesn't work anymore.
\end{remark}

Suppose now that $f$ is a \emph{$p$-ordinary} modular form, i.e., that $a_p \in \calO_\p^\times$. By \cite[Theorem 2.2.2]{wiles:lambda-adic}, for any prime $v\mid p$ of $K$ there is a $G_{K_v}$-\hspace{1pt}stable free sub\hspace{1pt}-$\hspace{1pt}\calO_\p$-module $F_v^+(T) \subseteq T$ of rank $1$.  
Let moreover $\Lambda^\ac_{\calO_\p} = \calO_\p\llbracket\Gamma^\ac \rrbracket$ be the anticyclotomic Iwasawa algebra over $\calO_\p$ and consider the $G_K$-representations $\T^\ac = T \otimes_{\calO_\p} \Lambda_{\calO_\p}^\ac$ and $\A^\ac = \T^\ac \otimes_{\Lambda^\ac_{\calO_\p}} (\Lambda^\ac_{\calO_\p})^\vee$, on which the choice of $F_v^+(T)$ induces a Greenberg Selmer structure $\lcond_\Gr$ over $K$, as in the case of elliptic curves. It turns out again that we have pseudoisomorphisms of $\Lambda_{\calO_\p}^\ac$-modules
\[
\hone_{\lcond_{\Gr}}(K, \T^\ac) \sim \varprojlim_n \honef(K_n, T) \quad \text{and} \quad \hone_{\lcond_{\Gr}}(K, \A^\ac)
\sim \varinjlim_n \honef(K_n, A).
\]
For any $np^\alpha \in \squarefreeadmissible^{(p)}$, define $\cbf_{\gen, \alpha, n} = \Cor^{K[np^{\alpha+1}]}_{K_\alpha}\bigl(\cbf(np^{\alpha+1})_\gen \bigr) \in \honef(K_\alpha[n], T)$. These classes satisfy the norm relations (see \cite[Lemma 4.1]{longo-vigni:generalized})
\[
\Cor^{K_{\alpha +1}[n]}_{K_\alpha[n]}(\cbf_{\gen, \alpha+1, n}) = a_p \cbf_{\gen, \alpha, n} - p^{k-2} \res^{K_\alpha[n]}_{K_{\alpha-1}[n]}\cbf_{\gen, \alpha-1, n}
\]
for $\alpha \ge 1$, which Longo and Vigni, inspired by the work of Howard \cite{howard:heegner-points} and Perrin-Riou \cite{perrin-riou:iwasawa-heegner}, used to build another set of cohomology classes $\bbf(np^{\alpha})_\gen\in \honef(K_\alpha[n], T)$ (denoted in \emph{loc.~cit.}~by $\beta_m[n]$, where $m=\alpha$) that are compatible with respect to $\Cor^{K_{\alpha+1}[n]}_{K_\alpha[n]}$.

\begin{theorem}
	We have that $\{\bbf(np^\ga)_\gen\}_{np^\ga\in\squarefreeadmissible^{(p)}}\in\ES^{(p)}(T,\lcond_{\bk},\admissible, \abf)$. Moreover, there is $\koly^{\ac}_\gen \in\KSuni(\T^{\ac},\lcond_{\Gr},\admissible', \{\chi_{n, \ell}\})$ such that $\koly(1)^\ac_{\gen, s}=\bbf^\ac_{\gen, K}$, where the automorphisms $\chi_{n, \ell}$ are those defined in Theorem \ref{thm:gen-heegner-cycles}.
\end{theorem}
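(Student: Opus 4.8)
The plan is to follow, along the anticyclotomic tower, the proof of Theorem \ref{thm:gen-heegner-cycles}, just as the elliptic-curve statement above parallels Theorem \ref{th:heegner-points}. First I would check that $\{\bbf(np^\ga)_\gen\}$ is a $p\hspace{1pt}$-complete anticyclotomic Euler system for $(T,\lcond_\bk,\admissible)$ relative to $\abf$. By Longo and Vigni's construction the classes $\bbf(np^\alpha)_\gen$ lie in $\honef(K_\alpha[n],T)=\hone_{\lcond_\bk}(K_\alpha[n],T)$, and since they are obtained from the generalized-Heegner classes $\cbf(np^{\alpha+1})_\gen$ by solving the three-term norm relation displayed above after corestriction (exactly as Howard does for points), they satisfy $\Cor^{K_{\alpha+1}[n]}_{K_\alpha[n]}\bbf(np^{\alpha+1})_\gen=\bbf(np^{\alpha})_\gen$, which is \ref{pE0}; conditions \ref{pE1} and \ref{pE2} are inherited from the horizontal compatibilities \ref{E1} and \ref{E2} of $\{\cbf(n)_\gen\}$ established in Theorem \ref{thm:gen-heegner-cycles} (ultimately \cite[Proposition 7.4]{castella-hsieh:heegner-cycles}), by the argument of \cite{longo-vigni:generalized}, which is the modular analog of \cite[Lemma 2.3.3]{howard:heegner-points}.

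Next I would feed this system, viewed in $\ES^{(p)}(T,\lcond_\rel,\admissible',\abf)$, into Theorem \ref{th:Iwasawa-euler-to-kolyvagin-system} to produce a universal $\{\chi_{n,\ell}\}$-Kolyvagin system $\koly^\ac_\gen\in\KSuni(\T^\ac,\lcond_\rel,\admissible',\{\chi_{n,\ell}\})$ with $\koly(1)^\ac_\gen=\bbf^\ac_{\gen,K}$. To identify $\chi_{n,\ell}$ with the units $v_{n,\ell}$ of Theorem \ref{thm:gen-heegner-cycles}, I would establish the Iwasawa-theoretic analog of condition \ref{E3}: the Atkin--Lehner relation $\tau_c(\cbf(n)_\gen)=w_f\,\sigma(n)(\cbf(n)_\gen)$ of \cite[Proposition 7.4]{nekovar:chow-groups} is preserved by the corestriction-and-polynomial recipe defining $\bbf(np^\alpha)_\gen$ (complex conjugation commuting with the Hecke operators and with corestriction up to the usual $(-1)^{\omega(\cdot)}$ sign), so the derived classes lie in the $\e_n$-eigenspace of $\tau_c$; the computation of Remark \ref{rk:E3}, which only uses that $\Fr_\ell$ and $\tau_c$ agree inside $\Gal(K_\lambda/\Q_\ell)$ and is therefore insensitive to the anticyclotomic twist, then yields $\vartheta_\ell=v_{n,\ell}^{-1}$, i.e.\ $\chi_{n,\ell}=v_{n,\ell}$, as in \emph{loc.\ cit.}

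Finally, by Remark \ref{rk:stressed-local-condition-iwasawa} it remains to upgrade the Selmer structure from $\lcond_\rel$ to the Greenberg structure $\lcond_\Gr$, that is, to show that $\loc_v\kappa(n)^\ac_{\gen,s,\alpha}\in\hone_{\lcond_\Gr}(K_v,\T^\ac_{s,\alpha})$ for the derivative classes of Definition \ref{def:kolyvagin-derivative-iwasawa} and every $v\mid Np$. For $v\mid N$ the assumed triviality of the Tamagawa numbers gives $\honef=\hone_{\ur}$ for the relevant modules and fields, as in the proof of Theorem \ref{thm:gen-heegner-cycles}, so the argument of case (a) of Proposition \ref{prop:derived-classes-in-the-Selmer} carries over; for $v\mid p$ one invokes the $p\hspace{1pt}$-ordinary local analysis of \cite{longo-vigni:generalized}, the modular counterpart of \cite[Lemma 2.3.4]{howard:heegner-points}, which places the derivative classes in the image of $\hone(K_v,F_v^+(\T^\ac))$. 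I expect this last point to be the main obstacle: the propagation of $\lcond_\bk$ is not the Greenberg condition, and only $p\hspace{1pt}$-ordinarity together with the explicit shape of the classes $\bbf(np^\alpha)_\gen$ forces the derivative classes into $F_v^+$; everything else is bookkeeping parallel to the elliptic-curve case and to Theorem \ref{thm:gen-heegner-cycles}.
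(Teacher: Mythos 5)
Your proposal follows the same route as the paper's proof: verify \ref{pE0}--\ref{pE2} from the Longo--Vigni construction (the paper cites their Lemma 4.1 and Proposition 4.9), feed the system into Theorem \ref{th:Iwasawa-euler-to-kolyvagin-system}, and then upgrade from $\lcond_\rel$ to $\lcond_\Gr$ via Remark \ref{rk:stressed-local-condition-iwasawa} by checking the local conditions of the derivative classes at $v\mid Np$, where the paper simply cites \cite[Lemma 4.12]{longo-vigni:generalized} rather than separating the $v\mid N$ (Tamagawa) and $v\mid p$ (ordinary filtration) cases as you do. The one place where you go beyond the paper is the paragraph arguing that an anticyclotomic analogue of \ref{E3} holds for the classes $\bbf(np^\alpha)_\gen$ and hence that $\chi_{n,\ell}=v_{n,\ell}$; the paper merely asserts the identification with the automorphisms of Theorem \ref{thm:gen-heegner-cycles} without comment, so your elaboration is a genuine (and correct) supplement: Lemma \ref{lem:formula-capitolo-3} a priori only produces $\vartheta_\ell$ as an automorphism involving $\Fr_\ell$, and reducing it to the scalar $v_{n,\ell}$ does require knowing that the Iwasawa-derived classes sit in the $\e_n$-eigenspace of $\tau_c$, exactly as you argue via the commutation of $\tau_c$ with $D_n$ and the stability of the Atkin--Lehner relation under the corestriction-and-polynomial recipe.
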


\begin{proof}
	We already observed that $\bbf(np^\alpha) \in \honef(K_{\alpha}[n], T)$; note moreover the conditions \ref{pE0}-\ref{pE2} follow by \cite[Lemma 4.1, Proposition 4.9]{longo-vigni:generalized}), whence the first claim. Let $\kappa(n)_{\gen, s, \alpha}$ be the classes of Definition \ref{def:kolyvagin-derivative-iwasawa} for $\bbf(np^\alpha) = \bbf(np^\alpha)_\gen$. The second claim follows from Theorem \ref{th:Iwasawa-euler-to-kolyvagin-system} and Remark \ref{rk:stressed-local-condition-iwasawa}, since  $\loc_{v} \kappa(n)^\ac_\hp \in \hone_{\lcond_\Gr}(K, \T^\ac_{s, \alpha})$ for $v \mid Np$ by  \cite[Lemma 4.12]{longo-vigni:generalized}.
\end{proof}

As in the elliptic curve case, the existence of a $p\hspace{1pt}$-complete anticyclotomic Euler system has as corollary one divisibility of Perrin Riou's formulation of the Iwasawa main conjecture.
\begin{corollary}[Longo--Vigni]
	$\hone_{\lcond_\Gr}(K, \T^\ac)$ is a torsion-free $\Lambda^\ac_{\calO_\p}$-module of rank $1$. Moreover, there is a finitely generated torsion $\Lambda^\ac_{\calO_\p}$-module $M$ such that
	\begin{enumerate}[label=\emph{(\roman*)}]
		\item $\Char(M)=\Char(M)^\iota$;
		\item $\hone_{\lcond_\Lambda}(K, \A^\ac)^\vee \sim \Lambda^\ac_{\calO_\p} \oplus M \oplus M$;
		\item $\Char(M) \supseteq \Char(\hone_{\lcond_\Gr}(K, \T^\ac)/\Lambda^\ac_{\calO_\p}\bbf^\ac_{\gen, K})$.
	\end{enumerate}
\end{corollary}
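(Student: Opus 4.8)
The plan is to obtain the corollary as a direct application of the abstract Iwasawa-theoretic statement Theorem~\ref{th:howard-abstract-iwasawa}, exactly mirroring the corollary deduced above in the elliptic curve setting. First I would invoke the preceding theorem of this subsection to fix a set of automorphisms $\{\chi_{n,\ell}\}$ and a universal $\{\chi_{n,\ell}\}$-Kolyvagin system $\koly^\ac_\gen\in\KSuni(\T^\ac,\lcond_\Gr,\admissible',\{\chi_{n,\ell}\})$ with $\koly(1)^\ac_\gen=\bbf^\ac_{\gen,K}$, so that it suffices to plug $\koly^\ac=\koly^\ac_\gen$ into Theorem~\ref{th:howard-abstract-iwasawa} and unwind the conclusion in terms of the Greenberg Selmer groups.

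Next I would verify the input hypotheses of that theorem. The triple $(T,\lcond_\bk,\admissible)$ satisfies Howard's Hypotheses H.1--H.5 (H.1 from the first points of Assumption~\ref{ass:assumptions-on-T}, and H.2--H.5 from \cite[Section~5.1]{longo-vigni:generalized}, as already used above in the non-Iwasawa corollary); the same verification, now carried out for the anticyclotomic twist $\T^\ac$ together with the Greenberg Selmer structure $\lcond_\Gr$ defined via the rank-one $G_{K_v}$-stable submodules $F_v^+(T)$ attached to the $p$-ordinary form $f$, shows that $\lcond_\Gr$ meets Howard's Hypotheses H.1--H.5 and \cite[Assumption~3.2]{longo-vigni:generalized}. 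Granting in addition the nonvanishing discussed below, Theorem~\ref{th:howard-abstract-iwasawa} then gives that $\hone_{\lcond_\Gr}(K,\T^\ac)$ is torsion-free of rank one over $\Lambda^\ac_{\calO_\p}$, produces a finitely generated torsion module $M$ with $\Char(M)=\Char(M)^\iota$ and a pseudo-isomorphism $\hone_{\lcond_\Gr}(K,\A^\ac)^\vee\sim\Lambda^\ac_{\calO_\p}\oplus M\oplus M$, and the divisibility $\Char(M)\supseteq\Char\bigl(\hone_{\lcond_\Gr}(K,\T^\ac)/\Lambda^\ac_{\calO_\p}\bbf^\ac_{\gen,K}\bigr)$; together with the pseudo-isomorphisms $\hone_{\lcond_\Gr}(K,\T^\ac)\sim\varprojlim_n\honef(K_n,T)$ and $\hone_{\lcond_\Gr}(K,\A^\ac)\sim\varinjlim_n\honef(K_n,A)$ this is precisely (i)--(iii).

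The one genuinely substantive step — and the one I expect to be the main obstacle — is checking the hypothesis $\koly(1)^\ac_\gen\ne0$, equivalently $\bbf^\ac_{\gen,K}\ne0$ in $\hone(K,\T^\ac)$. Since $\bbf^\ac_{\gen,K}$ interpolates the corestrictions to the layers $K_\alpha$ of the generalized Heegner cycle classes $\cbf(np^{\alpha+1})_\gen$, this is equivalent to the nonvanishing of those classes along the anticyclotomic $\Zp$-tower. This is not formal and must be imported from the literature: it is a Cornut--Vatsal type nonvanishing theorem for CM points and generalized Heegner cycles, the higher-weight counterpart of Cornut's theorem \cite{cornut:mazur-conjecture} invoked in the elliptic curve case (see also the discussion in \cite{longo-vigni:generalized}). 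With that nonvanishing in hand, the application of Theorem~\ref{th:howard-abstract-iwasawa} above completes the proof.
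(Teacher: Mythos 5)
Your proposal matches the paper's proof: both apply Theorem~\ref{th:howard-abstract-iwasawa} to the universal Kolyvagin system $\koly^\ac_\gen$ from the preceding theorem, with the nonvanishing of $\koly^\ac_\gen(1)=\bbf^\ac_{\gen,K}$ imported from Longo--Vigni (the paper cites the proof of \cite[Theorem~4.18]{longo-vigni:generalized}, which indeed rests on the Cornut--Vatsal type nonvanishing you describe). You spell out the verification of hypotheses H.1--H.5 and \cite[Assumption~3.2]{longo-vigni:generalized} that the paper leaves implicit, but the route is the same.
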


\begin{proof}
	This is Theorem \ref{th:howard-abstract-iwasawa} for  $\koly^\ac = \koly_\gen^\ac$, as $\koly_\gen^\ac(1) \ne 0$ by \cite[proof of Theorem 4.18]{longo-vigni:generalized}.
\end{proof}

Recently Longo, Pati and Vigni announced a proof of the other divisibility in the last point of the previous theorem, under mild assumptions \cite{longo-pati-vigni:perrin-riou-imc-modular-forms}.

\begin{remark}
	Under the generalized Heegner hypothesis, classical and generalized Heegner cycles are built respectively in \cite{elias-devera:cm:cycles} and \cite{brooks:shimura-curves-l-functions}. Analogues of the theorems listed in this sections using these more general objects might be found also in \cite{magrone:generalized-quaternionic}, \cite{pati:IMC}, \cite{lei-mastella-zhao:bloch-kato}. 
\end{remark}

\subsection{Hida families}

Let now $f$ be a cuspidal eigenform of weight $k$ (possibly odd) and level $\Gamma_0(N)\cap\Gamma_1(p)$. Keep the same notation as in the previous section for the Fourier coefficients and the Hecke field of $f$. We assume that $f$ is an \emph{ordinary $p$-stabilized newform}, in the sense that $a_p\in\calO_\p^\times$ and the conductor of $f$ is divisible by $N$. We also suppose that the residual representation attached to $f$ is irreducible and that $p>3$.

In this setting, Hida \cite{Hid86a},\cite{Hid86b} built a $G_\Q$-representation $\T$ that is associated with a $p$-adic family of modular forms passing through $f$. The representation $\T$ is a free module of rank 2 over a local normal domain $\calR$, equipped with a finite and flat map 
\begin{equation*}
	i\colon \gL:=\calO_\p\llbracket 1+p\Zp\rrbracket\longrightarrow\calR,
\end{equation*}
whose \emph{arithmetic primes} (see \cite[Definition 2.1.1]{H06}) parametrize the modular forms that lie in the family. As done in \cite[Definition 2.13]{H06}, one defines a \emph{critical character} $\Theta\colon G_\Q\to\gL^\times$, so that the twist $\T^\dag$ of $\T$ via $\Theta^{-1}$ has a perfect alternating $G_\Q$-invariant pairing $\T^\dag\times\T^\dag\to\calR(1)$.

There is a set of elements $\{\omega_s\}_{s>0}$ of $\calR$ such that $\omega_s\mid \omega_{s+1}$ and $i([1+p^s\Zp])\subseteq (\omega_s)$ (see \cite[Proposition 1.4.3]{NP}), denoting by square brackets the group elements of $\gL$. Let $J_s=(\omega_s)\subseteq \calR$, so that
\begin{equation*}
	\Rs:=\calR/(p^{s_1},\omega_{s_2})\quad\text{and}\quad \Ts:=\T^\dag\otimes_\calR\Rs
\end{equation*}
for every $\sfrak=(s_1,s_2)\in\Z_{>0}^2$. One can show that $\varprojlim_\sfrak \Rs=\calR$ and that $\calR/(\omega_s)$ is finite and free over $\calO_\p$ (see \cite[Lemma 4.1.2]{zerman:phd-thesis}). If we assume that the image of $G_\Q$ in $\Aut_\calR(\T\hspace{1pt})$ contains the scalars $1+p\Zp$ (see \cite{conti-lang-medved:big-image} for such big image assumptions), we then obtain that the couple $(\calR,\T^\dag)$ satisfies Assumptions \ref{ass:assumptions-on-R} and \ref{ass:assumptions-on-T}. Indeed, $\T^\dag/\m_\calR\T^\dag$ is irreducible since it is equivalent (up to finite base change) to the residual representation attached to $f$ (see \cite[Theorem 5.4]{longo-vigni:quaternion-2011}); an explicit computation using \cite[Proposition 2.1.2]{H06} yields that the determinant of the action of $\Fr_\ell$ on $\T^\dag$ is $\ell$; using the perfect pairing defined above one can show that the determinant of the action of the complex conjugation $\tau_c$ on $\T^\dag$ is $-1$, yielding also Assumption \ref{ass:assumptions-on-T} \ref{condition:assumption-eigenvalues-complex-conjugation}.

For every place $v\mid p$ of $K$, the representation $\T^\dag$ comes equipped with an exact sequence of $\calR\llbracket G_{K_v}\rrbracket$-modules
\begin{equation}\label{eq:Greenberg-selmer-Hida}
	0\to F_v^+(\T^\dag)\to\T^\dag\to F_v^-(\T^\dag)\to 0,
\end{equation}
where $F_v^\pm(\T^\dag)$ is free of rank 1 over $\calR$. Then, we can define the \emph{Greenberg Selmer structure} $\lcond_{\Gr}$ on $\T^\dag$ over $K$ via the local conditions
\begin{equation}\label{eq:def-greenberg-conditions-hida}
	\hone_{\lcond_\Gr}(K_v,\T^\dag):=\begin{cases}
		\honeur(K_v,\T^\dag) &\text{if $v\nmid p$;}\\
		\Imm\big(\hone(K_v,F_p^+(\T^\dag))\to \hone(K_v,\T^\dag)\big) &\text{if $v\mid p$}.
	\end{cases}
\end{equation}
The ring $\calR$ is built as a quotient of a big Hecke algebra acting on a tower of modular curves (see \cite[Section 2.1]{H06}), therefore we can define $\abf_\ell\in\calR$ to be the image of the Hecke operator $T_\ell$ and $\abf_p\in\calR$ to be the image of $U_p$. Set $\abf:=\{\abf_\ell\}_{\ell\in\admissible}$.

In this setting, Howard \cite{H06} built the ($p\hspace{1pt}$-complete) anticyclotomic Euler system of big Heegner points. Interpolating classical Heegner points in towers of modular curves, he constructed certain cohomology classes
\begin{equation*}
	\mathfrak{X}_c\in \hone_{\lcond_\Gr}(K[c],\T^\dag)
\end{equation*}
for every $c\in\N$ coprime with $N$.

With the aim of pursuing Kolyvagin's descent, we assume for simplicity Hypotheses (H.tam) and (H.stz) of \cite{buyukboduk:big-heegner}, which control Tamagawa factors and exceptional zeroes at finite order characters. By \cite[Proposition 2.10]{zerman:kolyvagin-hida-families}, for every $\ell\nmid Np$ one has $\Tr(\Fr_\ell \, | \, \T^\dag)=\Theta^{-1}(\Fr_\ell)\abf_\ell$, so that $\ubf_\ell=\Theta^{-1}(\Fr_\ell)$. As shown in \cite[Lemma 2.21]{zerman:kolyvagin-hida-families}, if $s_1\ge s_2$ we have that $\ubf_\ell$ reduces to $(-1)^\delta$ on $\Ts'$, for some $\delta\in\N$ that only depends on the representation. Therefore, choosing $\Omega$ to be the set of all $(s_1,s_2)\in\Z_{>0}^2$ such that $s_1\ge s_2$, we obtain that the set $\admissible'=\admissible'(\Omega)$ of Definition \ref{def:Kolyvagin-primes} satisfies Assumption \ref{ass:u-ell}. Thus, we can apply the formalism of Section \ref{sec:euler-systems-and-kolyvagin-systems} to obtain the following result.

\begin{theorem}\label{thm:hida-kolyvagin-system}
	We have that $\{\mathfrak{X}_n\}_{n\in\squarefreeadmissible}\in \ES(\T^\dag,\lcond_{\Gr},\admissible,\abf)$. Moreover, there are automorphisms $\chi_{n,\ell}\in\Aut(\T^\dag)$ and $\koly_{\Hid}\in\KSuni(\T^\dag,\lcond_\Gr,\admissible',\{\chi_{n,\ell}\})$ such that $\koly_{\Hid}(1)=\Cor^{K[1]}_K\mathfrak{X}_1$.
\end{theorem}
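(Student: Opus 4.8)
The plan is to follow the two-step pattern already used for Theorems \ref{th:heegner-points} and \ref{thm:gen-heegner-cycles}. \emph{Step 1: the big Heegner classes form an anticyclotomic Euler system.} By Howard's construction in \cite{H06} (see also \cite{buyukboduk:big-heegner}), each $\mathfrak{X}_n$ already lies in $\hone_{\lcond_\Gr}(K[n],\T^\dag)$, so it remains to verify conditions \ref{E1} and \ref{E2}. The norm-compatibility \ref{E1}, with $\abf_\ell$ the image of the Hecke operator $T_\ell$, is the first-layer horizontal norm relation for big Heegner points proved by Howard; here the relevant $\abf_\ell$ coincides with the element fixed before the statement because $\Tr(\Fr_\ell\,\vert\,\T^\dag)=\ubf_\ell\abf_\ell$ by \cite[Proposition 2.10]{zerman:kolyvagin-hida-families}. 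The Frobenius compatibility \ref{E2} is the description of the localization $\loc_\lambda\mathfrak{X}_n$ at an inert prime $\ell$, worked out in \cite{zerman:kolyvagin-hida-families} building on \cite{H06}. This gives $\{\mathfrak{X}_n\}_{n\in\squarefreeadmissible}\in\ES(\T^\dag,\lcond_\Gr,\admissible,\abf)$, the first assertion.

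\emph{Step 2: passing to a Kolyvagin system.} As recalled just before the statement, $(\calR,\T^\dag)$ satisfies Assumptions \ref{ass:assumptions-on-R} and \ref{ass:assumptions-on-T}, and the set $\admissible'=\admissible'(\Omega)$ attached to $\Omega=\{(s_1,s_2):s_1\ge s_2\}$ satisfies Assumption \ref{ass:u-ell}. Relaxing the local conditions at $Np$ and restricting the indices to $\squarefreeadmissible'$, Step 1 produces an element of $\ES(\T^\dag,\lcond_{\rel},\admissible',\abf)$, to which Theorem \ref{th:euler-to-kolyvagin-system} applies: it yields automorphisms $\chi_{n,\ell}\in\Aut(\T^\dag)$ and a universal $\{\chi_{n,\ell}\}$-Kolyvagin system $\koly_{\Hid}\in\KSuni(\T^\dag,\lcond_{\rel},\admissible',\{\chi_{n,\ell}\})$ with $\koly_{\Hid}(1)=\Cor^{K[1]}_K\mathfrak{X}_1$. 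By Remark \ref{remark:stressed-local-conditions}, upgrading this to an element of $\KSuni(\T^\dag,\lcond_{\Gr},\admissible',\{\chi_{n,\ell}\})$ amounts to checking, for every $\sfrak\in\Z_{>0}^2$ and every $n\in\squarefreeadmissible_\sfrak'$, that the Kolyvagin derivative class $\kappa(n)_\sfrak$ of Definition \ref{def:kolyvagin-derivative} satisfies $\loc_v\kappa(n)_\sfrak\in\hone_{\lcond_\Gr}(K_v,\Ts)$ for every $v\mid Np$.

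I expect this last local check — and specifically the Greenberg condition at $v\mid p$ — to be the main obstacle. At $v\mid N$ one argues exactly as in the case $v\nmid nNp$ of the proof of Proposition \ref{prop:derived-classes-in-the-Selmer}: Hypothesis (H.tam) of \cite{buyukboduk:big-heegner} forces $\hone_{\lcond_\Gr}=\honeur$ at such $v$ (and over $K[n]$, which is unramified at $v$ since $(n,N)=1$), so the unramifiedness of $\loc_{v_n}\mathfrak{X}_n$ propagates to $\kappa(n)_\sfrak$ through $\Cor^{K[1]}_K$, $(\res^{K[n]}_{K[1]})^{-1}$ and $D_n$ by the functoriality recalled in Appendix \ref{sec:semi-local-cohomology}. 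At $v\mid p$ the point is that $\loc_v\mathfrak{X}_n$ lies in $\Imm\bigl(\hone(K_v,F_v^+(\T^\dag))\to\hone(K_v,\T^\dag)\bigr)$ by the construction of the big Heegner classes, and that this image condition — governed by the $G_{K_v}$-stable exact sequence \eqref{eq:Greenberg-selmer-Hida}, which is functorial in semi-local cohomology — is preserved under the operators defining $\kappa(n)_\sfrak$. Hypotheses (H.tam) and (H.stz) are imposed precisely to rule out the Tamagawa-factor and exceptional-zero phenomena that could obstruct this specialization argument, so that the verification reduces to the one carried out in \cite{zerman:kolyvagin-hida-families}. Finally, $\koly_{\Hid}(1)=\Cor^{K[1]}_K\mathfrak{X}_1$ is immediate, since $\kappa(1)_\sfrak=\Cor^{K[1]}_K(\mathfrak{X}_1)_\sfrak$ for all $\sfrak$ and these classes interpolate.
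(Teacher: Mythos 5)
Your proposal follows the same two-step structure as the paper's proof: establish the Euler system conditions \ref{E1}--\ref{E2} from Howard's norm and congruence relations for big Heegner points (the paper cites \cite[Propositions 2.3.1 and 2.3.2]{H06}), then apply Theorem \ref{th:euler-to-kolyvagin-system} with $\lcond_\rel$ and upgrade to $\lcond_\Gr$ via Remark \ref{remark:stressed-local-conditions} by checking the local conditions of the derivative classes at $v\mid Np$, which the paper delegates to \cite[Section 4.3]{buyukboduk:big-heegner}. Your additional commentary on why the Greenberg condition at $v\mid p$ is the delicate point and the role of (H.tam) and (H.stz) fills in the same verification; the substance and route match.
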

\begin{proof}
	The first claim is a consequence of \cite[Propositions 2.3.1 and 2.3.2]{H06}. By Theorem \ref{th:euler-to-kolyvagin-system}, we find $\koly_{\Hid}\in\KSuni(\T^\dag,\lcond_\rel,\admissible',\{\chi_{n,\ell}\})$ such that $\koly_{\Hid}(1)=\Cor^{K[1]}_K\mathfrak{X}_1$. As explained in Remark \ref{remark:stressed-local-conditions}, we're just left to show that $\gk_{\Hid}(n)_\sfrak\in \hone_{\lcond_\Gr}(K,\Ts)$ for every $n\in\squarefreeadmissible'$. This can be proved exactly as in \cite[Section 4.3]{buyukboduk:big-heegner}.
\end{proof}

If, instead, we take $c=np^\ga$ with $n\in\squarefreeadmissible$ and $\ga\ge 0$, the set of elements
\begin{equation*}
	\bbf(np^\ga):=\Cor_{K_\ga[n]}^{K[np^\ga]} \abf_p^{-\ga} \mathfrak{X}_{np^\ga}\in \hone(K_\ga[n],\T^\dag)
\end{equation*}
lies in $\ES^{(p)}(\T^\dag,\lcond,\admissible,\abf)$, thanks to \cite[Propositions 2.3.1 and 2.3.2]{H06} and \cite[Proposition 4.14]{buyukboduk:big-heegner}. Moreover, one can define the Greenberg Selmer structure on the anticyclotomic twist $\T^{\dag,\ac}:=\T^\dag\otimes_\calR\calR\llbracket\Gamma^\ac\rrbracket$ by tensoring the exact sequence \eqref{eq:Greenberg-selmer-Hida} by $\calR^\ac$ and replacing $\T^\dag$ with $\T^{\dag,\ac}$ in \eqref{eq:def-greenberg-conditions-hida}. Keeping the same notations as above, we obtain the following result, which is a reinterpretation of \cite[Theorem A.1]{buyukboduk:big-heegner}.

\begin{theorem}\label{thm:hida-iwasawa-kolyvagin-system}
	There are $\chi_{n,\ell}\in\Aut(\T^{\dag,\ac})$ and $\koly_{\Hid}^\ac\in\KSuni(\T^{\dag,\ac},\lcond_\Gr,\admissible',\{\chi_{n,\ell}\})$ such that $\koly_{\Hid}^{\ac}(1)=\Cor^{K[1]}_K\varprojlim_\ga\bbf(p^\ga)$.
\end{theorem}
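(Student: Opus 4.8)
The plan is to deduce the statement from the abstract machinery of Theorem~\ref{th:Iwasawa-euler-to-kolyvagin-system}, in exact parallel to the way Theorem~\ref{thm:hida-kolyvagin-system} was obtained from Theorem~\ref{th:euler-to-kolyvagin-system}; in fact the whole statement can be seen as a reinterpretation of \cite[Theorem A.1]{buyukboduk:big-heegner} in our language. First I would record that the classes $\bbf(np^\ga)=\Cor_{K_\ga[n]}^{K[np^\ga]}\abf_p^{-\ga}\mathfrak{X}_{np^\ga}$ assemble into a $p$-complete anticyclotomic Euler system for $\T^\dag$: the vertical and horizontal norm relations \ref{pE0}--\ref{pE1} and the Frobenius compatibility \ref{pE2} follow from \cite[Propositions 2.3.1 and 2.3.2]{H06} together with \cite[Proposition 4.14]{buyukboduk:big-heegner}, while Assumption~\ref{ass:u-ell} holds because $\Omega$ has been chosen as the set of $(s_1,s_2)$ with $s_1\ge s_2$ and $\ubf_\ell=\Theta^{-1}(\Fr_\ell)$ reduces to $\pm 1$ on the relevant finite quotients for such $\sfrak$ by \cite[Lemma 2.21]{zerman:kolyvagin-hida-families}. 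Since every $\lcond_\Gr$-Selmer class is a fortiori $\lcond_\rel$-Selmer and $\admissible'\subseteq\admissible$, this yields an element of $\ES^{(p)}(\T^\dag,\lcond_\rel,\admissible',\abf)$; recall also that $(\calR,\T^\dag)$ satisfies Assumptions~\ref{ass:assumptions-on-R} and~\ref{ass:assumptions-on-T}, as explained above.

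Next I would apply Theorem~\ref{th:Iwasawa-euler-to-kolyvagin-system} with $\T^\dag$ in place of $\T$. This produces a family of automorphisms $\chi_{n,\ell}$ and a universal $\{\chi_{n,\ell}\}$-Kolyvagin system $\koly^\ac_{\Hid}\in\KSuni(\T^{\dag,\ac},\lcond_\rel,\admissible',\{\chi_{n,\ell}\})$ whose value at $1$ is the basic class $\bbf^\ac_K$ of the system, which by construction equals $\Cor^{K[1]}_K\varprojlim_\ga\bbf(p^\ga)$. (If one wanted the simplified form of the $\chi_{n,\ell}$ given in Remark~\ref{rk:E3} one could in addition verify condition \ref{E3} for the big Heegner classes $\mathfrak{X}_c$, but this is not needed for the statement.)

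The only remaining point, and the one requiring an argument beyond the formalism, is to replace $\lcond_\rel$ by $\lcond_\Gr$. By Remark~\ref{rk:stressed-local-condition-iwasawa} it suffices to check that the Kolyvagin derivative classes $\kappa(n)^\ac_{\sfrak,\ga}$ of Definition~\ref{def:kolyvagin-derivative-iwasawa} satisfy the Greenberg local conditions at the primes dividing $Np$. At $v\mid N$ this is the unramified condition, which I would deduce just as in the case $v\nmid nNp$ of Proposition~\ref{prop:derived-classes-in-the-Selmer}, using Hypothesis (H.tam) of \cite{buyukboduk:big-heegner} to trivialize the Tamagawa factors along the tower; at $v\mid p$ one must show that $\loc_v\kappa(n)^\ac_{\sfrak,\ga}$ lies in the image of $\hone(K_v,F_v^+(\T^{\dag,\ac}))$, which is precisely the computation carried out in the proof of \cite[Theorem A.1]{buyukboduk:big-heegner}, whose non-Iwasawa shadow \cite[Section 4.3]{buyukboduk:big-heegner} was already invoked for Theorem~\ref{thm:hida-kolyvagin-system}, and where Hypothesis (H.stz) is used to control exceptional zeros. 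I expect this last step — the behaviour of the derived big Heegner classes at $p$ in the Iwasawa-theoretic setting — to be the main obstacle, everything else being a direct transcription of the abstract results.
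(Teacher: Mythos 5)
Your proposal is correct and follows essentially the same route as the paper's own proof: verify that the big Heegner classes $\bbf(np^\ga)$ form a $p$-complete anticyclotomic Euler system via \cite[Propositions 2.3.1 and 2.3.2]{H06} and \cite[Proposition 4.14]{buyukboduk:big-heegner}, apply Theorem~\ref{th:Iwasawa-euler-to-kolyvagin-system} with the relaxed structure, and then invoke Remark~\ref{rk:stressed-local-condition-iwasawa} together with \cite[Section 4.3]{buyukboduk:big-heegner} to verify the Greenberg conditions at $v\mid Np$. Your slightly more careful distinction between the Iwasawa reference \cite[Theorem A.1]{buyukboduk:big-heegner} at $v\mid p$ and its non-Iwasawa shadow is a small improvement in precision, but the argument is the same.
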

\begin{proof}
	By Theorem \ref{th:Iwasawa-euler-to-kolyvagin-system}, we obtain an element $\koly_{\Hid}^\ac\in\KSuni(\T^{\dag,\ac},\lcond_\rel,\admissible',\{\chi_{n,\ell}\})$ that satisfies the required equality. By Remark \ref{remark:stressed-local-conditions}, we are just left to show that $\gk_{\Hid}^\ac(n)_\sfrak\in \hone_{\lcond_\Gr}(K,\Ts)$ for every $n\in\squarefreeadmissible'$. This is proved in \cite[Section 4.3]{buyukboduk:big-heegner}.
\end{proof}

\begin{remark}
	Big Heegner classes have been built under the generalized Heegner hypothesis in \cite{longo-vigni:quaternion-2011}. In \cite{zerman:kolyvagin-hida-families}, they are used to build a modified universal Kolyvagin system, expanding the arguments of this section.
\end{remark}

Both Theorems \ref{thm:hida-kolyvagin-system} and \ref{thm:hida-iwasawa-kolyvagin-system} can be used to study the structure of Greenberg Selmer groups. However, since $\calR$ is not a discrete valuation ring, these type of application are less staightforward and out of the scope of this article. The interested reader may want to consult \cite{buyukboduk:big-heegner}, \cite{Fou} or \cite{zerman:kolyvagin-hida-families}.

\appendix

\section{Nekov\' a\v r's key formula}\label{sec:formula-abstract}

In this appendix we review and improve the abstract formula of \cite[Section 9]{nekovar:chow-groups}, that we used in the proof of Lemmas \ref{lem:formula-capitolo-2} and \ref{lem:formula-capitolo-3} to find an explicit relation among the derivative classes. Let $p$ and $\ell$ be two distinct odd primes and let $R$ be a complete Noetherian local ring that is finite and free over $\Zp$. Let $\Zhat$ be the profinite completion of $\Z$ and fix a topological generator $\phi$ of $\Zhat$. 

\subsection{Conditions and generalities}\label{sec:setting}
Consider the following objects and conditions, that will be assumed for the rest of this appendix.

\begin{enumerate}[label=(G\arabic*)]
	\item \label{condition:pro-dihedrality} 
	Let $\Gtilde$ be a profinite group and $H\unlhd G\unlhd\Gtilde$ be a chain of normal closed subgroups with $\Gtilde/H=\langle\gs\rangle\rtimes\langle c\rangle$ dihedral, where $\langle\gs\rangle$ is cyclic of order $M$ and $\langle c\rangle$ is cyclic of order $2$ acting on $\langle\gs\rangle$ by $c\gs c=\gs^{-1}$. Moreover, $G/H=\langle\gs\rangle$ and $\Gtilde/G=\langle c\rangle$. 
	\item \label{condition:localization}
	Let $\Gtilde_0$ be a closed subgroup of $\Gtilde$ and call $G_0:=\Gtilde_0\cap G$ and $H_0:=G_0\cap H$. Suppose that the inclusion $\tilde{G}_0\subseteq \tilde{G}$ induces an isomorphism $\tilde{G}_0/H_0\cong \tilde{G}/H$. Call $\sigma_0$ the preimage of $\gs$ and $c_0$ the preimage of $c$, so that $G_0/H_0=\langle\sigma_0\rangle$ and $\tilde{G}_0/G_0=\langle c_0\rangle$. 
	\item \label{condition:semidirect-product}
	The group $\Gtilde_0$ is equipped with a surjective homomorphism $   \pi\colon \Gtilde_0\surj \Zhat$,
	that induces surjections $G_0\surj 2\Zhat$ and $H_0\surj 2\Zhat$.
\end{enumerate}
\begin{enumerate}[label=(T\arabic*)]
	\item \label{condition:structure-T}
	Let $T$ be a finitely generated torsion-free module over $R$ with a continuous action of $\Gtilde$.
	\item \label{condition:unramifiedness}
	$\Gtilde_0$ acts on $T$ through its quotient $\Zhat$. 
	\item \label{condition:frob-square-identity-mod-p-s}
	$\phi^2$ acts as the identity on $T/p^s T$ for some $s\ge 1$ such that $p^s\mid M$.
	\item \label{condition:vanishing-h-0}
	$\ho(H, T/p^s T) = \{0\}$.
\end{enumerate}
\begin{enumerate}[label=(UR)]
	\item \label{condition:unramified-classes} Let $\cohoclass{x}\in \hone(G,T)$ and $\cohoclass{y}\in \hone(H,T)$ be classes whose restriction to $G_0$ and $H_0$ lie in the image of the inflation maps $\hone(2\Zhat,T)\to \hone(G_0,T)$ and $\hone(2\Zhat,T)\to \hone(H_0,T)$, respectively.
\end{enumerate}
\begin{enumerate}[label=(COR)]
	\item The classes $\cohoclass{x}$ and $\cohoclass{y}$ satisfy $\Cor_{G}^H(\cohoclass{y})=M_1 \cohoclass{x}$, for some $M_1\in R$ divisible by $p^s$.\label{condition:corestriction}
\end{enumerate}

\begin{enumerate}[label=(E--S)]
	\item Assume that $\res_{H}^{H_0}(\mathbf{\ybar})=\res_G^{H_0}(\phi (\mathbf{\xbar}))$, denoting by $\mathbf{\xbar}$ and $\mathbf{\ybar}$ the image of $\mathbf{x}$ and $\mathbf{y}$ in $\hone(G, T/p^sT)$ and $\hone(H, T/p^sT)$, respectively. \label{condition:eichler-shimura}
\end{enumerate}
\begin{enumerate}[label=(FR)]
	\item $\phi^2-\delta M_1\phi+d=0$ on $T$ for some $\gd\in R^\times$ and $d\in R$.\label{condition:char-poly-frob}
\end{enumerate}

Let now $x\in Z^1(G_0,T)$ be a $1$-cocycle representing $\cohoclass{x}$.  Condition (UR) implies that $x$ is inflated by a 1-cocycle $\tilde{x}\in Z^1(2\Zhat,T)$, up to summing a coboundary. Hence, for every $g_0 \in G_0$ such that $\pi(g_0) = \phi^{2v}$ (for some $u\in\Z_{\ge 0}$), there is $b_x\in T$ such that
\begin{equation}\label{eq:definition-ax}
	x(g_0) = \tilde{x}(\phi^{2v})+(g_0-1)b_x=(1+\phi^2+\dots+\phi^{2(v-1)})a_x+(\phi^{2v}-1)b_x 
\end{equation}
where $a_x=\tilde{x}(\phi^2)\in T$, since $g_0$ acts as $\phi^{2v}$ on $T$ by assumption \ref{condition:unramifiedness}. Note that the cohomology class $[\tilde{x}\hspace{1pt}]$ corresponds to the class of $a_x$ in $T/(\phi^2-1)T$ via the isomorphism
\begin{equation*}
	\hone(2\Zhat,T)\xlongrightarrow{\cong} T/(\phi^2-1)T
\end{equation*}
induced by evaluating cocycles at $\phi^2$ (see \cite[Lemma B.2.8]{rubin:euler-systems}). The same argument works also for $y\in Z^1(H_0,T)$ representing $\cohoclass{y}$ and $g_0 \in H_0$ such that $\pi(g_0) = \phi^{2v}$, for some $v\in\Z_{\ge 0}$, yielding
\begin{equation}\label{eq:definition-ay}
	y(g_0) = \tilde{y}(\phi^{2v})+(g_0-1)b_y=(1+\phi^2+\dots+\phi^{2(v-1)})a_y+(\phi^{2v}-1)b_y. 
\end{equation}

\subsection{Kolyvagin's derivative}\label{sec:kolyvagin-derivative-abstract}

In this section, we introduce the notion of Kolyvagin's derivative as a purely group-theoretic construction, following \cite[Section~7]{nekovar:chow-groups}. As a piece of notation, we will always use the upper bar to denote the image in $T/p^sT$ of cocycles, cohomology classes and elements of $T$.

First, notice that $\Cor^H_G \cohoclass{\ybar}=0$ in $\hone(G,T/p^sT)$, by \ref{condition:corestriction}. Define the operators
\[
\Tr := \sum_{i=0}^{M-1}\sigma^i\quad\text{and}\quad D:=\sum_{i=1}^{M-1}i\gs^i
\]
in $\Z[G/H]$, satisfying the identity 
\begin{equation}\label{eq:telescopic-identity-appendix}
	(\sigma-1)D = M - \Tr.
\end{equation}
For future reference, we now make some computations at the level of cocycles. Fix a lifting $\tilde{\gs}$ of $\gs$ to $G$. From the explicit definition of corestriction (see e.g.~\cite[Section~I.5.4]{neukirch:cnf}), it follows that $\Cor^H_G\cohoclass{\ybar}$ can be represented by the cocycle $\Cor \ybar\in Z^1(G,T/p^sT)$ defined as
\begin{equation}\label{equ:corestriction-cocycle}
	(\Cor \ybar)(\tilde{\gs}) := \ybar(\tilde{\gs}^M)\quad \text{and}\quad (\Cor \ybar)(h) := \sum_{i=0}^{M-1}\tilde{\gs}^{i}\ybar(\tilde{\gs}^{-i}h\tilde{\gs}^i), 
\end{equation}
for every $h\in H$. Moreover, since $\Cor_{G}^H\cohoclass{\ybar}=0$, then $\Cor \ybar$ is a coboundary, i.e.,
\begin{equation}
	\label{equ:definition-of-a}
	(\Cor \ybar)(g)=(g-1)\abar
\end{equation}
for every $g\in G$ and for a unique (by the assumption \ref{condition:vanishing-h-0}) $\abar\in T/p^sT$. Consider also the cocycle $D\ybar\in Z^1(H,T/p^sT)$ defined as
\begin{equation}\label{eq:definition-cocycle-Dy}
	(D\ybar)(h):= \sum_{i=1}^{M-1}i\tilde{\gs}^{i}\ybar(\tilde{\gs}^{-i}h\tilde{\gs}^i),
\end{equation}
for every $h\in H$, that represents the cohomology class $D\cohoclass{\ybar}$. Note that, since $M\cohoclass{\ybar}=0$ by \ref{condition:frob-square-identity-mod-p-s} and moreover $\Tr (\cohoclass{\ybar}) = (\res_G^H \circ \Cor^H_G) (\cohoclass{\ybar})= 0$ by \ref{condition:corestriction}, 
it follows from \eqref{eq:telescopic-identity-appendix} that $D\cohoclass{\ybar} \in \hone(H, T/p^sT)^{G/H}$. In particular, by \ref{condition:vanishing-h-0}, applying the inflation-restriction sequence we obtain that $D\cohoclass{\ybar}$ lies in the image of $\res_G^H$. At level of cocycles, we can be more precise.

\begin{proposition}\label{prop:def-f-y}
	There is a cocycle $f_{\ybar}\in Z^1(G,T/p^s T)$ uniquely determined by the conditions 
	\begin{equation*}
		\res_G^H f_{\ybar}=D\ybar \quad \text{and} \quad  f_{\ybar}(\sigmati) = -\sigmati \abar,
	\end{equation*}
	where $\abar$ is the element defined in \eqref{equ:definition-of-a}. Moreover, its class $[f_{\ybar}]\in \hone(G,T/p^s T)$ is independent of the choice of the representative $\ybar$ of $\cohoclass{y}$.
\end{proposition}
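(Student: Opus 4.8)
The plan is to obtain $f_{\ybar}$ from a mild adjustment of a lift whose existence was just recorded, to deduce uniqueness from \ref{condition:vanishing-h-0}, then to read off the value at $\sigmati$ from the conjugation relations that any such lift must satisfy, and finally to check that changing the representative only alters $f_{\ybar}$ by a coboundary.

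\emph{Existence and uniqueness.} Since $D\cohoclass{\ybar}$ lies in the image of $\res_G^H\colon\hone(G,T/p^sT)\to\hone(H,T/p^sT)$, as just observed, I would pick a cocycle $F\in Z^1(G,T/p^sT)$ with $[\res_G^H F]=[D\ybar]$, write $\res_G^H F-D\ybar$ as the coboundary $h\mapsto(h-1)m_0$ of some $m_0\in T/p^sT$, and set $f_{\ybar}:=F-(g\mapsto(g-1)m_0)$, so that $\res_G^H f_{\ybar}=D\ybar$ exactly. Uniqueness follows by a standard argument: if $w\in Z^1(G,T/p^sT)$ vanishes on $H$, then comparing the two expansions of $w(hg)$ coming from the cocycle relation (using $g^{-1}hg\in H$) forces $(h-1)w(g)=0$ for all $h\in H$, whence $w(g)\in\ho(H,T/p^sT)=0$ by \ref{condition:vanishing-h-0}. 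Note this already shows $f_{\ybar}$ is pinned down by the first condition alone.

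\emph{The value at $\sigmati$.} Here I would apply the cocycle relation of $f_{\ybar}$ to the identity $\sigmati h=(\sigmati h\sigmati^{-1})\sigmati$, $h\in H$, which, since $f_{\ybar}$ restricts to $D\ybar$, yields
\[
(1-\sigmati h\sigmati^{-1})f_{\ybar}(\sigmati)=(D\ybar)(\sigmati h\sigmati^{-1})-\sigmati\,(D\ybar)(h).
\]
Expanding both derivative cocycles through \eqref{eq:definition-cocycle-Dy} and reindexing the first sum by $i\mapsto i-1$, the right-hand side telescopes; matching it against the explicit corestriction cocycle \eqref{equ:corestriction-cocycle} it equals $\sigmati\cdot(\Cor\ybar)(h)$ together with a leftover term that is a multiple of $M$, hence vanishes on $T/p^sT$ because $p^s\mid M$. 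As $\Cor\ybar$ is the coboundary $g\mapsto(g-1)\abar$ by \eqref{equ:definition-of-a}, the displayed identity becomes $(1-\sigmati h\sigmati^{-1})\bigl(f_{\ybar}(\sigmati)+\sigmati\abar\bigr)=0$ for every $h\in H$; since conjugation by $\sigmati$ permutes $H$, I conclude $f_{\ybar}(\sigmati)+\sigmati\abar\in\ho(H,T/p^sT)=0$, i.e.\ $f_{\ybar}(\sigmati)=-\sigmati\abar$, so the second condition holds automatically.

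\emph{Independence of the representative.} Replacing $\ybar$ by $\ybar+(h\mapsto(h-1)m_1)$, a direct computation with \eqref{eq:definition-cocycle-Dy} shows that $D\ybar$ changes by the coboundary $h\mapsto(h-1)(Dm_1)$, where $Dm_1:=\sum_{i=1}^{M-1}i\,\sigmati^i m_1$. Hence $f_{\ybar}$ plus the coboundary $g\mapsto(g-1)(Dm_1)$ on $G$ restricts to the new derivative cocycle, so by the uniqueness proved above it \emph{is} the cocycle attached to the new representative; the two therefore differ by a $G$-coboundary and define the same class in $\hone(G,T/p^sT)$. The step I expect to be the main obstacle is the bookkeeping in the value-at-$\sigmati$ computation: one must see that the shifted partial sums produced by $D$ reassemble into $\sigmati\cdot(\Cor\ybar)(h)$ and that the boundary terms collect into a single multiple of $M$. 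Everything else is formal cocycle manipulation together with the vanishing $\ho(H,T/p^sT)=0$.
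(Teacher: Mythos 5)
Your proof is correct, and it follows essentially the same route as the paper's referenced construction (the preceding paragraph already sets up the inflation–restriction observation you build on; the rest is the cocycle bookkeeping Nekov\'a\v r carries out in his Section 7). One clean refinement worth noting: you observe that the vanishing $\ho(H,T/p^sT)=0$ forces a cocycle on $G$ that vanishes on $H$ to vanish outright, so the restriction condition $\res_G^H f_{\ybar}=D\ybar$ already pins down $f_{\ybar}$ at the cocycle level, and the second condition $f_{\ybar}(\sigmati)=-\sigmati\abar$ is then \emph{derived} rather than imposed. This is a slightly stronger statement than the proposition records, but it is entirely compatible with it and makes the independence-of-representative step immediate.
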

\begin{proof}
	Follow the same steps as in \cite[Section 7]{nekovar:chow-groups}.
\end{proof}

\subsection{The key formula}\label{sec:proof} 

We are now able to prove the key formula that links $\cohoclass{x}=[x]$ and $\cohoclass{y}=[y]$, following \cite[Section 9]{nekovar:chow-groups}. Recall the elements $a_x$ and $a_y$ defined in \eqref{eq:definition-ax} and \eqref{eq:definition-ay}.

\begin{proposition}\label{prop:key-formula-appendix}
	The relation
	\begin{equation*}
		\biggl(\frac{ M}{p^s}\phi-\frac{M_1}{p^s}\biggr)\abar_x=\biggl(\frac{\delta M_1}{p^s}\phi-\frac{d+1}{p^s}\biggr)\abar.
	\end{equation*}
	holds in $T/p^s T$ and $\abar=-f_{\ybar}(\sigmati_0)$ for any lift $\sigmati_0$ of $\sigma_0$ to $G_0$.
\end{proposition}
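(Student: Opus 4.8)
The plan is to follow Nekov\'a\v r's computation in \cite[Section~9]{nekovar:chow-groups}, adapting it to the present axiomatic setting and keeping track of the denominators $p^s$ coming from the divisibilities imposed in \ref{condition:frob-square-identity-mod-p-s} and \ref{condition:corestriction}. First I would fix cocycle representatives $x\in Z^1(G_0,T)$ and $y\in Z^1(H_0,T)$ for $\cohoclass{x}$ and $\cohoclass{y}$, together with a lift $\sigmati_0$ of $\sigma_0$ to $G_0$ and a lift $\sigmati$ of $\sigma$ to $G$; by \ref{condition:unramified-classes} we may assume (after modifying by a coboundary) that $\res^{G_0}_G x$ and $\res^{H_0}_H y$ are inflated from $1$-cocycles on $2\Zhat$, so that the decompositions \eqref{eq:definition-ax} and \eqref{eq:definition-ay} are available, defining $a_x=\tilde x(\phi^2)$ and $a_y=\tilde y(\phi^2)$.

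The identity $\abar=-f_{\ybar}(\sigmati_0)$ is essentially immediate from the construction of $f_{\ybar}$: by Proposition \ref{prop:def-f-y} we have $f_{\ybar}(\sigmati)=-\sigmati\,\abar$ at the level of the fixed lift $\sigmati\in G$, and since $\sigmati_0$ and $\sigmati$ have the same image $\sigma$ in $G/H$, they differ by an element $h\in H$; using the cocycle relation $f_{\ybar}(\sigmati_0)=f_{\ybar}(\sigmati h)=f_{\ybar}(\sigmati)+\sigmati f_{\ybar}(h)=f_{\ybar}(\sigmati)+\sigmati (D\ybar)(h)$ and the fact that $\sigmati_0$ acts trivially on $T/p^sT$ (via $\pi$, by \ref{condition:unramifiedness} together with \ref{condition:frob-square-identity-mod-p-s}, as $\pi(\sigmati_0)$ is a power of $\phi^2$... more precisely $\sigmati_0$ lies over $\sigma_0$ which maps into $2\Zhat$), one reduces to checking that the correction term vanishes; this is exactly the normalization built into $f_{\ybar}$ in the proof of Proposition \ref{prop:def-f-y}, so it amounts to unwinding definitions.

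For the main relation I would proceed as follows. Evaluate the coboundary relation \eqref{equ:definition-of-a} $(\Cor\ybar)(g)=(g-1)\abar$ at $g=\tilde\sigma^M$, which acts as $\phi^{2M}$ on $T/p^sT$ and hence (again using \ref{condition:frob-square-identity-mod-p-s} and $p^s\mid M$) acts trivially, so $(\Cor\ybar)(\tilde\sigma^M)=0$; by \eqref{equ:corestriction-cocycle} this reads $\ybar(\tilde\sigma^M)=0$ — here one must instead evaluate carefully at a well-chosen element of $G_0$ to extract $a_x$, $a_y$ and $\abar$. The real content is: apply $\res$ to $H_0$, use \ref{condition:eichler-shimura} to replace $\abar_y$-type contributions by $\phi(\abar_x)$-type contributions, then use the telescoping identity \eqref{eq:telescopic-identity-appendix} together with \eqref{eq:definition-ax}--\eqref{eq:definition-ay} to express $(D\ybar)$ evaluated at a lift of $\sigma_0$ in terms of a geometric-series operator $\sum_{i} \phi^{2i}$ applied to $a_x$; finally invoke the characteristic-polynomial relation \ref{condition:char-poly-frob}, $\phi^2=\delta M_1\phi-d$, to collapse the geometric series $(1+\phi^2+\dots+\phi^{2(M-1)})$ acting on $T$ into the closed form appearing in the statement, dividing throughout by $p^s$ (legitimate because $p^s\mid M$ and $p^s\mid M_1$, and because $T/p^sT$ is the relevant module).

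I expect the main obstacle to be the bookkeeping in this last step: one must carefully identify which group elements to evaluate the various cocycles on so that the semilocal/local compatibility \ref{condition:eichler-shimura} can be inserted at the right place, and then verify that the operator $\frac{1}{p^s}(1+\phi^2+\cdots+\phi^{2(M-1)})$, which a priori only makes sense after reduction mod $p^s$, simplifies — using $\phi^2-\delta M_1\phi+d=0$ and the congruences $M\equiv 0$, $M_1\equiv 0 \pmod{p^s}$ — to $\frac{1}{p^s}\bigl(M\phi - M_1\bigr)$ on the $\cohoclass{x}$-side and to $\frac{1}{p^s}\bigl(\delta M_1\phi-(d+1)\bigr)$ on the $\cohoclass{y}$-side. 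This is precisely the algebraic identity that Nekov\'a\v r isolates; the generalization here is only that $R$ is an arbitrary complete Noetherian local $\Zp$-algebra rather than a field or DVR, so I would check that every division by $p^s$ is performed inside $T/p^sT$ (never in $R$ itself) and that no finiteness or torsion hypothesis beyond \ref{condition:structure-T}--\ref{condition:vanishing-h-0} is used. Once the operator identity is in place, the displayed formula follows by comparing the two expressions for the common quantity $\abar$ (equivalently $f_{\ybar}(\sigmati_0)$), completing the proof.
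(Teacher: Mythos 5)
Your outline correctly identifies the ingredients — cocycle representatives inflated from $2\Zhat$, the corestriction relation \ref{condition:corestriction}, the characteristic-polynomial identity \ref{condition:char-poly-frob}, the substitution from \ref{condition:eichler-shimura}, and the careful division by $p^s$ inside $T$ (legitimate because $T$ is torsion-free and $p^s$ divides $M$, $M_1$ and $d+1$). However, your account of the central computation is not what actually happens, and I don't think it can be made to work as stated.

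You propose to express something evaluated at a lift of $\sigma_0$ in terms of a geometric-series operator $1+\phi^2+\cdots+\phi^{2(M-1)}$ and then to collapse this series via \ref{condition:char-poly-frob} "to $\tfrac{1}{p^s}(M\phi-M_1)$ on the $\cohoclass{x}$-side and to $\tfrac{1}{p^s}(\delta M_1\phi-(d+1))$ on the $\cohoclass{y}$-side." This is not coherent: a single operator cannot simplify to two different expressions depending on which side it acts on, and in fact no geometric series of length $M$ appears anywhere in the computation. Also, $D\ybar$ is a cocycle on $H$ and cannot be evaluated at a lift of $\sigma_0$ (which lies outside $H$); that role is played by $f_{\ybar}$, but $f_{\ybar}(\sigmati_0)=-\abar$ is a definition, not an input. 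The telescoping identity \eqref{eq:telescopic-identity-appendix} enters only in the construction of $f_{\ybar}$ (Proposition \ref{prop:def-f-y}), not in the key-formula derivation.

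The actual mechanism is simpler and more direct: choose $\sigmati_0\in G_0$ with $\pi(\sigmati_0)$ trivial, then restrict the coboundary relation from \ref{condition:corestriction}, namely $(\Cor(y))(g)-M_1x(g)=(g-1)a$, to a \emph{single} element $h_0\in H_0$ with $\pi(h_0)=\phi^2$ (i.e.\ $v=1$). The corestriction sum collapses to $My(h_0)$ because all $M$ conjugates of $h_0$ by $\sigmati_0^i$ have the same $\pi$-image and $\sigmati_0$ acts trivially; there is no series in $\phi^2$ to deal with, only a constant factor $M$. Plugging in $x(h_0)=a_x+(\phi^2-1)b_x$ and $y(h_0)=a_y+(\phi^2-1)b_y$ from \eqref{eq:definition-ax}--\eqref{eq:definition-ay} gives
\begin{equation*}
M a_y - M_1 a_x = (\phi^2-1)\bigl(a - Mb_y + M_1 b_x\bigr),
\end{equation*}
and the expression $\delta M_1\phi-(d+1)$ comes simply from rewriting $\phi^2-1$ via \ref{condition:char-poly-frob}, not from collapsing a geometric series. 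After dividing by $p^s$ and reducing, \ref{condition:eichler-shimura} turns $\abar_y$ into $\phi\abar_x$, giving the displayed formula. Your discussion of $\abar=-f_{\ybar}(\sigmati_0)$ also slightly misses: you compare the $G$-lift $\sigmati$ from Proposition \ref{prop:def-f-y} with the $G_0$-lift $\sigmati_0$, but these differ only by an element of $H$ (not of $H_0$), and $D\ybar$ need not vanish on $H$; the correct argument takes $\sigmati:=\sigmati_0$ with $\pi(\sigmati_0)$ trivial from the start, and then proves independence of the choice of $G_0$-lift using the diagram \eqref{equ:inf-res-H0-G0} and Lemma \ref{lemma:D=0}, since two $G_0$-lifts differ by an element of $H_0$.
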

\begin{proof}
	Choose a lift $\sigmati_0 \in G_0$ of $\sigma_0$ such that $\pi(\sigmati_0)$ is trivial in $2\Zhat$ (see \ref{condition:semidirect-product}). 
	By \ref{condition:localization}, $\sigmati_0$ is also a lift of $\sigma \in G/H$.
	In particular, the cocycle $f_{\ybar}\in Z^1(G,T/p^s T)$ defined in Proposition~\ref{prop:def-f-y} has the following properties: 
	\begin{equation*}
		\res^H_G(f_{\ybar})=D\ybar\in Z^1(H,T/p^s T), \qquad f_{\ybar}(\sigmati_0) = - \sigmati_0 \abar = - \abar,
	\end{equation*}
	as $\sigmati_0$ acts trivially on $T$ and $T/p^s T$ by \ref{condition:unramifiedness}, where $\abar \in T/p^sT$ is the element defined in \eqref{equ:definition-of-a}. We now show that this element is independent on the choice of the lift $\sigmati_0$ of $\sigma_0$.
	
	Consider then the following commutative diagram
	\begin{equation}\label{equ:inf-res-H0-G0}
		\begin{tikzcd}
			&& {\hone(G,T/p^sT)} \ar[r, "\res"] \ar[d, "\res"] & {\hone(H,T/p^sT)} \ar[d, "\res"] \\
			0 \ar[r] & {\hone(G_0/H_0,T/p^sT)} \ar[r, "\infl"] & {\hone(G_0,T/p^sT)} \ar[r, "\res"] & {\hone(H_0,T/p^sT)},
		\end{tikzcd}
	\end{equation}
	where the bottom row is exact (here we used the fact that $\ho(H_0, T/p^sT) = T/p^s T$ since the action of $H_0$ is trivial on $T/p^s T$ by \ref{condition:unramifiedness} and \ref{condition:frob-square-identity-mod-p-s}). It follows from the following lemma that $\res^{H_0}_G([f_{\ybar}])=\res_H^{H_0} D\bar{\cohoclass{y}}$ is trivial and hence we find an element 
	\[
	[z_0]\in \hone(G_0/H_0,T/p^s T)=\Hom(G_0/H_0,T/p^s T)
	\]
	such that $\infl_{G_0}^{H_0}z_0=\res^{G_0}_G f_{\ybar}$.
	In particular
	\begin{equation}\label{eq:proprieta-elemento-a}
		z_0(\sigma_0)=(\res^{G_0}_G f_{\ybar})(\sigmati_0)=f_{\ybar}(\sigmati_0)=-\abar,
	\end{equation}
	therefore the element $\abar\in T/p^s T$ only depends on $\sigma_0$.
	
	\begin{lemma}\label{lemma:D=0}
		$\res_H^{H_0} \bigl(D \bar{\cohoclass{y}}\bigr) = 0$.
	\end{lemma}
	
	\begin{proof}
		Recall that the class $D \bar{\cohoclass{y}}$ is represented by the cocycle $D\ybar$ defined in \eqref{eq:definition-cocycle-Dy}. For any $h_0\in H_0$ with $\pi(h_0) = \phi^{2v}$ for some $v\in\Zhat$, by \eqref{eq:definition-ay} it follows that 
		\begin{equation*}
			\ybar(h_0)=\bar{\tilde{y}}(\phi^{2v})+(\phi^{2v}-1)b_x=\ybar(\sigmati_0^{-i}h_0\sigmati_0^i),
		\end{equation*}
		for any $i=1,\dots, M-1$, as $\pi(h_0)=\pi(\sigmati_0^{-i}h_0\sigmati_0^i)$. Hence,
		\[
		D\ybar(h_0) = \sum_{i=1}^{M-1} i \sigmati_0^i \ybar(h_0) = \biggl( \sum_{i=1}^{M-1} i \biggr)\ybar(h_0) = \biggl(\frac{M(M-1)}{2}\biggr) \ybar(h_0) = 0,
		\]
		since $\sigmati_0$ acts trivially on $T/p^sT$ by \ref{condition:unramifiedness} and $M \equiv 0  \bmod p^s$ by \ref{condition:frob-square-identity-mod-p-s}.
	\end{proof}
	
	According to assumption \ref{condition:corestriction}, there is an element $a\in T$ such that
	\begin{equation*}
		(\Cor(y))(g)-M_1 x(g)=(g-1)a,
	\end{equation*}
	for every $g\in G$, where $\Cor(y)$ is a cocycle defined as in \eqref{equ:corestriction-cocycle}, with $y$ in place of $\ybar$. When quotienting by $p^s T$, the term $M_1 x(g)$ vanishes and hence  $a + p^sT = \abar$ in $T/p^s T$ by \eqref{equ:definition-of-a}. Restricting to $g = h_0\in H_0$, we get the relation
	\begin{equation*}
		\sum_{i=0}^{M-1} y(\sigmati_0^{-i} h_0\sigmati_0^i)-M_1 x(h_0)=(h_0-1)a,
	\end{equation*}
	as elements of $T$. Now note that $y(\sigmati_0^{-i} h_0\sigmati_0^i) = y(h_0)$ for any $i\ge0$ by the same argument we applied to $\ybar$ in the proof of Lemma \ref{lemma:D=0}. Hence the previous relation becomes
	\begin{equation*}
		My(h_0)-M_1 x(h_0)=(h_0-1)a.
	\end{equation*}
	Putting this equation together with \eqref{eq:definition-ax} and \eqref{eq:definition-ay} for $g_0 = h_0$ and $v = 1$ we obtain
	\begin{equation*}
		Ma_y-M_1a_x=(\phi^2-1)(a-Mb_y+M_1b_x)
	\end{equation*}
	on $T$. 
	Using assumption \ref{condition:char-poly-frob} and recalling that $p^s$ divides $M$ and $M_1$, we obtain the relation
	\begin{equation*}
		M a_y - M_1 a_x=(\delta M_1\phi-(d+1))(a-p^s t),
	\end{equation*}
	for some $t\in T$. Since $T$ is torsion free over $R$ and all terms are divisible by $p^s$ in $T$, we can divide both sides by $p^s$, yielding the formula
	\begin{equation*}
		\biggl(\frac{M}{p^s}a_y-\frac{M_1}{p^s}a_x\biggr)=\biggl(\frac{\delta M_1}{p^s}\phi-\frac{d+1}{p^s}\biggr)(a-p^s t).
	\end{equation*}
	By quotienting modulo $p^s$, assumption \ref{condition:eichler-shimura} implies that $\abar_y = \phi (\abar_x)$, and therefore we obtain the claimed relation.
\end{proof}

\section{Semi-local Galois cohomology}\label{sec:semi-local-cohomology}

The work of Sections \ref{sec:euler-systems-and-kolyvagin-systems}--\ref{sec:Iwasawa-theory} relies on a nice interplay between \emph{global} and \emph{local} operators, i.e., operators on global cohomology groups and on their localizations. However, it frequently happens that an operator between global cohomology groups does not restrict to an operator between the corresponding local cohomology groups. In this appendix, inspired by \cite[Appendix B.4--B.5]{rubin:euler-systems}, we study how one can solve this problem using semi-local Galois cohomology.

Let $L/F$ be a Galois extension of number fields and let $v$ be a prime of $F$. For every prime $w$ of $L$ above $v$, fix a prime $\mathfrak{w}$ of $\bar{F}$ above $w$ and let $\inertia_{w}\subseteq\decomp_w\subseteq G_F$ denote the inertia and the decomposition of $\mathfrak{w}/v$. Fix a prime $w_0$ of $L$ above $v$ and write $\decomp=\decomp_{w_0}$ and $\inertia=\inertia_{w_0}$. For every $w\mid v$, choose an element $\sigma_w\in G_F$ such that $w=w_0\circ\gs_w$, so that $\decomp_w=\sigma_w^{-1}\decomp\sigma_w$.

Let $T$ be a discrete $G_F$-module. In the following, we need to describe local cohomology groups as cohomology groups of some explicit decomposition groups. In order to do that, we set $\hone(F_v,T):=\hone(\decomp,T)$, $\hone(L_{w},T):=\hone(\decomp_{w}\cap G_L,T)$, $\hone(F_v^{\ur},T):=\hone(\inertia,T)$ and $\hone(L_w^{\ur},T):=\hone(\inertia_{w}\cap G_L,T)$.  In this way, localization maps from global to local cohomology groups are just restrictions to a decomposition group. Define semi-local cohomology groups 
\begin{equation*}
	\hone(L_v,T):=\bigoplus_{w\mid v} \hone(L_w,T)\quad\text{and}\quad  \hone(L_v^{\ur},T):=\bigoplus_{w\mid v}\hone(\inertia_w\cap G_L,T),
\end{equation*}
and denote semi-localization maps as $\loc_v:=\bigoplus_{w\mid v}\loc_w\colon \hone(L,T)\longrightarrow \hone(L_v,T)$. 

\subsection{Galois action} \label{subsec:Galois-action}

By using the canonical identifications
\begin{equation*}
	\hone(L_v,T)\cong \hone(L,\Ind_{\decomp}^{G_F}(T))\quad\text{and}\quad  \hone(L_v^{\ur},T)\cong \hone(L,\Ind_{\inertia}^{G_F}(T))
\end{equation*}
coming from \cite[Propositions 4.2 and 5.1]{rubin:euler-systems}, one has a well defined action of $\Gal(L/F)$ on both modules and a commutative diagram
\begin{equation*}
	\begin{tikzcd}
		{\hone(L,T)} & {\hone(L_v,T)} & {\hone(L_v^{\ur},T)} \\
		{\hone(L,T)} & {\hone(L_v,T)} & {\hone(L_v^{\ur},T)},
		\arrow["{\loc_v}", from=1-1, to=1-2]
		\arrow["\gs", from=1-1, to=2-1]
		\arrow["\res", from=1-2, to=1-3]
		\arrow["\gs", from=1-2, to=2-2]
		\arrow["\gs", from=1-3, to=2-3]
		\arrow["{\loc_v}", from=2-1, to=2-2]
		\arrow["\res", from=2-2, to=2-3]
	\end{tikzcd}
\end{equation*}
for every $\sigma\in\Gal(L/F)$. 

\subsection{Corestriction} \label{app:corestriction}

As an immediate consequence of the esplicit description of \cite[Proposition 1.5.6]{neukirch:cnf}, we obtain the two unnamed vertical arrows that make the diagram
\begin{equation*}
	\begin{tikzcd}
		{\hone(L,T)} & {\hone(L_v,T)} & {\hone(L_v^{\ur},T)} \\
		{\hone(F,T)} & {\hone(F_v,T)} & {\hone(F_v^{\ur},T)}
		\arrow["{\Cor^L_F}"', from=1-1, to=2-1]
		\arrow["{\loc_v}", from=1-1, to=1-2]
		\arrow["{\loc_v}", from=2-1, to=2-2]
		\arrow[from=1-2, to=2-2]
		\arrow["\res", from=1-2, to=1-3]
		\arrow[from=1-3, to=2-3]
		\arrow["\res", from=2-2, to=2-3]
	\end{tikzcd}
\end{equation*}
commute. In particular, when $v$ is split in $L$, we have that
\begin{equation*}
	\loc_v(\Cor_F^L(\xi))=\hspace{-10pt}\sum_{\gs\in\Gal(L/F)}\hspace{-10pt}\gs \circ \loc_{w_0\circ\gs}(\xi),
\end{equation*}
for every $\xi\in \hone(L,T)$.

\subsection{Shapiro's lemma}\label{app:Shapiro}

Assume now that $T$ is also a $\Zp$-module and let $G:=\Gal(L/F)$. As explained in Section \ref{subsec:Shapiro's-lemma}, there is an  isomorphism $\Sh\colon \hone(L,T)\to \hone(F,T\otimes_{\Zp}\Zp[G])$ induced by Shapiro's lemma. As a consequence of \cite[Proposition B.4.2]{rubin:euler-systems}, Shapiro's lemma extends also to semi-local cohomology with a commutative diagram of the following type
\begin{equation*}
	\begin{tikzcd}
		{\hone(L,T)} & {\hone(L_v,T)} & {\hone(L_v^{\ur},T)} \\
		{\hone(F,T\otimes_{\Zp}\Zp[G])} & {\hone(F_v,T\otimes_{\Zp}\Zp[G])} & {\hone(F_v^{\ur},T\otimes_{\Zp}\Zp[G]),}
		\arrow["{\loc_v}", from=1-1, to=1-2]
		\arrow["{\Sh}"', from=1-1, to=2-1]
		\arrow["\res", from=1-2, to=1-3]
		\arrow["\Sh", from=1-2, to=2-2]
		\arrow["\Sh", from=1-3, to=2-3]
		\arrow["{\loc_v}", from=2-1, to=2-2]
		\arrow["\res", from=2-2, to=2-3]
	\end{tikzcd}
\end{equation*}
where all vertical maps are isomorphisms. Therefore, if $T$ is unramified at $v$, Shapiro's map induces isomorphisms between (semi-local) finite conditions and between (semi-local) singular quotients. For more details, see also \cite[Section 3.1.2]{skinner-urban:Iwasawa-main-conj-for-GL2}.

\printbibliography
 
\end{document}